\newcommand{\Sing}{\mathrm{Sing}}
\newcommand{\Grp}{\mathrm{Grp}}
\newcommand{\Ab}{\mathrm{Ab}}
\def\cpartial{\delta}
\newcommand{\KMWn}{\ul{K}'^{MW}}
\def\Pro{\mathrm{Pro}}
\def\aff{\mathrm{aff}}
\newcommand{\NB}[1]{\todo[color=gray!40]{#1}}
\newcommand{\tomemail}{\href{mailto:tom.bachmann@zoho.com}{tom.bachmann@zoho.com}}
\newtheorem{proposition}{Proposition}
\newtheorem{corollary}[proposition]{Corollary}
\newtheorem{lemma}[proposition]{Lemma}
\newtheorem{theorem}[proposition]{Theorem}
\newtheorem*{conjecture*}{Conjecture}
\newtheorem*{theorem*}{Theorem}
\newtheorem*{corollary*}{Corollary}
\newtheorem*{proposition*}{Proposition}
\newtheorem*{lemma*}{Lemma}
\theoremstyle{definition}
\newtheorem{definition}[proposition]{Definition}
\newtheorem{construction}[proposition]{Construction}
\newtheorem*{definition*}{Definition}
\newtheorem*{construction*}{Construction}
\theoremstyle{remark}
\newtheorem{remark}[proposition]{Remark}
\newtheorem*{remark*}{Remark}
\newtheorem{example}[proposition]{Example}
\newtheorem*{example*}{Example}
\newtheorem{discussion}[proposition]{Discussion}
\newcommand{\id}{\operatorname{id}}
\newcommand{\Z}{\mathbb{Z}}
\newcommand{\N}{\mathbb{N}}
\let\scr=\mathcal
\let\bb=\mathbb
\newcommand{\Gm}{{\mathbb{G}_m}}
\newcommand{\Gmp}[1]{{\mathbb{G}_m^{\wedge #1}}}
\def\A{\bb A}
\def\P{\bb P}
\newcommand{\SH}{\mathcal{SH}}
\DeclareMathOperator*{\colim}{colim}
\let\lim=\relax
\DeclareMathOperator*{\lim}{lim}
\def\Map{\mathrm{Map}}
\def\iMap{\ul{\mathrm{Map}}}
\def\PSh{\mathcal{P}}
\def\Spc{\mathcal{S}\mathrm{pc}{}}
\def\Fun{\mathrm{Fun}}
\newcommand{\Spec}{\mathrm{Spec}}
\newcommand{\wequi}{\simeq}
\DeclareRobustCommand{\ul}{\underline}
\newcommand{\tr}{\mathrm{tr}}
\newcommand{\iHom}{\ul{\operatorname{Hom}}}
\def\op{\mathrm{op}}
\let\cat=\mathrm
\def\Sm{{\cat{S}\mathrm{m}}}
\def\Ft{{\cat{F}\mathrm{t}}}
\def\Sch{\cat{S}\mathrm{ch}{}}
\def\Nis{\mathrm{Nis}}
\def\Zar{\mathrm{Zar}}
\def\mot{\mathrm{mot}}
\newcommand{\et}{{\acute{e}t}}
\newcommand{\lra}[1]{\langle #1 \rangle}
\def\ph{\mathord-}
\newcommand{\Th}{\mathrm{Th}}
\newcommand{\Shv}{\cat{S}\mathrm{hv}}
\newcommand{\fib}{\mathrm{fib}}
\newcommand{\GL}{\mathrm{GL}}
\newcommand{\SL}{\mathrm{SL}}
\newcommand{\myexternaldocument}[2][] {{
\let\nl\newlabel
\def\nlxx##1##2##3##4##5##6{\nl{##1}{{#1##2}{}{}{}{}}}
\renewcommand\newlabel[2]{\IfBeginWith{##1}{tocindent}{}{\nlxx{##1}##2}}
\externaldocument{#2}
}}
\theoremstyle{definition}
\theoremstyle{theorem}
\newtheorem{proposition+}[proposition]{Proposition*}
\newtheorem{corollary+}[proposition]{Corollary*}
\newtheorem{lemma+}[proposition]{Lemma*}
\newtheorem{theorem+}[proposition]{Theorem*}
\numberwithin{proposition}{section}
\renewcommand{\NB}[1]{}
\renewcommand{\todo}[1]{}
\title{Strongly $\A^1$-invariant sheaves (after F. Morel)}
\date{\today}
\author{Tom Bachmann}
\address{Institut Mathematik, JGU Mainz, Germany}
\email{\tomemail}
\begin{document}

\maketitle

\begin{abstract}
Strongly (respectively strictly) $\A^1$-invariant sheaves are foundational for motivic homotopy theory over fields.
They are sheaves of (abelian) groups on the Nisnevich site of smooth varieties over a field $k$, with the property that their zeroth and first Nisnevich cohomology sets (respectively all Nisnevich cohomology groups) are invariant under replacing a variety $X$ by $X \times \A^1$.
A celebrated theorem of Fabien Morel \cite[Theorem 5.46]{A1-alg-top} states that if the base field $k$ is perfect, then any strongly $\A^1$-invariant sheaf of abelian groups is automatically strictly $\A^1$-invariant.

The aim of these lecture notes is twofold: (1) provide a complete proof if this result, and (2) outline some of its applications.
\end{abstract}

\tableofcontents

\section{Introduction}
The aim of these notes is to prove some of the foundational results of unstable motivic homotopy theory over perfect fields.
All of our proofs are based on F. Morel's seminal book \cite{A1-alg-top}.

\subsection{Setting the stage}
We fix a field $k$, usually assumed perfect, and write $\Sm_k$ for the category of smooth $k$-varieties.
The category\footnote{We use the language of $\infty$-categories freely throughout this text, as set out in \cite{lurie-htt,lurie-ha}. In particular all notions are homotopy invariant by definition. Thus, e.g., ``colimit'' means ``homotopy colimit'', and so on.} $\Spc(k) \subset \PSh(\Sm_k)$ is obtained by localizing at the \emph{$\A^1$-homotopy equivalences} and the \emph{Nisnevich equivalences}, that is, consists of the subcategory of presheaves (of spaces) which are $\A^1$-invariant and satisfy Nisnevich descent.
The inclusion admits a left adjoint, called \emph{motivic localization} and denoted $L_\mot$.

The inclusion of Nisnevich sheaves into presheaves also has a left adjoint, denoted $L_\Nis$.
This is the sheafification in a Grothendieck topology and so comes with the usual properties, e.g., $L_\Nis$ preserves finite limits.
There are also some properties special to the Nisnevich topology:
\begin{enumerate}
\item Since the Nisnevich topos of finite type $k$-scheme of dimension $d$ has homotopy dimension $\le d$ (see Theorem \ref{thm:coh-dim} below), the topos $\Shv_\Nis(\Sm_k)$ is Postnikov complete \cite[Corollary 7.2.1.12]{lurie-htt}, and so in particular hypercomplete.
\item Since Nisnevich sheaves are characterized by a Brown--Gersten property \cite[Proposition C.5(2)]{hoyois2015quadratic}, the inclusion $\Shv_\Nis(\Sm_k) \hookrightarrow \PSh(\Sm_k)$ preserves filtered colimits (use that filtered colimits of spaces commute with finite limits).
\end{enumerate}

The inclusion of $\A^1$-invariant presheaves into all presheaves has a left adjoint denoted $\Sing$.
It is given by a generalization of the classical singular construction, with affine spaces in place of singular simplices.

We can read off the following formal consequences for the motivic localization functor.
\begin{lemma} \label{lemm:lmot-presn}
\begin{enumerate}
\item The functor $L_\mot$ can be written as \[ \colim \left( L_\Nis \to \Sing L_\Nis \to L_\Nis \Sing L_\Nis \to \cdots \right). \]
\item The functor $L_\mot$ preserves finite products.
\end{enumerate}
\end{lemma}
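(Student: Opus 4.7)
The plan is to treat the two parts directly using formal Bousfield-localization machinery. For (1), let $Y$ denote the sequential colimit displayed on the right-hand side, equipped with its canonical map from $X$ (via $L_\Nis$). I would verify that $Y$ is motivically local and then that it has the correct universal property. Motivic locality splits into two independent pieces. The cofinal subsequence of odd-indexed terms $L_\Nis X,\ L_\Nis \Sing L_\Nis X,\ \ldots$ consists of Nisnevich sheaves, so by the fact recorded in point (2) of the introduction that $L_\Nis\colon \PSh(\Sm_k) \to \Shv_\Nis(\Sm_k)$ preserves filtered colimits (equivalently, the inclusion does), the colimit $Y$ is again a Nisnevich sheaf. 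The cofinal subsequence of even-indexed terms $\Sing L_\Nis X,\ \Sing L_\Nis \Sing L_\Nis X,\ \ldots$ consists of $\A^1$-invariant presheaves, and $\A^1$-invariance is preserved by filtered colimits, since it amounts pointwise to the equivalences $F(U) \simeq F(U \times \A^1)$ and filtered colimits of equivalences in $\Spc$ are equivalences. Hence $Y$ is both Nisnevich-local and $\A^1$-invariant, i.e.\ motivically local.

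For the universal property of $Y$, let $Z$ be any motivically local presheaf and apply $\Map(-,Z)$ to the tower. Each structure map is either of the form $G \to L_\Nis G$, hence an equivalence after $\Map(-,Z)$ since $Z$ is a Nisnevich sheaf, or of the form $G \to \Sing G$, hence an equivalence after $\Map(-,Z)$ since $Z$ is $\A^1$-invariant. Taking the limit shows $\Map(Y,Z) \xrightarrow{\simeq} \Map(X,Z)$, so $Y$ satisfies the universal property characterizing $L_\mot X$, proving (1).

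For (2), it suffices to check that each ingredient appearing in the colimit formula from (1) preserves finite products. Sheafification $L_\Nis$ preserves all finite limits. The functor $\Sing F$ is by construction the colimit over $\Delta^\op$ of the simplicial presheaf $[n] \mapsto F(-\times \A^n)$; since $\Delta^\op$ is sifted, sifted colimits commute with finite products in $\Spc$, and the evaluation $F \mapsto F(-\times\A^n)$ trivially preserves products, $\Sing$ preserves finite products. Finally, filtered colimits of spaces commute with finite products. Composing these three observations in the formula from (1) gives (2). The only substantive input in the whole argument is this sifted-colimit description of $\Sing$; everything else is bookkeeping on top of the two structural features of $L_\Nis$ stated at the top of the introduction, so I do not expect a genuine obstacle.
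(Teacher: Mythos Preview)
Your proof is correct and follows essentially the same approach as the paper: the cofinal even/odd subsequence trick for motivic locality, the observation that each transition map is a motivic equivalence (which you phrase via the universal property and the paper phrases as ``$\id \to L'$ is a motivic equivalence''), and the sifted-colimit argument for $\Sing$ preserving finite products are all exactly what the paper does. The only cosmetic slip is the parenthetical ``equivalently'' when you say $L_\Nis$ preserves filtered colimits---$L_\Nis$ does so trivially as a left adjoint; what you actually need (and correctly use) is that the \emph{inclusion} preserves filtered colimits.
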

\begin{proof}
Write $L'$ for the colimiting functor.
It is clear by construction that $\id \to L'$ is a motivic equivalence.
Evaluating $L'$ as a colimit over the cofinal subset of even/odd terms, we see that it can be written as a filtered colimit of either $\A^1$-invariant presheaves or Nisnevich sheaves.
Since both of these categories are closed under filtered colimits, we see that $L'$ is motivic local.
This proves (1).

(2) follows immediately since $L_\Nis$ and $\Sing$ preserve finite products (the latter being a sifted colimit of right adjoint functors), as do filtered colimits.
\end{proof}

\subsection{Homotopy sheaves and connectivity}
We like to think of $\Spc(k)$ as a generalization of spaces suitable for algebraic geometry.
We would thus naturally like to use notions analogous to homotopy groups, connectivity and so on.
This is done by considering the embedding \[ \Spc(k) \hookrightarrow \Shv_\Nis(\Sm_k). \]
The target category is an $\infty$-topos, and so has well-understood homotopy theory \cite[\S6.5]{lurie-htt}.
We transport properties and definitions into $\Spc(k)$ along this embedding.
Thus each motivic space $X \in \Spc(k)$ has a \emph{zeroth homotopy sheaf} $\ul\pi_0(X) \in \Shv_\Nis(\Sm_k)_{\le 0}$.
Given a basepoint $* \in X$ (i.e. morphism $* \to X$), we also obtain \emph{higher homotopy sheaves} $\ul\pi_i(X, *) \in \Shv_\Nis(\Sm_k)_{\le 0}$.
We know that $\ul\pi_1(X,*)$ is a sheaf of groups and $\ul\pi_i(X,*)$ is a sheaf of abelian groups for $i \ge 2$.

However not everything can be transported so straightforwardly.
For example, if $X \in \Shv_\Nis(\Sm_k)$, we can consider the \emph{$n$-truncation} $X_{\le n} \in \Shv_\Nis(\Sm_k)$.
Unfortunately, if $X \in \Spc(k) \subset \Shv_\Nis(\Sm_k)$, then it need not be the case that $X_{\le n} \in \Spc(k)$.
In fact examples of this already arise for $n=0$ \cite{ayoub-counterex}.
A silver lining is that the problems seem to arise \emph{only} for $\ul\pi_0$.
That is, we obtain a very well-behaved theory when considering only \emph{connected} motivic spaces.
We outline this in the next few sections.
Here we contend ourselves with some observations about the permanence of connected motivic spaces.

\begin{lemma} \label{lemm:0-conn}
Let $X \in \Shv_\Nis(\Sm_k)$ be connected.
Then $L_\mot X$ is also connected.
\end{lemma}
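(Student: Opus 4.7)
The plan is to exploit the presentation
\[ L_\mot X \simeq \colim\bigl(L_\Nis X \to \Sing L_\Nis X \to L_\Nis \Sing L_\Nis X \to \cdots\bigr) \]
from Lemma \ref{lemm:lmot-presn}(1) and argue that each operation appearing in the tower --- Nisnevich sheafification $L_\Nis$, the singular construction $\Sing$, and sequential filtered colimits --- preserves the property of being connected. Since $X$ is connected to begin with, every entry of the tower will then be connected, and so will the colimit $L_\mot X$.

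Two of these three preservation claims are essentially formal. For sequential filtered colimits, the truncation functor $\tau_{\le 0} = \ul\pi_0$ is a left adjoint and so commutes with all colimits; a filtered colimit of copies of the terminal sheaf is again terminal. For $L_\Nis$, the left exact localization commutes with $\tau_{\le 0}$, giving $\ul\pi_0(L_\Nis Y) \simeq L_\Nis \ul\pi_0(Y)$, and sheafification fixes the terminal object.

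The substantive step is that $\Sing$ preserves connectedness. Evaluated on $U \in \Sm_k$, the unit map $Y \to \Sing Y$ is the inclusion of $Y(U)$ as the space of $0$-simplices of the simplicial space $Y(U \times \Delta^\bullet_{\aff})$ whose geometric realization computes $\Sing Y(U)$. Since $\pi_0$ of such a realization is the set-theoretic quotient of $\pi_0$ of its $0$-simplices by the relation imposed by $\pi_0$ of its $1$-simplices, the pointwise map $\pi_0(Y(U)) \to \pi_0(\Sing Y(U))$ is surjective for every $U$. Passing to Nisnevich sheafifications, $\ul\pi_0(Y) \to \ul\pi_0(\Sing Y)$ is an effective epimorphism of sheaves of sets; moreover any local section of $Y$ visibly yields a local section of $\Sing Y$. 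Combined, $\ul\pi_0(Y) = *$ forces $\ul\pi_0(\Sing Y) = *$, as required.

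I do not foresee any serious obstacle: the only genuine content is the behaviour of $\ul\pi_0$ under $\Sing$, and that reduces to the standard computation of $\pi_0$ of a geometric realization.
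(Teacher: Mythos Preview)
Your argument is correct and follows essentially the same route as the paper. The paper compresses your three preservation claims into one line --- ``$\ul\pi_0 L_\Nis \Sing X$ is a quotient of the sheaf $\ul\pi_0 X = *$'' --- treating $L_\Nis \Sing$ as a single step rather than separating the two functors, but the content is the same: sectionwise surjectivity of $\pi_0(Y(U)) \to \pi_0(\Sing Y(U))$ followed by sheafification.
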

\begin{proof}
Since homotopy sheaves commute with filtered colimits, connected objects are stable under filtered colimits.
Using the presentation of $L_\mot$ from Lemma \ref{lemm:lmot-presn}(1), we thus need only prove that $L_\Nis \Sing X$ is connected.
But $\ul\pi_0 L_\Nis \Sing X$ is a quotient of the sheaf $\ul\pi_0 X = *$, whence the result.
\end{proof}

\begin{lemma} \label{lemm:conn-cpt-of-basepoint}
Let $X \in \Spc(k)_* \subset \Shv_\Nis(\Sm_k)_*$.
Write $X_0 \in \Shv_\Nis(\Sm_k)_*$ for the connected component of the base point in $X$.
Then $X_0 \in \Spc(k)_*$.
\end{lemma}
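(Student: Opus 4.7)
The plan is to show that the unit $\eta: X_0 \to L_\mot X_0$ of the motivic localization is an equivalence; since $X_0$ is already a Nisnevich sheaf, this gives $X_0 \in \Spc(k)_*$. By Lemma \ref{lemm:0-conn}, $L_\mot X_0$ is connected, as $X_0$ is itself a connected pointed Nisnevich sheaf by construction. Let $\iota: X_0 \hookrightarrow X$ denote the canonical inclusion; since $X_0$ is the pullback of the basepoint $* \hookrightarrow \ul\pi_0(X)$ along $X \to \ul\pi_0(X)$, the map $\iota$ is a monomorphism.

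Applying $L_\mot$ to $\iota$ gives a pointed map $L_\mot(\iota): L_\mot X_0 \to L_\mot X \simeq X$, and I would next argue this map factors through $\iota$. Since $\ul\pi_0(X)$ is $0$-truncated, the composite $L_\mot X_0 \to X \to \ul\pi_0(X)$ is determined by $\ul\pi_0(L_\mot X_0) \to \ul\pi_0(X)$, and connectedness of $L_\mot X_0$ combined with pointedness forces this to hit the distinguished basepoint. The pullback property of $X_0$ then produces a pointed $f: L_\mot X_0 \to X_0$ with $\iota \circ f = L_\mot(\iota)$. Naturality of the unit gives $L_\mot(\iota) \circ \eta = \iota$, hence $\iota \circ f \circ \eta = \iota$; since $\iota$ is a monomorphism we conclude $f \circ \eta = \id_{X_0}$.

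For the reverse composite I would invoke the universal property of $\eta$: since $L_\mot X_0$ is motivic local, maps $L_\mot X_0 \to L_\mot X_0$ correspond to maps $X_0 \to L_\mot X_0$ via precomposition with $\eta$. Both $\id_{L_\mot X_0}$ and $\eta \circ f$ restrict along $\eta$ to $\eta$ itself (the latter using $f \circ \eta = \id$), so they agree, yielding $\eta \circ f = \id_{L_\mot X_0}$. Hence $\eta$ is an equivalence. The only delicate step in the plan is the factorization through $\iota$, which ultimately is formal from the pullback definition of $X_0$ together with $0$-truncatedness of $\ul\pi_0(X)$; everything else is a standard retraction argument from the universal property of $L_\mot$.
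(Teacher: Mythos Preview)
Your argument is correct and follows essentially the same route as the paper: use Lemma~\ref{lemm:0-conn} to see that $L_\mot X_0$ is connected, factor $L_\mot X_0 \to X$ through $X_0$, and obtain a retraction $X_0 \to L_\mot X_0 \to X_0$. The only difference is in how you conclude: the paper simply invokes the general fact that local objects in a Bousfield localization are closed under retracts, whereas you unwind this by hand, using the universal property of $\eta$ to show $\eta \circ f \simeq \id$ as well. Both are fine; the paper's version is just a sentence shorter.
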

\begin{proof}
The morphism $X_0 \to X$ factors through $L_\mot X_0$, $X$ being motivically local.
Since $L_\mot X_0$ is connected by Lemma \ref{lemm:0-conn}, the map $L_\mot X_0 \to X$ factors through $X_0$.
We have constructed a retraction $X_0 \to L_\mot X_0 \to X_0$, which proves that $X_0$ is motivically local (recall that local objects in any Bousfield localization are stable under retracts, since equivalences are).
\end{proof}

\subsection{Strongly and strictly $\A^1$-invariant sheaves}
\begin{definition}
A sheaf of groups $G$ on $\Sm_k$ is called \emph{strongly $\A^1$-invariant} if the classifying space $B_\Nis G \in \Shv_\Nis(\Sm_k)$ is $\A^1$-invariant (that is, motivically local).
A sheaf of abelian groups $A$ is called \emph{strictly $\A^1$-invariant} if the classifying spaces $K_\Nis(A, n) \in \Shv_\Nis(\Sm_k)$ are $\A^1$-invariant for every $n$.
\end{definition}

\begin{lemma}
\begin{enumerate}
\item $G$ is strongly $\A^1$-invariant if and only if $H^i(X, G) \wequi H^i(X \times \A^1, G)$ for every $X \in \Sm_k$ and $i \le 1$.
\item $A$ is strictly $\A^1$-invariant if and only if $H^i(X, A) \wequi H^i(X \times \A^1, A)$ for every $X \in \Sm_k$ and every $i$.
\end{enumerate}
\end{lemma}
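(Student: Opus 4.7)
The plan is to identify $B_\Nis G$ and $K_\Nis(A,n)$ as truncated objects in the $\infty$-topos $\Shv_\Nis(\Sm_k)$ and translate $\A^1$-invariance into a statement on homotopy groups of their sections. By construction $B_\Nis G$ is connected and $1$-truncated with $\ul\pi_1 = G$, and $K_\Nis(A,n)$ is $n$-truncated with $\ul\pi_n = A$. Since mapping spaces in an $\infty$-topos preserve the truncation level of the target, each $B_\Nis G(X)$ is a $1$-truncated space and each $K_\Nis(A,n)(X)$ is $n$-truncated. The standard identification of homotopy groups of these mapping spaces with (non-abelian) Nisnevich cohomology reads
\[ \pi_0 B_\Nis G(X) = H^1(X,G), \quad \pi_1(B_\Nis G(X), [P]) = \mathrm{Aut}_G(P), \]
\[ \pi_j K_\Nis(A,n)(X) = H^{n-j}(X,A) \quad \text{for } 0 \le j \le n. \]
The principle to exploit throughout is that a Nisnevich sheaf $F$ is motivically local iff $F(X) \to F(X \times \A^1)$ is an equivalence for all $X \in \Sm_k$, and such a map between $n$-truncated spaces is an equivalence iff it is a bijection on $\pi_0$ and an isomorphism on $\pi_j$ at every basepoint for $1 \le j \le n$.

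Part (2) is then an immediate unpacking: $K_\Nis(A,n)$ is $\A^1$-invariant iff $H^i(X,A) \to H^i(X \times \A^1,A)$ is an isomorphism for $0 \le i \le n$, so requiring this for all $n$ is exactly the stated condition. No basepoint subtleties arise here, since the abelian group structure identifies the homotopy groups canonically regardless of basepoint.

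Part (1) in the forward direction is the same unpacking, taking $\pi_0$ and $\pi_1$ at the trivial torsor of the equivalence $B_\Nis G(X) \simeq B_\Nis G(X \times \A^1)$. The reverse direction is more delicate: the hypothesis gives the $\pi_0$-bijection (from $H^1$) and the $\pi_1$-isomorphism only at the trivial basepoint (from $H^0$), but I still need an iso on $\pi_1$ at every $[P]$, i.e.\ $\mathrm{Aut}_G(P) \simeq \mathrm{Aut}_G(p^*P)$. My plan here is Nisnevich descent: choose a cover $U \to X$ trivializing $P$; then $\mathrm{Aut}_G(P)$ can be written as an equalizer of two maps $G(U) \rightrightarrows G(U \times_X U)$ (the first being pullback, the second being pullback twisted by conjugation by the transition cocycle of $P$), and likewise for $\mathrm{Aut}_G(p^*P)$ over $X \times \A^1$. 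Since $U$ and $U \times_X U$ are smooth, the hypothesized $\A^1$-invariance of $G(-) = H^0(-,G)$ on these schemes, coupled with the functoriality of equalizers, delivers the desired iso.

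The main obstacle is precisely this basepoint issue in the reverse direction of (1): a reader might worry that $\A^1$-invariance of $H^0$ and $H^1$ with coefficients in $G$ itself is insufficient to control the automorphism sheaves of non-trivial torsors, which are inner forms of $G$. The Nisnevich descent maneuver above is the key step that resolves this worry, leveraging that such inner forms become isomorphic to $G$ over a Nisnevich cover.
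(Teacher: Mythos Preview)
Your proof is correct and follows essentially the same approach as the paper: both identify the basepoint issue in the reverse direction of (1) and resolve it via Nisnevich descent, reducing to the trivial torsor where $\pi_1 = G$. The paper phrases this as ``the presheaf $Y \mapsto \pi_1(B_\Nis G(Y), \xi) = \Omega_\xi (B_\Nis G)(Y)$ is a Nisnevich sheaf on $(\Sm_k)_{/X}$, so its $\A^1$-invariance can be checked locally, where $\xi$ becomes trivial'', while you unpack the same sheaf condition as an explicit \v{C}ech equalizer $G(U) \rightrightarrows G(U \times_X U)$; these are the same argument.
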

\begin{proof}
(1) Since $\pi_{1-i}((B_\Nis G)(X), *) \wequi H^i(X, G)$, necessity is clear.
For sufficiency, we must show that for $X \in \Sm_k$, the natural map $(B_\Nis G)(X) \to (B_\Nis G)(X \times \A^1)$ is an equivalence.
Both spaces are $1$-truncated, and we have an isomorphism on $\pi_0$ by the expression above.
It remains to prove that for $\xi \in \pi_0 (B_\Nis G)(X)$, the induced map $\pi_1((B_\Nis G)(X), \xi) \to \pi_1((B_\Nis G)(X \times \A^1), \xi)$ is an isomorphism.
Note that the presheaf \[ P: (\Sm_k)_{/X} \to \Grp, Y \mapsto \pi_1((B_\Nis G)(Y), \xi) \] is a sheaf in the Nisnevich topology (it is given by $\Omega_\xi (B_\Nis G)|_{(\Sm_k)_{/X}}$, and sheaves are stable under limits).
Proving it is $\A^1$-invariant (which is what we want) is thus a local problem.
But locally, $\xi = *$ corresponds to the trivial torsor, so we may assume that $\xi=*$.
In this case $P = H^0(\ph, G)$, which is $\A^1$-invariant by assumption.

(2) Necessity is clear as before.
For sufficiency, note that $K_\Nis(A, n)(X) \to K_\Nis(A, n)(X\times \A^1)$ is a morphism of grouplike $H$-spaces (in fact $\scr E_\infty$-monoids), and hence is an equivalence if and only if it induces an equivalence on $\pi_0$ and on $\pi_i$ with the base point $0$, for $i \ge 1$ (i.e. all components of these spaces are canonically equivalent).
As before, the isomorphism on homotopy groups at base point $0$ translates precisely into the isomorphism of cohomology groups that was assumed.
\end{proof}

With this preparation out of the way, we can state the main results of this text.
\begin{theorem}[see Theorem \ref{thm:pi1-strongly-inv-forreal}] \label{thm:pi1-strongly-inv}
Let $k$ be any field.
Let $X \in \Spc(k)_*$.
Then $\ul\pi_1(X)$ is strongly $\A^1$-invariant.
\end{theorem}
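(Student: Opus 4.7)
The plan is as follows. First reduce to $X$ connected: by Lemma \ref{lemm:conn-cpt-of-basepoint} we may replace $X$ by its basepoint component $X_0 \in \Spc(k)_*$, which has the same $\ul\pi_1$. So assume $X$ is connected and motivically local. In the $\infty$-topos $\Shv_\Nis(\Sm_k)$ the $1$-truncation of a pointed connected object is its classifying sheaf, $X_{\le 1}\simeq B_\Nis \ul\pi_1(X)$, so the theorem is equivalent to the statement that $X_{\le 1}$ is motivically local.

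My approach would be to show directly that $L_\mot X_{\le 1}$ is itself $1$-truncated. Granted this, $L_\mot X_{\le 1}$ is connected (Lemma \ref{lemm:0-conn}), $1$-truncated, and motivically local, so it equals $B_\Nis H$ for some strongly $\A^1$-invariant sheaf of groups $H$. One then exploits that $X$ is motivically local: the map $X \to L_\mot X_{\le 1}$ has $1$-truncated target, so it factors through $X \to X_{\le 1} \to L_\mot X_{\le 1}$; this factorization, together with an analysis of $\pi_1$, identifies the comparison $\ul\pi_1(X) \to H$ as an isomorphism, so the unit $\eta : X_{\le 1} \to L_\mot X_{\le 1}$ induces an iso on $\pi_1$ and is therefore an equivalence of pointed connected $1$-truncated sheaves.

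All the substance lies in proving $L_\mot X_{\le 1}$ is $1$-truncated. Writing $L_\mot = \colim (L_\Nis \Sing)^n$ via Lemma \ref{lemm:lmot-presn}(1), $L_\Nis$ preserves truncation level (being a left exact localization of an $\infty$-topos), but $\Sing$ is the delicate point: the geometric realization of the simplicial diagram $[n] \mapsto Y(U \times \A^n)$, even when $Y$ is $1$-truncated, can a priori acquire $\pi_i$ for $i \ge 2$, and this must be excluded for $Y = B_\Nis \ul\pi_1(X)$. Concretely this amounts to the $\A^1$-invariance of the presheaf $U \mapsto H^1_\Nis(U, \ul\pi_1(X))$, and is what I expect to be by far the main obstacle: it is not formal, and will require a careful torsor-theoretic analysis of $\ul\pi_1(X)$-bundles on smooth schemes, likely via specialization and contraction arguments for sheaves over fields together with a Gersten-style descent along henselizations of smooth varieties at smooth subschemes. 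Crucially, the argument must use that $\ul\pi_1(X)$ arises as the fundamental sheaf of a motivically local space rather than as an abstract $\A^1$-invariant sheaf of groups.
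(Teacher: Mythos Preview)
Your reformulation is fine but ultimately circular. Saying that $X_{\le 1}\simeq B_\Nis G$ (with $G=\ul\pi_1 X$) is motivically local is \emph{exactly} the definition of $G$ being strongly $\A^1$-invariant. You then propose to show instead that $L_\mot X_{\le 1}$ is $1$-truncated, and observe that this ``amounts to the $\A^1$-invariance of $U\mapsto H^1_\Nis(U,G)$''. But together with $\A^1$-invariance of $H^0(\ph,G)$ (which you do not mention but would also need in order to control $\Sing B_\Nis G$), that is again literally the statement to be proved. So your plan reduces the theorem to itself, and the paragraph where you say the hard step ``will require a careful torsor-theoretic analysis \ldots\ Gersten-style descent'' is a description of what a proof would have to contain, not a proof. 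There is a second, smaller gap in step~5: even granting that $L_\mot X_{\le 1}\simeq B_\Nis H$ for some strongly invariant $H$, your argument that the unit induces an isomorphism $G\to H$ is only gestured at (``together with an analysis of $\pi_1$''); it is not obvious without further input.

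The paper's proof (Theorem~\ref{thm:pi1-strongly-inv-forreal}) supplies exactly the missing geometric content. After the same reduction to connected $X$, it shows directly that $H^i(U,G)\to H^i(\A^1\times U,G)$ is an isomorphism for $i\le 1$, by combining three ingredients: unramifiedness of $G$ (Lemma~\ref{lemm:pi1-unramified}, via Gabber's lemma and Proposition~\ref{prop:gersten-injectivity}); injectivity of $H^1(U,G)\to H^1(U_2,G)$ when the complement has codimension $\ge 2$ (Proposition~\ref{prop:inj-H1}, again relying on Gabber's lemma); and $\A^1$-invariance of $H^i$ on essentially smooth schemes of dimension $\le i$, obtained from the homotopy-dimension bound and an obstruction-theory argument using that $X$ itself is motivically local (Lemmas~\ref{lemm:pi1-obstruction} and~\ref{lemm:A1-inv-dimsmall}). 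The last point is precisely where the hypothesis ``$G=\ul\pi_1$ of a motivic space'' enters, as you correctly anticipated; but the paper makes this concrete rather than leaving it as a placeholder.
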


\begin{theorem}[see Theorem \ref{thm:strict-A1-invariance}] \label{thm:strongly-strictly-inv}
Let $k$ be a perfect field.
Let $A$ be a sheaf of abelian groups on $\Sm_k$ which is strongly $\A^1$-invariant.
Then $A$ is strictly $\A^1$-invariant.
\end{theorem}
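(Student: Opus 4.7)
The plan is to follow Morel's strategy and reduce $\A^1$-invariance of all higher Nisnevich cohomology groups to an invariance statement for an explicit Gersten-type resolution, the \emph{Rost--Schmid complex}. Concretely, for each $X \in \Sm_k$ I would construct a Cousin-type complex
\[ C^*(X, A) := \Bigl( \bigoplus_{x \in X^{(0)}} A(\kappa(x)) \to \bigoplus_{x \in X^{(1)}} A_{-1}(\kappa(x)) \to \bigoplus_{x \in X^{(2)}} A_{-2}(\kappa(x)) \to \cdots \Bigr), \]
where $A_{-n}$ is the $n$-fold contraction of $A$ (with an appropriate twist by determinant lines of conormal bundles, which I suppress here), and prove two things: first, that this is a flasque resolution of the restriction of $A$ to the small Nisnevich site of $X$, so that it computes $H^*_\Nis(X, A)$; and second, that the complex itself is $\A^1$-homotopy invariant in $X$. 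Combining these gives the theorem.

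A preparatory step is to show that the contraction $A_{-1}(X) := \ker(A(X \times \Gm) \to A(X))$ is again a strongly $\A^1$-invariant sheaf of abelian groups; this is formal, since $B_\Nis A_{-1}$ is a based loop space of $B_\Nis A$, and loop spaces of motivically local spaces are motivically local. Iterating produces all the $A_{-n}$ needed to populate $C^*$. The heart of the argument is the construction of the differentials and the proof of local exactness of $C^*$. For $X$ essentially smooth local with generic point $\eta$, the key technical statements are the injectivity $A(X) \hookrightarrow A(\eta)$ and the exactness of
\[ 0 \to A(X) \to A(\eta) \xrightarrow{\partial} \bigoplus_{x \in X^{(1)}} A_{-1}(\kappa(x)), \]
together with its analogues in higher codimension. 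Residues are defined by pulling back along the Henselization at a codimension-one point and extracting the $\Gm$-coefficient built into the definition of $A_{-1}$. Proving well-definedness (independence of uniformizer), the relation $\partial^2=0$, and local exactness is where Theorem \ref{thm:pi1-strongly-inv} is applied repeatedly, in combination with Gabber-type geometric presentation lemmas on smooth varieties; this is where \emph{perfectness} of $k$ enters the proof, since it is needed to find standard étale neighborhoods with smooth closed strata and geometrically well-behaved residue fields.

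Once the Rost--Schmid resolution is in place, $\A^1$-invariance of $H^*_\Nis(-, A)$ follows from the $\A^1$-invariance of $C^*(-, A)$, which I would establish by a codimension-by-codimension analysis of the projection $X \times \A^1 \to X$: a codimension-$p$ point of $X \times \A^1$ either dominates a codimension-$p$ point of $X$ (contributing a purely transcendental field extension, handled by strong $\A^1$-invariance of $A_{-p}$ evaluated on fields) or lies over a codimension-$(p-1)$ point of $X$ (its contribution is accounted for by residues to the next level). Collating these contributions produces an explicit chain homotopy realizing the desired quasi-isomorphism. The main obstacle is the construction and internal consistency of the Rost--Schmid differentials: proving independence of all choices, proving $\partial^2 = 0$ on regular two-dimensional local schemes, and proving local exactness essentially forces one to equip the family $(A_{-n})_n$ with the full package of a Rost-style cycle module on smooth $k$-schemes --- pullbacks, proper pushforwards along finite extensions, an action of Milnor--Witt $K$-theory, and residues satisfying Rost's axioms. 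Erecting this structure starting from nothing more than strong $\A^1$-invariance of $A$ is the technical core of Morel's proof and is where essentially all of the work would go.
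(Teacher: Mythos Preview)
Your outline is essentially correct and follows the same Rost--Schmid strategy as the paper: build the complex $C^*_{RS}(X,A)$ from contractions twisted by determinants of normal bundles, prove it is an acyclic resolution of $A$, and deduce $\A^1$-invariance of cohomology from $\A^1$-invariance of the complex. You also correctly identify that the technical core is erecting the full cycle-module package (transfers, $\KMWn_*$-action, residues, $\partial^2=0$) starting only from strong $\A^1$-invariance.

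Two points of divergence are worth flagging. First, the paper does \emph{not} establish $\A^1$-invariance of $C^*_{RS}$ by constructing an explicit chain homotopy along $\A^1_X \to X$ as you propose. Instead it runs an induction on $\dim X$: for $X$ local, purity for the Rost--Schmid complex gives a short exact sequence $0 \to C^{*-d}_{RS}(x,M_{-d}) \to C^*_{RS}(X,M) \to C^*_{RS}(U,M) \to 0$ (and its $\A^1$-analogue), and one compares the resulting fiber sequences using the inductive hypothesis. Your chain-homotopy approach is the classical Rost argument for cycle modules and is perfectly viable once pushforward along the zero section $X \hookrightarrow \A^1_X$ is available; it just requires that the transfer machinery be fully in place before this step, whereas the paper's inductive argument bootstraps more gradually. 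Second, a minor correction: Theorem~\ref{thm:pi1-strongly-inv} is not actually used in the proof of Theorem~\ref{thm:strongly-strictly-inv}. The two results are largely independent; what is used repeatedly is Gabber's presentation lemma together with the algebraic consequences of strong $\A^1$-invariance (unramifiedness, vanishing of $H^1$ on local schemes, etc.), not the fact that $\ul\pi_1$ of motivic spaces is strongly invariant.
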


\subsection{Consequences of the theorems}
Many properties of unstable motivic homotopy theory can be deduced from the above theorems.
For a careful exposition, see, e.g., \cite[\S2.2 and \S2.3]{asok2017simplicial}.
Here and in the main text, we will distinguish results relying on Theorems \ref{thm:pi1-strongly-inv} and \ref{thm:strongly-strictly-inv} by an asterisk.

Here are some samples.

\begin{corollary+} \label{cor:pi-i-strongly-inv}
Let $X \in \Spc(k)_*$.
\begin{enumerate}
\item $\ul\pi_i X$ is strictly $\A^1$-invariant for every $i \ge 2$ (strongly invariant for $i=1$).
\item Suppose that $X$ is connected.
  Then for every $i \ge 1$, the sheaf $X_{\le i}$ is $\A^1$-invariant.
\end{enumerate}
\end{corollary+}
\begin{proof}
(1) Each $\ul\pi_i X$ is strongly invariant by Theorem \ref{thm:pi1-strongly-inv} applied to $\Omega^{i-1} X$.
If $i \ge 2$ this sheaf is abelian, and hence strictly $\A^1$-invariant by Theorem \ref{thm:strongly-strictly-inv}.

(2) Inductively apply Lemma \ref{lemm:totalspace-fiber-A1-inv} below to the fiber sequences $K(\ul\pi_{i+1} X, i+1) \to X_{\le i+1} \to X_{\le i}$.
\end{proof}

\begin{lemma} \label{lemm:totalspace-fiber-A1-inv}
Let $F \to E \to B$ be a fiber sequence in $\Shv_\Nis(\Sm_k)_*$ with $B$ connected and $\A^1$-invariant.
Then $F$ is $\A^1$-invariant if and only if $E$ is $\A^1$-invariant.
\end{lemma}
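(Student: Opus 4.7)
The plan is to handle the two implications separately. The direction ``$E$ invariant $\Rightarrow$ $F$ invariant'' is immediate: $F = * \times_B E$ is a finite limit in $\Shv_\Nis(\Sm_k)$, and evaluation at any $X \in \Sm_k$ preserves limits, so $F(X) = * \times_{B(X)} E(X)$. Hence $\A^1$-invariance of $B$ and $E$ forces $\A^1$-invariance of $F$. The same argument shows, more generally, that $\A^1$-invariance is stable under arbitrary limits in $\Shv_\Nis(\Sm_k)$.

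For the converse, assume $F$ and $B$ are $\A^1$-invariant. The key structural input is that in the $\infty$-topos $\Shv_\Nis(\Sm_k)$, a pointed fiber sequence $F \to E \to B$ with $B$ connected corresponds to an action of the loop object $G := \Omega B$ on $F$, and $E$ is recovered as the two-sided bar construction
\[ E \;\simeq\; \bigl|\, F \times G^{\bullet}\, \bigr|, \]
a geometric realization whose face and degeneracy maps are built from the action $F \times G \to F$, the multiplication $G \times G \to G$, and projections. This is a standard consequence of the theory of principal bundles / effective groupoid objects in $\infty$-topoi (see \cite[\S7.2]{lurie-htt}).

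With this presentation in hand, I would apply $L_\mot$ levelwise. As a left adjoint, $L_\mot$ commutes with the geometric realization, and by Lemma~\ref{lemm:lmot-presn}(2) it commutes with the finite products appearing in each simplicial level. Thus $L_\mot E \simeq \bigl|\, L_\mot F \times (L_\mot G)^{\bullet}\, \bigr|$. Now $L_\mot F = F$ by hypothesis, and $G = \Omega B = * \times_B *$ is a finite limit of $\A^1$-invariant sheaves, hence itself $\A^1$-invariant by the first paragraph, so $L_\mot G = G$. Consequently $L_\mot E \simeq |F \times G^{\bullet}| \simeq E$, as desired.

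The principal obstacle is justifying the bar-construction presentation of $E$; the rest of the argument is a formal manipulation with the two properties of $L_\mot$ from Lemma~\ref{lemm:lmot-presn}. This presentation uses only the connectedness of $B$ and the fact that $\Shv_\Nis(\Sm_k)$ is an $\infty$-topos --- no $\A^1$-theoretic ingredient is involved in deriving it.
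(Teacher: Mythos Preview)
Your first direction (``$E$ invariant $\Rightarrow$ $F$ invariant'') is fine and essentially coincides with one half of the paper's argument.

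The second direction contains a genuine gap. You write $E \simeq |F \times G^\bullet|$ in $\Shv_\Nis(\Sm_k)$ and then apply $L_\mot$. As a left adjoint, $L_\mot$ does commute with geometric realization, but the realization on the right-hand side of
\[
L_\mot E \;\simeq\; \bigl| L_\mot F \times (L_\mot G)^\bullet \bigr|
\]
is now computed in $\Spc(k)$, not in $\Shv_\Nis(\Sm_k)$. Substituting $L_\mot F = F$ and $L_\mot G = G$ gives $L_\mot E \simeq |F \times G^\bullet|_{\Spc(k)}$. To conclude this equals $E = |F \times G^\bullet|_{\Shv_\Nis}$ you would need to know that the realization in $\Shv_\Nis$ of a simplicial diagram of motivically local objects is again motivically local. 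That is false in general ($\Spc(k) \subset \Shv_\Nis(\Sm_k)$ is a reflective subcategory, not closed under colimits), and for this particular diagram it is exactly the statement you are trying to prove. So the argument is circular. Viewed differently: if you regard $L_\mot$ as an endofunctor of $\Shv_\Nis(\Sm_k)$, it is not a left adjoint and does not preserve geometric realizations.

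The paper's proof sidesteps this by working on the other side of the adjunction. It uses the functor $\Omega_{\A^1_+}(P)(X) = P(X \times \A^1)$, which is a limit and therefore preserves the fiber sequence. One obtains a morphism of fiber sequences from $F \to E \to B$ to $\Omega_{\A^1_+}F \to \Omega_{\A^1_+}E \to \Omega_{\A^1_+}B$; the map on bases is an equivalence by hypothesis, and since $B$ is connected, \cite[Lemma 6.2.3.16]{lurie-htt} gives that the map on fibers is an equivalence iff the map on total spaces is. This handles both directions in one stroke and avoids any colimit manipulation.
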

\begin{proof}
Write $\Omega_{\A^1_+}$ for the functor $\Omega_{\A^1_+}(P)(X) = P(X \times \A^1)$.
We have a morphism of fiber sequences
\begin{equation*}
\begin{CD}
F @>>> E @>>> B \\
@VaVV  @VbVV  @VcVV \\
\Omega_{\A^1_+} F @>>> \Omega_{\A^1_+} E @>>> \Omega_{\A^1_+} B.
\end{CD}
\end{equation*}
Since by assumption $c$ is an equivalence and $B$ is connected, $a$ is an equivalence if and only if $b$ is an equivalence \cite[Lemma 6.2.3.16]{lurie-htt}.
\end{proof}

\begin{corollary+} \label{corr:unstable-connectivity}
Let $X \in \Shv_\Nis(\Sm_k)$ be $n$-connected, $n \ge -1$.
Then $L_\mot X$ is $n$-connected.
\end{corollary+}
\begin{proof}
The case $n=-1$ is trivial and the case $n=0$ is Lemma \ref{lemm:0-conn}.
We may thus assume that $X$ is connected, and pick a base point $*$.
Consider the fiber sequence \[ (L_\mot X)_{> n} \to L_\mot X \to (L_\mot X)_{\le n}. \]
Since the second and third space are motivically local, so is the first.
Since $X$ is $n$-connected, the composite $X \to (L_\mot X)_{\le n}$ is null and thus $X \to L_\mot X$ lifts to $(L_\mot X)_{> n}$.
But also $(L_\mot X)_{>n}$ is motivically local, so $X \to (L_\mot X)_{>n}$ factors through $L_\mot X$.
In other words the map $(L_\mot X)_{>n} \to L_\mot X$ has a section.
This implies that $\ul\pi_i L_\mot X = *$ for $i \le n$, as needed.
\end{proof}

\begin{corollary+}
Let $X \in \Shv_\Nis(\Sm_k)$ be connected.
Then $X$ is motivically local if and only if $\ul\pi_i X$ is strongly $\A^1$-invariant for all $i$.
\end{corollary+}
\begin{proof}
Necessity is Corollary \ref{cor:pi-i-strongly-inv}(1).
For sufficiency, we see as in the proof of Corollary \ref{cor:pi-i-strongly-inv}(2) that $X_{\le i}$ is $\A^1$-invariant for every $i$.
Since $\Shv_\Nis(\Sm_k)$ is Postnikov complete and $\A^1$-invariant sheaves are closed under limits, we conclude.
\end{proof}

\begin{corollary+}\label{corr:connectivity-stable}
Let $X_1 \to X_2 \to X_3 \in \Spc(k)_*$ be a cofiber sequence, with $X_i$ connected for each $i$.
If $X_1$ and $X_3$ are $d$-connective, then so is $X_2$.
Similarly if $X_1$ and $X_2$ are $d$-connective, so is $X_3$.
\end{corollary+}
\begin{proof}
If $X_1$ and $X_2$ are $d$-connective, then so is the cofiber when computed in Nisnevich sheaves, and we conclude by Corollary \ref{corr:unstable-connectivity}.
Now let $X_1, X_3$ be $d$-connective.
It will suffice to show that for every $(d-1)$-truncated motivic space $Z$ we have $\Map(X_2, Z) = *$ (take $Z=(X_2)_{<d}$).
But we have the fiber sequence \[ \Map(X_3, Z) \to \Map(X_2, Z) \to \Map(X_1, Z) \] in which the outer two terms are contractible, and hence so is the middle one.
\end{proof}

\subsection{Outline}
Let us summarize briefly the contents of these notes.
Note that there are also individual summaries at the beginning of each section.

We conclude the introduction with \S\ref{subsec:standard-results} on useful standard results and \S\ref{subsec:notation} summarizing some notation.
In \S\ref{sec:pi1} we study $\ul\pi_1$ of a motivic space and establish Theorem \ref{thm:pi1-strongly-inv}.

The rest of the notes is dedicated to proving Theorem \ref{thm:strongly-strictly-inv}, i.e., that strongly $\A^1$-invariant sheaves of abelian groups are strictly $\A^1$-invariant.
From here on, with very few exceptions, we assume that the base field is perfect.
We begin in \S\ref{sec:MWn} by defining the sheaf of \emph{naive} Milnor--Witt $K$-theory\footnote{This differs in general from Morel's \emph{unramified} Milnor--Witt $K$-theory, but is easier to define and ``good enough'' for our purposes.} and relating it to motivic homotopy theory.
Next in \S\ref{sec:contractions} we introduce the important concept of a \emph{contraction} of a strongly $\A^1$-invariant sheaf.
We explain how these contractions relate to local cohomology, and how contracted sheaves carry certain kinds of transfers.
In \S\ref{sec:gersten} we define and study the Gersten complex $C^*(X,M)$ of a strongly $\A^1$-invariant sheaf $M$ on a smooth scheme $X$.
Crucially we manage to identify $C^2(X,M)$ in terms of contractions of $M$.
This allows us to establish an important purity property for the Gersten complexes, at least in codimension $\le 2$ (Corollary \ref{cor:purity-codim2}).
Finally in \S\ref{sec:RS} we define and study the Rost--Schmid complex $C^*_{RS}(X,M)$ of $M$ on $X$.
This is modeled on the identification of $C^{*\le 2}(X,M)$, and in particular has an automatic purity property (almost) built in.
The complex turns out to be a resolution of $M$.
We use this to prove the main theorem, i.e., that $M$ is strictly $\A^1$-invariant.

\subsection{Standard results} \label{subsec:standard-results}
\subsubsection{Distinguished squares} \label{subsub:dist-squares}
A square of schemes
\begin{equation*}
\begin{CD}
V @>>> Y \\
@VVV @V{p}VV \\
U @>i>> X
\end{CD}
\end{equation*}
is called \emph{distinguished} if
\begin{itemize}
\item the square is cartesian,
\item $i$ is an open immersion,
\item $p$ is étale, and
\item $Y \setminus V \to X \setminus U$ is an isomorphism (of reduced schemes).
\end{itemize}
Such squares are \emph{motivically cocartesian}, that is, become pushout squares in $\Spc(k)$.
In fact they already become pushout squares in $\Shv_\Nis(\Sm_k)$ \cite[Proposition C.5(2)]{hoyois2015quadratic}.

\begin{remark} \label{rmk:excision}
Let $X \in \Sm_k$, $Z \subset X$ closed and $Y \to X$ an étale neighborhood (i.e. $Y \to X$ is étale and $Z_Y \to Z$ is an isomorphism).
We obtain a distinguished square by setting $U = X \setminus Z, V = Y \setminus Z_Y$.
The square being motivically cocartesian implies that the induced map on horizontal cofibers \[ Y/Y \setminus Z \xrightarrow{\wequi} X/X \setminus Z \] is an equivalence.
This is usually called \emph{excision}.
\end{remark}

\subsubsection{Cohomological dimension}
One standard fact that we shall use all the time is the following.
\begin{theorem} \label{thm:coh-dim}
Let $X$ be a qcqs scheme of Krull dimension $\le d$.
Then the Nisnevich and Zariski homotopy dimensions (in particular cohomological dimensions) of $X$ are bounded by $d$.
\end{theorem}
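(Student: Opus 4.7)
The plan is to proceed by induction on $d$, handling the Nisnevich case explicitly (the Zariski case follows by a parallel, somewhat easier argument, since Zariski points are ordinary local rings rather than Henselian local rings). Recall that the Nisnevich topos of $X$ has homotopy dimension $\leq d$ precisely when every $d$-connective Nisnevich hypersheaf on the small Nisnevich site of $X$ admits a global section; the cohomological-dimension bound is a direct consequence.

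For the base case $d = 0$, I would observe that a qcqs scheme of Krull dimension $0$ decomposes as a finite disjoint union of spectra of semilocal $0$-dimensional rings, each of which is a finite product of $0$-dimensional local rings. Such rings are automatically Henselian (the maximal ideal is nilpotent, so étale extensions lift by successive approximation), so $X$ is a finite coproduct of points of the Nisnevich topos. The topos of Nisnevich hypersheaves on $X$ is therefore equivalent to a finite product of copies of $\Spc$, which is manifestly of homotopy dimension $0$.

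For the inductive step, let $F$ be a $d$-connective Nisnevich hypersheaf on $X$ with $\dim X \leq d$. Since $\pi_0 F = *$, after refining one can produce a distinguished square
\[\begin{CD} V @>>> Y \\ @VVV @VVV \\ U @>>> X \end{CD}\]
with $Z := X \setminus U$ of dimension $< d$ and sections $s_U \in F(U)$, $s_Y \in F(Y)$. Nisnevich descent gives $F(X) \simeq F(U) \times_{F(V)} F(Y)$, so a global section exists provided the restrictions $s_U|_V$ and $s_Y|_V$ can be matched. The discrepancy is a homotopy in $F(V)$, and I would lift it to a homotopy in $F(Y)$ supported near $Z_Y$. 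Passing to the colimit over Nisnevich neighborhoods of $Z_Y$ reduces the obstruction to one on $Z_Y$, which is qcqs of dimension $\leq \dim Z < d$, so the induction hypothesis applied to an appropriately connective loop hypersheaf kills it.

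The main obstacle I expect is making the ``lift the discrepancy'' step fully precise; this requires a Postnikov-tower argument in which obstruction classes living in cohomology of $Z$ (or its Nisnevich neighborhoods) are killed one stage at a time by the induction hypothesis. In practice, this bookkeeping is handled cleanly in the language of hypercomplete $\infty$-topoi: one first establishes the cohomological-dimension bound directly by the same inductive scheme, and then bootstraps to the homotopy-dimension bound using Postnikov completeness of the Nisnevich topos on a qcqs scheme of finite Krull dimension.
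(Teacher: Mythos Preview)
The paper does not prove this theorem; it simply cites \cite[Theorems 3.12 and 3.18]{clausen2019hyperdescent}. So there is no argument in the paper to compare against, and the relevant question is only whether your sketch stands on its own.

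It does not, for two concrete reasons.

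\textbf{The base case is false as stated.} A qcqs scheme of Krull dimension $0$ need not decompose as a finite disjoint union of spectra of local rings. For instance, $\Spec\bigl(\prod_{n \ge 1} \F_2\bigr)$ is affine (hence qcqs), zero-dimensional, and has infinitely many points; its underlying space is a Cantor set. What is true is that the underlying space of a qcqs $0$-dimensional scheme is profinite, and one argues that sheaves on a profinite space have homotopy dimension $0$ because every open cover refines to a finite clopen partition. Your argument as written does not establish this.

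\textbf{The bootstrap at the end is circular in this context.} You propose to first bound the cohomological dimension and then upgrade to homotopy dimension ``using Postnikov completeness of the Nisnevich topos on a qcqs scheme of finite Krull dimension.'' But in this paper (and in Lurie's treatment \cite[Corollary 7.2.1.12]{lurie-htt} that the paper invokes), Postnikov completeness is \emph{deduced from} the homotopy-dimension bound you are trying to prove. You would need an independent proof of Postnikov completeness, and you have not indicated one.

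Separately, the inductive step is too vague to assess: the sentence ``after refining one can produce a distinguished square with $Z := X \setminus U$ of dimension $< d$ and sections $s_U, s_Y$'' hides all of the content. Why does such a square exist, and why do the sections exist? The actual arguments in the literature (e.g.\ Clausen--Mathew, or the earlier treatments going back to Nisnevich and Thomason--Trobaugh) make this precise via a careful analysis of the cd-structure, and the details are not trivial.
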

\begin{proof}
See \cite[Theorems 3.12 and 3.18]{clausen2019hyperdescent}.
\end{proof}

\subsubsection{Spheres} \label{subsub:spheres}
In motivic homotopy theory there are two basic ``spheres'': the ``topological circle'' $S^1 = * \amalg_{* \amalg *} *$, and the ``geometric circle'' $\Gm = (\A^1 \setminus 0, 1)$.
Both are objects in $\Spc(k)_*$.
Out of this we can build further spheres, using the notation \[ S^{i,j} = S^{i-j} \wedge \Gmp{j}. \]
This makes sense only if $i \ge j \ge 0$.
Other spaces that appear like they ought to be spheres can often be identified in terms of these basic ones.
In particular we have \cite[\S3 Corollary 2.18, Lemma 2.15]{A1-homotopy-theory} \cite[Proposition 4.40]{elmanto-primer} \[ \A^n \setminus 0 \wequi S^{2n-1,n} \quad\text{and}\quad \P^n/\P^{n-1} \wequi \A^n/\A^n \setminus 0 \wequi S^{2n,n}. \]
For example, $\P^1 \wequi S^{2,1}$.

Just as in topology, for any vector bundle $E$ on a smooth scheme $X$ we can form an associated Thom space, that is, a bundle of spheres over $X$.
It is given by the formula \[ \Th(E) := E/E \setminus 0_X. \]
Here $0_X \subset E$ denotes the closed subscheme which is the image of the zero section.

\begin{example}
If $X = *$ and $E = \scr O^n$ is (necessarily) trivial of rank $n$, then $\Th(\scr O^n) \wequi S^{2n,n}$.
More generally, if $X$ is arbitrary and $E = \scr O_X^n$, then \[ \Th(\scr O_X^n) = \A^n \times X/(\A^n \setminus 0) \times X \wequi \A^n/\A^n \setminus 0 \wedge X_+ \wequi S^{2n,n} \wedge X_+. \]
\end{example}

\begin{remark}
Suppose that $f: Y \to X$ is a morphism of smooth schemes and $F$ is a vector bundle on $Y$, together with a morphism of vector bundles $\tilde f: F \to f^* E$ which is fiberwise injective (that is, $\tilde f^{-1}(0_Y^{f^*E}) \subset 0_Y^F$).
Then there is a canonical induced map \[ \Th(\tilde f, f): \Th(F) \to \Th(E). \]
\end{remark}

\subsubsection{Purity} \label{subsub:purity}
Let $X \in \Sm_S$ and $Z \subset X$ be a closed subscheme such that $Z \to S$ is also smooth.
We call $(X, Z)$ a \emph{smooth closed pair}.
A morphism of smooth closed pairs $(X, Z) \to (X', Z')$ is a morphism $f: X \to Z$ such that $f^{-1}(Z') = Z$, as subschemes of $X$.
Recall that for any smooth closed pair $(X, Z)$, the conormal bundle of $Z/X$ is a vector bundle, and its dual is called the \emph{normal bundle} $N_{Z/X}$ \cite[Definition 19.21]{goertz-wedhorn-2}.

The \emph{homotopy purity theorem} is the following result.
For a proof, see, e.g., \cite[Theorem 3.23]{hoyois-equivariant}.
\begin{theorem} \label{thm:purity}
For every smooth closed pair $(X, Z)$ over $S$, there is a canonical equivalence in $\Spc(S)$ of the form \[ X/X \setminus Z \wequi \Th(N_{Z/X}). \]
This equivalence is natural in the smooth closed pair $(X,Z)$.
For a pair $(E,0_X)$ of a vector bundle $E$ over $X$ and its zero section, the equivalence arises from the canonical isomorphism $N_{0_X/E} \wequi E$.
\end{theorem}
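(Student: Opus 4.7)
The plan is to prove purity via deformation to the normal cone, which is the standard Morel--Voevodsky strategy. Given a smooth closed pair $(X, Z)$, form the deformation space $D = D_Z X$ defined as the blow-up $\mathrm{Bl}_{Z \times \{0\}}(X \times \A^1)$ with the strict transform of $X \times \{0\}$ removed. One checks directly that $D$ is smooth, that $Z \times \A^1 \hookrightarrow D$ is a smooth closed subscheme, that the projection $D \to \A^1$ has fiber $X$ over $t = 1$ with $Z \times \{1\} = Z$, and that the fiber over $t = 0$ is the total space of the normal bundle $N_{Z/X}$ with $Z \times \{0\}$ sitting inside as the zero section. Thus one obtains, in $\Spc(S)$, a zigzag
\[ X/X \setminus Z \xrightarrow{i_1} D/D \setminus (Z \times \A^1) \xleftarrow{i_0} \Th(N_{Z/X}), \]
and the goal is to show both maps are motivic equivalences.

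Next, I would argue that $i_0$ and $i_1$ are motivic equivalences by reducing to a local model via the excision property of distinguished squares (\S\ref{subsub:dist-squares} and Remark \ref{rmk:excision}). The crucial local input is the following: for any smooth closed pair $(X, Z)$ with $Z \hookrightarrow X$ of codimension $c$, there exists Nisnevich-locally on $X$ an étale morphism $\phi: X \to N_{Z/X}$ which restricts to the identity on $Z$ and identifies the conormal bundles. This is obtained by lifting étale coordinates transverse to $Z$, i.e., sections of the conormal bundle. Together with excision, this reduces the comparison of $X/X \setminus Z$ and $\Th(N_{Z/X})$ to a situation where $(X, Z)$ is literally the pair $(N_{Z/X}, 0_X)$, in which case purity is tautological. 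A parallel local trivialization applies to the deformation space: étale-locally on $D$ near $Z \times \A^1$, we have $D \wequi N_{Z/X} \times \A^1$ with $Z \times \A^1$ corresponding to $0_X \times \A^1$, and then the quotient $D/D \setminus (Z \times \A^1)$ becomes $\Th(N_{Z/X}) \wedge (\A^1)_+$, which is $\A^1$-equivalent to $\Th(N_{Z/X})$. This shows $i_0$ is an equivalence, and applying the same trivialization identifies $i_1$ with the inclusion $\Th(N_{Z/X}) \to \Th(N_{Z/X}) \wedge (\A^1)_+$ at $t = 1$, also an $\A^1$-equivalence.

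Naturality in $(X, Z)$ follows formally from the functoriality of the blow-up construction in morphisms of smooth closed pairs, and the normalization for $(E, 0_X)$ is built into the construction since in that case the deformation space is isomorphic to $E \times \A^1$ and both fibers agree canonically with $\Th(E)$.

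The main obstacle is the local triviality statement: proving that Nisnevich-locally every smooth closed pair $(X, Z)$ of codimension $c$ admits an étale map to its normal bundle restricting to the identity on $Z$. This is not entirely formal; one has to choose a regular system of parameters for $Z$ in $X$ (which exists locally since $Z \hookrightarrow X$ is a regular immersion between smooth schemes), check that the resulting map $X \to N_{Z/X}$ is étale in a Nisnevich neighborhood of $Z$, and verify that the induced identification of conormal bundles is the canonical one. Once this local model is in place, the rest of the argument is a straightforward combination of excision, $\A^1$-invariance, and the compatibility of cofibers with the distinguished squares of \S\ref{subsub:dist-squares}.
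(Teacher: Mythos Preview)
The paper does not give its own proof of this theorem; it simply states the result and refers the reader to \cite[Theorem 3.23]{hoyois-equivariant}. Your proposal sketches the standard Morel--Voevodsky deformation-to-the-normal-cone argument, which is precisely the approach taken in the cited reference (and in the original \cite{A1-homotopy-theory}), so there is nothing to compare---your outline is correct and matches what the paper is implicitly invoking.
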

Note that the normal bundle of $Z$ in $X$ is invariant under passing to étale neighborhoods of $Z$, consistent with excision (see Remark \ref{rmk:excision}).

\subsubsection{Essentially smooth base change}
Let $K/k$ be a separable field extension, so that $\Spec(K)$ is the cofiltered limit of a family of smooth $k$-varieties.
We have the base change functor \[ \Shv_\Nis(\Sm_k) \to \Shv_\Nis(\Sm_K), F \mapsto F|_K. \]
In this situation of \emph{essentially smooth base change}, the functor behaves particularly well.

\begin{lemma} \label{lemm:essentially-smooth-bc}
\begin{enumerate}
\item Let $\{X_\lambda\}$ be a cofiltered system of smooth $k$-varieties with $\lim_\lambda X_\lambda \in \Sm_\eta$.
  In this case one has \[ F|_\eta(\lim_\lambda X_\lambda) \wequi \colim_\lambda F(X_\lambda). \]
\item The functor $F \mapsto F|_\eta$ preserves $\A^1$-invariant Nisnevich sheaves of spaces.
\item The functor commutes with taking homotopy sheaves.
\item The functor preserves strongly/strictly $\A^1$-invariant sheaves.
\end{enumerate}
\end{lemma}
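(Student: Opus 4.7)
The plan is to reduce parts (2)--(4) to the continuity statement (1), which is where the real content lies. Once (1) is established, the other parts follow from the observation that every smooth $\eta$-variety arises as a cofiltered limit of smooth $k$-varieties with transition maps of finite presentation, so all properties in question are controlled by the values of $F$ on $\Sm_k$ and behave well under filtered colimits.

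For (1): the functor $|_\eta$ is the Nisnevich sheafification of the presheaf-level left Kan extension $u_!$ along the base change $\Sm_k \to \Sm_\eta$, $X \mapsto X_\eta$. A standard cofinality argument shows that $u_!F$ already satisfies $(u_! F)(\lim_\lambda X_\lambda) \wequi \colim_\lambda F(X_\lambda)$, since any morphism from $\lim_\lambda X_\lambda$ to a finitely presented $\eta$-scheme factors through some finite stage. It then remains to check that Nisnevich sheafification does not change the value at such a pro-smooth object. This is a continuity property of the Nisnevich topology: every Nisnevich cover, and iteratively every hypercover, of $\lim_\lambda X_\lambda$ is pulled back from one over some $X_\lambda$. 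Combined with the fact recalled in the introduction that filtered colimits of Nisnevich sheaves of spaces remain Nisnevich sheaves, sheafification commutes with the filtered colimit, yielding (1).

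Granting (1), the remaining parts are formal. For (2), write $Y \in \Sm_\eta$ as $\lim_\lambda Y_\lambda$ with $Y_\lambda \in \Sm_k$; then $Y \times \A^1 = \lim_\lambda (Y_\lambda \times \A^1)$, and (1) combined with the $\A^1$-invariance of $F$ yields $F|_\eta(Y) \wequi F|_\eta(Y \times \A^1)$. For (3), the presheaf of $\pi_i$ of $F|_\eta$ agrees with the left Kan extension of the presheaf of $\pi_i$ of $F$, because $\pi_i$ commutes with filtered colimits; sheafifying gives $\ul\pi_i(F|_\eta) \wequi (\ul\pi_i F)|_\eta$. For (4), the functor $|_\eta$ is a left adjoint and therefore preserves geometric realizations and Nisnevich sheafifications, so $(B_\Nis G)|_\eta \wequi B_\Nis(G|_\eta)$ and $K_\Nis(A,n)|_\eta \wequi K_\Nis(A|_\eta, n)$; applying (2) to these classifying spaces establishes the strong or strict $\A^1$-invariance of $G|_\eta$ or $A|_\eta$.

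The main obstacle is (1), specifically the commutation of Nisnevich sheafification with pro-smooth base change. Once this continuity statement is granted, the rest follows directly from adjunction formalism and the preservation of the relevant structure under filtered colimits.
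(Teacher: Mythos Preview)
Your proposal is correct and follows essentially the same approach as the paper. The paper's proof is extremely terse (it gives exactly your argument for (1) in one sentence, and dismisses (2)--(4) as ``easy consequences''); you have filled in those consequences in a reasonable way, and your identification of the key point (Nisnevich covers of $\lim_\lambda X_\lambda$ descend to some finite stage, so the presheaf-level base change already produces a sheaf) matches the paper's.
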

\begin{proof}
The formula in (1) holds for base change on the level of presheaves presheaves, and one needs to see that it preserves Nisnevich sheaves.
This is true because any Nisnevich cover of $\lim_\lambda X_\lambda$ is already defined at some finite stage.
(2), (3) and (4) are easy consequences.
\end{proof}

\subsubsection{Essentially smooth schemes} \label{subsub:ess-smooth}
Write $\Pro_\aff(\Ft_k)$ for the category of pro-objects \cite[Tag 05PX]{stacks-project} in finite type $k$-schemes, such that the transition maps are affine.
One may show that if $X_\bullet \in \Pro_\aff(\Ft_k)$ then $\lim X_\bullet \in \Sch_k$ exists, and the resulting functor \[ \lim: \Pro_\aff(\Ft_k) \to \Sch_k \] is fully faithful \cite[Proposition 8.13.5]{EGAIV}.
Its essential image is called the category of \emph{essentially finite type schemes}.
Write $\Pro_\et(\Sm_k) \subset \Pro_\aff(\Ft_k)$ for the full subcategory on those objects $X_\bullet$ such that each $X_i \in \Sm_k$ and each transition map $X_j \to X_i$ is étale.
The essential image in $\Sch_k$ is called the category of \emph{essentially smooth schemes}.

\begin{example}
Let $X \in \Sm_k$ and $x \in X$.
The \emph{localization of $X$ in $x$} is \[ \bigcap_{x \ni U} U. \]
Here the intersection is over all open neighborhoods of $x$.
The set of such neighborhoods is filtered, and hence $X_x$ is an essentially smooth scheme.

Similarly the \emph{henselization $X_x^h$ of $X$ in $x$} is the (cofiltered) limit over all étale neighborhoods of $x$ in $X$, that is, all pairs $(V \to X, \tilde x)$ with $V \to X$ étale and $\tilde x \in V$ such that $\tilde x$ lies over $x$ and the induced map $\tilde x \to x$ is an isomorphism.
\end{example}

Let $F: \Sm_k \to \scr C$ be any functor and $X \in \Pro_\et(\Sm_k)$, that is, $X$ is an essentially smooth scheme.
Applying $F$, we obtain $FX \in \Pro(\scr C)$.
For many purposes, we can work with $\Pro(\scr C)$ the same way we can work with $\scr C$, and we will often silently do this.
For example, for $Y \in \scr C$ we have the space $\Map(X, Y)$.
What this actually means is $\Map_{\Pro(\scr C)}(X, cY)$, where $c: \scr C \to \Pro(\scr C)$ denotes the ``constant pro-object'' functor.
In other words, if $X = \lim_i X_i$ is a presentation of $X$ as a cofiltered limit of smooth schemes along étale maps, then \[ \Map(FX, Y) = \colim_i \Map(FX_i, Y). \]
It is very convenient that this does not depend on the choice of presentation.

\begin{remark}
Everything we have said here would have worked equally well if we defined essentially smooth schemes to be cofiltered inverse limits of smooth schemes along affine transition maps.
To the best of my knowledge, the restriction to étale transition maps was first considered in \cite{A1-alg-top}, and has become standard in motivic homotopy theory.
This ensures, for example, that essentially smooth schemes have finite dimension.
\end{remark}

\subsubsection{Smooth retractions} \label{subsubs:smooth-retract}
The geometric intuition behind purity (\S\ref{subsub:purity}) is that smooth closed pairs ``look étale-locally like $(\A^{n+c}, \A^n)$''.
One can make this precise: a \emph{parametrization} of a smooth closed pair $(X, Z)$ over $S$ consists is a morphism of smooth closed pairs $(X,Z) \to (\A^{n+c}_S, \A^n_S)$ over $S$, such that $X \to \A^{n+c}_S$ is étale.
For any $z \in Z$ there exists an open neighborhood $U$ of $Z$ in $X$ and a parametrization of $(U, U \cap Z)$ \cite[Tag 0FUE]{stacks-project}.

\begin{lemma}
Let $\varphi: (X, Z) \to (\A^{n+c}_S, \A^n_S)$ be a parametrized smooth closed pair over $S$.
There exists an étale neighborhood $X' \to X$ of $Z$ such that $Z \hookrightarrow X'$ admits a smooth retraction.
\end{lemma}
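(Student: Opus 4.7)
The plan is to use the projection $\pi: \A^{n+c}_S \to \A^n_S$ to produce a first-approximation retraction $X \to \A^n_S$, and then to replace $X$ by a suitable étale neighborhood of $Z$ so as to upgrade this into a retraction landing in $Z$ itself.

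First I would set $p := \pi \circ \varphi : X \to \A^n_S$; since $\varphi$ is étale and $\pi$ is smooth of relative dimension $c$, the composite $p$ is smooth of relative dimension $c$. The hypothesis that $(X,Z) \to (\A^{n+c}_S, \A^n_S)$ is a morphism of smooth closed pairs means $Z = \varphi^{-1}(\A^n_S)$ scheme-theoretically, so $q := p|_Z$ is the base change of $\varphi$ along $\A^n_S \hookrightarrow \A^{n+c}_S$ and is in particular étale. I would then form the fiber product
\[ W := X \times_{p,\A^n_S,q} Z, \]
with projections $\pi_1 : W \to X$ and $\pi_2 : W \to Z$. Here $\pi_1$ is étale as a base change of $q$, and $\pi_2$ is smooth of relative dimension $c$ as a base change of $p$. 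The identity $p \circ i = q$, where $i: Z \hookrightarrow X$ is the closed immersion, provides a canonical morphism $\sigma := (i, \id_Z): Z \to W$ which is a section of $\pi_2$, and hence a closed immersion because $\pi_2$ is separated.

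To finish, I would cut $W$ down so that $\pi_1$ becomes an honest étale neighborhood of $Z$ rather than merely an étale cover containing $Z$. The image $\sigma(Z)$ lies in the closed subscheme $Z \times_{\A^n_S} Z \hookrightarrow W$, where it coincides with the diagonal of the étale morphism $q$; this diagonal is therefore both open and closed in $Z \times_{\A^n_S} Z$. Consequently $C := (Z \times_{\A^n_S} Z) \setminus \sigma(Z)$ is closed in $Z \times_{\A^n_S} Z$, and therefore closed in $W$. Setting $X' := W \setminus C$ then produces an open subscheme of $W$; the map $X' \to X$ is étale, and combining $W \times_X Z = Z \times_{\A^n_S} Z$ with $X' \cap (Z \times_{\A^n_S} Z) = \sigma(Z)$ yields $X' \times_X Z \cong Z$, so $X' \to X$ is the desired étale neighborhood of $Z$. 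The restriction $\pi_2|_{X'}: X' \to Z$ is then smooth and retracts $\sigma: Z \hookrightarrow X'$, as required.

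The only point requiring a moment of care is the interplay between the scheme-theoretic identity $Z = \varphi^{-1}(\A^n_S)$ (which upgrades $q$ from smooth to étale and thereby makes its diagonal clopen) and the topological excision of the ``wrong'' component $C$ that is needed to arrange the étale-neighborhood condition. Beyond this bookkeeping, I foresee no real obstacle.
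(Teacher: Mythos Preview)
Your proof is correct and essentially identical to the paper's. The paper forms $X_1 = X \times_{\A^{n+c}_S} \A^c_Z$ (where $\A^c_Z \to \A^{n+c}_S$ is $\id_{\A^c}$ times the \'etale map $q: Z \to \A^n_S$), but since $\A^c_Z \cong \A^{n+c}_S \times_{\A^n_S} Z$ this $X_1$ is canonically your $W = X \times_{p,\A^n_S,q} Z$; the paper then removes $\beta^{-1}(Z) \setminus Z$, which is exactly your $C = (Z \times_{\A^n_S} Z) \setminus \sigma(Z)$, using the same openness argument (a map of \'etale $Z$-schemes is \'etale, equivalently the diagonal of the \'etale map $q$ is clopen).
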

\begin{proof}
Consider the following cartesian square
\begin{equation*}
\begin{CD}
X_1 @>{\beta}>> X \\
@VVV    @V{\varphi}VV \\
\A^c_Z @>{\alpha}>> \A^{n+c}_S.
\end{CD}
\end{equation*}
The map $\alpha$ is the product with $\A^c$ of the étale map $Z \to \A^n_S$, and thus is étale.
We deduce that $\beta$ is étale.
By construction, the canonical map $Z \to X_1$ admits a smooth retraction (via the étale projection to $\A^c_Z$).
Since $Z \to \beta^{-1}(Z)$ is a map between étale $Z$-schemes it is étale \cite[Tag 02GW]{stacks-project}, whence open.
Consequently we may set $X' = X_1 \setminus (\beta^{-1}(Z) \setminus Z)$.
\end{proof}

The following is the main way we will use this result.
\begin{corollary} \label{cor:ess-smooth-retraction}
Let $k$ be a perfect field, $X \in \Sm_k$ and $x \in X$.
Then $x \to X_x^h$ admits an essentially smooth retraction.
\end{corollary}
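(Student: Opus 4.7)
The plan is to apply the preceding lemma to the smooth closed pair $(X, \overline{\{x\}})$, which produces a smooth retraction of an étale neighborhood $X' \to X$ onto the closure $Z = \overline{\{x\}}$; then localizing at the generic point $x$ of $Z$ will convert this into the desired retraction. The perfection of $k$ is used exactly to ensure that $Z$ is smooth at $x$, so that the preceding lemma applies in the first place.

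More precisely, let $Z = \overline{\{x\}}$ with its reduced scheme structure. Over the perfect field $k$, every reduced finite-type $k$-scheme is generically smooth \cite[Tag 056V]{stacks-project}; applied to the integral scheme $Z$, this yields that $Z$ is smooth at its generic point $x$, so after shrinking $X$ to a suitable open neighborhood of $x$ we may assume $(X, Z)$ is a smooth closed pair. Shrinking $X$ further, we can choose a parametrization of $(X, Z)$ in the sense of \S\ref{subsubs:smooth-retract}, whereupon the preceding lemma produces an étale neighborhood $X' \to X$ of $Z$ together with a smooth retraction $r \colon X' \to Z$ of $Z \hookrightarrow X'$. Since this étale neighborhood restricts to an isomorphism over $Z \ni x$, the henselization $X_x^h$ coincides with the henselization of $X'$ at the distinguished lift of $x$.

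It remains to turn $r$ into an essentially smooth retraction onto $x = \Spec k(x)$. The crucial point is that $x$ is the generic point of the integral scheme $Z$, so $\mathcal{O}_{Z,x} = k(x)$ already; hence base change of $r$ along $\Spec k(x) \hookrightarrow Z$ yields a smooth $k(x)$-scheme $Y = X' \times_Z \Spec k(x)$ with $\mathcal{O}_{Y,x} = \mathcal{O}_{X',x} \otimes_{k(x)} k(x) = \mathcal{O}_{X',x}$. Thus $X_x^h = Y_x^h$, which is a cofiltered limit of étale neighborhoods of $x$ in $Y$, each of which is smooth over $k(x)$ via (the base change of) $r$; this exhibits $X_x^h$ as essentially smooth over $\Spec k(x) = x$. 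The structure map then furnishes an essentially smooth retraction of $x \hookrightarrow X_x^h$, because $r|_Z = \id_Z$ restricts to the identity on the generic point. The only genuine input is the reduction to a smooth closed pair, which is where perfection of $k$ enters; the remainder is formal manipulation of the preceding lemma together with the observation that localizing $Z$ at its generic point is a trivial operation.
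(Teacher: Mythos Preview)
Your proof is correct and follows essentially the same approach as the paper: reduce to a smooth closed pair using generic smoothness over the perfect field, apply the preceding lemma to obtain a smooth retraction $r\colon X' \to Z$, and then pass to the henselization. The only difference is cosmetic: where the paper simply observes that the composite $(X')_x^h \to Z$ factors through the generic point $x \hookrightarrow Z$ (since the closed point of the local scheme $(X')_x^h$ maps to $x$, and $x$ admits no proper generalizations in $Z$), you make this explicit by base-changing $r$ along $\Spec k(x) \to Z$ to produce the smooth $k(x)$-scheme $Y$ with $Y_x^h \simeq (X')_x^h$.
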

\begin{proof}
By generic smoothness \cite[Tag 0B8X]{stacks-project}, up to shrinking $X$ around $x$, we may assume that $\overline{\{x\}} =: Z$ is smooth.
Then shrinking $X$ further, we can find a parametrization of the smooth closed pair $(X, Z)$.
We may thus pass to an étale neighborhood $X'$ of $Z$ (whence $x$) such that $Z \to X'$ has a smooth retraction $X' \to Z$.
The induced morphism $X_x^h \wequi (X')_x^h \to Z$ factors through $x \hookrightarrow Z$, yielding the desired essentially smooth retraction.
\end{proof}

\begin{remark} \label{rmk:reduce-to-closed-point}
Thus in order to prove something about $X_x^h$ over $k$, we may instead prove something about (the isomorphic scheme) $Y_y^h$ over $k'$.
Here $k'$ is some (in general non-perfect) field extension of $k$ (in fact the residue field of $x$), $Y \in \Sm_{k'}$, and $y$ is a \emph{rational} point of $Y$.

In other words, if we are willing to pass to a non-perfect ground field, we may assume that $x$ is a rational point.
\end{remark}

\begin{remark} \label{rmk:smooth-retraction-dimensionality}
The proof of Corollary \ref{cor:ess-smooth-retraction} shows the following slightly stronger statement: if $x \in X$ is a point of codimension $c$, then $X_x^h$ can be written as a cofiltered limit of smooth $x$-schemes of dimension $c$ (with affine étale transition maps).
\end{remark}

\subsubsection{Cotangent complex of lci morphisms} \label{subsub:cotangent-cx}
Let $X \to Y$ be a morphism of schemes.
Then we have the \emph{cotangent complex} $L_{X/Y}$ (see, e.g., \cite[Tag 08P5]{stacks-project}), which is a complex of quasi-coherent sheaves on $X$.
If $X \to Y$ is lci (e.g., any morphism between smooth schemes over a field), then $L_{X/Y}$ is perfect \cite[Tag 08SL]{stacks-project} and hence has a \emph{determinant} \cite[Tag 0FJW]{stacks-project}.
This is a line bundle on $X$ which we denote by $\omega_{X/Y}$.

\begin{example} \label{ex:omega-lci}
Suppose that $X \to Y$ factors as $X \hookrightarrow A \to Y$, with $X \to A$ a regular closed immersion and $A \to Y$ smooth.
Denote by $C_{X/A}$ the conormal bundle and by $\Omega^1_{A/Y}$ the sheaf of differentials.
In this case, we have a representation \cite[proof of Tag 08SL]{stacks-project} \[ L_{X/Y} \wequi [C_{X/A} \xrightarrow{d} \Omega^1_{A/Y}|_X]; \] here the $\Omega^1$-term is placed in degree zero.
It follows that \[ \omega_{X/Y} \wequi \det(C_{X/A})^* \otimes \det \Omega^1_{A/Y} \wequi \det(N_{X/A}) \otimes \det(\Omega^1_{A/Y}). \]
\end{example}

\begin{remark} \label{rmk:omega-composition}
One of the key properties \cite[Tag 08QR]{stacks-project} of the cotangent complex is that, given morphisms $X \to Y \to Z$, there is a cofiber sequence \[ L_{Y/Z}|_X \to L_{X/Z} \to L_{X/Y}. \]
Assuming the morphisms are lci, this translates into a canonical isomorphism of line bundles \[ \omega_{X/Z} \wequi \omega_{X/Y} \otimes \omega_{Y/Z}|_X. \]
\end{remark}

\subsection{Notation} \label{subsec:notation}
Here is a non-exhaustive list of notation we use commonly.
\begin{center}
\begin{tabular}{rl}
$\Spc(S)$                 & $\infty$-category of motivic spaces over $S$ \\
$\PSh(\scr C)$            & $\infty$-category of presheaves (of spaces) on $\scr C$ \\
$\Shv_\Nis(\Sm_S)$        & $\infty$-category of Nisnevich sheaves (of spaces) on $\Sm_K$ \\
$H^*(X,F)$                & Nisnevich cohomology groups \\
$H^*_Z(X,F)$              & cohomology with support \\
$H^*_x(X,F)$              & local cohomology \\
$X^{{(c)}} \subset X$     & points of codimension $c$ on a scheme \\
$X_{(d)} \subset X$       & points of dimension $d$ on a scheme \\
$X_x, X_x^h$              & localization and Henselization of a scheme in a point \\
$\omega_{X/Y}$            & determinant of the cotangent complex of $X \to Y$ \\
$\Z[\Gmp{n}]$             & sheaf of abelian groups freely generated by the pointed sheaf of sets $\Gmp{n}$ \\
\end{tabular}
\end{center}

\subsection{Acknowledgements}
First and foremost I would like to thank Fabien Morel for having the vision to propose this theory, to write it up in his book \cite{A1-alg-top}, and for teaching me about all of it.
I would further like to thank Aravind Asok and Mike Hopkins for numerous discussions about the material.
Finally I want to thank Niels Feld for his thorough reading of a draft of this text and the many comments he provided.

\section{Strong $\A^1$-invariance of $\ul\pi_1$} \label{sec:pi1}
In this section we prove that if $X$ is a pointed motivic space (over a field), then $\ul\pi_1(X)$ is strongly $\A^1$-invariant.
This heavily relies on Gabber's presentation lemma, which we introduce in \S\ref{subsec:gabber}.
We draw some first consequences for $\ul\pi_1$ in \S\ref{subsec:pi1-1}, and finally prove the main result in \S\ref{subsec:pi1-2}.

\subsection{Gabber's lemma} \label{subsec:gabber}
Gabber's presentation lemma is a geometric result, which is used in a variety of foundational results in motivic homotopy theory.
Its formulation is as follows.

\begin{theorem}[Gabber's lemma] \label{thm:gabber-lemma}
Let $k$ be a field, $X \in \Sm_k$, $Z \subset X$ closed and everywhere of positive codimension, and $z \in Z$.
Then there exist:
\begin{enumerate}
\item An open neighborhood $X' \subset X$ of $z$.
\item A smooth affine $k$-scheme $S$ and an étale map $\varphi: X' \to \A^1_S$,
\end{enumerate}
such that
\begin{enumerate}[(a)]
\item The composite $Z' := Z \cap X' \to X' \xrightarrow{\varphi} \A^1_S \to S$ is finite.
\item The composite $Z' \cap X' \to \A^1_S$ is a closed immersion.
\item The preimage of $Z' \subset \A^1_S$ under the map $X' \to \A^1_S$ coincides with $Z'$.
\end{enumerate}
\end{theorem}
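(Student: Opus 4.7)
The plan is to execute a classical generic linear projection argument, together with a device to handle the case of a finite base field. First, after shrinking $X$ around $z$, I may assume $X$ is affine and fix a closed immersion $X \hookrightarrow \A^N_k$. Write $d = \dim_z X$ and observe that $\dim_z Z \le d-1$.

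The heart of the argument is to produce simultaneously a linear projection $q: \A^N_k \to \A^{d-1}_k$ and a linear form $t: \A^N_k \to \A^1_k$ with three properties: (i) the restriction $q|_Z$ is finite over some open neighborhood $S \subset \A^{d-1}_k$ of $q(z)$; (ii) $q|_Z$ is a closed immersion near $z$; and (iii) the combined map $(t,q): X \to \A^d_k$ is \'etale at $z$. Condition (i) is arranged by choosing the direction of projection to avoid the finitely many points of the closure at infinity $\bar Z \setminus Z \subset \P^N$, so that the projection becomes proper and quasi-finite on $Z$; (ii) amounts to unramifiedness plus generic injectivity of $q$ along $Z$, both open conditions on the Grassmannian of projections and realizable because $\dim Z \le d - 1$; and (iii) is the usual Noether-normalization requirement at the single point $z$, again a generic open condition. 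Over an infinite $k$, a dimension count on the Grassmannian confirms that the intersection of these opens contains $k$-rational points; over a finite $k$ one substitutes a flexible family of maps of larger degree and invokes a Gabber/Poonen-type Bertini input to locate an element in the intersection.

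Once $q$, $t$, and $S$ have been chosen, set $\varphi = (t,q)$ and let $X' \subset X$ be a neighborhood of $z$ on which $\varphi$ is \'etale and such that $\varphi(Z \cap X')$ is contained in $\A^1_S$; then conditions (a) and (b) of the theorem follow from (i) and (ii) respectively. For (c), because $\varphi$ is \'etale and $Z' \hookrightarrow \A^1_S$ is a closed immersion, the scheme-theoretic preimage $W := \varphi^{-1}(Z')$ receives a closed immersion $Z' \hookrightarrow W$ between \'etale $Z'$-schemes; any such morphism is automatically also open, so $W = Z' \sqcup W''$ for some closed subscheme $W'' \subset X'$ disjoint from $Z'$, and hence not containing $z$. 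Shrinking $X'$ to remove $W''$ yields the desired refinement.

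The main obstacle will be the simultaneous realization of (i)--(iii) when $k$ is finite: the Grassmannian of linear projections may have too few rational points for the naive generic-position argument to apply, so one must replace linear projections by a richer family of maps and carry out a Gabber-style Bertini count to locate one with all three properties. A secondary, but purely bookkeeping, subtlety is tracking the successive shrinkings of $X$ around $z$ throughout the argument so that the final $X'$ still contains $z$ and the claimed openness and finiteness statements hold on it.
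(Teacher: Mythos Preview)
The paper does not actually prove Gabber's lemma; it only supplies a remark stating that over infinite fields the proof is ``a fairly straightforward general position argument'' (citing Gabber) and that over finite fields the argument is ``significantly more delicate'' (citing the dedicated paper on Gabber's lemma over finite fields). Your sketch is precisely in this spirit: the linear-projection/Noether-normalization argument you outline is the standard general-position proof for infinite $k$, and you correctly flag that finite $k$ requires replacing linear projections by a larger family of maps and invoking a Bertini-over-finite-fields type input---which is exactly the content of the cited reference. So your proposal and the paper's (referenced) approach coincide.

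One small wording issue: your condition (ii) asks that $q|_Z$ be a closed immersion near $z$, but what is needed for clause (b) is that $(t,q)|_{Z'}: Z' \to \A^1_S$ be a closed immersion. In practice this follows once $Z' \to S$ is finite and $(t,q)|_{Z'}$ is unramified and universally injective; unramifiedness is automatic from \'etaleness of $\varphi$, and universal injectivity is the genuine extra condition to arrange by generic choice of $t$. This is a bookkeeping point rather than a gap.
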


Conditions (2) and (3) of Theorem \ref{thm:gabber-lemma} assert that $X' \to \A^1_S$ is an \emph{étale neighborhood} of (the image of) $Z'$.
This ensures that the commutative square
\begin{equation*}
\begin{CD}
X' \setminus Z' @>>> X' \\
@VVV               @VVV \\
\A^1_S \setminus Z' @>>> \A^1_S
\end{CD}
\end{equation*}
is a distinguished Nisnevich square (in particular, is cartesian).
The square thus becomes a pushout upon applying $L_\mot$ (see \S\ref{subsub:dist-squares}), and hence the induced map \[ X'/X' \setminus Z' \to \A^1_S/\A^1_S \setminus Z' \] is a motivic equivalence.
This is the main way in which we use Gabber's lemma.

\begin{remark}[proof of Gabber's lemma]
If the field $k$ is infinite, then the proof of Theorem \ref{thm:gabber-lemma} is a fairly straightforward general position argument \cite[\S3]{gabber-presentation-lemma}.
The case of finite base fields is significantly more delicate \cite{gabber-lemma-finite-fields}.
\end{remark}

Here is a sample application.
\begin{proposition} \label{prop:XmodUconn}
Let $k$ be any field, $X \in \Sm_k$ and $Z \subset X$ closed, everywhere of positive codimension.
Then $L_\mot(X/X \setminus Z)$ is connected.
\end{proposition}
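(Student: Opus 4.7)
The plan is to combine a Nisnevich-local reduction via Gabber's lemma with an explicit $\A^1$-homotopy argument. First I would reformulate the problem: since $L_\mot$ commutes with colimits and $X/X\setminus Z$ is the pointed pushout $* \leftarrow X\setminus Z \to X$, one obtains $L_\mot(X/X\setminus Z) \wequi * \amalg_{L_\mot(X\setminus Z)} L_\mot X$ in pointed motivic spaces. Applying $\ul\pi_0$ (which commutes with colimits), connectedness is equivalent to the assertion that $\ul\pi_0 L_\mot(X\setminus Z) \to \ul\pi_0 L_\mot X$ is an epimorphism of pointed Nisnevich sheaves, a condition that can be checked locally on $X$.

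Next I would apply Gabber's lemma (Theorem \ref{thm:gabber-lemma}). For each $z \in Z$ it produces an open neighborhood $X' \ni z$ and an étale morphism $\varphi\colon X' \to \A^1_S$, with $S$ smooth affine, such that the square displayed after the theorem is a distinguished Nisnevich square. Consequently the induced map $X'/X'\setminus Z' \to \A^1_S/\A^1_S\setminus Z'$ (with $Z' := Z \cap X'$) is a motivic equivalence, and $Z' \to S$ is finite. The opens $X'$ thus obtained, together with $X \setminus Z$, form a Zariski cover of $X$, so the problem reduces to proving the analogous connectedness for the pair $(\A^1_S, Z')$ with $Z' \to S$ finite.

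To handle this reduced case I would work on a Nisnevich stalk, i.e., an essentially smooth Henselian local scheme $U$ with residue field $k'$ and closed point $u_0$, and show that any morphism $f = (s, \tilde f)\colon U \to S \times \A^1 = \A^1_S$ is $\A^1$-homotopic to a morphism factoring through $\A^1_S \setminus Z'$. The natural candidate is the affine-linear shift $H(t, u) = (s(u), (1-t)\tilde f(u) + t\alpha)$, whose endpoint is the section $(s, \alpha)$ for any chosen $\alpha \in \mathcal{O}(U)$. It then remains to pick $\alpha$ so that $(s, \alpha)$ is disjoint from $Z'$. Since $Z' \times_S U \to U$ is finite and $U$ is Henselian local, the graph of $\alpha$ meets $Z' \times_S U$ if and only if it does so at $u_0$; hence one only needs $\alpha \bmod \mathfrak m_U \in k'$ to avoid the finite set of $k'$-rational points of the fiber of $Z' \times_S U$ over $u_0$ inside $\A^1_{k'}$. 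Whenever $|k'|$ exceeds this cardinality (in particular whenever $k$ is infinite, since then $k' \supseteq k$ is infinite), such an $\alpha$ exists trivially.

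The hard part will be the case of small residue fields---in particular when the base field $k$ is finite, so that $k'$ itself may be too small relative to the number of forbidden rational points in the fibre of $Z'$. This is precisely the phenomenon responsible for the substantial extra delicacy in the finite-field form of Gabber's lemma, and I expect that bypassing it requires iterating the Gabber reduction in order to further simplify $Z'$ and using more elaborate $\A^1$-homotopies than constant shifts, exploiting additional structure on $\A^1_S$ to avoid the cardinality obstruction.
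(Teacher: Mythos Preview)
Your strategy coincides with the paper's: Zariski-localize via Gabber's lemma to the pair $(\A^1_S,Z')$ with $Z'\to S$ finite, handle that case, and globalize. For the globalization the paper writes $X/(X\setminus Z)$ as the geometric realization of $U^\bullet/(U^\bullet\setminus Z)$ for the \v{C}ech nerve of the cover $U=\coprod_i X_i\to X$, and uses that $\ul\pi_0$ of a realization is a quotient of $\ul\pi_0$ in degree zero. Your sentence ``a condition that can be checked locally on $X$'' is correct, but it is precisely this \v{C}ech argument that justifies it; the sheaves $\ul\pi_0 L_\mot(X\setminus Z)$ and $\ul\pi_0 L_\mot(X)$ live on $\Sm_k$, not on $X_\Zar$, so the reduction to a cover is not entirely formal.

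For the local step the paper is terser: using that $\A^1_S$ is $\A^1$-contractible onto $S$, it identifies $\A^1_S/(\A^1_S\setminus Z')$ motivically with what it calls the reduced suspension of $\A^1_S\setminus Z'$, and appeals to Lemma~\ref{lemm:0-conn}. Unwound, this amounts to asking that the projection $\A^1_S\setminus Z'\to S$ be a Nisnevich epimorphism, i.e.\ that for each henselian local $U\to S$ there be a section $U\to\A^1_U\setminus Z'_U$. Your straight-line homotopy performs exactly the same reduction---it contracts $\A^1_S$ to the constant section $(s,\alpha)$ and then asks for an $\alpha$ avoiding $Z'$---so the two local arguments agree in substance, and your finite-field worry is exactly right: when $|k'|$ is too small neither formulation produces the needed section, and the paper's invocation of Lemma~\ref{lemm:0-conn} has the same gap as your shift homotopy. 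The remedy is, as you already suspect, an iteration rather than a more elaborate single homotopy: compare the finite-field half of the proof of Proposition~\ref{prop:gersten-injectivity}, which inducts on $\dim X$ using that for $\dim S\ge 1$ the generic point of $S$ has infinite residue field (so a section exists over a dense open of $S$), and feeds the remaining proper closed locus back into the induction. The base case $\dim X=1$ can be handled directly, e.g.\ because $Z$ is then $0$-dimensional hence smooth, so purity identifies $X/(X\setminus Z)$ with a wedge of Thom spaces $\Th(N_{z/X})\simeq_\mot \Sigma^u(\Gm\wedge z_+)$, each of which is already Nisnevich-connected.
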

\begin{proof}
Applying Lemma \ref{thm:gabber-lemma} to every point in $X$ we find a Zariski covering $\{X_i \to X\}_{i \in I}$ such that \[ X_i/X_i \setminus Z \wequi \A^1_{S_i}/\A^1_{S_i} \setminus Z \stackrel{\mot}{\wequi} \tilde\Sigma (\A^1_{S_i} \setminus Z). \]
Here $\tilde\Sigma$ denotes the reduced suspension, and so in particular this motivic space is connected (use Lemma \ref{lemm:0-conn}).
Set $U = \amalg_i X_i$, so that $U \to X$ is a Nisnevich cover of $X$.
Then $X$ is the colimit in $\Shv_\Nis(\Sm_k)$ of the associated Čech nerve $U^\bullet$, with $U^n = U \times_X U \times_X \dots \times_X U$.
Similarly $X \setminus Z$ is the Čech nerve of $(U \setminus Z)^\bullet$.
We deduce that \[ X/X \setminus Z \stackrel{\mot}{\wequi} |L_\mot(U^\bullet/U^\bullet \setminus Z)|_\Nis, \] where $|(\ph)|_\Nis$ denotes the geometric realization in the topos $\Shv_\Nis(\Sm_k)$.
Recall that the geometric realization of a simplicial object (in a topos) is connected if and only if the object in zeroth degree is connected.
It thus remains to show that $L_\mot(U/U \setminus Z)$ is connected.
But this is \[ U/U \setminus Z = \bigvee_i X_i/X_i \setminus Z, \] which is connected since each $X_i/X_i \setminus Z$ is.
\end{proof}

\begin{corollary} \label{cor:A1-inv-unramified}
Let $G$ be an $\A^1$-invariant Nisnevich sheaf of groups.
Then $G$ is unramified, in the sense that for every dense open immersion $U \to X$, the map $G(X) \to G(U)$ is an injection.
\end{corollary}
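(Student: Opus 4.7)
The plan is to convert injectivity of $G(X) \to G(U)$ into a vanishing statement for pointed maps into $G$, and then invoke Proposition \ref{prop:XmodUconn}.

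First I would use the group structure to reduce to the following: if $h \in G(X)$ satisfies $h|_U = 1$, then $h = 1$. Indeed, given $f, g \in G(X)$ with $f|_U = g|_U$, apply this to $h = fg^{-1}$. I now view $G$ as a pointed Nisnevich sheaf of sets (so in particular $0$-truncated as a sheaf of spaces), with basepoint the identity. Then a section $h \in G(X)$ with $h|_U = 1$ corresponds exactly to a pointed map $X/U \to G$ in $\Shv_\Nis(\Sm_k)_*$, where $X/U$ denotes the cofiber of $U_+ \to X_+$.

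Next I would observe that $G$ is motivically local: it is a Nisnevich sheaf by hypothesis, $\A^1$-invariant by hypothesis, and being $0$-truncated there are no higher coherences to worry about. Therefore the map $X/U \to G$ factors through $L_\mot(X/U)$. Since $U$ is dense in $X$, the closed complement $Z := X \setminus U$ is everywhere of positive codimension, so Proposition \ref{prop:XmodUconn} applies and gives that $L_\mot(X/U) = L_\mot(X/X \setminus Z)$ is connected. A pointed map from a connected pointed sheaf into a $0$-truncated pointed sheaf must factor through $\ul\pi_0$ of the source, which is the point; hence the map is constant at the basepoint, i.e. $h = 1$.

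All of this is essentially formal once Proposition \ref{prop:XmodUconn} is in hand, so the real content is packaged in Gabber's lemma, used there. The only small point to verify carefully is that $G$, regarded as a discrete pointed sheaf of spaces, is genuinely motivically local (not just $\A^1$-invariant and a Nisnevich sheaf of sets); but this is automatic from the fact that a $0$-truncated presheaf of sets satisfies hyperdescent for any topology as soon as it is an ordinary sheaf.
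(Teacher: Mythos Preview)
Your proof is correct and follows essentially the same route as the paper: both arguments map the cofiber sequence $U_+ \to X_+ \to X/U$ into $G$, use Proposition~\ref{prop:XmodUconn} to see that $\Map_*(X/U,G) \simeq \Map_*(L_\mot(X/U),G)$ is trivial (since $L_\mot(X/U)$ is connected and $G$ is $0$-truncated), and then invoke the group structure to upgrade ``trivial fiber'' to injectivity. Your element-by-element phrasing is just an unwinding of the paper's fiber-sequence formulation.
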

\begin{proof}
Indeed mapping the cofiber sequence $U_+ \to X_+ \to X/U$ into $G$ we obtain a fiber sequence \[ \Map(X/U, G) \to G(X) \to G(U). \]
The first term is contractible by Proposition \ref{prop:XmodUconn}, whence $G(X) \to G(U)$ is injective (here we use that $G$ is a sheaf of groups).
\end{proof}

\begin{remark} \label{rmk:unramifiedness-use}
Let $X \in \Sm_k$ be connected with generic point $\eta$.
Corollary \ref{cor:A1-inv-unramified} implies that for $G$ an $\A^1$-invariant Nisnevich sheaf of groups, the canonical map $F(X) \to F(\eta)$ to the stalk is injective.
\end{remark}
\begin{example}\label{ex:unramifiedness-use}
Let $K$ be a finitely generated, separable field extension of $k$, i.e. the fraction field of a smooth $k$-scheme.
Write $K(t)$ for the purely transcendental extension of $K$.
Let $G$ be an $\A^1$-invariant Nisnevich sheaf of groups.
Then the composite \[ G(K) \wequi G(\A^1_K) \to G(K(t)) \] is injective.
This trick can sometimes be used to reduce problems to the case infinite fields (but beware that $K(t)$ is not perfect, even if $k$ is).
\end{example}

We can strengthen Proposition \ref{prop:XmodUconn} to establish higher connectivity of $X/X\setminus Z$, depending on the codimension of $Z$.
For this we need the full strength of our main theorems.
\begin{corollary+}
Let $k$ be a perfect field, $X \in \Sm_k$, and $Z \subset X$ closed, everywhere of codimension $\ge d$.
Then $X/X \setminus Z$ is $d$-connective.
\end{corollary+}
\begin{proof}
Suppose first that $Z$ is smooth.
Then by purity (see \S\ref{subsub:purity}) we find that $X/X \setminus Z \wequi \Th(N_{Z/X})$, which is $d$-connective (working locally on $Z$, we may assume that $N_{Z/X}$ is trivial, so that $\Th(N_{Z/X}) \wequi \Sigma^d \Gmp{d} \wedge Z_+$ (see \S\ref{subsub:spheres}), which is $d$-connective by the unstable connectivity theorem, Corollary \ref{corr:unstable-connectivity}).
Now let $Z$ be general.
We prove the result by induction on the dimension of $Z$.
In the base case $Z = \emptyset$ nothing needs to be proved.
By generic smoothness \cite[Tag 0B8X]{stacks-project} there is a closed subscheme $Z_1 \subset Z$ such that $Z \setminus Z_1$ is smooth, and $\dim Z_1 < \dim Z$.
We thus have a cofiber sequence \[ X \setminus Z/X \setminus Z_1 \to X/X \setminus Z \to X/X \setminus Z_1. \]
Since each term is connected by Proposition \ref{prop:XmodUconn}, by Corollary \ref{corr:connectivity-stable} it suffices to show that the two outer terms are $d$-connective.
Since $Z \setminus Z_1$ is smooth this holds for the left hand term by what we just said; since $\dim Z_1 < \dim Z$ it holds for the right hand term by induction.
\end{proof}

Let us point out the following variant which does not require the main theorems.
\begin{lemma} \label{lem:remove-codim-2}
Let $k$ be a perfect field, $X \in \Sm_k$, and $Z \subset X$ closed, everywhere of codimension $\ge 2$.
Then for any strongly $\A^1$-invariant sheaf of groups $G$ we have $G(X) \wequi G(X \setminus Z)$.
\end{lemma}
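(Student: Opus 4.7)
The plan is to prove the stronger statement that $\Map_*(X/(X\setminus Z), B_\Nis G) \wequi *$; via the fiber sequence associated to the cofiber $(X\setminus Z)_+ \to X_+ \to X/(X\setminus Z)$ this gives both the desired bijection $G(X) \wequi G(X\setminus Z)$ and, as a bonus, the analogous statement for Nisnevich $H^1$. (Injectivity of $G(X) \to G(X\setminus Z)$ by itself already follows from Corollary~\ref{cor:A1-inv-unramified}.) Throughout, $B_\Nis G$ is motivically local by assumption, so it is harmless to take cofibers either in $\Shv_\Nis(\Sm_k)_*$ or in $\Spc(k)_*$.

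I would proceed by induction on $\dim Z$, the case $Z = \emptyset$ being trivial. Since $k$ is perfect, generic smoothness produces a dense open smooth locus $Z^{\mathrm{sm}} \subset Z$ whose closed complement $Z^{\mathrm{sing}}$ has strictly smaller dimension, so that $\mathrm{codim}_X Z^{\mathrm{sing}} > \mathrm{codim}_X Z \ge 2$. The tower $X\setminus Z \subset X\setminus Z^{\mathrm{sing}} \subset X$ of opens yields a cofiber sequence
\[
(X\setminus Z^{\mathrm{sing}})/(X\setminus Z) \to X/(X\setminus Z) \to X/(X\setminus Z^{\mathrm{sing}}),
\]
which becomes a fiber sequence upon applying $\Map_*(-, B_\Nis G)$. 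The fiber $\Map_*(X/(X\setminus Z^{\mathrm{sing}}), B_\Nis G)$ is contractible by the induction hypothesis applied to $Z^{\mathrm{sing}}$, so it suffices to show the base $\Map_*((X\setminus Z^{\mathrm{sing}})/(X\setminus Z), B_\Nis G)$ is contractible. By homotopy purity (Theorem~\ref{thm:purity}) applied to the smooth closed pair $(X\setminus Z^{\mathrm{sing}}, Z^{\mathrm{sm}})$, this base identifies with $\Map_*(\Th(N), B_\Nis G)$, where $N$ is the normal bundle of rank $c := \mathrm{codim}_X Z \ge 2$.

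It thus remains to show that $\Map_*(\Th(E), B_\Nis G) \wequi *$ for any rank-$c$ vector bundle $E$ on a smooth $k$-scheme $Y$, with $c \ge 2$. For trivial $E$ one has $\Th(\mathcal{O}_Y^c) \wequi S^{2c,c} \wedge Y_+$ (\S\ref{subsub:spheres}), and the suspension-loop adjunction gives
\[
\Map_*(S^{2c,c} \wedge Y_+, B_\Nis G) \wequi \Map_*(\Gmp{c} \wedge Y_+,\, \Omega^c B_\Nis G).
\]
This is contractible because $\Omega B_\Nis G \wequi G$ is $0$-truncated, so that $\Omega^c B_\Nis G \wequi *$ for $c \ge 2$. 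For general $E$, pick a Zariski cover of $Y$ trivializing $E$; then $\Th(E)$ is the colimit along the associated \v{C}ech simplicial scheme of the Thom spaces of the trivialized restrictions, so $\Map_*(\Th(E), B_\Nis G)$ is a totalization of contractible spaces, and is itself contractible.

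The main obstacle is essentially bookkeeping: globalizing the Thom space calculation via Zariski descent, and keeping straight cofibers versus fiber sequences and which $\infty$-category one is working in. No tool beyond those already in the excerpt is required; Gabber's lemma in particular plays no role here.
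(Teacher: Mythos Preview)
Your proof is correct and follows essentially the same route as the paper: induction on $\dim Z$, generic smoothness to peel off the singular locus, homotopy purity to identify the relevant cofiber with a Thom space, and a \v{C}ech-nerve reduction to trivial bundles where $\Th(\scr O^c)$ is a double suspension. The only cosmetic difference is that you organize the induction via the tower cofiber sequence of quotients $X/(X\setminus Z)$, while the paper uses the rotated cofiber sequence $\Th(N) \to \Sigma(X\setminus Z)_+ \to \Sigma(X\setminus Z_1)_+$ and phrases the conclusion as surjectivity of $G(X)\to G(X\setminus Z)$; your formulation in terms of $\Map_*(X/(X\setminus Z),B_\Nis G)\wequi *$ makes the bonus $H^1$-statement more visibly automatic.
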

\begin{proof}
We must prove that $G(X) \to G(X \setminus Z)$ is surjective.
As in the proof of the previous corollary, we argue by induction on the dimension of $Z$.
Let $Z_1 \subset Z$ with $\dim Z_1 < \dim Z$ and $Z \setminus Z_1$ smooth.
(In the base case, where $Z$ is smooth, we can pick $Z_1 = \emptyset$.)
Mapping the cofiber sequence \[ \Th(N_{Z \setminus Z_1 /X \setminus Z_1}) \wequi X \setminus Z_1/X \setminus Z \to \Sigma X_+ \setminus Z \to \Sigma X_+ \setminus Z_1 \] into $K(G,1)$, it will suffice to show that if $Z' \in \Sm_k$ and $V$ is a vector bundle on $Z'$ of rank $\ge 2$, then $\Map(\Th(V), K(G,1)) = *$.
Writing $\Th(V)$ as the colimit of a Čech nerve on which $V$ trivializes reduces to the case where $V$ is trivial.
But then $\Th(V)$ is a double suspension, whence the desired vanishing.
\end{proof}

Here is another typical application.
\begin{proposition} \label{prop:gersten-injectivity}
Let $F \in \Spc(k)_*$, $X \in \Sm_k$ connected, $x \in X$ arbitrary and $\eta \in X$ the generic point.
The canonical map \[ \pi_0 F(X_x) \to \pi_0 F(\eta) \] has trivial fiber.
\end{proposition}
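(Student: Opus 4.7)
The plan is to combine Gabber's lemma (Theorem \ref{thm:gabber-lemma}) with a pushout construction and the $\A^1$-invariance of $F$. Using essentially smooth base change, write $\pi_0 F(X_x) = \colim_V \pi_0 F(V)$ over open neighborhoods $V \subset X$ of $x$, and $\pi_0 F(\eta) = \colim_U \pi_0 F(U)$ over dense opens $U \subset X$. Thus an element $\alpha$ of the fiber at the basepoint is represented by some $\alpha_0 \in \pi_0 F(V_0)$ with $V_0 \ni x$ open, such that $\alpha_0|_{U_0} \simeq *$ on some dense open $U_0 \subset V_0$. We are reduced to producing an open $V_1 \ni x$ in $V_0$ with $\alpha_0|_{V_1} \simeq *$. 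If $x \in U_0$ we take $V_1 = U_0$; otherwise $x$ lies in the closed subscheme $Z_0 := V_0 \setminus U_0$ of positive codimension.

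In the nontrivial case, apply Gabber's lemma to $(V_0, Z_0)$ at $x$, obtaining an open $V_0' \ni x$, a smooth affine $S$, and an étale map $\varphi: V_0' \to \A^1_S$ exhibiting $V_0'$ as an étale neighborhood of $Z' := Z_0 \cap V_0'$. Embedding $\A^1_S \hookrightarrow \P^1_S$, the square
\begin{equation*}
\begin{CD}
V_0' \setminus Z' @>>> V_0' \\
@VVV @V{\varphi}VV \\
\P^1_S \setminus Z' @>>> \P^1_S
\end{CD}
\end{equation*}
is a distinguished Nisnevich square and hence a pushout in $\Shv_\Nis(\Sm_k)$ (see \S\ref{subsub:dist-squares}). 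Since $V_0' \setminus Z' \subset U_0$, the restriction $\alpha_0|_{V_0' \setminus Z'}$ comes with a null-homotopy; gluing $\alpha_0|_{V_0'}$ on $V_0'$ and the constant basepoint map on $\P^1_S \setminus Z'$ along this null-homotopy, the pushout property produces a map $\beta: \P^1_S \to F$ in $\Spc(k)$ with $\beta \circ \varphi \simeq \alpha_0|_{V_0'}$ and $\beta|_{\P^1_S \setminus Z'} \simeq *$.

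Since $Z' \subset \A^1_S$, the $\infty$-section $S \hookrightarrow \P^1_S$ factors through $\P^1_S \setminus Z'$, so $\beta|_\infty \simeq *$. Viewing $\P^1_S \setminus 0 \cong \A^1_S$ as the affine chart centered at $\infty$, the $\A^1$-invariance of $F$ gives $\beta|_{\P^1_S \setminus 0} \simeq \beta|_\infty \circ \pi \simeq *$, and pulling back by $\varphi$ yields $\alpha_0|_{V_0''} \simeq *$ on the Zariski open $V_0'' := \varphi^{-1}(\P^1_S \setminus 0) \subset V_0'$. It remains to ensure $x \in V_0''$, i.e., $\varphi(x) \notin \{0\} \times S$; this is arranged by precomposing $\varphi$ with a translation $t \mapsto t + c$ of $\A^1_S$, for some $c \in \Gamma(S, \mathcal{O}_S)$ (possibly after shrinking $S$ around $s := \pi(\varphi(x))$) with $c(s) \neq -\varphi(x)$ in $\kappa(s)$, which exists because $\kappa(s)$ has at least two elements, and translations visibly preserve all Gabber conditions. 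The main obstacle in writing this up carefully is the $\infty$-categorical bookkeeping in the pushout step: one must package the null-homotopy of $\alpha_0|_{V_0' \setminus Z'}$ into coherent gluing data to invoke the universal property of the pushout of sheaves of spaces.
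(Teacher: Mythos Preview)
Your argument is correct and takes a genuinely different---and in one respect cleaner---route than the paper.

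The paper works entirely inside $\A^1_S$: after applying Gabber's lemma it seeks a section of $\A^1_S \setminus Z' \to S$ near the image $s$ of $x$, which exists when $\kappa(s)$ is infinite (there are more rational points in $\A^1_s$ than the finitely many in $Z'_s$). This fails over finite fields, and the paper handles that case separately by an induction on $\dim X$: in dimension $1$ one point can always be avoided, while in higher dimension the generic point of $S$ has infinite residue field, so a section exists over a dense open $S\setminus W$, and the inductive hypothesis applied to $(S,s,W)$ kills the remaining fiber. Your approach instead embeds $\A^1_S \hookrightarrow \P^1_S$ and uses the $\infty$-section, which is \emph{always} disjoint from $Z' \subset \A^1_S$; this is what buys you a uniform argument with no case split. (The same $\P^1$ trick appears in the paper later, in Lemma~\ref{lemm:pi1-map-null}, but is not used for this proposition.) The only residual issue is avoiding the $0$-section, which you correctly handle by a translation: one needs a single value $c(s)\in\kappa(s)$ with $\varphi(x)\ne -c(s)$ in $\A^1_s$, and since $|\kappa(s)|\ge 2$ (and $\varphi(x)$ may not even be $\kappa(s)$-rational) this always exists and lifts to $\mathcal O(S)$ after shrinking $S$.

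Regarding the ``$\infty$-categorical bookkeeping'' you flag: there is no hidden obstruction. Since the distinguished square is a pushout in $\Shv_\Nis(\Sm_k)$ and $F$ is a sheaf, $F(\P^1_S)$ is the homotopy pullback $F(V_0')\times_{F(V_0'\setminus Z')}F(\P^1_S\setminus Z')$. You have a point $p\in F(V_0')$ representing $\alpha_0|_{V_0'}$, the basepoint $*\in F(\P^1_S\setminus Z')$, and a path between their restrictions to $F(V_0'\setminus Z')$ (this is exactly what $\alpha_0|_{U_0}\simeq *$ supplies); any such triple \emph{is} a point of the pullback, giving your $\beta$ with $\beta|_{V_0'}\simeq p$ and $\beta|_{\P^1_S\setminus Z'}\simeq *$ on the nose. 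No higher coherence is required because you only need the resulting $\pi_0$-class.
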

\begin{proof}
\textbf{First case: $k$ infinite.}

Let $X_1$ be an open neighborhood of $x$ in $X$ and $Z \subset X_1$ a closed subset containing $x$.
Applying Gabber's Lemma to $X_1, Z, x$ we obtain a smaller open neighborhood $X_2$ together with an étale neighborhood $X_2 \to \A^1_S$ of $Z \cap X_2$.
Since $Z \cap X_2 \to S$ is finite, the canonical map $\A^1_S \setminus Z \cap X_2 \to S$ has a section over an open neighborhood $S_1$ of the image of $x$ in $S$ (this is the only place where we use that $k$ is infinite).\footnote{Since $k$ is infinite, $\A^1_{\bar s} \setminus Z_{\bar s}$ has a rational point, where $\bar s \in S$ is a closed specialization of the image of $x$. This defines an element $f \in \scr O(\bar s)$, which lifts to any affine open neighborhood $U$ of $\bar s$. We obtain $\tilde f: U \to \A^1_U$. Now $\tilde f^{-1}(Z) \subset U$ is a closed subscheme which, by construction, does not contain $\bar s$. Set $S_1 = U \setminus \tilde f^{-1}(Z)$ to ensure that $\tilde f|_{S_1}$ factors through $\A^1_S \setminus Z$.}
Write $X_3$ for the preimage of $S_1$ in $X_2$.
Consider the diagram of cofiber sequences
\begin{equation*}
\begin{CD}
X_3 \setminus Z @>>> X_3 @>>> X_3/X_3 \setminus Z \\
@VVV               @VVV         @V{\wequi}V{\mot}V \\
\A^1_{S_1} \setminus Z @>>> \A^1_{S_1} @>>> \A^1_{S^1}/\A^1_{S_1} \setminus Z.
\end{CD}
\end{equation*}
Taking homotopy classes of pointed maps into $F$ this translates into a diagram of fiber sequences of sets
\begin{equation}
\begin{CD}
\pi_0 F(X_3 \setminus Z) @<d<< \pi_0 F(X_3) @<c<< \pi_0 F(X_3/X_3 \setminus Z) \\
@AAA               @AAA         @A{\wequi}AA \\
\pi_0 F(\A^1_{S_1} \setminus Z) @<a<< \pi_0F(\A^1_{S_1}) @<b<< \pi_0F(\A^1_{S^1}/\A^1_{S_1} \setminus Z) \\
@. @V{\wequi}VV \\
@. \pi_0 F(S_1).
\end{CD}
\end{equation}
Since $\A^1_{S_1} \setminus Z \to S_1$ has a section, the map denoted $a$ is an injection, and so the map denoted $b$ is null.
It follows that $c$ is null and so $d$ has trivial fiber.

Taking the filtering colimit over all open neighborhoods $X_1$ and all closed subsets $Z$ yields the desired conclusion.

\textbf{Second case: $k$ finite.}

We will prove by induction on $d$ the following statement:
For every $X \in \Sm_k$ connected of dimension $\le d$, $x \in X$, $Z \subset X$ a proper closed subset, there exists an open neighborhood $X'$ of $x$ in $X$ such that $\pi_0 F(X') \to \pi_0 F(X' \setminus Z)$ has trivial fiber.
Taking the filtering colimit over all such $Z$ yields the desired conclusion as before.
If $\dim X = 0$ then $Z=\emptyset$ and there is nothing to prove.

Now let $\dim X = 1$.
If $x \not\in Z$ then we can take $X' = X \setminus Z$.
Thus $x \in Z$ and, since $\dim X = 1$, we have $Z = Z_1 \amalg \{x\}$.
Replacing $X$ by $X \setminus Z_1$, we may assume that $Z=\{x\}$.
Applying Gabber's lemma as before, we obtain $X_1 \to \A^1_S$, where $S$ has dimension $0$.
Let $s \in S$ be the image of $x$.
The map $\A^1_s \setminus Z \to s$ has a section, since $\A^1_s$ has at least two rational points, and $Z$ consists of only one point.
It follows as before that $\A^1_S \setminus Z \to S$ has a section in an open neighborhood of $s$, and the desired result follows by the same argument as before.

Now let $\dim X > 1$.
Construct $X_1 \to \A^1_S$ as before.
The generic point of $S$ has infinite residue field (since $\dim S = \dim X -1 > 0$), so $\A^1_S \setminus Z \to S$ has a section on the complement of some proper closed subscheme $W \subset S$.
Applying the inductive assumption to $(S,s,W)$ we find some open neighborhood $s \in S_1 \subset S$ such that $\pi_0 F(S_1) \to \pi_0 F(S_1 \setminus W)$ has trivial fiber.
In the commutative square
\begin{equation*}
\begin{CD}
\pi_0 F(\A^1_{S_1}) @>>> F(\A^1_{S_1} \setminus Z) \\
@VVV                         @VVV \\
\pi_0 F(\A^1_{S_1 \setminus W}) @>>> \pi_0 F(\A^1_{S_1 \setminus W} \setminus Z)
\end{CD}
\end{equation*}
the composite via the bottom left hand corner thus has trivial fiber (for the vertical morphism this holds by construction of $S_1$ and $\A^1$-invariance of $F$, whereas for the horizontal morphism this holds by the existence of a section over $S \setminus W$), whence so does the top map.
Now the argument proceeds as before.
\end{proof}

\begin{corollary} \label{cor:connectivity-crit}
Let $F \in \Spc(k)_*$.
Then $F$ is connected if and only if for every finitely generated, separable field extension $K/k$ we have $\ul\pi_0(F)(K) = *$.
\end{corollary}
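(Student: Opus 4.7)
The forward implication is immediate: if $F$ is connected then $\ul\pi_0 F = *$ as a Nisnevich sheaf, so its value on any $\Spec K$ is trivial.

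For the converse, the plan is to show that the Zariski (hence Nisnevich) stalks of the presheaf $X \mapsto \pi_0 F(X)$ all vanish. The first step is to identify $\ul\pi_0 F(K)$ with $\pi_0 F(\Spec K)$ for any finitely generated separable extension $K/k$. This follows from Lemma \ref{lemm:essentially-smooth-bc}(1): both the presheaf $X \mapsto \pi_0 F(X)$ and its sheafification $\ul\pi_0 F$ have the same value on $\Spec K$, because $\Spec K$ is Henselian local and so its only Nisnevich covers are split, hence sheafification does not alter the value. Using that $k(X)/k$ is finitely generated and separable whenever $X \in \Sm_k$ is connected (smoothness forces $X$ to be geometrically reduced over $k$, so $k(X) \otimes_k \bar k$ is reduced), the hypothesis translates into $\pi_0 F(\eta) = *$ for the generic point $\eta$ of every such $X$.

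Next, for $X \in \Sm_k$ connected with generic point $\eta$ and an arbitrary $x \in X$, I would apply Proposition \ref{prop:gersten-injectivity} to the canonical map $\pi_0 F(X_x) \to \pi_0 F(\eta)$. Since this map has trivial fiber and the target is a singleton by the previous paragraph, the source is a singleton as well. Consequently every Zariski stalk of the presheaf $\pi_0 F$ is trivial, whence so is its Nisnevich sheafification $\ul\pi_0 F$, i.e.\ $F$ is connected.

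The only conceptual ingredient is the identification $\ul\pi_0(F)(K) = \pi_0 F(\eta)$ at field points; after that the result reduces to a direct application of the Gersten-type injectivity of Proposition \ref{prop:gersten-injectivity} and presents no serious obstacle.
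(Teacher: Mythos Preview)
Your proof is correct and follows essentially the same approach as the paper's: both reduce to showing that Zariski stalks of $\pi_0 F$ are trivial via Proposition~\ref{prop:gersten-injectivity}. You spell out two points the paper leaves implicit---the identification $\ul\pi_0(F)(K) = \pi_0 F(\Spec K)$ at Henselian local (field) points, and the fact that function fields of smooth $k$-varieties are finitely generated separable extensions---but the underlying argument is the same.
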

\begin{proof}
We must show that $\ul\pi_0(F) \to *$ is an isomorphism, for which it suffices to prove that $\ul\pi_0(F)(X_x) = *$ for every $X \in \Sm_k$ and $x \in X$.
This follows from our assumption and Proposition \ref{prop:gersten-injectivity}.
\end{proof}

\begin{corollary} \label{cor:Zariski-vanishing}
Let $F$ be strongly invariant (respectively strictly invariant) and $X \in \Sm_k$.
Then $H^i_\Zar(X, F) = H^i_\Nis(X, F)$ for $i \le 1$ (respectively $i \in \Z$).
\end{corollary}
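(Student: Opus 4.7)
The plan is to establish the vanishing of the higher direct images $R^q\epsilon_* F$ for $q \ge 1$ (in the allowed range), where $\epsilon \colon X_\Nis \to X_\Zar$ is the canonical change-of-topology morphism. Collapse of the associated Leray spectral sequence will then yield $H^p_\Zar(X,F) \wequi H^p_\Nis(X,F)$; for the nonabelian $q=1$ case, where a spectral sequence is not formally available, a direct torsor argument plays the same role.

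First I reduce the claim to a stalk computation: $R^q\epsilon_* F$ is the Zariski sheafification of $U \mapsto H^q_\Nis(U,F)$, so its stalk at a point $x \in X$ equals $\colim_{x \in U} H^q_\Nis(U,F) = H^q_\Nis(X_x, F)$, with $X_x$ the Zariski localization (an essentially smooth scheme). It thus suffices to show that this group vanishes for every $x$.

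The key input is that under the hypothesis, the Nisnevich Eilenberg--MacLane sheaf $K_\Nis(F,q)$ (respectively $B_\Nis G$ in the strong nonabelian case with $q=1$) is motivically local. Pointing it by $0$ (respectively by the trivial torsor), Proposition \ref{prop:gersten-injectivity} gives that
\[
H^q_\Nis(X_x, F) \;=\; \pi_0 K_\Nis(F,q)(X_x) \;\longrightarrow\; \pi_0 K_\Nis(F,q)(\eta) \;=\; H^q_\Nis(\eta, F)
\]
has trivial fiber, where $\eta$ is the generic point of the connected component containing $x$. But $\eta$ is the spectrum of a field, and every Nisnevich cover of $\Spec \eta$ is refined by the identity (some connected component of the cover must have residue field $\eta$), so $H^q_\Nis(\eta, F) = 0$ for $q \ge 1$. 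Trivial fiber over $0$ then forces $H^q_\Nis(X_x, F) = 0$, as required.

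The main obstacle I anticipate is the nonabelian case $q=1$, where no Leray spectral sequence is directly available. However, the stalk vanishing $H^1_\Nis(X_x, G) = 0$ is sufficient: it says every Nisnevich $G$-torsor $T$ on $X$ is trivial on each $X_x$, and by the colimit formula for sections along cofiltered limits with affine transition maps $T$ then admits a section on some Zariski neighborhood of $x$, so it is Zariski-locally trivial, giving surjectivity of $H^1_\Zar(X,G) \to H^1_\Nis(X,G)$; injectivity follows from the same vanishing applied to the inner-form twist $G^T$ of $G$ by a fixed torsor $T$ (this twist is again a strongly $\A^1$-invariant sheaf of groups, as motivic locality of its classifying space $B_\Nis G^T$ is inherited from that of the base point component of $B_\Nis G$ at $[T]$). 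The case $i=0$ is automatic since $F$ is already a Nisnevich, hence Zariski, sheaf.
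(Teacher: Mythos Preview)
Your proof is correct and follows essentially the same route as the paper's: both reduce to showing $H^q_\Nis(X_x, F) = 0$ on Zariski local rings, then invoke Proposition \ref{prop:gersten-injectivity} applied to $K(F,q)$ together with the vanishing of higher Nisnevich cohomology of fields. You have simply unpacked what the paper calls ``standard arguments'' (via the Leray spectral sequence in the abelian case and a torsor argument in the nonabelian one).

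One minor simplification for the nonabelian case: your inner-twist argument for injectivity of $H^1_\Zar(X,G) \to H^1_\Nis(X,G)$ is not needed. Since $G$ is a Nisnevich sheaf, any Zariski $G$-torsor is Zariski-locally isomorphic to $G$ and hence is itself already a Nisnevich sheaf; therefore an isomorphism between two such torsors as Nisnevich sheaves is the same datum as an isomorphism as Zariski sheaves. So injectivity is automatic, and only the surjectivity direction (which you handle correctly via the stalk vanishing) requires work.
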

\begin{proof}
By standard arguments, it suffices to show that $H^i_\Nis(X_x, F) = 0$ for all $x \in X$ and $i \ne 0$.\footnote{Write $\Gamma_\Nis F, \Gamma_\Zar F: \Sm_k^\op \to \SH$ for the (pre)sheaves of spectra of derived global sections. Both are Zariski sheaves, so it suffices to show that $\Gamma_\Zar F \to \Gamma_\Nis F$ is an equivalence on Zariski stalks. This translates into the stated condition.}
Now by Proposition \ref{prop:gersten-injectivity} applied to $K(F, i)$, the canonical map $H^i(X_x, F) \to H^i(\eta, F)$ has trivial fiber, where $\eta$ is the generic point of the component of $x$.
But since fields have Nisnevich cohomological dimension $0$, the target set has only one point, whence the same is true for the source.
\end{proof}

\subsection{Some consequences for $\ul\pi_1$} \label{subsec:pi1-1}
Let $X \in \Spc(k)_*$ be connected and set $G = \ul\pi_1 X$.

We first observe that $G$ is unramified.
\begin{lemma} \label{lemm:pi1-unramified}
Let $X \in \Sm_k$ be connected with generic point $\eta$.
Then $G(X) \to G(\eta)$ is injective.
\end{lemma}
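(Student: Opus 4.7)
The plan is to apply the Gersten-type injectivity result, Proposition \ref{prop:gersten-injectivity}, to the loop space $F := \Omega X$. Since $X$ is motivically local and $\A^1$-invariant Nisnevich sheaves of spaces are closed under limits, $F \in \Spc(k)_*$, and by definition $G = \ul\pi_1 X = \ul\pi_0 F$. Because $G$ is a Nisnevich sheaf of groups, an element $g \in G(X)$ with $g|_\eta = e$ is trivial if and only if it becomes trivial on every Nisnevich stalk, i.e.\ $g|_{X_x^h} = e$ for every $x \in X$. So it suffices to reduce the problem to the henselizations of $X$ at its points and apply Proposition \ref{prop:gersten-injectivity} there.

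For a henselian local scheme $Y$ (Nisnevich cohomological dimension zero), the $0$-truncation commutes with global sections, so $G(Y) = \pi_0 F(Y)$. In particular $G(\eta) = \pi_0 F(\eta)$ and $G(X_x^h) = \pi_0 F(X_x^h)$, and by essentially smooth base change (Lemma \ref{lemm:essentially-smooth-bc}) the latter equals $\colim_V \pi_0 F(V_{\tilde x})$, where $(V,\tilde x) \to (X,x)$ ranges over étale neighborhoods. Applying Proposition \ref{prop:gersten-injectivity} to $F$, $V$, and $\tilde x \in V$ (after passing to the connected component of $\tilde x$) yields that each $\pi_0 F(V_{\tilde x}) \to \pi_0 F(\eta_V)$ has trivial fiber; since filtered colimits commute with finite limits this passes to the colimit, giving that $\pi_0 F(X_x^h) \to \pi_0 F(\eta_x^h)$ has trivial fiber, where $\eta_x^h$ is the generic point of the regular (hence integral) local scheme $X_x^h$, a finite separable extension of $\eta$.

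The conclusion follows by a short diagram chase: $g|_\eta = e$ implies $g|_{\eta_x^h} = e$ via functoriality along $\eta_x^h \to \eta$; the trivial-fiber property above then forces $g|_{X_x^h} = e$ for every $x \in X$; and the sheaf condition on $G$ yields $g = e$ in $G(X)$. The main technical point is the upgrade from the Zariski localization $X_x$ appearing in Proposition \ref{prop:gersten-injectivity} to the Henselization $X_x^h$ needed for the sheaf-theoretic conclusion, which is handled by the filtered-colimit argument above; no genuinely new geometric input beyond what is already packaged into Proposition \ref{prop:gersten-injectivity} (ultimately, Gabber's lemma) is required.
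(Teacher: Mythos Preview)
Your proof is correct and follows essentially the same approach as the paper: reduce to henselizations via the sheaf condition, then apply Proposition \ref{prop:gersten-injectivity} to $\Omega X$ to obtain that $G(X_x^h) \to G(\eta_x^h)$ has trivial kernel. You are more explicit than the paper about the filtered-colimit passage from Zariski localizations $V_{\tilde x}$ (where the proposition literally applies) to the henselization $X_x^h$; one minor slip---$\eta_x^h$ is a separable but generally \emph{infinite} algebraic extension of $\eta$---does not affect the argument.
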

\begin{proof}
For $x \in X$ write $\eta_x \in X_x^h$ for the generic point of the henselization.
Consider the commutative square
\begin{equation*}
\begin{CD}
G(X) @>>> \prod_{x\in X} G(X_x^h) \\
@VVV         @VVV \\
G(\eta) @>>> \prod_{x \in X} G(\eta_x).
\end{CD}
\end{equation*}
Applying Proposition \ref{prop:gersten-injectivity} to $\Omega X$ we see that $G(X_x^h) \to G(\eta_x)$ is injective, and so the right hand map is injective.
The top map is injective since $G$ is a sheaf; it follows that the left hand map is injective.
\end{proof}

We want a similar result for $H^1(\ph, G)$.
We begin with the following abstract fact.
\begin{lemma} \label{lemm:compute-coh}
Let $\scr C$ be $\infty$-topos and $G \to E \in \scr C_{\le 0}$ be an injection of group objects.
Suppose that $H^{1}(*,E)=*$.
Then $H^{1}(*,G)\wequi H^{0}(*,E/G)/H^{0}(*,E).$
\end{lemma}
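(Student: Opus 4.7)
The plan is to deloop the inclusion $G \hookrightarrow E$ and extract the bijection from the resulting long fiber sequence. In any $\infty$-topos, a monomorphism of group objects $G \hookrightarrow E$ produces a canonical fiber sequence
\[
E/G \to BG \to BE
\]
in $\scr C_*$. I would take this construction for granted; the intuition is that $BG$ is presented as the action groupoid of $E$ acting on $E/G$ by left translation (the stabilizer of the trivial coset being $G$), and the evident forgetful map to the classifying groupoid $BE$ has fiber $E/G$ over the basepoint.

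Next, I would evaluate at $* \in \scr C$. Since global sections preserve limits, I obtain a fiber sequence of pointed spaces
\[
(E/G)(*) \to BG(*) \to BE(*).
\]
The hypothesis $\pi_0 BE(*) = H^1(*,E) = *$ says $BE(*)$ is connected. Since $E \in \scr C_{\le 0}$, $BE$ lies in $\scr C_{\le 1}$, with $\pi_1 BE(*) = E(*) = H^0(*,E)$; hence $BE(*) \wequi B H^0(*,E)$. So the fiber sequence rewrites as
\[
(E/G)(*) \to BG(*) \to B H^0(*,E).
\]
The standard equivalence between fiber sequences $F \to X \to BH$ and actions of $H$ on $F$ (with $X$ recovered as the homotopy quotient) then presents $BG(*)$ as the homotopy quotient of $(E/G)(*)$ by $H^0(*,E)$, where the action comes from functoriality of global sections applied to the $E$-action on $E/G$. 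Taking $\pi_0$, and using that $E/G \in \scr C_{\le 0}$ so $(E/G)(*)$ is a set, yields
\[
H^1(*,G) \wequi H^0(*,E/G)/H^0(*,E),
\]
as desired.

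The main obstacle is pinning down the fiber sequence $E/G \to BG \to BE$ in the $\infty$-topos setting with its correct $E$-action on $E/G$: this is well known but must be arranged carefully in the $\infty$-categorical language (or deduced from the $\infty$-topos version of torsor classification). Once this ingredient is available, the remainder is purely formal manipulation of $1$-truncated pointed spaces via the connectivity of $BE(*)$.
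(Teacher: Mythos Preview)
Your proposal is correct and follows essentially the same approach as the paper: both take global sections of the fiber sequence $E/G \to BG \to BE$, use connectedness of $\Gamma(BE)$ to identify $\Gamma(BG)$ as a homotopy quotient of $\Gamma(E/G)$ by $\Omega\Gamma(BE) \wequi H^0(*,E)$, and then pass to $\pi_0$. The paper phrases the key step as $\Gamma(BE) \wequi *_{h\Omega\Gamma(BE)}$ together with universality of colimits, whereas you phrase it via the equivalence between fiber sequences over $BH$ and $H$-actions; these are the same maneuver.
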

\begin{proof}
Take global sections in the fiber sequence $E/G\to BG\to BE$.
By assumption $\Gamma (BE)$ is connected and hence $\Gamma (BG)\wequi \Gamma (E/G)_{h\Omega \Gamma (BE)}$ (write $\Gamma (BE) \wequi *_{h \Omega \Gamma (BE)}$ \cite[Lemma 7.2.2.1]{lurie-htt} and use universality of colimits \todo{more explicit reference?}).
The claim follows. 
\end{proof}

Write $G^c$ for the constant sheaf corresponding to $G$, that is, if $U$ is connected with generic point $\eta$, then $G^c(U) = G(\eta)$.
Then $G^c$ is acyclic (see, e.g., Lemma \ref{lemm:acyclic-sheaves}) and by Lemma \ref{lemm:pi1-unramified} the canonical map $G\to G^c$ is injective.
Lemma \ref{lemm:compute-coh} thus applies and we deduce that (note $H^0(U, G^c) \wequi G(\eta) \wequi H^0(U_2, G^c)$) \begin{equation} \label{eq:inj-crit} \begin{gathered}\text{In order for $H^1(U,G) \to H^1(U_2,G)$ to be injective,} \\ \text{it suffices that $H^0(U,G^c/G) \to H^0(U_2, G^c/G)$ is injective.} \end{gathered} \end{equation}

Let $U_1 \subset U_2 \subset U$ be open subsets, with $U_i$ of codimension $i$. From the cofiber sequences \[ U \to U/U_1 \to \Sigma U_1 \to \Sigma U \quad\text{and}\quad U_2/U_1 \to U/U_1 \to U/U_2 \]\NB{add plus subscripts?} we obtain parts of ``long exact sequences'' \[ [\Sigma U, X] \to [\Sigma U_1, X] \to [U/U_1, X] \to [U, X] \quad\text{and}\quad [U/U_2, X] \to [U/U_1, X] \to [U_2/U_1, X]. \]
Here as usual we have to be careful with the meaning of exactness, in that these are in general only sequences of pointed sets.
Consider now the case where $U$ is henselian local and take the further colimit over all $U_1, U_2$.
We can now recognize certain terms in the sequences, which then take the following form: \[ G(U) \to G(\eta) \to \colim [U/U_1, X] \to *, \quad\text{and} \] \[ \colim [U/U_2, X] \xrightarrow{*} \colim [U/U_1, X] \to \prod^w_{x \in U^{(1)}}[\Sigma^{2,1}x, X] =: C^2(U). \]
Here $\prod^w$ means the weak product, i.e. filtered colimit of finite products.
(For the identification of $C^2(U)$, we can argue as follows.
By a cofinality argument, we can assume that $U_2 \setminus U_1 = \amalg_{i=1}^n Z_i$, where $Z_i \subset U_2$ is smooth, connected, and $N_{Z_i/U_2}$ is trivial of rank $1$.
Then $U_2/U_1 \wequi \vee_{i=1}^n \Sigma^{2,1}Z_{i+}$.
In the colimit, the number of $Z_i$ grows, while each $Z_i$ shrinks to its generic point, yielding the claimed expression.)

\begin{lemma} \label{lemm:pi1-map-null}
The map marked $*$ is zero.
\end{lemma}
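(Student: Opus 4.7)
The strategy is to exploit the fact that $U/U_2$ is highly connected (thanks to the codimension-$\geq 2$ gap $U\setminus U_2$), while the target colimit $\colim_{U_1}[U/U_1, X] \wequi G(\eta)/G(U)$ only sees $\pi_1$-level information. Concretely, given a representative $f : U/U_2 \to X$ of a class $\alpha \in [U/U_2, X]$, I want to show the induced map $U/U_1 \to U/U_2 \xrightarrow{f} X$ is null in $\colim_{U_1}[U/U_1,X]$, which I do by factoring $f$ through the $2$-connective cover $X_{\geq 2}$ and showing the relevant colimit of mapping spectra vanishes.

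\emph{Step 1 (2-connectivity of $U/U_2$).} I will first show $U/U_2 \in \Shv_\Nis(\Sm_k)_*$ is $\geq 2$-connective, by induction on $\dim(U\setminus U_2)$. Writing $Z = U\setminus U_2$ (of codimension $\ge 2$), generic smoothness yields $Z_1 \subset Z$ closed with $Z \setminus Z_1$ smooth and $\dim Z_1 < \dim Z$. The cofiber sequence
\[ (U\setminus Z_1)/(U\setminus Z) \to U/(U\setminus Z) \to U/(U\setminus Z_1) \]
has both outer terms $\geq 2$-connective: the left is a Thom space of a rank-$\ge 2$ bundle by purity (\S\ref{subsub:purity}), and the right by induction (the base case $Z = \emptyset$ being trivial). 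Since cofiber sequences of $n$-connective objects have $n$-connective middle (by the long exact sequence of homotopy sheaves), $U/U_2$ is $\geq 2$-connective.

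\emph{Step 2 (Factorization through $X_{\geq 2}$).} Since $U/U_2$ is $\geq 2$-connective, the map $f$ factors (essentially uniquely) through the $2$-connective cover $X_{\geq 2} \hookrightarrow X$ in the $\infty$-topos $\Shv_\Nis(\Sm_k)_*$. Postcomposing with the collapse $U/U_1 \to U/U_2$, the image of $\alpha$ in $[U/U_1, X]$ lies in the image of $[U/U_1, X_{\geq 2}] \to [U/U_1, X]$. It therefore suffices to prove $\colim_{U_1}[U/U_1, X_{\geq 2}] = 0$.

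\emph{Step 3 (Vanishing in the colimit).} The fiber sequence $\Map_*(U/U_1, X_{\geq 2}) \to X_{\geq 2}(U) \to X_{\geq 2}(U_1)$ combined with the descent spectral sequence and henselianness of $U$ yields $\pi_0 X_{\geq 2}(U) = *$ and $\pi_1 X_{\geq 2}(U) = 0$ (since $H^p(U, -) = 0$ for $p>0$ on a henselian local, and $\ul\pi_0 X_{\geq 2} = \ul\pi_1 X_{\geq 2} = 0$). The long exact sequence gives $[U/U_1, X_{\geq 2}] = \pi_1 X_{\geq 2}(U_1)$. Passing to the colimit, $\colim_{U_1} X_{\geq 2}(U_1) = X_{\geq 2}(\eta)$, and since Nisnevich stalks preserve connectivity in an $\infty$-topos, $X_{\geq 2}(\eta)$ is $2$-connected. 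Hence $\colim \pi_1 X_{\geq 2}(U_1) = \pi_1 X_{\geq 2}(\eta) = 0$, completing the proof.

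\emph{Main obstacle.} The key technical step is Step 3, specifically the identification of $\pi_1$ of section spaces in the colimit with $\pi_1$ of Nisnevich stalks. This depends on filtered colimits of sections computing stalks at the generic point and on $\pi_1$ commuting with filtered colimits of spaces. Step 1, while routine, must be carried out carefully to avoid implicitly invoking the as-yet-unproven main theorem (i.e., the ``Corollary+'' form of higher connectivity).
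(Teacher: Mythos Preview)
Your Step 1 contains a fatal error. The object $U/U_2$ is \emph{not} $2$-connective in $\Shv_\Nis(\Sm_k)$, and your argument cannot establish that it is. The purity equivalence (Theorem \ref{thm:purity}) holds only in $\Spc(k)$, not in $\Shv_\Nis(\Sm_k)$, so the identification of $(U\setminus Z_1)/(U\setminus Z)$ with a Thom space is only valid after motivic localization. Worse, the Thom space itself is not $2$-connective in $\Shv_\Nis(\Sm_k)$: already $\Th(\scr O^2) = \A^2/(\A^2 \setminus 0)$ fails to be connected there, since $\ul\pi_0(\A^2/(\A^2\setminus 0)) = * \amalg_{\A^2\setminus 0} \A^2$ in sheaves of sets, and its stalk at $\Spec k$ has two distinct elements (the collapsed base point and the class of the origin $0\in\A^2$). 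Thus your inductive argument breaks down immediately.

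If you instead try to argue that $L_\mot(U/U_2)$ is $2$-connective in $\Shv_\Nis(\Sm_k)$---which is what you would actually need, since $X$ is motivically local and you want to factor through the Nisnevich-topos cover $X_{\ge 2}$---you run straight into the circularity you yourself flagged. This is an instance of the unstable connectivity theorem (Corollary \ref{corr:unstable-connectivity}), which depends on Theorem \ref{thm:pi1-strongly-inv}, and Lemma \ref{lemm:pi1-map-null} is a step in the proof of the latter. Even knowing that $L_\mot(\Sigma^2\Gmp{2})$ is simply connected already requires these results.

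The paper's argument is entirely different and geometric: it applies Gabber's lemma to $(\tilde U, Z_2, z)$ to obtain an étale neighborhood $\varphi\colon \tilde U \to \A^1_S$ of $Z_2$, sets $Z_1^0 = \varphi^{-1}(\A^1_V)$ where $V\subset S$ is the finite image of $Z_2$, and proves directly that $\tilde U/(\tilde U\setminus Z_1^0) \to \tilde U/(\tilde U\setminus Z_2)$ is null in $\Spc(k)_*$ via an explicit $\A^1$-homotopy. Passing to $\P^1_S$, the relevant composite is induced by the zero section $S \to \P^1_S$, which is $\A^1$-homotopic to the section at $\infty$; the latter yields the null map since $\infty$ avoids $Z_2\subset \A^1_S$. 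No connectivity or obstruction-theoretic input is used.
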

\begin{proof}
Let $\tilde U$ be a smooth scheme, $Z_2 \subset \tilde U$ closed of codimension $2$, $z \in Z_2$.
We shall show that after shrinking $\tilde U$ around $z$, there exists $Z_2 \subset Z_1^0 \subset \tilde X$ with $Z_1$ of codimension $1$, such that $U/U \setminus Z_1^0 \to U/U \setminus Z_2$ becomes null homotopic in $\Spc(k)_*$.
Note that if $Z_1^0 \subset Z_1 \subset \tilde U$ then $U/U \setminus Z_1 \to U/U \setminus Z_2$ factors through $U/U \setminus Z_1^0 \to U/U \setminus Z_2$ and hence is also null.
Taking the colimit over schemes $\tilde U$ approximating $U$, all $Z_1$ and all (sufficiently large) $Z_2$ proves the result.

We now apply Gabber's lemma (Theorem \ref{thm:gabber-lemma}) to $(\tilde U, Z_2, z)$.\NB{This is basically redoing Proposition \ref{prop:gersten-injectivity}, but with a slicker argument.}
Up to shrinking $U$, we thus obtain an étale neighborhood $\varphi: U \to \A^1_S$ of $Z_2$.
Write $V$ for the image of $Z_2$ in $S$, which is a closed subscheme of codimension $\ge 1$.
Put $Z_1^0 = \varphi^{-1}(\A^1_V)$; this is codimension $\ge 1$ and contains $Z_2$.
The commutative square
\begin{equation*}
\begin{CD}
@. U/U \setminus Z_1^0 @>>> U/U \setminus Z_2 \\
@. @VVV                       @VV{\wequi}V \\
S/S\setminus V @>{0}>{\wequi}> \A^1_S/\A^1_S \setminus \A^1_V @>>> \A^1_S/\A^1_S \setminus Z_2 @>{\wequi}>> \P^1_S/\P^1_S \setminus Z
\end{CD}
\end{equation*}
shows that it suffices to prove the bottom composite is null (note that $Z \subset \P^1_S$ is closed, $Z \to S$ being finite).
Let $\lambda: S \to \P^1_S$ be any section.
Noting that $Z \subset \P^1_V$ we see that $\lambda$ induces a map $\lambda': S/S \setminus V \to \P^1_S/\P^1_S \setminus Z$.
For $\lambda = 0$, this is the map we are trying to prove is null.
For $\lambda = \infty$, the image of $\lambda$ is disjoint from $Z$ and hence $\lambda'$ is null.
Since any two maps $\lambda$ are $\A^1$-homotopic (via a linear homotopy), this concludes the proof.
\end{proof}

\begin{proposition} \label{prop:inj-H1}
For an open immersion $U_2 \to U$ with complement of codimension $\ge 2$, the canonical map $H^1(U,G) \to H^1(U_2,G)$ is injective.
\end{proposition}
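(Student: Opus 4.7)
The plan is to apply the criterion (\ref{eq:inj-crit}), reduce to a henselian local base by checking on stalks, and then exploit Lemma \ref{lemm:pi1-map-null} together with the fact that $C^2$ depends only on codimension-$1$ data.

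First, by (\ref{eq:inj-crit}), it suffices to show that $H^0(U, G^c/G) \to H^0(U_2, G^c/G)$ has trivial fiber over the basepoint. Since $G^c/G$ is a sheaf of pointed sets, a section is trivial iff all its stalks are. For $s$ in the kernel, the stalks $s_x$ at $x \in U_2$ are immediately trivial. For $x \in U \setminus U_2$, the stalk lies in $(G^c/G)(\tilde U)$ and restricts to $0$ over $V := U_2 \times_U \tilde U$, where $\tilde U := U_x^h$ is henselian local essentially smooth and $\tilde U \setminus V$ has codimension $\ge 2$. Thus it suffices to prove: for any such $\tilde U$ and open $V \subset \tilde U$ whose complement has codimension $\ge 2$, the restriction $(G^c/G)(\tilde U) \to (G^c/G)(V)$ has trivial fiber.

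Next, I would combine the identification $(G^c/G)(\tilde U) \wequi \colim_{\tilde U_1}[\tilde U/\tilde U_1, X]$ with the exact sequence of pointed sets
\[ \colim [\tilde U/\tilde U_2, X] \to \colim_{\tilde U_1} [\tilde U/\tilde U_1, X] \to C^2(\tilde U) \]
from the discussion preceding Lemma \ref{lemm:pi1-map-null}. That lemma states that the leftmost map is zero, and hence the induced map $\alpha: (G^c/G)(\tilde U) \to C^2(\tilde U)$ has trivial fiber over the basepoint.

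Finally, I would factor $\alpha$ through the restriction to $V$. Since $\tilde U \setminus V$ has codimension $\ge 2$, one has $\tilde U^{(1)} = V^{(1)}$, and for any $y$ in this common set $\tilde U_y = V_y$. Hence the projection of $\alpha$ onto the $y$-th factor of $C^2(\tilde U) = \prod^w_{y \in \tilde U^{(1)}}[\Sigma^{2,1} y, X]$ factors through the localization at $y$:
\[ (G^c/G)(\tilde U) \to (G^c/G)(V) \to (G^c/G)(V_y) = (G^c/G)(\tilde U_y) \to [\Sigma^{2,1} y, X]. \]
Assembling over all $y$, the map $\alpha$ factors as $(G^c/G)(\tilde U) \to (G^c/G)(V) \to C^2(\tilde U)$. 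Since $\alpha$ has trivial fiber, so does the first arrow, as desired. The only non-formal input is Lemma \ref{lemm:pi1-map-null}; everything else is a sheaf-theoretic reduction and a naturality argument.
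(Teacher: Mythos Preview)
Your approach is essentially the same as the paper's --- both hinge on Lemma \ref{lemm:pi1-map-null} and the observation that $C^2$ only sees codimension-$1$ points --- but there is one genuine gap in your reduction step.

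You write that by \eqref{eq:inj-crit} it suffices to show the map $H^0(U, G^c/G) \to H^0(U_2, G^c/G)$ has \emph{trivial fiber}. But \eqref{eq:inj-crit} requires this map to be \emph{injective}, and for a map of pointed sets these are not the same. Since $G$ need not be normal in $G^c$, the sheaf $G^c/G$ is only a sheaf of pointed sets, and $(G^c/G)(U)$ is in general not a single $G(\eta)$-orbit (it surjects onto $H^1(U,G)$), so one cannot pass from trivial fiber to injectivity by a formal trick at the level of global sections.

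The good news is that your stalk-local argument actually does prove injectivity, once one notes the following: for $\tilde U$ henselian local, the restriction map factors as
\[
(G^c/G)(\tilde U) = G(\eta)/G(\tilde U) \twoheadrightarrow G(\eta)/G(V) \hookrightarrow (G^c/G)(V),
\]
where the second arrow is injective because its fiber over the basepoint is exactly $G(V)/G(V)$ (from the exact sequence $G(V) \to G^c(V) \to (G^c/G)(V)$). Thus the composite has trivial fiber iff $G(\tilde U) = G(V)$ iff it is injective. This is precisely what the paper spells out as ``$C^2(U)$ has a natural $G(\eta)$-action making $\alpha$ equivariant, and the source is a single orbit, so trivial fiber implies injective.'' You should add this sentence; once you have injectivity on stalks, injectivity for general $U$ follows from the sheaf condition (two sections agreeing on all stalks are equal), and then \eqref{eq:inj-crit} applies.

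A minor imprecision: your factorization through $(G^c/G)(\tilde U_y)$ is not quite right as stated, since $\tilde U_y$ is only Zariski-local and there is no obvious map $(G^c/G)(\tilde U_y) \to [\Sigma^{2,1} y, X]$ on the sheaf level. What you actually need (and what your argument shows) is that $\alpha_y$ factors through the presheaf quotient $G(\eta)/G(V)$; equivalently, use the henselization $\tilde U_y^h$ in place of $\tilde U_y$. This is cosmetic and does not affect the substance.
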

\begin{proof}
It follows from Lemma \ref{lemm:pi1-map-null} that $G(\eta)/G(U) \to C^2(U)$ has trivial fiber.
Moreover, $C^2(U)$ has a natural action by $G(\eta)$\todo{details} which makes this map equivariant, and thus since the source consists of a single orbit, the map is actually injective.
Recall that here $U$ was henselian local.

Now we consider general $U$ again.
By considering the henselizations in all points, we deduce that \[ H^0(U, G^c/G) \to C^2(U) := \prod^w_{x \in U^{(1)}}[\Sigma^{2,1}x, U] \] is injective.
Since $C^2(U) \wequi C^2(U_2)$, the desired result follows via \eqref{eq:inj-crit}.
\end{proof}

\subsection{Main result}\label{subsec:pi1-2}
\begin{remark}
Let $F$ be a presheaf of sets on $\Sm_k$, and $X \in \Sm_k$.
We have the retraction $X \to \A^1 \times X \to X$ consisting of inclusion at $1$ (say) and projection.
It follows that the following three statements are equivalent:
\begin{enumerate}
\item $F(X) \wequi F(\A^1 \times X)$
\item $F(X) \to F(\A^1 \times X)$ is surjective
\item $F(\A^1 \times X) \to F(X)$ is injective.
\end{enumerate}
We will use this repeatedly in the sequel.
\end{remark}

\begin{lemma} \label{lemm:pi1-obstruction}
Let $\scr X$ be an $\infty$-topos, $X \in \scr X$ and $U \in \scr X$ an object of homotopy dimension $\le n+1$.
The map $[U, X] \to [U, \tau_{\le n} X]$ is surjective.
\end{lemma}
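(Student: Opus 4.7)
My plan is to proceed by obstruction theory, exploiting the fact that the canonical map $p\colon X \to \tau_{\le n} X$ is $(n+1)$-connective (in Lurie's sense: its fibers have vanishing $\pi_i$ for $i \le n$). Given any class $[f] \in [U, \tau_{\le n} X]$ represented by some morphism $f\colon U \to \tau_{\le n} X$, I want to produce a lift $\tilde f\colon U \to X$ with $p \tilde f \simeq f$.

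The key step is to form the pullback
\begin{equation*}
\begin{CD}
Y @>>> X \\
@V{q}VV @VV{p}V \\
U @>f>> \tau_{\le n} X.
\end{CD}
\end{equation*}
Since connectivity of morphisms is stable under base change in an $\infty$-topos, the map $q\colon Y \to U$ is again $(n+1)$-connective. A lift $\tilde f$ of $f$ is precisely the datum of a section of $q$, so it suffices to produce such a section.

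This is where the hypothesis on $U$ is used: by assumption $U$ has homotopy dimension $\le n+1$, which (interpreted in the slice topos $\scr X_{/U}$) means exactly that every $(n+1)$-connective morphism to $U$ admits a section. Applying this to $q$ yields the desired section $s\colon U \to Y$, and the composite $U \xrightarrow{s} Y \to X$ is the required lift.

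I do not expect any serious obstacle: the argument is entirely formal once one unwinds the definitions of $n$-connective morphism and homotopy dimension of an object. The only point that deserves a brief justification in the write-up is that $p\colon X \to \tau_{\le n} X$ is $(n+1)$-connective and that this property is preserved under pullback, both of which are standard facts from \cite[\S6.5.1]{lurie-htt}.
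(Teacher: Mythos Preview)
Your proposal is correct and essentially identical to the paper's proof: both fix a map $f\colon U \to \tau_{\le n} X$, pull $X$ back along it, use stability of $(n+1)$-connectivity under base change, and then invoke the definition of homotopy dimension in $\scr X_{/U}$ to produce a section. The only difference is phrasing---the paper says ``working in $\scr X_{/U}$ we may assume $U = *$ and $X$ is $n$-connected,'' whereas you spell out the section-lifting explicitly---but the mathematical content is the same.
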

\begin{proof}
Pulling back along a fixed map $U \to \tau_{\le n} X$ and working in $\scr X_{/U}$, we may assume that $U=*$ and $X$ is $n$-connected (use that $n$-connected maps are stable under base change \cite[Proposition 6.5.1.6.6(6)]{lurie-htt}).
By the definition of homotopy dimension \cite[Definition 7.2.1.1]{lurie-htt}, it follows that $[*, X] \ne \emptyset$, as needed.
\end{proof}

\begin{lemma} \label{lemm:A1-inv-dimsmall}
Let $X \in \Spc(k)$ be connected and $U$ an essentially smooth scheme.
Set $G = \ul\pi_1 X$.
\begin{enumerate}
\item If $\dim U \le 1$ then $H^1(U, G) \wequi H^1(U \times \A^1, G)$.
\item If $\dim U = 0$ then $G(U) \wequi G(\A^1 \times U)$.
\end{enumerate}
\end{lemma}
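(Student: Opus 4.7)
The plan is to use the preceding remark to reduce to proving surjectivity of the restriction maps $G(U) \to G(U \times \A^1)$ and $H^1(U, G) \to H^1(U \times \A^1, G)$. The common tool for both parts is the Postnikov fiber sequence
\[ X_{>1} \to X \to \tau_{\le 1} X \wequi K(G,1) \]
in $\Shv_\Nis(\Sm_k)_*$ (using that $X$ is connected, so $\tau_{\le 1} X$ has only one nonzero homotopy sheaf, namely $G = \ul\pi_1 X$ in degree $1$). Since $\ul\pi_0 X_{>1} = \ul\pi_1 X_{>1} = *$, the sheaf $X_{>1}$ is $2$-connective, and combined with Theorem \ref{thm:coh-dim} (and Lemma \ref{lemm:essentially-smooth-bc} in the essentially smooth case) this gives that $\Map(V, X_{>1})$ is $(2-d)$-connective whenever $V$ is an essentially smooth scheme of Krull dimension $\le d$.

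For (2), I take $V = \A^1 \times U$, so $\dim V \le 1$ and hence $\pi_0 X_{>1}(V) = *$. The basepointed long exact sequence of the fiber sequence evaluated at $V$ then yields a surjection $\pi_1 X(V) \twoheadrightarrow \pi_1 K(G,1)(V) = G(V)$. After reducing to the case where $U$ is a disjoint union of $\Spec K$'s for separable finitely generated field extensions $K/k$ (so that $U$ is Nisnevich local), the $\A^1$-invariance of $X$ gives $\pi_1 X(V) = \pi_1 X(U) = G(U)$; by naturality this composite is the restriction map.

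For (1), I take $V = \A^1 \times U$ with $\dim V \le 2$, so $V$ has Nisnevich homotopy dimension $\le 2$. The key input is now Lemma \ref{lemm:pi1-obstruction}, which with $n = 1$ gives that $[V, X] \to [V, K(G,1)] = H^1(V, G)$ is surjective. The $\A^1$-invariance of $X$ on essentially smooth schemes identifies $[U, X] \wequi [V, X]$, and a naturality square
\[ \begin{CD} [U, X] @>>> H^1(U, G) \\ @V{\wequi}VV @VVV \\ [V, X] @>>{\text{surj.}}> H^1(V, G) \end{CD} \]
forces $H^1(U, G) \to H^1(V, G)$ to be surjective as well.

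The main bookkeeping subtlety to keep straight is the identification $\pi_1 K(G,1)(V) = G(V)$ at the trivial basepoint (together with $[V, K(G,1)] = H^1(V, G)$ as a pointed set of torsors); beyond this, both parts reduce to the standard connectivity-of-mapping-space principle applied to the Postnikov fiber sequence, combined with the retraction trick from the remark.
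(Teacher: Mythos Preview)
Your proof is correct and follows essentially the same route as the paper. For (1) you invoke Lemma~\ref{lemm:pi1-obstruction} with $n=1$ exactly as the paper does; for (2) the paper applies Lemma~\ref{lemm:pi1-obstruction} with $n=0$ to $\Omega X$, whereas you unwind this by hand via the long exact sequence of the fiber sequence $X_{>1} \to X \to K(G,1)$ --- but looping that fiber sequence gives precisely $\Omega X \to \tau_{\le 0}\Omega X = G$, so the two arguments are the same computation.
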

\begin{proof}
We know that $U \times \A^1 \in \Shv_\Nis(\Sm_k)$ has homotopy dimension $\le \dim U + 1$ (combine the bound on the Nisnevich homotopy dimension by the Krull dimension from Theorem \ref{thm:coh-dim} with the fact that $\dim \A^1_U) = \dim U + 1$\NB{ref?}).

(1) Applying Lemma \ref{lemm:pi1-obstruction} with $n=1$ to $X \in \Shv_\Nis(\Sm_k)$ and $U \times \A^1$, we learn that the top horizontal map in the following commutative diagram is surjective
\begin{equation*}
\begin{CD}
[U \times \A^1, X] @>>> [U \times \A^1, \tau_{\le 1} X] \wequi H^1(U \times \A^1, G) \\
@AAA                         @AAA \\
[U , X] @>>> [U , \tau_{\le 1} X] \wequi H^1(U , G). \\
\end{CD}
\end{equation*}
Since $X$ is $\A^1$-invariant, the left hand map is surjective (in fact, an isomorphism), and hence the right hand map is surjective.

(2) Reason similarly by applying Lemma \ref{lemm:pi1-obstruction} with with $n=0$ to $\Omega X \in \Shv_\Nis(\Sm_k)$ and $U \times \A^1$.
\end{proof}

\begin{theorem} \label{thm:pi1-strongly-inv-forreal}
Let $k$ be any field and $X \in \Spc(k)_*$.
Then $\ul\pi_1 X$ is strongly $\A^1$-invariant.
\end{theorem}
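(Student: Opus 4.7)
The plan is to reduce to the case $X$ connected via Lemma \ref{lemm:conn-cpt-of-basepoint} (which does not change $\ul\pi_1$), set $G := \ul\pi_1 X$, and then verify separately that $G(Y) \simeq G(Y \times \A^1)$ and $H^1(Y, G) \simeq H^1(Y \times \A^1, G)$ for every $Y \in \Sm_k$; together these are equivalent to strong $\A^1$-invariance of $G$ by the characterization lemma in \S1.3. In each case the zero section $s_0 \colon Y \to Y \times \A^1$ splits the projection $p$, so $p^*$ is split injective and the content is surjectivity.

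For the $H^0$-invariance, assume $Y$ connected with generic point $\eta$. Lemma \ref{lemm:pi1-unramified} applied to $Y$ gives $G(Y) \hookrightarrow G(\eta)$; applied to $Y \times \A^1$ together with the factorization through the intermediate subscheme $\A^1_\eta$ (whose generic point coincides with that of $Y \times \A^1$), it gives $G(Y \times \A^1) \hookrightarrow G(\A^1_\eta)$. Lemma \ref{lemm:A1-inv-dimsmall}(2) applied to the zero-dimensional essentially smooth scheme $\eta$ provides an isomorphism $p^* \colon G(\eta) \xrightarrow{\simeq} G(\A^1_\eta)$ with inverse $s_0^*$. For $g \in G(Y \times \A^1)$, the elements $g$ and $p^* s_0^* g$ have the same restriction to $\A^1_\eta$ (by the $0$-dimensional isomorphism together with the compatibility of restriction with $p^*, s_0^*$), hence agree in $G(Y \times \A^1)$ by the injection above, yielding surjectivity.

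The $H^1$-invariance is what I expect to be the main obstacle. The case $\dim Y \le 1$ is Lemma \ref{lemm:A1-inv-dimsmall}(1), which is obtained by combining the obstruction Lemma \ref{lemm:pi1-obstruction} (applicable because $Y \times \A^1$ then has Nisnevich homotopy dimension $\le 2$) with the $\A^1$-invariance of $X$. For arbitrary $Y$, my plan is the following: given a torsor $T$ representing a class in $H^1(Y \times \A^1, G)$, form the sheaf of $G$-equivariant isomorphisms between $T$ and $p^* s_0^* T$; this is a torsor under the inner form $G^{s_0^* T}$ of $G$ which is trivial over the zero section $Y$, and the goal is to show it is globally trivial on $Y \times \A^1$. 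Nisnevich-locally on $Y$ the class $s_0^* T$ trivializes so $G^{s_0^* T}$ becomes $G$, and one aims to combine the already-established $H^0$-invariance with the codimension-$\ge 2$ injectivity of Proposition \ref{prop:inj-H1} to conclude---mirroring the Nisnevich-local reduction in the proof of the characterization lemma in \S1.3. The principal difficulty is that removing codimension-$\ge 2$ subsets does not lower the Krull dimension, so a naive induction on $\dim Y$ does not work directly; executing the local-to-global step cleanly while controlling the inner twist $G^{s_0^* T}$ is where the argument should be most delicate.
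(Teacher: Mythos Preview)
Your $H^0$ argument is correct and is exactly the $i=0$ case of the paper's proof, phrased slightly differently. The gap is in your $H^1$ argument: the torsor/inner-form detour is unnecessary, and the difficulty you identify is based on a misconception.

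You write that ``removing codimension-$\ge 2$ subsets does not lower the Krull dimension.'' Removing \emph{one} such subset does not, but removing \emph{all} of them does. Concretely, let $U^{(\le 1)}$ be the essentially smooth scheme obtained from $U$ by removing every closed subscheme of codimension $\ge 2$ (i.e.\ the cofiltered limit over all such opens). Its points are exactly the points of $U$ of codimension $\le 1$, so $\dim U^{(\le 1)} \le 1$. Now run the \emph{same} square you used for $H^0$, with $i=1$:
\[
\begin{CD}
H^1(\A^1 \times U, G) @>>> H^1(\A^1 \times U^{(\le 1)}, G) \\
@VVV @VVV \\
H^1(U, G) @>>> H^1(U^{(\le 1)}, G).
\end{CD}
\]
The top map is injective by Proposition~\ref{prop:inj-H1} (the complement $\A^1 \times (U \setminus U^{(\le 1)})$ has codimension $\ge 2$ in $\A^1 \times U$), and the right-hand map is injective by Lemma~\ref{lemm:A1-inv-dimsmall}(1) since $\dim U^{(\le 1)} \le 1$. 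Hence the left-hand map is injective, which is what you need.

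So the paper's proof is literally the uniform square argument for $i \in \{0,1\}$: Lemma~\ref{lemm:pi1-unramified} handles the top map for $i=0$, Proposition~\ref{prop:inj-H1} handles it for $i=1$, and Lemma~\ref{lemm:A1-inv-dimsmall} handles the right-hand map in both cases. You had all the ingredients; you just did not see that the $H^0$ strategy transports verbatim to $H^1$ once you pass to $U^{(\le 1)}$ rather than trying to stay on $U$.
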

\begin{proof}
By Lemma \ref{lemm:conn-cpt-of-basepoint}, we may assume $X$ connected.
Set $G = \ul\pi_1 X$.
Let $U \in \Sm_k$ be connected.
We shall show that $H^i(\A^1 \times U, G) \to H^i(U, G)$ is injective for $i \le 1$.
Thus let $U^{(\le 0)} = U^{(0)}$ be the set of generic points of $U$, and $U^{(\le 1)}$ the set of points of dimension $\le 1$.
In other words $U^{(0)}$ is the essentially smooth scheme obtained by removing all closed subschemes of positive codimension, and $U^{(\le 1)}$ is obtained similarly by removing all closed subschemes of codimension $\ge 2$.
Lemma \ref{lemm:pi1-unramified} and Proposition \ref{prop:inj-H1} imply that in the following commutative diagram
\begin{equation*}
\begin{CD}
H^i(\A^1 \times U, G) @>>> H^i(\A^1 \times U^{(\le i)}, G) \\
@VVV @VVV \\
H^i(U, G) @>>> H^i(U^{(\le i)}, G) \\
\end{CD}
\end{equation*}
the top horizontal map is injective.
The right hand horizontal map is injective by Lemma \ref{lemm:A1-inv-dimsmall}.
It follows that the left hand horizontal map is injective, as needed.
\end{proof}

\section{Naive Milnor--Witt $K$-theory} \label{sec:MWn}
\begin{definition}\label{def:naive-KMW}
We define a sheaf of graded (non-commutative) rings $\KMWn_*$ on $\Sm_k$ as generated by a copy of $\Gm$ in degree 1, with elements denoted like $[a]$, and an element $\eta$ in degree -1, subject to the following relations:
\begin{itemize}
\item (\textit{Steinberg relation.}) For $a \in \scr O(X)^\times$ with $1-a \in \scr O(X)^\times$ we require $[a][1-a] = 0$.
\item (\textit{Logarithmic relation.}) For $a,b \in \scr O(X)^\times$ we require $[ab] = [a] + [b] + \eta[a][b]$.
\item (\textit{Centrality relation.}) For $a \in \scr O(X)^\times$ we require $\eta[a] = [a]\eta$.
\item (\textit{Hyperbolic relation.}) $\eta(2 + \eta[-1]) = 0$.
\end{itemize}
We call $\KMWn_*$ the \emph{naive Milnor--Witt $K$-theory sheaf}.

For $a \in \scr O(X)^\times$, the element $1 + \eta[a] \in \KMWn_0(X)$ is denoted by $\lra{a}$.
\end{definition}

\begin{remark} \label{rmk:KMW-naive-vs-not}
There is a sheaf of graded rings $\ul{K}_*^{MW}$ defined in \cite{A1-alg-top}, which is called \emph{unramified Milnor--Witt $K$-theory}.
There is a map $\KMWn_* \to \ul{K}_*^{MW}$ which induces an isomorphism on sections over fields.
It is an isomorphism of sheaves if $k$ is infinite of characteristic $\ne 2$ \cite[Theorem 6.3]{KMW-for-local-rings}.
I believe the assumption that $char(k) \ne 2$ is unnecessary for this, but I do not have a proof at the moment.
Infinitude of $k$ is, however, necessary (for the same reason that it is necessary in ordinary Milnor $K$-theory \cite{kerz-improved-milnor-K}).
In order to forgo the complications on the characteristic and/or infinitude of $k$, in this text we shall only use $\KMWn$.
\end{remark}

\subsubsection*{Outline}
In \S\ref{subsec:KMWn-1}, we outline some easy consequences of the above definition.
Our goal for the remainder of this section is to relate $\KMWn_*$ to motivic homotopy theory.
The key idea is that $\KMWn_n$ for $n \ge 1$ should be close to the free strongly $\A^1$-invariant sheaf of abelian groups under $\Gmp{n}$.
To prove this, we must find analogs of the defining relations for $\KMWn_*$ ``in the wild''.
We begin with this in \S\ref{subsec:hopf} where we show, for example, that the two possible ``Hopf maps'' $\Sigma \Gmp{3} \to \Sigma \Gmp{2}$ (roughly speaking, these are given by $(x,y,z) \mapsto (xy, z)$ and $(x,y,z) \mapsto (x, yz)$, respectively) are homotopic over any base.
Then in \S\ref{subsec:KMWn-universal} we utilize this, together with the proof of the Steinberg relation due to Hu--Kriz--Hoyois, to establish our desired universal property for $\KMWn_*$.

\subsection{First consequences} \label{subsec:KMWn-1}
We summarize some easy consequences of the relations.
\begin{lemma} \label{lemm:KMWn-basics}
\begin{enumerate}
\item We have $[1] = 0$ and $\lra{1} = 1$.
\item We have $\lra{a}\lra{b} = \lra{ab}$.
\item The sheaves $\KMWn_i$ for $i \le 0$ are generated by the elements $\eta^{-i} \lra{a}$.
  The ring $\KMWn_0$ is commutative and central in $\KMWn_*$.
\item The sheaves $\KMWn_i$ for $i>0$ are generated by elements $[a_1] \dots [a_i]$.
\end{enumerate}
\end{lemma}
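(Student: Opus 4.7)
The plan is to handle the four items in the order (2), (1), (3), (4): the direct computation for (2) feeds into a bootstrapping argument for (1), and once (1) is in hand the arguments for (3) and (4) reduce to rewriting an arbitrary monomial in the generators $\eta,[a_1],[a_2],\ldots$ into a stated normal form using the centrality of $\eta$.

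For (2) the logarithmic relation together with centrality of $\eta$ give the direct expansion
\[ \langle a \rangle \langle b \rangle = (1+\eta[a])(1+\eta[b]) = 1+\eta[a]+\eta[b]+\eta^2[a][b] = 1 + \eta[ab] = \langle ab \rangle. \]
For (1) the strategy is to first exhibit $[1]$ and $1-\langle 1 \rangle$ as annihilators and then use the hyperbolic relation to force them to vanish. Substituting $a=1$ into the logarithmic relation and rearranging with centrality of $\eta$ yields $[1]\langle b \rangle = 0$ for every $b$, and symmetrically $\langle a \rangle[1] = 0$. From (2) we have $\langle -1 \rangle = \langle -1 \rangle \langle 1 \rangle$, so $\langle -1 \rangle \eta[1] = 0$. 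On the other hand, the hyperbolic relation rearranges as $\eta \langle -1 \rangle = \eta + \eta^2[-1] = -\eta$, and since $\eta$ commutes with $\langle -1 \rangle$ (an immediate consequence of $\eta[-1]=[-1]\eta$), we deduce $0 = \langle -1 \rangle \eta[1] = \eta \langle -1 \rangle [1] = -\eta[1]$, so $\eta[1]=0$ and thus $\langle 1 \rangle = 1$. Substituting $b=1$ into $[1]\langle b \rangle = 0$ now gives $[1]=0$.

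For (3) and (4), centrality of $\eta$ allows one to rewrite any monomial in the generators as $\eta^n[a_1][a_2]\cdots[a_m]$, which has degree $m-n$. For (3), if $i = m-n \le 0$ one writes this as $\eta^{-i}\cdot \eta^m[a_1]\cdots[a_m] = \eta^{-i}(\langle a_1 \rangle - 1)(\langle a_2 \rangle - 1)\cdots(\langle a_m \rangle - 1)$, and expanding together with (2) gives a $\Z$-linear combination of terms $\eta^{-i}\langle c \rangle$. Commutativity of $\KMWn_0$ follows immediately from (2), and centrality of $\langle a \rangle$ in $\KMWn_*$ reduces to $\langle a \rangle [b] = [b]\langle a \rangle$, which amounts to $\eta[a][b] = \eta[b][a]$; this last equality follows by comparing the logarithmic expansions of $[ab]$ and $[ba]$. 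For (4), when $i = m-n>0$ we induct on $n$. The base $n=0$ is the stated form, and for $n\ge 1$ we use $\eta[a_1]=\langle a_1 \rangle - 1$ to write
\[ \eta^n[a_1]\cdots[a_m] = \eta^{n-1}\langle a_1\rangle [a_2]\cdots[a_m] - \eta^{n-1}[a_2]\cdots[a_m], \]
then collapse $\langle a_1 \rangle [a_2] = [a_1 a_2] - [a_1]$ (itself a restatement of the logarithmic relation) to reduce the first summand to monomials of the same degree with strictly fewer $\eta$ factors, so the induction closes.

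The hard step is the bootstrap in (1): no single defining relation by itself yields $[1]=0$ or $\langle 1 \rangle = 1$, and the crucial observation is that the hyperbolic relation provides precisely the information needed to annihilate the idempotent $1-\langle 1 \rangle$ produced by (2). Once this identity $\eta \langle -1 \rangle = -\eta$ is extracted, the remaining steps amount to keeping track of how $\eta$, $[a]$, and $\langle a \rangle$ interact under the defining relations.
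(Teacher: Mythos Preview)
Your proof is correct and follows essentially the same approach as the paper's: part (2) is identical, part (3) and the centrality argument are the same (the key observation $\eta[a][b]=\eta[b][a]$ from $[ab]=[ba]$ appears in both), and in part (4) you make explicit the induction that the paper only sketches with the phrase ``using the logarithmic relation we can get rid of the extra factors of $\eta$.'' The only genuine difference is in (1): the paper multiplies the hyperbolic relation by $[1]$ to obtain $(\langle 1\rangle-1)(1+\langle -1\rangle)=0$ directly and then expands via (2), whereas you first extract $[1]\langle b\rangle=0$ from the logarithmic relation and then combine $\langle -1\rangle\eta[1]=0$ with the consequence $\eta\langle -1\rangle=-\eta$ of the hyperbolic relation. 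Both routes use the same three ingredients (logarithmic, part (2), hyperbolic) and arrive at $\langle 1\rangle=1$ before deducing $[1]=0$; yours is slightly more roundabout but perfectly valid.
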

\begin{proof}
We can rewrite the logarithmic relation as $[ab] = [a] + \lra{a}[b]$ and shall use this several times below.

(2) Multiplying the above expression by $\eta$ yields $\lra{ab}-1=\lra{a}-1 + \lra{a}(\lra{b}-1)$, which is the required relation.

(1) Multiplying the hyperbolic relation by $[1]$ (on the left) yields $(\lra{1}-1)(1+\lra{-1}) = 0$.
Multiplying out the left hand side using (1) yields $\lra{1}-1 = 0$.
Now applying the logarithmic relation $[1] = [1\cdot 1] = [1] + \lra{1}[1]$ yields $[1]=0$.

(4) The sheaves $\KMWn_i$ for $i \ge 0$ are clearly generated by expressions of the form $[a_1] \dots [a_{i+d}] \eta^d$.
Using the logarithmic relation, for $i>0$ we can get rid of the extra factors of $\eta$.

(3) The above argument shows that $\KMWn_0$ is generated by elements of the form $\eta[a] = \lra{a}-\lra{1}$, and the sheaves $\KMWn_i$ for $i<0$ are generated by $\eta^{-i}$-multiples of these.
To show that $\KMWn_0$ is central (in particular, commutative), it suffices to show that $\lra{a}$ commutes with the generators $[b]$ and $\eta$.
For $\eta$ this is true by definition.
For $[b]$, observe that the logarithmic relation implies $\eta[a][b] = \eta[b][a]$ (since $[ab] = [ba]$ and $[a]+[b] = [b]+[a]$).
Using that $\eta$ is central this yields $(\lra{a}-1)[b] = [b](\lra{a}-1)$, from which the result follows.
\end{proof}

\begin{remark} \label{rmk:Gm-to-KMW0}
Lemma \ref{lemm:KMWn-basics}(1,2) imply that there is a morphism of sheaves of abelian groups \[ \Gm \to (\KMWn_0)^\times, a \mapsto \lra{a}. \]
\end{remark}

\begin{remark} \label{rmk:KMW-extra-relations}
The sections of $\ul{K}_*^{MW}$ (the non-naive Milnor--Witt $K$-theory sheaf) satisfy some further interesting relations, in particular:
\begin{itemize}
\item $\lra{a^2} = 1$
\item $[a][b] = -\lra{-1}[b][a]$
\end{itemize}
For a proof\NB{include it, since we use it?}, see \cite[Lemma 3.7(3,4)]{A1-alg-top}.\footnote{This does not immediately generalize from fields, because the proof, e.g., uses that for $1 \ne a \in \scr O^\times$ have $1-a \in \scr O^\times$.}
These relations do hold on sections over fields, though, because there $\KMWn_* = \ul{K}_*^{MW}$.
\end{remark}

Let us note that the sheaves $\KMWn_n$ for $n \ge 1$ admit slightly more explicit presentations.
\begin{lemma} \label{lemm:KMWn-pres}
Let $n \ge 1$.
Consider the sheaf $P_n$ generated by copies of $\Gm^{\times i}$ for $i \ge n$, with generators denoted by $[\eta^i, u_1, \dots, u_{n+i}]$.
Write $R_n$ for the following sheaf of relations:
\begin{itemize}
\item (\textit{Steinberg relation.}) $[\eta^i, u_1, \dots, u_{n+i}] = 0$ if $u_m + u_{m+1} = 1$ for some $m$.
\item (\textit{Logarithmic relation.}) For $a,b \in \scr O(X)^\times$ and each $m,i$ we have \begin{gather*} [\eta^i, u_1, \dots, u_{m-1}, ab, u_{m+1}, \dots, u_{n+i}] = [\eta^i, u_1, \dots, u_{m-1}, a, u_{m+1}, \dots, u_{n+i}] \\ + [\eta^i, u_1, \dots, u_{m-1}, b, u_{m+1}, \dots, u_{n+i}] + [\eta^{i+1}, u_1, \dots, u_{m-1}, a, b, u_{m+1}, \dots, u_{n+i}]. \end{gather*}
\item (\textit{Hyperbolic relation.}) For each $m, i$ we have \[ [\eta^{i+2}, u_1, \dots, u_{m-1}, -1, u_{i+1}, \dots, u_{n+i+1}] + 2[\eta^{i+1}, u_1, \dots, u_{m-1}, u_{i+1}, \dots, u_{n+i+1}] = 0. \]
\end{itemize}
Then $P_n/R_n \wequi \KMWn_n$.
\end{lemma}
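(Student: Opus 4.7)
My plan is to build an explicit two-sided inverse between $P_n/R_n$ and $\KMWn_n$ by passing through the canonical form $\eta^i[u_1]\cdots[u_{n+i}]$. First I would define a map $\phi_n : P_n \to \KMWn_n$ sending the generator $[\eta^i, u_1, \dots, u_{n+i}]$ to $\eta^i[u_1]\cdots[u_{n+i}]$. To see that $\phi_n$ kills $R_n$: the Steinberg relation vanishes because $[u_m][1-u_m] = 0$ in $\KMWn_*$; the logarithmic relation in $R_n$ is the image of the defining identity $[ab] = [a]+[b]+\eta[a][b]$ applied at position $m$, where the extra $\eta$ produced by the last term is moved to the front using centrality; and the hyperbolic relation is obtained analogously from $2\eta + \eta^2[-1] = 0$. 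Thus $\phi_n$ descends to $\bar\phi_n : P_n/R_n \to \KMWn_n$. Surjectivity of $\bar\phi_n$ is immediate from Lemma \ref{lemm:KMWn-basics}(4): for $n \ge 1$ the sheaf $\KMWn_n$ is generated by monomials $[a_1]\cdots[a_n]$, which are the images of the $i = 0$ generators.

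For injectivity, I would realize $\KMWn_*$ as $F/J$, where $F$ is the free associative graded sheaf of rings on $\eta$ in degree $-1$ and on $[a]$ for $a \in \Gm$ in degree $1$, and $J$ is the two-sided ideal generated by the four defining relations (Steinberg, logarithmic, centrality, hyperbolic). The centrality relation allows every word of total degree $n \ge 1$ to be rewritten uniquely as $\eta^i[u_1]\cdots[u_{n+i}]$, so modding out only by centrality yields $F_n / (\text{centrality}) \wequi P_n$. It then suffices to show that the image of $J_n$ in $P_n$ is exactly the sheaf of relations $R_n$. An element $w_1 \cdot r \cdot w_2 \in J_n$, with $r$ one of the Steinberg, logarithmic, or hyperbolic relations, reduces in canonical form to an instance of the corresponding relation in $R_n$: the position $m$ is determined by the number of units appearing in $w_1$, and the total $\eta$-exponents are determined by the $\eta$'s in $w_1$ and $w_2$ together with the $\eta$'s produced by the relation itself. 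Since $R_n$ already allows arbitrary $m$, arbitrary surrounding units $u_j$, and arbitrary $i \ge 0$, every such reduced element lies in $R_n$; conversely, every element of $R_n$ arises in this way.

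The main obstacle is this last combinatorial verification: one must check that, after $\eta$-sliding, no "mixed" relations appear beyond those enumerated in $R_n$, and that the canonical-form rewriting commutes with imposing the remaining relations. This is straightforward once one fixes the convention that all $\eta$'s are moved to the front before any other simplification, so that the rewriting and the imposition of Steinberg/logarithmic/hyperbolic relations do not interfere. Given this, the image of $J_n$ in $P_n$ coincides with $R_n$, and consequently $P_n/R_n \wequi F_n/J_n = \KMWn_n$.
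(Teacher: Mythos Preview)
Your proposal is correct and follows essentially the same approach as the paper: use the centrality relation to bring every degree-$n$ word into the canonical form $\eta^i[u_1]\cdots[u_{n+i}]$ (thereby identifying $F_n/(\text{centrality})$ with $P_n$), and then check that the remaining three families of relations, multiplied on both sides by arbitrary words, land exactly on $R_n$. The paper compresses this entire argument into a single sentence, whereas you have written out the two maps and the verification explicitly; conceptually there is no difference.
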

\begin{proof}
This just lists generators in degree $n$ of the two-sided ideal generated by the defining relations of Definition \ref{def:naive-KMW}, except for the centrality of $\eta$, which is enforced by pulling it to the front.
\end{proof}

\subsection{Some Hopf maps} \label{subsec:hopf}
Before proceeding, we need to recall various maps between motivic spheres.
We can work in $\Spc(\Z)$, or, equivalently, over any base.

\begin{remark}
Recall the canonical equivalences $\SL_2 \wequi \A^2 \setminus 0 \wequi \Sigma \Gmp{2} \in \Spc(S)_*$, which we shall use constantly in the sequel.
(For the first one, check that projection to the first row $\SL_2 \to \A^2 \setminus 0$ is a (Zariski) locally trivial bundle with fiber $\A^1$; for the second one, see the proof of Lemma \ref{lemm:A1-homotopies}.)
\end{remark}

\begin{lemma} \label{lemm:SL2-globally-conn}
The space $\Map_{\Spc(\Z)}(*, \SL_2)$ is connected.
\end{lemma}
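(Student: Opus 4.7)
The strategy is to exploit the group structure on $\SL_2$ together with the fact that $\Z$ is a Euclidean domain, so that $\SL_2(\Z)$ is generated by elementary matrices.

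First, by Lemma \ref{lemm:lmot-presn}(2), $L_\mot$ preserves finite products, so $\SL_2 \in \Spc(\Z)$ is a group object and $\Map_{\Spc(\Z)}(*, \SL_2)$ is a group object in spaces whose $\pi_0$ is a group. It therefore suffices to show (a) that every element in the image of the canonical map $\SL_2(\Z) \to \pi_0 \Map_{\Spc(\Z)}(*, \SL_2)$ is trivial, and (b) that this map is surjective.

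For (a), since $\Z$ is Euclidean, $\SL_2(\Z)$ is generated by the elementary matrices $E_{12}(n) = \begin{pmatrix} 1 & n \\ 0 & 1 \end{pmatrix}$ and $E_{21}(n) = \begin{pmatrix} 1 & 0 \\ n & 1 \end{pmatrix}$ for $n \in \Z$. For each such matrix, the morphism $\A^1 \to \SL_2$ given by $s \mapsto E_{ij}(sn)$ defines a naive $\A^1$-homotopy from the identity (at $s = 0$) to $E_{ij}(n)$ (at $s = 1$), so the class of $E_{ij}(n)$ is trivial in $\pi_0$. Since the map is a group homomorphism, every product of elementary matrices also has trivial class.

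The main technical step (b) uses the filtered colimit presentation of $L_\mot$ from Lemma \ref{lemm:lmot-presn}(1), which yields $\pi_0 \Map_{\Spc(\Z)}(*, \SL_2) = \colim_n \pi_0 F_n(\Spec \Z)$ for the iterated alternation $F_n$ of $\Sing$ and $L_\Nis$ starting at $\SL_2$. The key observation is that the Nisnevich-local analogue of the argument in (a) goes through: for any henselian local ring $R$, every element of $\SL_2(R)$ is a product of elementary matrices (since in any $\begin{pmatrix} a & b \\ c & d \end{pmatrix} \in \SL_2(R)$ at least one of $a, c$ is a unit, as $ad - bc = 1$), and the same linear homotopies trivialize the resulting classes. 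Hence $\ul\pi_0 \Sing \SL_2 = *$ as a Nisnevich sheaf, and any class in a later stage $\pi_0 F_n(\Spec \Z)$ arising from Nisnevich sheafification becomes trivial after one further application of $\Sing$. Carefully tracking that no $\pi_0$-class survives past the next $\Sing$ step — and thereby confirming that the colimit collapses to a single component — is the main obstacle; the rest of the argument is formal.
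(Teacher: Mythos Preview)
Your part (a) is exactly what the paper does: $\SL_2(\Z)$ is generated by elementary matrices, and these are naively $\A^1$-homotopic to the identity, so the image of $\SL_2(\Z) \to \pi_0 (L_\mot \SL_2)(\Spec \Z)$ is trivial.

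The gap is entirely in part (b). You correctly observe that $\ul\pi_0 \Sing \SL_2 = *$ as a Nisnevich sheaf, since $\SL_2$ of any local ring is elementary. But this only tells you that $L_\Nis \Sing \SL_2$ is a \emph{connected sheaf}; it says nothing about $\pi_0$ of its \emph{global sections} over $\Spec \Z$. Since $\Spec \Z$ has Nisnevich homotopy dimension $1$, a connected sheaf $F$ can perfectly well have $\pi_0 F(\Spec \Z) \ne *$ (think of $BG$). Your next sentence, that ``any class \dots\ arising from Nisnevich sheafification becomes trivial after one further application of $\Sing$,'' is exactly the assertion that needs proof, and you give no mechanism for it: a class in $\pi_0 (L_\Nis \Sing \SL_2)(\Spec \Z)$ is killed by $\Sing$ only if it extends to a class over $\A^1_\Z$ restricting trivially at $0$, and nothing in your argument produces such an extension. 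You yourself flag this as ``the main obstacle,'' but then declare the rest formal --- it is not.

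The paper does not attempt this step directly. It instead invokes \cite[Theorems 2.4.2 and 3.3.1]{asok2015affine}, which establish the nontrivial fact that $(L_\mot \SL_2)(\Z) \simeq (\Sing \SL_2)(\Z)$ (affine representability for isotropic reductive groups). The paper's own remark immediately following the lemma makes the point explicit: over a henselian local base the sheaf-connectedness argument suffices, but over $\Spec \Z$ the citation is genuinely needed. Your sketch rediscovers why.
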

\begin{proof}
This is true because $(L_\mot \SL_2)(\Z) \wequi (\Sing \SL_2)(\Z)$ \cite[Theorems 2.4.2 and 3.3.1]{asok2015affine} and $\SL_2(\Z)$ is generated by elementary matrices\todo{ref?}.
\end{proof}
\begin{remark}
It is automatic that the \emph{sheaf} $\ul\pi_0 L_\mot \SL_2$ is connected (Lemma \ref{lemm:0-conn} works over any base), since $\SL_2 \wequi \Sigma \Gmp{2}$.
Consequently the reference to \cite{asok2015affine} is not necessary if $S$ is henselian local, e.g., a field.
\end{remark}

\begin{corollary} \label{cor:SL2-pted-unpted}
Let $X \in \Spc(\Z)_*$.
The canonical map $\pi_0 \Map_{\Spc(\Z)_*}(X, \SL_2) \to \pi_0 \Map_{\Spc(\Z)}(X, \SL_2)$ is a bijection.
\end{corollary}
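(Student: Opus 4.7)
The plan is to exploit the fact that $\SL_2$ is a group scheme, hence a group object in $\Spc(\Z)$, and use the shear-map splitting of the basepoint-evaluation fibration $\Map_*(X, \SL_2) \to \Map(X, \SL_2) \to \SL_2$.

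First I would invoke the standard observation that for any group object $G$ in an $\infty$-topos and any pointed object $(X, x_0)$, the shear map
$$\Phi: \Map(X, G) \to G \times \Map_*(X, G), \quad f \mapsto \bigl(f(x_0),\, x \mapsto f(x_0)^{-1} f(x)\bigr)$$
is an equivalence of spaces, with inverse $(g, h) \mapsto (x \mapsto g \cdot h(x))$. This is purely formal from $G$ being grouplike (indeed, essentially the defining property of a grouplike $\scr E_1$-monoid).

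Applying this with $G = \SL_2$ and taking $\pi_0$ yields
$$\pi_0 \Map_{\Spc(\Z)}(X, \SL_2) \wequi \pi_0 \Map_{\Spc(\Z)}(*, \SL_2) \times \pi_0 \Map_{\Spc(\Z)_*}(X, \SL_2).$$
By Lemma \ref{lemm:SL2-globally-conn} the first factor on the right is a singleton, so projection to the second factor is a bijection. I would then check that this bijection is inverse to the canonical forgetful map: a pointed map $g$ satisfies $g(x_0) = e$, so $\Phi(g) = (e, g)$, which projects to $g$ in the second factor.

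I do not anticipate any substantial obstacle; the only nonformal input is exactly Lemma \ref{lemm:SL2-globally-conn}, which is already established. The mildest care required is in writing down the shear-splitting at the $\infty$-categorical level, but this is entirely standard.
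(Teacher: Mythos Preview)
Your proposal is correct and follows essentially the same approach as the paper: both split the evaluation fibration $\Map_*(X,\SL_2)\to\Map(X,\SL_2)\to\Map(*,\SL_2)$ using the group structure on $\SL_2$ (the paper builds the inverse $F\times B\to E$ via the constant-map section and multiplication, which is exactly your shear map read backwards), and then invoke Lemma~\ref{lemm:SL2-globally-conn} to kill the base. The only cosmetic difference is that the paper verifies the shear map is an equivalence by a fiberwise check over the connected base, whereas you write down the inverse explicitly.
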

\begin{proof}
Consider the fiber sequence $F= \Map_{\Spc(\Z)_*}(X, \SL_2) \to E=\Map_{\Spc(\Z)}(X, \SL_2) \to B=\Map_{\Spc(\Z)}(*, \SL_2)$.
Considering constant maps supplies us with a section of $E \to B$.
Noting that the middle space is a group (since $\SL_2$ is) we can thus form a map $\alpha: F \times B \to E$ over $B$.
Since $B$ is connected (Lemma \ref{lemm:SL2-globally-conn}) and the base change of $\alpha$ along $* \to B$ is an equivalence, $\alpha$ is an equivalence \cite[Lemma 6.2.3.16]{lurie-htt}.
We deduce that $\pi_0 F \wequi \pi_0 E$, as needed.
\end{proof}

\begin{lemma} \label{lemm:A1-homotopies}
Let $S$ be a scheme.

The following maps $\Gm \wedge \Gm \to \Gm \wedge \Gm \in \Spc(S)_*$ become homotopic after one suspension: the switch map, the map $\id \wedge i$ and the map $i \wedge \id$.
In fact they become homotopic to the opposites of the suspensions of the map $\id \wedge i'$ or the map $i' \wedge \id$.

Here $i: \Gm \to \Gm$ is the map $x \mapsto x^{-1}$, the map $i': \Gm \to \Gm$ is $x \mapsto -x$ (which is not pointed, but its unreduced suspension is), and the opposite refers to taking the inverse in the group structure (coming from mapping out of a suspension, or equivalently the group structure on $\Sigma \Gm \wedge \Gm \wequi \SL_2$).
\end{lemma}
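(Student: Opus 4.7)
The plan is to translate the claim into an equality in the group $[\Sigma \Gm^{\wedge 2}, \SL_2]$. Since all the spaces and maps involved are pulled back from $\Spec \Z$, I would first reduce to $S = \Spec \Z$ by base change. Under the equivalence $\Sigma \Gm^{\wedge 2} \wequi \SL_2$, and using Corollary \ref{cor:SL2-pted-unpted} together with Lemma \ref{lemm:SL2-globally-conn}, the relevant set of homotopy classes of pointed self-maps of $\Sigma \Gm^{\wedge 2}$ coincides with the set of unpointed homotopy classes of maps $\Sigma \Gm^{\wedge 2} \to \SL_2$; this set carries two a priori different group structures (one from the co-$H$-space structure on the suspension source, one from the $H$-space structure on $\SL_2$), which agree by an Eckmann--Hilton argument. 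In particular, the ``opposite'' of a class agrees with post-composition by matrix inversion.

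With this translation in place, each of the five self-maps can be realized as an explicit scheme morphism $\Gm \times \Gm \to \SL_2$ (factoring through $\Gm \wedge \Gm$) by composing with a fixed representative of $\A^2 \setminus 0 \wequi \SL_2$. For example, to show that $\Sigma(\id \wedge i)$ is the inverse of $\Sigma \id$ in the group, I would observe that the sum $\Sigma(\id) + \Sigma(\id \wedge i)$, computed via matrix multiplication in $\SL_2$, factors through the multiplication $\mu \circ (\id, i): \Gm \to \Gm$, $x \mapsto x \cdot x^{-1} = 1$, so its image sits inside the basepoint of $\SL_2$ and the sum is null. By symmetry the same argument identifies $\Sigma(i \wedge \id)$ with the inverse of $\Sigma \id$, yielding $\Sigma(\id \wedge i) \simeq \Sigma(i \wedge \id)$. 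The switch map is treated analogously by factoring its sum with $\Sigma \id$ through an $\A^1$-contractible matrix expression. For the $i'$-twisted versions, one uses that $i'(x) = -x$ differs from $i(x) = x^{-1}$ by the element $\langle -1\rangle$-style twist which, under the equivalence with $\SL_2$, is realized by a fixed element of $\SL_2(\Z)$ and is thus $\A^1$-homotopic to the identity after accounting for the sign swap on the suspension coordinate.

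The main obstacle will be the careful identification of each of the five maps under the chain $\Sigma \Gm^{\wedge 2} \wequi \A^2 \setminus 0 \wequi \SL_2$, and producing the right matrix-valued morphisms that factor through the smash and realize the claimed homotopies. The $\A^1$-homotopies themselves are elementary—essentially row and column operations through upper- and lower-triangular unipotent subgroups of $\SL_2$, which are $\A^1$-contractible—but picking compatible matrix formulas so that these elementary homotopies apply simultaneously to all five maps requires careful bookkeeping.
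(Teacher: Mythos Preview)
Your reduction to $S = \Spec(\Z)$, the passage to unpointed maps via Corollary~\ref{cor:SL2-pted-unpted}, and the Eckmann--Hilton identification of the two group structures are all correct. However, the central computational claim is wrong: $\Sigma(\id \wedge i)$ is \emph{not} the inverse of $\Sigma(\id)$ in general. Indeed, the lemma itself asserts that $\Sigma(\id \wedge i) \simeq -\Sigma(\id \wedge i')$, and $\Sigma(\id \wedge i')$ acts as $\lra{-1}$ (see Remark~\ref{rmk:multn-by-a-map}), which is not $1$ over any base where $-1$ is not a square (e.g.\ after base change to $\R$). Your argument for the claim---that $\Sigma(\id) + \Sigma(\id \wedge i)$ ``factors through $\mu \circ (\id, i): x \mapsto x \cdot x^{-1} = 1$''---conflates the group operation on homotopy classes (coming from the suspension coordinate, or equivalently from matrix multiplication in $\SL_2$) with multiplication in the $\Gm$-coordinates of the source. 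These are unrelated: there is no mechanism by which the matrix product of two representatives $\Gm \times \Gm \to \SL_2$ would collapse to the basepoint just because one of them precomposes with $y \mapsto y^{-1}$ on a $\Gm$-factor. The same confusion undermines your sketch for the switch map.

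The paper's argument is different and avoids this trap. It uses the explicit pushout presentations $\A^2 \setminus 0 \simeq (\Gm \times \A^1) \amalg_{\Gm \times \Gm} (\A^1 \times \Gm)$ and $\P^1 \simeq \A^1 \amalg_{\Gm} \A^1$. An automorphism of such a square that \emph{swaps} the two contractible corners contributes a sign on the suspension coordinate; this identifies $\Sigma(\text{switch on }\Gm^{\wedge 2})$ with the \emph{negative} of the linear switch $(x,y) \mapsto (y,x)$ on $\A^2 \setminus 0$, and $\Sigma i$ with the negative of the chart-swap $t \mapsto 1/t$ on $\P^1$. These are $\GL_2$-automorphisms of determinant $-1$, and any two such differ by an element of $\SL_2(\Z)$, which is generated by elementary (hence $\A^1$-nullhomotopic) matrices. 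This single observation yields all the claimed homotopies at once and explains the appearance of $i'$: the matrix $\mathrm{diag}(-1,1)$ is the simplest representative of the determinant-$(-1)$ coset.
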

\begin{proof}
We may assume $S=\Spec(\Z)$.
By Corollary \ref{cor:SL2-pted-unpted}, we need only consider homotopy classes of unpointed maps.
Consider the pushout squares
\begin{equation*}
\begin{CD}
\Gm \times \Gm @>>> \Gm \times \A^1  @. \quad\text{and}\quad @. \Gm @>>> \A^1 \\
@VVV                    @VVV                 @.                  @VVV    @VVV \\
\A^1 \times \Gm @>>> \A^2 \setminus 0 @.                      @. \A^1 @>>> \P^1. \\
\end{CD}
\end{equation*}
These show, respectively, that $\Sigma \Gm \wedge \Gm \wequi \A^2 \setminus 0$ and $\Sigma \Gm \wequi \P^1$.
The automorphism $(x, y) \mapsto (y,x)$ induces automorphism of both squares, interchanging the top right and bottom left corner, and inducing on the top left corner the switch map respectively $i$.
It follows that the switch map on $\Sigma \Gm \wedge \Gm$ is homotopic to the opposite in the group structure of the switch automorphism on $\A^2 \setminus 0$, and similarly $i$ is homotopic to the opposite of the ``switch'' automorphism on $\P^1$.
Analogously we have the automorphism $(x,y) \mapsto (-x,y)$ which preserves the corners of the squares, and hence the evident analogs of $i'$ on $\Sigma \Gm \wedge \Gm$ and $\A^2 \setminus 0$ (respectively $\Sigma \Gm$ and $\P^1$) correspond.

Recall that we have an action of $\GL_2(\Z)$ on $\A^2 \setminus 0$ (respectively $\P^1$) under which our automorphism corresponds to the matrix $\begin{pmatrix} 0 & 1 \\ 1 & 0 \end{pmatrix}$.
Since any two matrices in $\GL_2$ with the same determinant differ by an element of $\SL_2$, and $\SL_2(\Z)$ is generated by elementary matrices, it follows that the switch automorphisms on $\A^2 \setminus 0$ and $\P^1$ are also homotopic to the map induced by the matrix $\begin{pmatrix} -1 & 0 \\ 0 & 0 \end{pmatrix}$ or $\begin{pmatrix} 1 & 0 \\ 0 & -1 \end{pmatrix}$, corresponding to $i'$.

Putting these observations together, the result follows.\NB{details?}
\end{proof}

Let $X, Y \in \Spc(S)_*$.
We have the three canonical (pointed) maps $X \times Y \to X$, $X \times Y \to Y$ and $X \times Y \to X \wedge Y$.
Recall that suspensions are cogroups, and hence maps out of suspensions can be added.
We may thus add the suspensions of the three previous maps in order to obtain \[ \Sigma(X \times Y) \to \Sigma X \vee \Sigma Y \vee \Sigma (X \wedge Y). \]
This map is in fact an equivalence, see, e.g., \cite[Corollary 2.24]{devalapurkar2019james}.
The extension to larger finite products is immediate.

Now consider the multiplication map $\mu: \Gm \times \Gm \to \Gm, (x, y) \mapsto xy$.
This is a pointed map, and hence after suspending it splits into a sum of three maps: two maps $\Sigma \Gm \to \Sigma \Gm$ and one map $\Sigma \Gmp{2} \to \Sigma \Gm$.
It is easily verified that the two former maps are homotopic to the identity.
The latter map is denoted $\eta_{12}$.

\begin{definition}
More generally, let $I = I_1 \amalg \dots \amalg I_n$ be a partition of a finite set.
We define a map \[ \eta_{I_1, \dots, I_n}: \Sigma \Gmp{I} \to \Sigma \Gmp{n} \] by analogously passing to a summand of the suspension of \[ \mu_{I_1, \dots, I_n}: \Gm^{\times I} \wequi \prod_{i=1}^n \Gm^{\times I_i} \to \Gm^{\times n}, \] where each of the maps $\Gm^{\times I_i} \to \Gm$ is multiplication.
\end{definition}

We deduce the following key results.
\begin{proposition} \label{prop:eta-central}
Let \[ \{1, 2, \dots, n\} = I_1 \amalg \dots \amalg I_m = I_1' \amalg \dots \amalg I_m' \] be two ordered (i.e. $I_i < I_{i+1}$, $I_i' < I_{i+1}'$) partitions into nonempty sets.
Then the two maps \[ \eta_{I_1, \dots, I_m}, \eta_{I'_1, \dots, I'_m}: \Sigma \Gmp{n} \to \Sigma \Gmp{m} \in \Spc(\Z)_* \] are homotopic.
\end{proposition}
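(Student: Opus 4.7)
The plan is to proceed in three steps: reduce to the case $m=2$ by a combinatorial argument, then reduce to $n=3$ by induction, and finally handle the base case $n=3$, $m=2$ using associativity of multiplication together with Lemma~\ref{lemm:A1-homotopies}.

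\emph{Step 1: reduction to $m=2$.} Any two ordered partitions of $\{1,\dots,n\}$ into $m$ non-empty parts are connected by a sequence of \emph{local swaps}: moves that change only a pair of consecutive blocks $(I_j, I_{j+1})$ to $(I'_j, I'_{j+1})$ with $I_j \cup I_{j+1} = I'_j \cup I'_{j+1}$. (This is elementary: both partitions refine to the same singleton partition by successive splits; reverse one such path and concatenate with the other.) For a single local swap, the maps $\mu_{I_1,\dots,I_m}$ and $\mu_{I'_1,\dots,I'_m}$ differ only in their $j$-th and $(j{+}1)$-th output factors. By the naturality of the wedge splitting $\Sigma(X_1 \times \cdots \times X_m) \wequi \bigvee_{\emptyset \neq S} \Sigma X_S$ in each $X_i$, extracting the $\Sigma\Gmp{n} \to \Sigma\Gmp{m}$ component reduces the comparison to that of the top summands of two maps $\Gm^{I_j \cup I_{j+1}} \to \Gm^2$, i.e.\ to the $m=2$ case on the set $I_j \cup I_{j+1}$.

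\emph{Step 2: reduction to $n=3$.} For $m=2$, an ordered partition is determined by $k = |I_1|$, and any two are connected by moves that change $k$ to $k \pm 1$. Such a move moves a single element $b$ from one side of the split to the other, while the other elements are absorbed (by iterated multiplication) into a left-hand product $a$ and a right-hand product $d$. By functoriality of the wedge splitting under multiplication-within-blocks, this reduces to a statement about three factors only: one must show $\eta_{\{1\},\{2,3\}} \simeq \eta_{\{1,2\},\{3\}}$ as maps $\Sigma\Gmp{3} \to \Sigma\Gmp{2}$.

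\emph{Step 3: the case $n=3$, $m=2$.} Both maps $\mu \circ (\id \times \mu)$ and $\mu \circ (\mu \times \id)$ from $\Gm^3$ to $\Gm$ agree (ternary multiplication). Suspending and splitting $\Sigma(\Gm^3)$ and $\Sigma(\Gm^2)$ into wedges of smash-products, the identity of these composites yields a system of equations among the wedge-summand components. At the top level one obtains that the $\Sigma\Gmp{3} \to \Sigma\Gm$ component factors, through the two routes, as $\eta_{12} \circ \eta_{\{1,2\},\{3\}}$ and $\eta_{12} \circ \eta_{\{1\},\{2,3\}}$, plus contributions from lower summands. Using Lemma~\ref{lemm:A1-homotopies} (the switch/inversion identities on $\Sigma \Gm \wedge \Gm$) to identify the lower contributions, the system collapses to the desired homotopy $\eta_{\{1\},\{2,3\}} \simeq \eta_{\{1,2\},\{3\}}$.

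\emph{Main obstacle.} The delicate part is the bookkeeping in Step 3. The equality $\mu(\mu \times \id) = \mu(\id \times \mu)$ is an equation of maps, not an equation of individual wedge components; the splitting produces many summand-level equations that tangle the top-summand comparison with lower-summand terms. The content of Lemma~\ref{lemm:A1-homotopies} — that the switch, inversion and negation maps on $\Sigma \Gmp{2}$ are related in controlled ways — is precisely what is needed to disentangle these contributions and isolate the top-summand identity. This is the topological shadow of the algebraic fact that $\eta[a][b] = \eta[b][a]$, consistent with the centrality relation built into Milnor--Witt $K$-theory.
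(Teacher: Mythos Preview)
Your reductions in Steps 1 and 2 are in the same spirit as the paper's (which simply asserts ``it will suffice to consider the two maps $\eta_{12}, \eta_{23}: \Sigma \Gmp{3} \to \Sigma \Gmp{2}$''), and they are fine.

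The gap is in Step 3. Your plan is to extract the desired relation from the associativity identity $\mu \circ (\mu \times \id) = \mu \circ (\id \times \mu)$ by splitting via the intermediate $\Sigma \Gm^2$. But the ``contributions from lower summands'' you allude to actually vanish. Indeed, the component of $\Sigma(\mu \times \id)|_{\Sigma \Gmp{3}}$ into $\Sigma \Gm_a$ is computed via $p_a \circ (\mu \times \id) = \mu \circ p_{12}$, which factors through the projection $p_{12}: \Gm^3 \to \Gm^2$; under the James splitting this projection kills every summand containing the third factor, so the component is null (and likewise for $\Sigma \Gm_b$). Hence $\Sigma(\mu \times \id)|_{\Sigma \Gmp{3}}$ lands purely in the top summand $\Sigma \Gmp{2}$, and composing with $\Sigma \mu$ yields exactly $\eta_{12} \circ \eta_{\{1,2\},\{3\}}$, with no extra terms. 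The associativity identity therefore gives only
\[
\eta_{12} \circ \eta_{\{1,2\},\{3\}} \;\simeq\; \eta_{12} \circ \eta_{\{1\},\{2,3\}},
\]
and there is no cancellation law for $\eta_{12}$ available at this stage. Lemma~\ref{lemm:A1-homotopies} does not help here: there are no lower terms to ``disentangle''.

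The paper's argument for the base case is different and avoids this trap. Instead of using associativity, it writes $\mu_{\{1\},\{2,3\}}$ as a composite of $\mu_{\{1,2\},\{3\}}$ with \emph{switch} maps $\tau_{ab}$ on both sides, and separately writes $\mu_{\{1,2\},\{3\}}$ as a composite of itself with \emph{inversion} maps $i_a$ on both sides. These are genuine equalities of maps $\Gm^3 \to \Gm^2$. Since switches and inversions are product-type maps, passing to top summands is straightforward, and Lemma~\ref{lemm:A1-homotopies} then says that on $\Sigma \Gmp{2}$ (and hence after smashing with further $\Gm$'s) the switches and the inversions agree. This directly yields $\eta_{\{1\},\{2,3\}} \simeq \eta_{\{1,2\},\{3\}}$, with no post-composition by $\eta$ to cancel.
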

\begin{proof}
It will suffice to consider the two maps $\eta_{12}, \eta_{23}: \Sigma \Gmp{3} \to \Sigma \Gmp{2}$.\NB{details?}
We express $\mu_{23}: \Gm^{\times 3} \to \Gm^{\times 2}$ in terms of switch maps $\tau_{ab}$ and $\mu_{12}$: \[ \mu_{23}: (x,y,z) \xmapsto{\tau_{12}} (y,x,z) \xmapsto{\tau_{23}} (y, z, x) \xmapsto{\mu_{12}} (yz,x) \xmapsto{\tau_{12}} (x, yz). \]
On the other hand, consider the following composite, where $i_n$ denotes the map inverting the $n$-th factor: \[ \mu_{12}: (x,y,z) \xmapsto{i_1}(x^{-1},y,z) \xmapsto{i_2} (x^{-1}, y^{-1}, z) \xmapsto{\mu_{12}} ((xy)^{-1}, z) \xmapsto{i_1} (xy,z). \]
Suspending and passing to the appropriate summands, the composites become homotopic by Lemma \ref{lemm:A1-homotopies}.
This yields the desired homotopy $\Sigma \eta_{12} \wequi \Sigma \eta_{23}$.
\end{proof}

\begin{proposition} \label{prop:hyperbolic}
The map \[ \eta \circ (\id + \id + \eta \circ [-1]): \Sigma \Gmp{2} \to \Sigma \Gm \in \Spc(\Z)_* \] is null, where $+$ means addition in the group structure\footnote{Beware that the group is not abelian.} and $[-1]: \Gmp{2} \to \Gmp{3}$ is the map $(x,y) \mapsto (x,y,-1)$.
\end{proposition}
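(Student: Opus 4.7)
The proposition is a motivic realization of the hyperbolic relation $\eta(2+\eta[-1])=0$ from $\KMWn_*$. The plan is to rewrite the entire composite as a single component of the suspension of an explicit map between products of $\Gm$'s, and then exhibit an $\A^1$-null-homotopy using Lemma \ref{lemm:A1-homotopies} together with the $\A^1$-connectivity of $\SL_2$.

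First, I will unpack the sum. By functoriality of the splitting
\[ \Sigma \Gm^n \wequi \bigvee_{\emptyset \ne I \subset \{1,\dots,n\}} \Sigma \Gmp{|I|}, \]
each of $\id, \id, \eta\circ[-1]: \Sigma\Gmp{2}\to \Sigma\Gmp{2}$ is the $\Sigma\Gmp{2}$-summand of the suspension of some unreduced map $\Gm^2 \to \Gm^2$. Choosing $\eta = \eta_{\{1\},\{2,3\}}$ (legitimate by Proposition \ref{prop:eta-central}), one sees that the constant coordinate $-1$ entering $[-1]$ causes $\eta \circ [-1]$ to be the $\Sigma\Gmp{2}$-summand of $\Sigma(\id \wedge i'): \Sigma\Gm^2 \to \Sigma\Gm^2$, where $i'(y)=-y$. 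Postcomposing everything with $\eta: \Sigma\Gmp{2} \to \Sigma\Gm$ realizes the composite in the statement as the $\Sigma\Gmp{2}$-to-$\Sigma\Gm$ component of the suspension of a single explicit map $f: \Gm^2 \to \Gm$.

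Second, I simplify the sign twists. By Lemma \ref{lemm:A1-homotopies}, $\Sigma(\id\wedge i')$ is homotopic, up to taking the opposite in the cogroup structure, to a self-map of $\Sigma\Gm^{\wedge 2}$ built from the group-inverse $\Sigma i$ and the switch. Translating through the equivalence $\Sigma\Gmp{2}\wequi\SL_2$ and invoking Eckmann--Hilton, the full composite is identified with the Hopf map $\eta:\SL_2\to\P^1$ precomposed with an explicit $\SL_2$-valued self-map of $\SL_2$ of the form ``right-multiply by the constant matrix $-I$, combined with the group-theoretic squaring''.

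Third, I produce the null-homotopy. The matrix $-I\in\SL_2(\Z)$ is $\A^1$-path-connected to the identity: $\SL_2(\Z)$ is generated by elementary matrices $e_{ij}(t)$, each of which admits the $\A^1$-homotopy $s\mapsto e_{ij}(st)$ to $I$ (this is the geometric input of Lemma \ref{lemm:SL2-globally-conn}, and Corollary \ref{cor:SL2-pted-unpted} lets us use it in the pointed category). Hence right-multiplication by $-I$ on $\SL_2$ is $\A^1$-homotopic to the identity. Combined with the elementary observation that $\eta:\A^2\setminus 0\to\P^1$ factors $\pm 1$-invariantly, i.e.\ $\eta(v)=\eta(-v)$, the contributions from $\id+\id$ and from $\eta\circ[-1]$ cancel in $[\Sigma\Gmp{2},\Sigma\Gm]$, giving the desired null-homotopy.

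The main obstacle is the bookkeeping in Step~1, namely correctly identifying which summand of each suspension splitting corresponds to each factor, and then correctly tracking the opposites and sign twists produced by Lemma \ref{lemm:A1-homotopies}. Once the composite has been reduced to its ``single $\Sigma f$'' form, the null-homotopy of Step~3 is the geometric content and is transparent.
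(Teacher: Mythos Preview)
Your Step 1 is correct and essentially matches the paper: one identifies $\id + \eta\circ[-1]$ with $\Sigma(\id \wedge i')$ on $\Sigma\Gmp{2}$ (this is just the splitting of the suspension of $(x,y)\mapsto(x,-y)$, which factors as $(x,y)\mapsto(x,y,-1)\mapsto(x,-y)$).

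The gap is in Steps 2 and 3. You assert that after translating through $\Sigma\Gmp{2}\wequi\SL_2$ and invoking Eckmann--Hilton, the composite becomes ``$\eta$ precomposed with right-multiplication by $-I$ combined with squaring'', but you never actually verify this. In fact it is not clear what ``combined with'' means precisely, and the map $\Sigma(\id\wedge i')$ corresponds on $\A^2\setminus 0$ to the matrix $\mathrm{diag}(1,-1)$, not to $-I$. Your Step 3 then uses two ingredients ($-I\sim I$ in $\SL_2$, and $\eta(v)=\eta(-v)$) neither of which connects to the form you claimed in Step 2; the sentence ``the contributions from $\id+\id$ and from $\eta\circ[-1]$ cancel'' is just a restatement of the goal, not a deduction.

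The paper's argument is shorter and avoids the $\SL_2$ bookkeeping entirely. Having identified $\id+\eta\circ[-1]$ with $\Sigma(\id\wedge i')$, it applies Lemma \ref{lemm:A1-homotopies} directly: that lemma says $\Sigma(\id\wedge i')$ is homotopic to the \emph{opposite} of the switch map. Hence $\id+\id+\eta\circ[-1]\sim \id - \text{(twist)}$. Now the punchline is one line: since $\eta$ is the summand coming from the \emph{commutative} multiplication $\Gm\times\Gm\to\Gm$, one has $\eta\circ\text{(twist)}=\eta$, and therefore $\eta\circ(\id-\text{twist})$ is null. I suggest you abandon the $\SL_2$/Eckmann--Hilton translation and use this route instead; all the pieces you need (the identification of $\Sigma i'$ and Lemma \ref{lemm:A1-homotopies}) are already in your Step 1 and the paper's toolbox.
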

\begin{proof}
The map $i': \Gm \to \Gm$ (i.e. $x \mapsto -x$) can be written as the composite $\Gm \xrightarrow{(\id, -1)} \Gm \times \Gm \xrightarrow{\mu} \Gm$.
Splitting the unreduced suspension\NB{basepoint shenanigans}, we see that $\Sigma i' \wequi \id + \eta \circ [-1]$.
Smashing with the identity map on $\Gm$, using Lemma \ref{lemm:A1-homotopies} and adding another identity map, we see that \[ \id - \text{(twist)} \sim \id + \id + \eta \circ[-1]: \Sigma \Gmp{2} \to \Sigma \Gmp{2}. \]
Since $\eta$ comes from multiplication and $\Gm$ is commutative, the left hand map becomes null after composing with $\eta$ once more.
This being true for the right hand map establishes the result.
\end{proof}

\subsection{Universal property} \label{subsec:KMWn-universal}
Now we can prove the following.
Denote by $\Z[\Gmp{n}]$ the sheaf of abelian groups freely generated by the pointed sheaf of sets $\Gmp{n}$.
\begin{theorem} \label{thm:KMWn-univ}
Let $k$ be any field.
Let $F$ be a strongly $\A^1$-invariant sheaf of abelian groups and $n \ge 1$.
Any map $\Z[\Gmp{n}] \to F$ factors uniquely through $\KMWn_n$.
\end{theorem}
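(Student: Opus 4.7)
The plan is to leverage the explicit presentation $\KMWn_n \wequi P_n/R_n$ from Lemma \ref{lemm:KMWn-pres}: given $\phi \colon \Z[\Gmp{n}] \to F$, I will extend $\phi$ to a map out of $P_n$ using the Hopf maps of \S\ref{subsec:hopf}, and then verify the extension kills each relation in $R_n$. Concretely, since $F$ is strongly $\A^1$-invariant and abelian, $BF = K(F,1)$ is motivically local, hence so is $F \wequi \Omega BF$. Thus $\phi$ is equivalent to a pointed motivic homotopy class $\tilde\phi \colon \Sigma\Gmp{n} \to BF$. For each $i \ge 0$, I choose an ordered partition of $\{1, \dots, n+i\}$ into $n$ nonempty blocks and compose $\tilde\phi$ with the Hopf map $\eta_{I_1, \dots, I_n} \colon \Sigma\Gmp{n+i} \to \Sigma\Gmp{n}$, producing a pointed map $\phi_i \colon \Gmp{n+i} \to F$ which I declare to be the image of the generator $[\eta^i, u_1, \dots, u_{n+i}] \in P_n$. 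Proposition \ref{prop:eta-central} guarantees that $\phi_i$ does not depend on the partition, and in particular allows me to verify each relation below in an arbitrary slot.

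Verifying the logarithmic and hyperbolic relations is essentially bookkeeping with Hopf maps. The logarithmic relation follows from the canonical splitting $\Sigma(\Gm \times \Gm) \wequi \Sigma\Gm \vee \Sigma\Gm \vee \Sigma(\Gm \wedge \Gm)$, under which $\Sigma\mu$ decomposes as $\Sigma\pi_1 + \Sigma\pi_2 + \eta_{12}$; substituting $\mu$ into a single slot of $\Gmp{n+i}$ and postcomposing with the extension yields exactly the identity $[\ldots, ab, \ldots] = [\ldots, a, \ldots] + [\ldots, b, \ldots] + [\eta, \ldots, a, b, \ldots]$. The hyperbolic relation is a direct translation of Proposition \ref{prop:hyperbolic}, applied slot by slot. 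Uniqueness of the factorization is automatic: by Lemma \ref{lemm:KMWn-basics}(4) the canonical map $\Z[\Gmp{n}] \to \KMWn_n$ is surjective as a map of sheaves of abelian groups, so any $\psi \colon \KMWn_n \to F$ is determined by $\psi \circ \iota$.

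The main obstacle is the Steinberg relation, where I invoke the Hu--Kriz--Hoyois argument. The task reduces to showing that the Steinberg-locus map $\sigma \colon (\A^1 \setminus \{0,1\})_+ \to \Gm \wedge \Gm$, $a \mapsto (a, 1-a)$, becomes null after suspension in $\Spc(\Z)_*$. Under the equivalences $\Sigma(\Gm \wedge \Gm) \wequi \A^2 \setminus 0 \wequi \SL_2$, the assignment $a \mapsto \begin{pmatrix} a & 1-a \\ -1 & 1 \end{pmatrix}$ defines a morphism $\A^1 \to \SL_2$ whose restriction to $\A^1 \setminus \{0,1\}$ recovers $\Sigma\sigma$; since $\A^1$ is motivically contractible, the extension is unpointedly null-homotopic, and Corollary \ref{cor:SL2-pted-unpted} upgrades this to a pointed null-homotopy. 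Smashing with the identity on the remaining factors and transporting from slots $(1,2)$ to an arbitrary adjacent pair via Proposition \ref{prop:eta-central}, I obtain a null-homotopy of the composite through the chosen Hopf map, which kills the image of any generator satisfying $u_m + u_{m+1} = 1$ and completes the verification.
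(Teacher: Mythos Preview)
Your overall architecture matches the paper's proof: extend $\phi$ to $P_n$ via Hopf maps, then kill the relations $R_n$ using Propositions \ref{prop:eta-central} and \ref{prop:hyperbolic}. The treatment of uniqueness, the logarithmic relation, and the hyperbolic relation is correct and essentially identical to the paper's.

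The gap is in your Steinberg argument. You assert that under $\Sigma(\Gm \wedge \Gm) \wequi \A^2 \setminus 0 \wequi \SL_2$, the scheme morphism $a \mapsto \begin{pmatrix} a & 1-a \\ -1 & 1 \end{pmatrix}$ restricted to $\A^1 \setminus \{0,1\}$ ``recovers $\Sigma\sigma$''. But $\Sigma\sigma$ has source $\Sigma(\A^1 \setminus \{0,1\})_+$, whereas your matrix map has source $\A^1 \setminus \{0,1\}$; there is a type mismatch you have not bridged. More seriously, the underlying identification you are implicitly using is that the inclusion $\Gm \times \Gm \hookrightarrow \A^2 \setminus 0$ corresponds, under $\A^2 \setminus 0 \wequi \Sigma(\Gm \wedge \Gm)$, to the quotient map $\Gm \times \Gm \to \Gm \wedge \Gm$ followed by suspension. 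This is false: in the pushout presentation of $\A^2 \setminus 0$ as $\Gm \star \Gm$, the inclusion $\Gm \times \Gm \hookrightarrow \A^2 \setminus 0$ factors through either contractible piece $\Gm \times \A^1$ or $\A^1 \times \Gm$, and hence is \emph{motivically null} regardless of any Steinberg-type input. Your argument would therefore ``prove'' that every map $X_+ \to \Gmp{2}$ arising from an unpointed map $X \to \Gm \times \Gm$ becomes null after one suspension, which is absurd.

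The paper does not attempt to prove the Steinberg relation directly; it reduces (via Proposition \ref{prop:eta-central}) to the nullity of $\Sigma\mathfrak{st}$ and then cites \cite{hoyois-steinberg}. That argument is genuinely more delicate than extending $(a,1-a)$ over $a=0,1$: one must track how the quotient $\Gm \wedge \Gm$ (not the product) sits inside a suitable model, and the extension that witnesses nullity takes place in a space like $\P^1 \wedge \P^1$ or a Thom space, not in $\A^2 \setminus 0$ via the naive inclusion. You should either cite Hoyois as the paper does, or reproduce that argument carefully.
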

\begin{proof}
Since $\Z[\Gmp{n}] \to \KMWn_n$ is surjective (Lemma \ref{lemm:KMWn-basics}(4)), the factorization is unique if it exists.
Our map $\Z[\Gmp{n}] \to F$ corresponds to a map $\alpha: \Sigma \Gmp{n} \to BF \in \Spc(k)_*$.
By applying the Hopf construction we obtain various maps $\eta_\bullet: \Sigma \Gmp{n+i} \to \Sigma \Gmp{n}$, which by Proposition \ref{prop:eta-central} are all $\A^1$-homotopic (for fixed $i$).
We denote them by $\eta^i$.
Now consider the composite \[ \bigvee_{i \ge 0} \Sigma \Gmp{n+i} \xrightarrow{\vee \eta^i} \Sigma \Gmp{n} \xrightarrow{\alpha} BF. \]
Applying $\ul\pi_1$, this yields (in the notation of Lemma \ref{lemm:KMWn-pres}) a map $P_n \to F$, which we must prove annihilates $R_n$.

Let us begin with the Steinberg relation.
Using Proposition \ref{prop:eta-central}, it suffices to prove that for any $X \in \Sm_k$ and $a \in \scr O^\times(X)$ such that $1-a \in \scr O^\times(X)$, the map $X_+ \xrightarrow{(a,1-a)} \Gmp{2} \in \Spc(k)_*$ becomes nullhomotopic after one suspension.
By adjunction, for this it is enough to show that the composite \[ S^1_X = \Sigma X_+ \xrightarrow{\Sigma a)} \Sigma(\A^1_X \setminus \{0,1\})_+ \xrightarrow{\Sigma \mathfrak{st}} \Sigma(\Gmp{2})_X \in \Spc(X)_* \] is null.
For this, see \cite{hoyois-steinberg}.

Now we establish the logarithmic relation.
Again using Proposition \ref{prop:eta-central}, for this it suffices to show that the multiplication map $\Gm \times \Gm \to \Gm$ splits after one suspension as the sum of the two projection maps and $\eta$.
This holds by construction.

The hyperbolic relation is established similarly, using Proposition \ref{prop:hyperbolic}.
\end{proof}

\begin{remark} \label{rmk:multn-by-a-map}
For $a \in \scr O^\times$ we have the map $\Gm \to \Gm, x \mapsto ax$.
Its unpointed suspension is homotopic to $1+\eta[a]$: use the same argument as at the end of the proof of Theorem \ref{thm:KMWn-univ} (which treats $a=-1$).
\end{remark}

\begin{remark} \label{rmk:KMW-univ}
Abstract nonsense considerations show that there exists an initial strongly $\A^1$-invariant sheaf of abelian groups $F_n$ under $\Z[\Gmp{n}]$.
Theorem \ref{thm:KMWn-univ} is equivalent to stating that $\Z[\Gmp{n}] \to F_n$ factors through $\KMWn_n$.
It is proved in \cite[Theorem 3.37]{A1-alg-top} that in fact $F_n \wequi \ul{K}_n^{MW}$.
\end{remark}

Combining the above work with the main theorems of these notes, we can deduce the following result.
The assumptions on the cardinality, characteristic and perfectness of $k$ are not actually necessary, and only an artefact of our simplified exposition.
\begin{corollary+}
Let $k$ be an infinite perfect field of characteristic $\ne 2$, $n \ge 1$ and $m \ge 2$.
Then $\ul\pi_m \Sigma^m \Gmp{n} \wequi \ul{K}_n^{MW}$.
\end{corollary+}
\begin{proof}
It follows easily from the main theorems\NB{details?} that $\Sigma^m \Gmp{n}$ is $m$-connective and $\ul\pi_m \Sigma^m \Gmp{n}$ is the free strictly $\A^1$-invariant sheaf of abelian groups on $\Gmp{n}$.
Combining Theorem \ref{thm:KMWn-univ} and the main theorem, this is the same as the initial strictly $\A^1$-invariant sheaf under $\KMWn_n$.
By Remark \ref{rmk:KMW-naive-vs-not}, $\KMWn_n \wequi \ul{K}_n^{MW}$, so it will suffice to show that the latter is strictly $\A^1$-invariant.
For this, see \cite[\S2 and \S9]{feld-cycle-modules}.
\end{proof}

\section{Contractions} \label{sec:contractions}
Recall that we set $\Gm = (\A^1 \setminus 0, 1) \in \Spc(k)_*$.
\begin{definition}
For $X \in \Spc(k)_*$, we denote by $\Omega_\Gm X := \iMap(\Gm,X)$ the $\Gm$-loop space.

In the case where $X=F$ is a sheaf of pointed sets $\Sm_k$, we also write $F_{-1} := \Omega_\Gm F$ and call this the \emph{contraction of $F$}.
It is a pointed sheaf of sets.

We define the \emph{iterated contractions} by $F_{-n-1} = (F_{-n})_{-1}$.
\end{definition}
In other words, we have \[ F_{-1}(X) = \{ s \in F(X \times \Gm) \mid i_X^*(s) = *\}, \] where $i_X: X \to X \times \Gm$ is the inclusion at the base point.

\subsubsection*{Outline} 
The aim of this section is to study this construction.
We begin in \S\ref{subsec:contr-1} with some straightforward observations.
For example, we use the results of the previous section to show that there is an action $\KMWn_n \otimes F_{-n} \to F$.

In \S\ref{subsec:contr-local-coho}, we relate contractions to local cohomology.
If $F$ is \emph{strictly} $\A^1$-invariant, $X \in \Sm_k$ and $x \in X$, it is easy to see using purity that the local cohomology groups $H^*_x(X,F)$ can all be expressed using contractions of $F$.
If $F$ is only strongly invariant, this a priori only remains true for $* < 2$.
One crucial result, the proof of which we only start in this section (and finish in \S\ref{sec:gersten}) is that the identification still works for $*=2$.

In \S\ref{subsec:contractions-transfers} we construct certain transfers on sheaves of the form $F_{-1}$.
These are a priori not even well-defined.
We begin the proof that on sheaves of the form $F_{-2}$, the transfers are indeed well-defined (again the proof is finished in \S\ref{sec:gersten}).

\subsection{First properties} \label{subsec:contr-1}
The following are immediately verified:
\begin{itemize}
\item If $G$ is a sheaf of (abelian) groups, then $G_{-1}$ also is.
\item If $F$ is $\A^1$-invariant, then so is $F_{-1}$.
\item If $A$ is a sheaf of abelian groups, then $A_{-1}$ is a summand of $A(\Gm \times (\ph))$.
  (Namely we always have a natural retraction $A(X) \xrightarrow{p^*} A(X \times \Gm) \xrightarrow{i^*} A(X)$, and $A_{-1}(X)$ is the complementary summand.)
\end{itemize}

Contractions interact well with strong $\A^1$-invariance:
\begin{lemma} \label{lemm:contract-strong}
Let $F$ be a Nisnevich sheaf of groups and $n \ge 1$ such that $K(F,n)$ is $\A^1$-invariant (assume $F$ abelian if $n \ge 2$).

There is a canonical equivalence $\Omega_\Gm K(F,n) \wequi K(F_{-1},n)$.
In particular, if $F$ is strongly (or strictly) $\A^1$-invariant, so is $F_{-1}$.
\end{lemma}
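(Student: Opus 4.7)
I will show $\Omega_\Gm K(F,n) \wequi K(F_{-1},n)$ by identifying both sides as $n$-truncated, $(n-1)$-connected pointed motivic spaces with $\ul\pi_n = F_{-1}$, which forces the equivalence by Postnikov. First, $\Omega_\Gm K(F,n)$ is $n$-truncated (the fiber of a map between $n$-truncated objects), and $\A^1$-invariant: $K(F,n)^{\Gm_+}(X) = K(F,n)(X\times\Gm)$ inherits $\A^1$-invariance from $K(F,n)$, and fibers of $\A^1$-invariant maps are $\A^1$-invariant. Hence $\Omega_\Gm K(F,n) \in \Spc(k)_*$.

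From the fiber sequence $\Omega_\Gm K(F,n) \to K(F,n)^{\Gm_+} \xrightarrow{i^*} K(F,n)$ (where $i^*$ restricts at the basepoint $1\in\Gm$), the long exact sequence at $\pi_n$ yields $\ul\pi_n \Omega_\Gm K(F,n) = \ker(F(\ph\times\Gm)\to F(\ph)) = F_{-1}$, using that $K(F,n)$ is $n$-truncated and $\pi_n K(F,n)(Y)=F(Y)$. Applying Theorem \ref{thm:pi1-strongly-inv} to the pointed motivic space $\Omega^{n-1}\Omega_\Gm K(F,n) \wequi \Omega_\Gm BF$ exhibits $F_{-1}$ as strongly $\A^1$-invariant, so $K(F_{-1},n)$ is a well-formed $\A^1$-invariant target and a genuine candidate for the equivalence.

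It remains to show $\Omega_\Gm K(F,n)$ is $(n-1)$-connective. I induct on $n$. The identification $\Omega\,\Omega_\Gm K(F,n) \wequi \Omega_\Gm K(F,n-1)$, combined with the inductive equivalence $\Omega_\Gm K(F,n-1) \wequi K(F_{-1}, n-1)$, forces $\ul\pi_j = 0$ for $1 \le j \le n-1$. The remaining case $\ul\pi_0 = 0$ (connectedness) reduces, via Corollary \ref{cor:connectivity-crit}, to the vanishing $H^n_\Nis(\Gm_K, F) = 0$ for every finitely generated separable extension $K/k$. For $n \ge 2$ this is automatic from $\dim \Gm_K \le 1$. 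The base case $n = 1$, requiring $H^1_\Nis(\Gm_K, F) = 0$ for a strongly $\A^1$-invariant sheaf of groups $F$, is the main obstacle I expect: this is Morel's geometric input (essentially Lemma 6.1.4 of \cite{A1-alg-top}), using the pair $(\A^1,\Gm)$ and the $\A^1$-invariance of $BF$. Granted the equivalence, the ``in particular'' statement follows formally: if $F$ is strongly (resp.\ strictly) $\A^1$-invariant, then $K(F,1)$ (resp.\ all $K(F,n)$) is $\A^1$-invariant, hence so is $K(F_{-1}, n) \wequi \Omega_\Gm K(F,n)$, which gives the claimed invariance for $F_{-1}$.
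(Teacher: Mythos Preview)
Your plan follows essentially the same route as the paper: identify $\ul\pi_n\Omega_\Gm K(F,n)=F_{-1}$, note $n$-truncatedness, reduce higher connectivity to connectedness via $\Omega\,\Omega_\Gm K(F,n)\wequi \Omega_\Gm K(F,n-1)$, and then use Corollary~\ref{cor:connectivity-crit} to reduce to $H^n(\Gm_K,F)=0$, handling $n\ge 2$ by dimension.

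The one genuine gap is the base case $n=1$: you correctly flag $H^1_\Nis(\Gm_K,F)=0$ as the crux but do not prove it, instead pointing to Morel's book. The paper supplies this directly, and the argument is short enough that you should include it. First use Corollary~\ref{cor:Zariski-vanishing} to replace Nisnevich by Zariski cohomology. Then observe that any \v{C}ech $1$-cocycle $\{g_{ij}\}$ for $F$ on a finite Zariski cover $U_1,\dots,U_n$ of $\Gm_K$ extends verbatim to a cocycle on $\A^1_K$: set $\tilde U_1=U_1\cup\{0\}$ and $\tilde U_i=U_i$ for $i>1$; since the pairwise and triple intersections are unchanged, the same $g_{ij}$ define a class in $H^1_\Zar(\A^1_K,F)$. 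One more application of Corollary~\ref{cor:Zariski-vanishing} and the hypothesis that $K(F,1)$ is $\A^1$-invariant give $H^1_\Zar(\A^1_K,F)=H^1_\Nis(\A^1_K,F)=0$, so the original class was trivial.

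A minor point: your detour through Theorem~\ref{thm:pi1-strongly-inv} is unnecessary. You do not need $K(F_{-1},n)$ to be $\A^1$-invariant \emph{a priori}; the equivalence $\Omega_\Gm K(F,n)\wequi K(F_{-1},n)$ is established in $\Shv_\Nis(\Sm_k)_*$, and $\A^1$-invariance of the right side then follows from that of the left. (Your appeal to Theorem~\ref{thm:pi1-strongly-inv} would in any case only yield strong invariance of $F_{-1}$, not $\A^1$-invariance of $K(F_{-1},n)$ for $n\ge 2$.)
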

\begin{proof}
Since $K(F,n)$ is $n$-truncated, so is $\Omega_\Gm K(F,n)$.
Now \[ \ul\pi_n \Omega_\Gm K(F,n) \wequi \Omega_{S^n} \Omega_\Gm K(F,n) \wequi \Omega_\Gm \Omega_{S^n} K(F,n) \wequi \Omega_\Gm F = F_{-1}. \]
It remains to prove that the other homotopy sheaves of $\Omega_\Gm K(F,n)$ vanish.
Using induction on $n$, it is enough to show that $\Omega_\Gm K(F,n)$ is connected.
By Corollaries \ref{cor:connectivity-crit} and \ref{cor:Zariski-vanishing}, for this it suffices to show: if $K/k$ is a separable field extension, then $H^n_\Zar(\Gm_K, F) = 0$.
If $n \ge 2$ this is clear, because $\Gm_K$ has dimension $1$.
We must thus treat the case $n=1$.

As a preparatory remark, let $X$ be a quasi-compact scheme and $F$ a Zariski sheaf of groups on $X$.
Let $U_1, \dots, U_n$ an open cover, for $i < j$ let $g_{ij} \in F(U_i \cap U_j)$, and for $i<j<k$ assume that $g_{ij} g_{jk} = g_{ik} \in F(U_i \cap U_j \cap U_k)$.
Then the $\{g_{ij}\}$ define a cocycle for $F$, whence an element in $H^1_\Zar(X,F)$, and all elements of that group arise in this way (for appropriately fine covers)\footnote{Indeed to evaluate $H^1_\Zar(X, F) = \pi_0 ((L_\Zar BF)(X))$, we may take the filtered colimit over all hypercovers $X_\bullet \to X$ of $\pi_0 BF(|X_\bullet|) \wequi \check{H}^1(X_\bullet, F)$ \cite[Theorem 8.6(b)]{dugger2004hypercovers}.}

We now apply this to $\Gm_K$.
Thus let $U_1, \dots, U_n$ be an open cover of $\Gm_K$, $\{g_{ij} \in F(U_i \cap U_j)\}_{i<j}$ a cocycle.
We may assume that $U_1 \ne \emptyset$.
Set $\tilde U_1 = U_1 \cup \{0\} \subset \A^1_K$, $\tilde U_i = U_i$ for $i > 1$.
Then the $\tilde U_i$ form an open cover of $\A^1_K$.
For $i<j$ (respectively $i<j<k$) we have $\tilde U_i \cap \tilde U_j = U_i \cap U_j$ (respectively $\tilde U_i \cap \tilde U_j \cap \tilde U_k = U_i \cap U_j \cap U_k$).
It follows that the $\{g_{ij}\}_{i<j}$ also define a cocycle for $F$ on $\A^1_K$, with respect to the cover $\{\tilde U_i\}_i$.
This means that the class of $H^1_\Zar(\Gm_K, F)$ corresponding to the $g_{ij}$ lies in the image of the map $H^1_\Zar(\A^1_K, F) \to H^1_\Zar(\Gm_K, F)$.
Since the cocycle was arbitrary, the map is thus surjective.
One more application of Corollary \ref{cor:Zariski-vanishing} shows that $H^1_\Zar(\A^1_K, F) = H^1_\Nis(\A^1_K, F)$, which is zero by assumption.
This concludes the proof.
\end{proof}

The following is one of the main reasons why contractions are relevant in motivic homotopy theory.
\begin{corollary+}
Let $k$ be perfect, $X \in \Spc(k)_*$.
For $i \ge 1$ we have $\ul\pi_i \Omega_\Gm X \wequi \ul\pi_i(X)_{-1}$, and if $X$ is connected then so is $\Omega_\Gm X$.
\end{corollary+}
\begin{proof}
For $T \in \Sm_k$ and $i \ge 1$ the space $S^i \wedge \Gm \wedge T_+$ is connected, and hence any map $S^i \wedge \Gm \wedge T_+ \to X$ factors (uniquely) through $X_0$, the connected component of the base point.
In other words $\ul\pi_i \Omega_\Gm X \wequi \ul\pi_i \Omega_\Gm X_0$, and so for the first statement we may assume that $X$ is connected.
By considering the Postnikov tower, both statements follow from Lemma \ref{lemm:contract-strong}.
(In more detail: we have fiber sequences $K(\ul\pi_{n+1} X, n+1) \to X_{\le n+1} \to X_{\le n}$.
Applying $\Omega_\Gm$ we get fiber sequences $K(\ul\pi_{n+1}(X)_{-1}, n+1) \to \Omega_\Gm(X_{\le n+1}) \to \Omega_\Gm(X_{\le n})$.
Inductively these show that $\ul\pi_i \Omega_\Gm(X_{\le n}) \wequi \ul\pi_i(X)_{-1}$, for $i \le n$.
The result follows since $\Omega_\Gm X \wequi \lim_n \Omega_\Gm(X_{\le n})$.)
\end{proof}

\NB{mention exactness of contraction?}

\begin{construction}
The canonical maps $\Z[\Gmp{n}] \otimes \KMWn_m \to \KMWn_{n+m}$ induce by adjunction canonical maps \[ \KMWn_m \to (\KMWn_{m+n})_{-n}. \]
\end{construction}

\begin{remark}
One similarly has $\ul{K}_m^{MW} \to (\ul{K}_{m+n}^{MW})_{-n}$.
This map can be shown to be an isomorphism \cite[Lemma 2.48]{A1-alg-top}.
This is one of the justifications for the slightly curious notation for contractions.
\end{remark}

Let $F$ be a strongly $\A^1$-invariant abelian sheaf.
Since $F_{-n} \wequi \iHom(\Z\{\Gmp{n}\}, F)$, we get by adjunction an action $\Z[\Gmp{n}] \otimes F_{-n} \to F$.
\begin{proposition} \label{prop:KMW-action}
Let $n \ge 1$ and $k$ be any field.
The above action factors through the surjection $\Z[\Gmp{n}] \to \KMWn_n$, inducing an action \[ \KMWn_n \otimes F_{-n} \to F. \]
\end{proposition}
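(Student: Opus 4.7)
The plan is to reduce the statement to the universal property of $\KMWn_n$ from Theorem \ref{thm:KMWn-univ}. By tensor-hom adjunction in sheaves of abelian groups, a section $s \in F_{-n}(X)$ is equivalent to a map of abelian group sheaves $\tilde s : \Z[\Gmp{n}] \otimes \Z[X] \to F$; taking the adjoint in the other variable yields $\alpha_s : \Z[\Gmp{n}] \to F^X$, where $F^X$ denotes the parameter sheaf $Y \mapsto F(X \times Y)$. The proposition will follow if I can show each $\alpha_s$ factors through $\KMWn_n$ in a manner natural in $(X, s)$, as such a coherent family reassembles under adjunction into the sought-after morphism $\KMWn_n \otimes F_{-n} \to F$.

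To invoke Theorem \ref{thm:KMWn-univ} for $\alpha_s$, the key step is to verify that $F^X$ is strongly $\A^1$-invariant. For this I would pass to the sheaf of pointed 1-truncated spaces $(BF)^X : Y \mapsto BF(X \times Y)$, which inherits $\A^1$-invariance from $BF$, and hence lies in $\Spc(k)_*$. Its only nonzero homotopy sheaf away from the base point is $\ul\pi_1 = F^X$, so the connected component of the base point identifies with $K(F^X, 1) = BF^X$. Lemma \ref{lemm:conn-cpt-of-basepoint} then forces $BF^X$ to also lie in $\Spc(k)_*$, which is precisely the assertion that $F^X$ is strongly $\A^1$-invariant.

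With $F^X$ strongly $\A^1$-invariant, Theorem \ref{thm:KMWn-univ} supplies a unique factorization $\bar{\alpha}_s : \KMWn_n \to F^X$. The uniqueness clause propagates naturality in $X$ (contravariance under morphisms $X' \to X$) and in $s$ (compatibility with the abelian group structure on $F_{-n}$), so the family $\{\bar{\alpha}_s\}$ assembles (by pulling back along the diagonal $W \to W \times W$) into a map of sheaves $\KMWn_n \otimes F_{-n} \to F$ factoring the action, as desired. The principal difficulty will be the strong $\A^1$-invariance of $F^X$: this rests on identifying $((BF)^X)_0$ with $BF^X$ in $\Spc(k)_*$, a Postnikov-tower observation, combined with the input of Lemma \ref{lemm:conn-cpt-of-basepoint}.
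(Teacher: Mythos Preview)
Your approach is correct and genuinely different from the paper's. The paper argues as follows: to show that $a\cdot b=0$ for $a\in K_n(X)$ and $b\in F_{-n}(X)$, it first uses unramifiedness of $F$ (Corollary~\ref{cor:A1-inv-unramified}, hence ultimately Gabber's lemma) to reduce to checking this at the generic point $\eta$ of $X$, and then uses essentially smooth base change (Lemma~\ref{lemm:essentially-smooth-bc}) to replace the base field by $\eta$. Over the new base, $b$ becomes a global section and the map $x\mapsto x\cdot b$ is a morphism $\Z[\Gmp{n}]\to F$ of sheaves over $\eta$, to which Theorem~\ref{thm:KMWn-univ} applies directly.

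You instead stay over $k$ and prove the stronger intermediate statement that $F^X$ is strongly $\A^1$-invariant for every $X\in\Sm_k$, by recognising $BF^X$ as the connected component of the basepoint in the motivic space $(BF)^X$ and invoking Lemma~\ref{lemm:conn-cpt-of-basepoint}. This avoids both the unramifiedness reduction and the change of base field, and the auxiliary fact about $F^X$ is of independent interest. The paper's route, by contrast, uses tools (unramifiedness, essentially smooth base change) that are already developed for other purposes and keeps the argument pointwise; it trades your categorical observation for a more concrete reduction. Both routes feed into Theorem~\ref{thm:KMWn-univ} at the end, so neither is essentially shorter.
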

\begin{proof}
The map is surjective by Lemma \ref{lemm:KMWn-basics}(4), say with kernel $K_n$.
We must show that the composite $K_n \otimes F_{-n} \to \Z[\Gmp{n}] \otimes F_{-n} \to F$ is null.
Let $X \in \Sm_k$ be connected with generic point $\eta$, $a \in K_n(X)$ and $b \in F_{-n}(X)$.
We have to show that $0 = a \cdot b \in F(X)$.
Using Remark \ref{rmk:unramifiedness-use}, we may prove this for the image in $F(\eta)$.
By essentially smooth base change (Lemma \ref{lemm:essentially-smooth-bc}), we may thus assume that $X=\Spec(k)$.
We have the map \[ \Z[\Gmp{n}] \to F, x \mapsto x \cdot b. \]
Since $F$ is strongly $\A^1$-invariant and abelian, this map factors through $\KMWn_n$, by Theorem \ref{thm:KMWn-univ}.
The vanishing of $a \cdot b$ follows.
\end{proof}

Now let $0<k \le n$.
\begin{construction}
Consider the composite \[ \KMWn_{n-k} \otimes F_{-n} \xrightarrow{u} (\Z[\Gmp{k}] \otimes \KMWn_{n-k} \otimes F_{-n})_{-k} \to (\KMWn_n \otimes F_{-n})_{-k} \to F_{-k}, \] where $u$ is a unit of adjunction, the second map comes from multiplication in $\KMWn_*$, and the third is the action from Proposition \ref{prop:KMW-action}.
In the special case $k=n$, we in particular obtain the following important action: \[ \KMWn_0 \otimes F_{-n} \to F_{-n}. \]
In other words, for $n \ge 1$, the sheaf $F_{-n}$ is automatically a module over the sheaf of rings $\KMWn_0$.
\end{construction}

\begin{remark} \label{rmk:action-of-Otimes}
Tracing through the definitions\NB{details?}, we find that the element $\lra{a}-1 = [a] \eta$ acts on $F_{-1} = [\Sigma \Gm, BF]$ by precomposition with the endomorphism $\Sigma \Gm \xrightarrow{[a]} \Sigma \Gmp{2} \xrightarrow{\eta} \Gmp{2}$.
Combining with Remark \ref{rmk:multn-by-a-map}, we see that $\lra{a}$ acts by precomposition with the unreduced suspension of the multiplication-by-$a$ map.
\end{remark}

We can use this to define \emph{twisted versions} of contracted sheaves.
This works as follows.
\begin{definition}
Let $F$ be strongly $\A^1$-invariant, $n \ge 1$, $X \in \Sm_k$ and $L$ a line bundle on $X$.
We define \[ F_{-n}(X, L) = F_{-n}(X) \times_{\scr O^\times} L^\times. \]
\end{definition}
Here $\scr O^\times$ acts on $F_{-n}$ via the map $\scr O^\times \to \ul{K}_0^{MW}$ from Remark \ref{rmk:Gm-to-KMW0}, and $L^\times$ denotes the set of non-vanishing sections of $L$ (on which $\scr O^\times$ acts in the evident way).
This construction makes sense also for $X$ essentially smooth, and we will in fact mainly use it when $X$ is the spectrum of a field.
Otherwise it may be more reasonable to sheafify the construction.

\begin{remark}
We claim that the action of $\scr O^\times$ on $F_{-n}$ factors through $\scr O^\times/2$, that is, squares act trivially.
Using Remark \ref{rmk:unramifiedness-use}, this reduces to the case of sections over a field.
But in this case, by Remark \ref{rmk:KMW-extra-relations}, the map $\scr O^\times(K) \to \KMWn_0(K)$ sends squares to $1$.
The claim follows.

Using this we can, for every line bundle $E$, construct a canonical isomorphism \[ F_{-n}(X, L) \wequi F_{-n}(X, L \otimes E^{\otimes 2}). \]
In particular (taking $E=L^*$ the dual bundle) \[ F_{-n}(X, L) \wequi F_{-n}(X, L^*). \]
\end{remark}

One use of twisted sheaves is via the following construction.
\begin{construction} \label{constr:twisting-V}
Let $n \ge 1$, $K/k$ a separable field extension, and $V$ an $n$-dimensional $K$-vector space.
Assume that $K(F, n)$ is $\A^1$-invariant.
We attempt to define a map \[ \alpha_V: [V/V \setminus 0, K(F, n)] \to F_{-n}(K, \det V) \] as follows.
Pick a basis $e_1, \dots, e_n$ of $V$.
This induces $V/V \setminus 0 \wequi \A^n_K/\A^n_K \setminus 0 \wequi \Sigma^n \Gmp{n} \wedge \Spec(K)_+$ and hence $[V/V \setminus 0, K(F, n)] \wequi F_{-n}(K)$.
Combine this with the isomorphism $F_{-n}(K) \wequi F_{-n}(K, \det V)$ sending $a$ to $a \otimes (e_1 \wedge \dots \wedge e_n)$.
\end{construction}
\begin{lemma}\label{lemm:twisting-V}
Assumptions as in Construction \ref{constr:twisting-V}.
The map $\alpha_V$ is a well-defined isomorphism.
\end{lemma}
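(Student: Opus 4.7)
My plan is to reduce the lemma to a single computation about the action of $GL_n(K)$ on $[\Sigma^n \Gmp{n}, K(F,n)] \simeq F_{-n}(K)$. Bijectivity of $\alpha_V$ is immediate once well-definedness is established, since every step in Construction \ref{constr:twisting-V} is individually a bijection. So the work is in showing independence of the chosen basis.

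Two bases $e$ and $f$ of $V$ are related by some $g \in GL_n(K)$ (with columns expressing the $f_i$ in terms of the $e_j$). Tracking through the construction, the two identifications $V/V \setminus 0 \simeq \Sigma^n \Gmp{n} \wedge \Spec(K)_+$ differ by precomposition with an automorphism $\bar\gamma_g$ of $\A^n_K/\A^n_K\setminus 0$ induced by the linear action of $g$ on $\A^n_K$, while $e_1 \wedge \ldots \wedge e_n$ and $f_1 \wedge \ldots \wedge f_n$ differ in $(\det V)^\times$ by multiplication by $\det g$. Since $\scr O^\times$ acts on $F_{-n}$ through $\scr O^\times \to \KMWn_0,\ a \mapsto \lra{a}$ (Remark \ref{rmk:Gm-to-KMW0}), the effect on the $F_{-n}(K)$-coordinate of the balanced product is multiplication by $\lra{\det g}$. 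Hence the lemma reduces to: the automorphism $\bar\gamma_g^*$ of $F_{-n}(K)$ is multiplication by $\lra{\det g}$.

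For this, I reduce to generators. Since $K$ is a field, any $g \in GL_n(K)$ is a product of elementary matrices and a single diagonal matrix $\mathrm{diag}(\det g, 1, \ldots, 1)$. Composition of matrices translates into composition of endomorphisms of $\A^n_K/\A^n_K \setminus 0$ and hence into composition on $F_{-n}(K)$; composition of scalar multiplications matches multiplicativity of $\lra{\cdot}$ (Lemma \ref{lemm:KMWn-basics}(2)), so it suffices to verify the two generator cases. An elementary matrix $I + tE_{ij}$ with $i \ne j$ is $\A^1$-homotopic to the identity via $s \mapsto I + stE_{ij}$; after pushing this down to $\A^n_K/\A^n_K\setminus 0$, the induced endomorphism is $\A^1$-homotopic to the identity, so $\bar\gamma^*$ is the identity, matching $\lra{1} = 1$. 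For $\mathrm{diag}(a, 1, \ldots, 1)$, under the splitting $\A^n_K/\A^n_K\setminus 0 \simeq (\A^1/\A^1\setminus 0)^{\wedge n} \simeq \Sigma\Gm \wedge \Sigma^{n-1}\Gmp{n-1}$, the induced endomorphism is $\bar m_a \wedge \id$ where $\bar m_a$ is the map on $\A^1/\A^1 \setminus 0 \simeq \Sigma\Gm$ induced by multiplication by $a$ on $\A^1$ — that is, the unreduced suspension of the scaling map on $\Gm$. By Remark \ref{rmk:multn-by-a-map} this unreduced suspension represents $\lra{a}$, so by Remark \ref{rmk:action-of-Otimes}, precomposition with $\bar m_a \wedge \id$ acts on $F_{-n}(K) \simeq (F_{-(n-1)})_{-1}(K)$ (using Lemma \ref{lemm:contract-strong}) as multiplication by $\lra{a}$, as required.

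The main obstacle I expect is bookkeeping: correctly tracing the two bases through the identifications and confirming that the $\scr O^\times$-twist in the balanced product is the right side to compare with the $\bar\gamma_g^*$-action, so that the two effects cancel rather than reinforce. The computation at the level of generators is then routine given Remarks \ref{rmk:multn-by-a-map} and \ref{rmk:action-of-Otimes}, but I want to make sure the direction convention (acting by $g$ versus $g^T$ on coordinates) does not distort the determinant, which it does not since $\det g = \det g^T$.
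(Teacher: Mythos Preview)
Your proposal is correct and follows essentially the same route as the paper: reduce well-definedness to showing that the action of $g \in GL_n(K)$ on $[\A^n_K/\A^n_K \setminus 0, K(F,n)] \simeq F_{-n}(K)$ is multiplication by $\lra{\det g}$, then check this on the generators of $GL_n(K)$ (elementary matrices, which are $\A^1$-homotopic to the identity, and $\mathrm{diag}(a,1,\dots,1)$, handled via Remark~\ref{rmk:action-of-Otimes}). Your version is slightly more explicit about the $n=1$ reduction (invoking Remark~\ref{rmk:multn-by-a-map} and Lemma~\ref{lemm:contract-strong}), but the argument is the same.
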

\begin{proof}
It is clear that the map is an isomorphism, we need only check that it is well-defined.
Let us write $\alpha_V^e$ for the isomorphism corresponding to the choice of basis $e_1, \dots, e_n$.
Let $M: \A^n_K \to \A^n_K$ be a $K$-linear automorphism, corresponding to a change of basis $e_1, \dots, e_n \mapsto Me_1, \dots, Me_n$.
Then we have $\alpha_V^{Me}(a) = \det{M} \cdot \alpha_V^e((M^{-1})^* a)$, where $(M^{-1})^*$ acts on $F_{-n}$ via the action of $M^{-1}$ on $\A^n/\A^n \setminus 0$.
It thus suffices to show that the map \[ M^*: F_{-n}(K) \wequi [\A^n_K/\A^n_K\setminus 0, K(F,n)] \to [\A^n_K/\A^n_K\setminus 0, K(F,n)] \wequi F_{-n}(K) \] is multiplication by $\det M$.
Recall that $\mathrm{GL}_n(K)$ is generated by matrices of the form $diag(a,1,\dots,1)$ and elementary matrices\NB{ref?}, the latter of which have determinant $1$ and are $\A^1$-homotopic to the identity.
It thus suffices to consider the case where $M$ is of the former form, and in fact we may assume $n=1$.
This case was treated in Remark \ref{rmk:action-of-Otimes}.
\end{proof}

\subsection{Contractions and local cohomology} \label{subsec:contr-local-coho}
Let $X \in \Sm_k$ and $Z \subset X$ closed.
Let $F$ be a Nisnevich sheaf of abelian groups on $\Sm_k$.
We have the \emph{cohomology with support} \[ H^i_Z(X,F) = [X/X \setminus Z, K(F, i)]_{\Shv_\Nis(\Sm_k)}. \]
Now let $x \in X$.
Write $\overline{\{x\}}$ for the closure of $x$.
The \emph{local cohomology groups} are defined as \[ H^i_x(X, F) = \colim_{x \in U \subset X} H^i_{\overline{\{x\}} \cap U}(U, F), \] the (filtering) colimit being over all open neighborhoods $U$ of $x$.
By generic smoothness \cite[Tag 0B8X]{stacks-project}, assuming $k$ perfect, once $U$ is small enough, $Z' := U \cap \overline{\{x\}}$ is smooth.
Thus by purity (see \S\ref{subsub:purity}), $U/U \setminus Z' \stackrel{\A^1}{\wequi} \Th(N_{Z/U})$.
Shrinking $U$ further, we can assume that $N_{Z/U}$ is trivial of rank equal to the codimension $c$ of $x$, so that $U/U \setminus Z' \wequi Z'_+ \wedge \Sigma^c \Gmp{c}$.
If $K(F,i)$ is $\A^1$-invariant, then we deduce that \[ H^i_{Z'}(U, F) \wequi \tilde H^{i-c}(Z'_+ \wedge \Gmp{c}, F). \]
If $i=c$, then in the colimit we obtain $F_{-c}(x)$.
Unfortunately this computation is not quite canonical: we needed to use a trivialization of $N_{x/X}$.
Via Construction \ref{constr:twisting-V} and Lemma \ref{lemm:twisting-V}, this is precisely what twisting is designed to avoid.
We have thus proved the following.
(See \S\ref{subsub:cotangent-cx} and Example \ref{ex:omega-lci} for the meaning of $\omega_{x/X}$.)
\begin{lemma} \label{lemm:identify-local-cohomology}
Work over a perfect field $k$.

Let $F$ be a Nisnevich sheaf of abelian groups such that $K(F, c)$ is $\A^1$-invariant.
Let $X \in \Sm_k$ and $x \in X$ be a point of codimension $c$.
Then there is a canonical isomorphism \[ H^c_x(X, F) \stackrel{\alpha_x}{\wequi} F_{-c}(x, \omega_{x/X}). \]
If $y \in X$ has codimension $>c$, then $H^c_y(X, F) = 0$.
\end{lemma}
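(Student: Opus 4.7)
The plan is to combine purity with a Nisnevich-local trivialization of the normal bundle, using Lemma \ref{lemm:twisting-V} to absorb the dependence on the trivialization. The discussion immediately preceding the statement already essentially carries out the construction; what remains is to organize it into a canonical isomorphism and to dispose of the higher-codimension case.

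Suppose first that $x$ has codimension exactly $c$. Since $k$ is perfect, generic smoothness lets me shrink $X$ to an open neighborhood $U$ of $x$ so that $Z := \overline{\{x\}} \cap U$ is smooth of pure codimension $c$ in $U$. Purity (Theorem \ref{thm:purity}) gives $U/(U \setminus Z) \wequi \Th(N_{Z/U})$ in $\Spc(k)$, and since $K(F,c)$ is motivically local by hypothesis this yields $H^c_Z(U,F) \wequi [\Th(N_{Z/U}), K(F,c)]$. I would then shrink $U$ further to trivialize $N_{Z/U}$ by a basis $e = (e_1,\dots,e_c)$, so that $\Th(N_{Z/U}) \wequi \Sigma^c \Gmp{c} \wedge Z_+$ and the right-hand side becomes $F_{-c}(Z)$. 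Taking the filtered colimit over such $U$ and invoking essentially smooth base change (Lemma \ref{lemm:essentially-smooth-bc}) produces $H^c_x(X,F) \wequi F_{-c}(x)$.

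To upgrade to the twisted form, I would use Example \ref{ex:omega-lci} (which gives $\omega_{Z/U} \wequi \det N_{Z/U}$, since $\Omega^1_{U/U}=0$) together with Remark \ref{rmk:omega-composition} applied to $x \to Z \to U$ (noting $\omega_{x/Z}$ is canonically trivial since $x \to Z$ is a localization, and $\omega_{U/X}$ is trivial since $U \to X$ is an open immersion) to identify $\omega_{x/X} \wequi \det N_{Z/U}|_x$. The basis $e$ then trivializes $\omega_{x/X}$, and Construction \ref{constr:twisting-V} produces an isomorphism $\alpha^e_x : H^c_x(X,F) \wequi F_{-c}(x, \omega_{x/X})$. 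The one delicate point, which is the main obstacle to overcome, is showing $\alpha^e_x$ is canonical: a change of basis $e \mapsto Me$ is absorbed by the determinant factor built into the twist, which is precisely the content of Lemma \ref{lemm:twisting-V}; independence of $U$ is automatic from the colimit.

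Finally, for $y$ of codimension $c' > c$, the same shrinking-and-trivialization recipe gives $U/(U \setminus \overline{\{y\}} \cap U) \wequi \Sigma^{c'} \Gmp{c'} \wedge W_+$ where $W = \overline{\{y\}} \cap U$, so $H^c_y(X,F)$ becomes a filtered colimit of mapping sets of the form $[\Gmp{c'} \wedge W_+, \Omega^{c'} K(F,c)]$. Since $K(F,c)$ is $c$-truncated and $c' > c$, the iterated loop space $\Omega^{c'} K(F,c)$ is contractible, so the vanishing follows.
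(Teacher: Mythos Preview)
Your proposal is correct and follows essentially the same approach as the paper: the paper's argument is the paragraph immediately preceding the lemma (generic smoothness, purity, trivialize the normal bundle, pass to the colimit, then invoke Construction \ref{constr:twisting-V} and Lemma \ref{lemm:twisting-V} for canonicity), with the vanishing for codimension $>c$ handled identically via $\Omega^{c'}K(F,c)=*$. Your extra care in identifying $\omega_{x/X}$ with $\det N_{Z/U}|_x$ via Example \ref{ex:omega-lci} and Remark \ref{rmk:omega-composition} just makes explicit what the paper leaves implicit.
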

\begin{proof}
Only the last point has not been explained.
But we have seen that $H^c_y(X, F) \wequi H^{c-d}(y_+ \wedge \Gmp{d}, F)$, where $d > c$ is the codimension of $y$.
This group vanishes since $c-d<0$.
\end{proof}

From now on let $F$ be a strongly $\A^1$-invariant sheaf of abelian groups and $k$ a perfect field.
Thus we may apply Lemma \ref{lemm:identify-local-cohomology} with $c \in \{0,1\}$.
The first step in our quest to ultimately show that $F$ is strictly $\A^1$-invariant will be to prove that Lemma \ref{lemm:identify-local-cohomology} also holds for $F$ when $c=2$.
We begin as follows.
Let $X \in \Sm_k$, $x \in X$ of codimension $2$, $y \in X$ of codimension $1$ specializing to $x$.
Assume that $x$ is a smooth point of $\overline{\{y\}} := Y$.
Consider the two cofiber sequences (recall our conventions on essentially smooth schemes from \S\ref{subsub:ess-smooth}) \[ y \to Y_x  \to Y_x/y \text{ and } X_x \setminus x/X_x \setminus Y_x \to X_x/X_x \setminus Y_x \to X_x/X_x \setminus x. \]
A choice $\lambda$ of trivialization of $N_{Y_x/X_x}$ induces an $\A^1$-equivalence between the $\P^1 \wedge$ the first sequence and the second sequence.
Denote by $\mu$ a choice of trivialization of $N_{x/Y_x}$.
Consider the following part of the long exact sequence coming from mapping the second cofiber sequence into $F$ \[ H^1(X_x/X_x \setminus Y_x, F) \to H^1(X_x \setminus x/X_x \setminus Y_x, F) \xrightarrow{\partial} H^2_x(X, F). \]
Consider also the following part of the long exact sequence coming from mapping the first cofiber sequence into $F_{-1}$ \[ F_{-1}(Y_x) \to F_{-1}(y) \xrightarrow{\partial} H^1_x(Y, F_{-1}) \to H^1(Y_x, F). \]
Using $\mu$, the second to last term of the second sequence is identified with $F_{-2}(x)$:  \[ [Y_x/Y_x \setminus x, K(F_{-1}, 1)] \wequi [\Th(N_{x/Y}), K(F_{-1},1)] \stackrel{\mu}{\wequi} [\Sigma\Gm \wedge x_+, K(F_{-1},1)] \wequi [\Gm \wedge x_+, F_{-1}] = F_{-2}(x). \]
(Here we use that $F_{-1}$ is strongly $\A^1$-invariant, by Lemma \ref{lemm:contract-strong}.)
Similarly using $\lambda$, the first two terms in the respective long exact sequences are identified.
The last term of the second sequence vanishes, by Corollary \ref{cor:Zariski-vanishing}.
\begin{construction} \label{constr:local-H2-comparison}
Given $X, x, y, \mu, \lambda$ as above, and using the identifications described, we obtain a map \[ \alpha_{x,y}^{\mu,\lambda}: F_{-2}(x) \xleftarrow{\wequi} F_{-1}(y)/F_{-1}(Y_x) \wequi H^1(X_x \setminus x/X_x \setminus Y_x, F)/H^1(X_x/X_x \setminus Y_x, F) \xrightarrow{\partial} H^2_x(X, F). \]
\end{construction}

We shall prove that this map is an isomorphism.

\begin{remark} \label{rmk:alpha-etale-extension}
The map is compatible with étale extensions of $X$: if $X' \to X$ is étale, $x' \in X'$ a point above $x$ and $y' \in X'$ a point above $y$, specializing into $x$, then we get a commutative diagram of the form
\begin{equation*}
\begin{CD}
F_{-2}(x') @>{\alpha_{x',y'}^{\mu',\lambda'}}>> H^2_{x'}(X',F) \\
@AAA @AAA \\
F_{-2}(x) @>{\alpha_{x,y}^{\mu,\lambda}}>> H^2_x(X,F)
\end{CD}
\end{equation*}
Here $\mu', \lambda'$ are the canonical induced trivializations.\NB{details?}
\end{remark}

We begin with the following lemma, which is somewhat similar to, but much easier to prove than, Gabber's lemma.
\begin{lemma} \label{lemm:contr1}
Let $X$ be smooth of dimension $2$ over a field $k$, $C \subset X$ a smooth closed curve, $z \in C$ a rational point, and $Z \subset X$ a closed subscheme of positive codimension, not containing any component of $C$.
There exists a henselian local scheme $S$ of dimension $1$, essentially smooth over $k$, an étale morphism $\varphi: X' \to \mathbb{A}^1_S$ and a pro-étale morphism $p: X' \to X$, such that the following hold:
\begin{enumerate}
\item $p$ is a pro-(étale neighborhood) of $z$,
\item $\varphi^{-1}(\mathbb{A}^1_s) = p^{-1}(C)$, where $s$ is the closed point of $S$, and
\item $\varphi^{-1}(\varphi(Z')) = Z'$ and $\varphi|_{Z'}: Z' \to \mathbb{A}^1_S$ is a closed immersion.
  Here $Z' = Z \times_X X'$.
\end{enumerate}
\end{lemma}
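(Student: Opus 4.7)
The plan is to construct $\varphi$ by choosing local coordinates $(t, u)$ on $X$ near $z$ with $u$ cutting out $C$ and $t|_C$ a uniformizer, then setting $S$ to be the henselization of $\A^1_u$ at the origin. The main obstacle will be condition (3)---the requirement that $Z' \to \A^1_S$ be a closed immersion---which I plan to handle via a Nakayama argument exploiting the smoothness of $C$.

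Since $C$ is smooth of codimension one in the smooth surface $X$ at the rational point $z$, I can produce, in some Zariski neighborhood of $z$, an element $u \in \scr O(X)$ cutting out $C$ and an element $t \in \scr O(X)$ with $t(z) = 0$ such that $t|_C$ is a uniformizer of $\scr O_{C, z}$; then $(t, u)$ is a regular system of parameters at $z$, so that $(t, u): X \to \A^2$ is étale at $z$. I will shrink $X$ to an affine open $U$ such that this map is étale everywhere, $C \cap U = V(u)$, $V(t, u) \cap U = \{z\}$ scheme-theoretically, $Z \cap C \cap U$ is supported at $\{z\}$, and every irreducible component of $Z \cap U$ passes through $z$ (components not meeting $z$ can be avoided by further shrinking). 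These conditions force $Z \cap U \to \A^1_u$ to be quasi-finite: on any irreducible component, $u$ is not identically zero by the hypothesis that $Z$ contains no component of $C$, and vanishes at $z$. Setting $S = \Spec \scr O^h_{\A^1_u, 0}$, $\A^1_S = \A^2 \times_{\A^1_u} S$, and $X' = U \times_{\A^1_u} S$, the induced $\varphi: X' \to \A^1_S$ is étale and $p: X' \to X$ is pro-étale. Writing $S = \lim_j S_j$ as a cofiltered limit of étale neighborhoods of $0$ in $\A^1_u$, each $U \times_{\A^1_u} S_j$ is an étale neighborhood of $z$ (its fiber over $z$ equals the fiber of $S_j$ over $0$, which is $\Spec k$), yielding (1); condition (2) is immediate since both $\varphi^{-1}(\A^1_s)$ and $p^{-1}(C)$ are cut out by $u$ in $X'$.

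For (3), I may assume $z \in Z$ (else one shrinks $U$ to avoid $Z$, trivialising the claim). Write $\tilde z$ for the unique preimage of $z$ in $X'$. Since $Z \cap U \to \A^1_u$ is quasi-finite, the structure theorem for quasi-finite morphisms over a henselian local base decomposes $Z' := Z \times_X X'$ as $Z'_0 \sqcup R$, where $Z'_0$ is finite local at $\tilde z$ and $R$ is disjoint from the closed fiber of $X' \to S$. To see that $\varphi|_{Z'_0}: Z'_0 \to \A^1_S$ is a closed immersion, observe that the corresponding map $\scr O(S)[t] \to A := \scr O(Z'_0)$ of finite $\scr O(S)$-algebras reduces modulo $\mathfrak{m}_S$ to $k[t] \to A/\mathfrak{m}_S A = \scr O_{Z \cap C, z} = \scr O_{C, z}/J$; this reduction is surjective because $\scr O_{C, z}$ is a DVR with uniformizer $\bar t$ and $J$ is a proper nonzero ideal, and Nakayama then lifts this to surjectivity of $\scr O(S)[t] \to A$. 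Next, the étale map $\varphi^{-1}(\varphi(Z'_0)) \to \varphi(Z'_0)$ has henselian local target, and its fiber over the closed point is the single point $\tilde z$ (using $V(t, u) \cap U = \{z\}$ scheme-theoretically), so the structure theorem again yields $\varphi^{-1}(\varphi(Z'_0)) = Z'_0 \sqcup E$ for a closed $E$ disjoint from $\tilde z$. Removing $R \cup E$ from $X'$ produces the desired pro-(étale neighborhood), open subsets of pro-étale limits being pulled back from finite stages. The main difficulty is executing these shrinkings and removals consistently so that all three conditions hold at once.
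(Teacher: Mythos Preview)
Your proof is correct and follows essentially the same strategy as the paper's: choose coordinates with one cutting out $C$, henselize $\A^1$ at the origin via that coordinate, decompose $Z'$ over the henselian local base, and use Nakayama on the special fiber to get the closed immersion, then excise the extraneous components. The only notable cosmetic difference is that you verify the closed immersion on the special fiber by explicitly identifying it with a quotient of the DVR $\scr O_{C,z}$ (using that $\bar t$ is a uniformizer), whereas the paper invokes the criterion ``proper + unramified + universally injective $\Rightarrow$ closed immersion'' \cite[Tag 04XV]{stacks-project}; your version is slightly more concrete but amounts to the same thing.
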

\begin{proof}
Shrinking $X$ around $z$, we may assume that $C$ is cut out by a single function, say $t$.
Set $S_0 = \mathbb{A}^1$ and view $t: X \to S$.
Pick another function $f: X \to \mathbb{A}^1$ such that $df,dt$ generate the cotangent space of $X$ at $z$; this is possible because $C$ is smooth, and so $dt \ne 0$.
Let $\varphi_0: X \to \mathbb{A}^1_{S_0}$ be given by $(t,f)$.
By construction $\varphi_0$ is étale at $z$ (use \cite[17.11.1]{EGAIV}).
If $z \not\in Z$ then shrinking $X$ further, we may assume $Z=\emptyset$.
Henselizing $S_0$ in the origin then yields what we want.
Thus from now one we assume $z \in Z$. Noting that $t^{-1}(0) \cap Z = C \cap Z$ is a finite set of closed points, we see that $t|_Z$ is quasi-finite at $z$.
Shrinking $X$ further, we can ensure that $\varphi_0$ is étale (the étale locus being open \cite[Tag 02GI(1)]{stacks-project}), $t|_Z$ is quasi-finite (the quasi-finite locus being open \cite[Tag 01TI]{stacks-project}), and $t^{-1}(0) \cap Z = \{z\}$.
Let $s = 0 \in S_0$ and set $S = S_{0,s}^h$. Write $X_S$ for the base change of $X$ along $S \to S_0$, and so on.
Noting that $Z_S$ is quasi-finite over the henselian local scheme $S$, we see that $Z_S = A_1 \amalg \dots \amalg A_n \amalg A'$, with each $A_i$ local and finite over $S$ and $A'$ having no points over $s$ \cite[Tag 04GJ]{stacks-project}.
We have $z \in A_i$ for precisely one $i$, say $z \in A_1$. We claim that $A_1 \to \mathbb{A}^1_S$ is a closed immersion.
Indeed it is finite over $S$, so we may check this in the special fiber (use Nakayama's lemma\NB{details?}).
But since $t^{-1}(0) \cap Z = \{z\}$, the special fiber of $A_1$ consists of precisely one rational point, and hence maps to $\mathbb{A}^1$ via a universal injection \cite[Tag 01S4(2,3)]{stacks-project}.
The map is also unramified (being a composite of an étale morphism and a closed immersion), and hence a closed immersion \cite[Tag 04XV]{stacks-project}.
This proves the claim. Now set $X' = (X_S \setminus \bigcup_{i>1} A_i) \setminus (\varphi^{-1}(\varphi(A_1)) \setminus A_1)$.
Noting that $A_1 \to \varphi^{-1}(\varphi(A_1))$ is open (being a section of an étale morphism), we see that $X'$ is an open subscheme of $X_S$. The required properties hold by construction.
\end{proof}

We can deduce the following.
\begin{lemma} \label{lemm:contr2}
Let $X$ be smooth over a perfect field $k$, $z \in X$ a point of codimension 2, $y \in X_z^h$ a point of codimension 1 specializing to $z$ and $F$ a strongly $\A^1$-invariant sheaf.
Assume that $z$ is a smooth point of $\overline{\{y\}}$.
Then $H^1(X_z^h \setminus \overline{\{y\}}, F) = 0$.
\end{lemma}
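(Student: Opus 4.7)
The plan is to reduce $H^1(X_z^h\setminus Y,F)$ (for $Y := \overline{\{y\}}$) to cohomology of $F_{-1}$ on the henselian local scheme $Y$ via purity, where it vanishes formally. Since $X_z^h$ is henselian local, $H^i(X_z^h,F) = 0$ for all $i \ge 1$; the long exact sequence of cohomology with support associated to the open immersion $U := X_z^h\setminus Y\hookrightarrow X_z^h$ therefore identifies $H^1(U,F) \wequi H^2_Y(X_z^h,F)$, and it suffices to prove the latter vanishes.

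By hypothesis $z$ is a smooth point of $Y$, so after shrinking an ambient smooth $k$-scheme around $z$ we may assume $Y$ is smooth. Purity (Theorem~\ref{thm:purity}, passed to the essentially smooth pro-étale limit as in \S\ref{subsub:ess-smooth}) then yields a motivic equivalence $X_z^h/U \wequi \Th(N_{Y/X_z^h})$. Being closed in the henselian local scheme $X_z^h$, $Y$ is itself henselian local of dimension one, so $\mathrm{Pic}(Y) = 0$ and the line bundle $N_{Y/X_z^h}$ is trivial; hence $\Th(N_{Y/X_z^h}) \wequi \Sigma(Y_+ \wedge \Gm)$. Using the $\Sigma \dashv \Omega$ adjunction together with the identification $\Omega_\Gm K(F,1)\wequi K(F_{-1},1)$ from Lemma~\ref{lemm:contract-strong} (which crucially uses strong $\A^1$-invariance of $F$), one computes
\[
H^2_Y(X_z^h,F) \wequi [Y_+ \wedge \Gm,\, K(F,1)] \wequi H^1(Y,F_{-1}) = 0,
\]
the last vanishing because $Y$ is henselian local.

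The main technical obstacle is that $K(F,2)$ is not a priori motivically local (this would amount to strict $\A^1$-invariance of $F$), so the purity equivalence cannot be transported to Nisnevich cohomology of $K(F,2)$ directly. After folding down via $\Sigma\dashv\Omega$ to the motivically local $K(F,1)$, the motivic equivalence applies freely; the cleanest execution of this step uses Gabber's Lemma~\ref{lemm:contr1} to produce an explicit geometric presentation of $X_z^h/U$ as an étale neighborhood of $(\A^1_s)_\zeta^h\subset(\A^1_S)_\zeta^h$, so that the identification above can be carried out on the model $\A^1_S/(\A^1_S\setminus\A^1_s)$.
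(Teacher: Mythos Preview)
Your argument has a genuine circularity that the final paragraph acknowledges but does not repair. The identification
\[
H^2_Y(X_z^h,F)\;=\;[X_z^h/U,\,K(F,2)]_{\Shv_\Nis}\;\stackrel{?}{\wequi}\;[\Sigma(Y_+\wedge\Gm),\,K(F,2)]\;\wequi\;H^1(Y,F_{-1})
\]
uses the purity equivalence $X_z^h/U\wequi\Th(N_{Y/X_z^h})$, which is only an equivalence in $\Spc(k)$, not in $\Shv_\Nis(\Sm_k)$. To transport it into the mapping space on the left you need $K(F,2)$ to be motivically local, i.e., $F$ strictly $\A^1$-invariant --- precisely what the whole section is building towards. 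The ``fold down via $\Sigma\dashv\Omega$'' does not help: $X_z^h/U$ is not a suspension in $\Shv_\Nis$, only after motivic localization, so you cannot desuspend before knowing the target is $\A^1$-local. Your appeal to Lemma~\ref{lemm:contr1} in the last sentence gestures in the right direction but is not an argument: that lemma does \emph{not} exhibit $X_z^h\to\A^1_S$ as an \'etale neighborhood of $\A^1_s$ (it only says $\varphi^{-1}(\A^1_s)=C$, not that $C\to\A^1_s$ is an isomorphism), so there is no Nisnevich excision identifying $X_z^h/U$ with the model you name.

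The paper's proof avoids $K(F,2)$ entirely. It works directly with a class $a\in H^1(X_z^h\setminus Y,F)$, spreads it out to $\tilde a\in H^1(X\setminus C,F)$ for a smooth curve $C\subset X$, and observes that $\tilde a$ is supported on some proper closed $Z\setminus C$ (since $H^1$ of the generic point vanishes). The point of Lemma~\ref{lemm:contr1} is then to produce an \'etale map $\varphi:X\to\A^1_S$ which is an \'etale neighborhood of $Z$ (not of $C$) and satisfies $\varphi^{-1}(\A^1_s)=C$. The first condition gives Nisnevich excision for the \emph{support}, so $\tilde a$ lifts along $X\setminus C\to\A^1_S\setminus\A^1_s\simeq\A^1_K$ to a class in $H^1(\A^1_K,F)$; this group vanishes by strong $\A^1$-invariance of $F$ alone. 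No purity, no $K(F,2)$, no $F_{-1}$ is needed.
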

\begin{proof}
Using that $k$ is perfect, we can find an essentially smooth map $X_z^h \to z$ and thus assume that $z$ is rational and $X$ has dimension 2 (though now $k$ may no longer be perfect!); see \S\ref{subsubs:smooth-retract}.
Let $a \in H^1(X_z^h \setminus \overline{\{y\}},F)$.
Replacing $X$ by an étale neighborhood of $z$, we may assume given a smooth curve $C \subset X$ with preimage $\overline{\{y\}}$ in $X_z^h$, and also that $a$ is the image of $\tilde a \in H^1(X \setminus C, F)$.
Since $H^1$ of the generic point of $X$ vanishes, $\tilde a$ is supported on a proper closed subscheme $Z \setminus C \subset X \setminus C$, where $C \not\subset Z$.
Apply Lemma \ref{lemm:contr1} to $(X,Z,C,z)$ and replace $X$ by $X'$. We now have an étale map $\varphi: X \to \mathbb{A}^1_S$ with $\varphi^{-1}(\mathbb{A}^1_s) = C$, which is moreover an étale neighborhood of $Z$.
It induces $X \setminus C \to \mathbb{A}^1_S \setminus \mathbb{A}^1_s \simeq \mathbb{A}^1_K$.
Since $\tilde a$ is supported on $Z \setminus C$ (that is, lies in the image of the map $H^1_{Z \setminus C}(X \setminus C, F) \to H^1(X \setminus Z, F)$), it lifts to $H^1(\mathbb{A}^1_K, F)$. But this group vanishes by strong invariance of $F$, concluding the proof.
\end{proof}

\begin{proposition}
Notation and assumptions as in Construction \ref{constr:local-H2-comparison}.
The map $\alpha_{x,y}^{\mu,\lambda}$ is an isomorphism.
\end{proposition}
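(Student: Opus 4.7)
The strategy is to show directly that the connecting map
$$\partial\colon H^1\!\bigl((X_x\setminus x)/(X_x\setminus Y_x),F\bigr)\big/\,\mathrm{im}\bigl(H^1(X_x/X_x\setminus Y_x,F)\bigr)\longrightarrow H^2_x(X,F)$$
arising from the long exact sequence of the cofiber triangle $(X_x\setminus x)/(X_x\setminus Y_x)\to X_x/(X_x\setminus Y_x)\to X_x/(X_x\setminus x)$ is a bijection; this map, combined with the identifications induced by $\mu$ and $\lambda$, is exactly $\alpha_{x,y}^{\mu,\lambda}$. Injectivity is immediate from exactness, together with the vanishing $\tilde H^1(X_x/X_x\setminus x,F)=0$ coming from purity and strong $\A^1$-invariance (indeed $X_x/X_x\setminus x\wequi \P^2\wedge x_+$ after trivializing $N_{x/X_x}$). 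The only real content is surjectivity.

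Before attacking surjectivity, I would reduce to a concrete geometric situation. By Remark~\ref{rmk:alpha-etale-extension} the whole construction is compatible with étale extensions, and by Remark~\ref{rmk:reduce-to-closed-point} I may apply an essentially smooth retraction to assume that $x$ is a rational point of $X$ (over a possibly non-perfect field). Now I apply Gabber's lemma (Theorem~\ref{thm:gabber-lemma}) to $(X,Y,x)$ where $Y=\overline{\{y\}}$: after shrinking we obtain an étale morphism $\varphi\colon X\to\A^1_S$ which is an étale neighborhood of $Y$, with $Y\to S$ finite and $Y\hookrightarrow\A^1_S$ a closed immersion. The distinguished-square property lets me compute $X/X\setminus Y$ and $(X\setminus x)/(X\setminus Y)$ on the side of $\A^1_S$.

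Surjectivity of $\partial$ is equivalent to the vanishing of the next arrow $H^2_x(X,F)\to \tilde H^2(X_x/X_x\setminus Y_x,F)$. The plan is to exhibit, for each class in $H^2_x(X,F)\cong F_{-2}(x)$, an explicit preimage under the edge map $H^1(X_x\setminus x,F)\to H^2_x(X,F)$ from the localization sequence for the pair $(X_x,x)$: vanishing of the latter's cokernel in $H^2(X_x,F)$ forces the desired vanishing via the commutative diagram of localization sequences for the inclusions $x\subset Y_x\subset X_x$ (using $H^1(X_x\setminus Y_x,F)=0$ from Lemma~\ref{lemm:contr2}). Such preimages are supplied by the composite $F_{-1}(y)\twoheadrightarrow H^1\!\bigl((X_x\setminus x)/(X_x\setminus Y_x),F\bigr)\to H^1(X_x\setminus x,F)$, whose first arrow is surjective by Lemma~\ref{lemm:contr2} applied to $X_x\setminus x$ and its curve $y$, and whose composition with the edge map is by construction the same as $\partial$.

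The heart of the argument — and what I expect to be the main obstacle — is the \emph{compatibility} of two residue maps $F_{-1}(y)\to F_{-2}(x)$: the one coming from the curve $y\to Y_x$ via the $\mu$-trivialization (which is surjective by Corollary~\ref{cor:Zariski-vanishing}, since $\tilde H^1(Y_x,F_{-1})=0$), and the one induced by $\partial$ on the surface via the $\lambda$-trivialization. Establishing this equality reduces to comparing the purity equivalences for the three smooth closed pairs $(X_x,Y_x)$, $(Y_x,x)$, $(X_x,x)$ and tracking the identification of $N_{x/X_x}$ with $N_{Y_x/X_x}|_x\oplus N_{x/Y_x}$ (cf.\ Remark~\ref{rmk:omega-composition}). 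Once this bookkeeping is verified, $\partial$ inherits the surjectivity of the curve-residue map, and the proposition follows.
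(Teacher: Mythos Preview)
Your approach is essentially the paper's: once one passes to the henselization $X_x^h$, the surjectivity of $\alpha$ factors as two surjections $H^1_y(X,F)\to H^1(X\setminus x,F)\to H^2_x(X,F)$, the first coming from Lemma~\ref{lemm:contr2} and the second from $H^2(X_x^h,F)=0$. But your write-up obscures this in several ways.

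First, you never clearly commit to working on $X_x^h$ rather than $X_x$. This matters: Lemma~\ref{lemm:contr2} is stated for the henselization, and the vanishing $H^2(X,F)=0$ (which you need for the edge map $H^1(X\setminus x,F)\to H^2_x(X,F)$ to surject) holds on a henselian local scheme because it has Nisnevich homotopy dimension zero, but is not known for the Zariski localization at this stage---Corollary~\ref{cor:Zariski-vanishing} only covers $i\le 1$. Invoking Remark~\ref{rmk:reduce-to-closed-point} does not by itself henselize; you should explicitly replace $X$ by $X_x^h$ via Remark~\ref{rmk:alpha-etale-extension}.

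Second, the direct appeal to Gabber's lemma is superfluous. Its role is entirely absorbed into Lemma~\ref{lemm:contr2} (through Lemma~\ref{lemm:contr1}), and you never actually use the $\A^1_S$ model you produce.

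Third, the ``main obstacle'' you flag in the last paragraph---the compatibility of the curve residue (via $\mu$) with the surface boundary (via $\lambda$)---is not an obstacle at all. The map $\alpha_{x,y}^{\mu,\lambda}$ is \emph{defined} in Construction~\ref{constr:local-H2-comparison} as $\partial$ precomposed with precisely that curve residue, so there is nothing to compare; the agreement of $\partial$ with the composite through $H^1(X\setminus x,F)$ is just naturality of connecting maps for the map of cofiber sequences induced by $(X\setminus x)_+\to X_+$. The paper's proof ends with the single sentence ``the map $\alpha_{x,y}^{\mu,\lambda}$ is thus the composite of two surjections''. Also, your citation for the surjectivity of the first arrow $F_{-1}(y)\to H^1((X_x\setminus x)/(X_x\setminus Y_x),F)$ is misplaced: this map is an isomorphism by purity (using $\lambda$), not by Lemma~\ref{lemm:contr2}; that lemma is what makes the \emph{second} arrow surject.
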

\begin{proof}
By the construction involving exact sequences, the map is injective; we thus need only prove it is surjective.
We shall use the fact that the construction of $\alpha$ is compatible with étale extensions, as explained in Remark \ref{rmk:alpha-etale-extension}, and in fact we shall use the same notation.
Assuming that $x' \to x$ is an isomorphism, we find that both vertical maps are isomorphisms.
In particular, the bottom horizontal map is an isomorphism if and only if the top one is.
It follows that we may replace $X$ by $X_x^h$, and so assume that $X$ is henselian local with closed point $x$.
Note that $\{y\} = Y \setminus x$ is a closed subscheme of $X \setminus x$.
The relevant boundary map thus has the form $H^1_y(X, F) \to H^2_x(X, F)$.
Set $C=\overline{\{y\}}$.
The cofiber sequence $X \setminus C \to X \setminus x \to X \setminus x/X \setminus C$ implies by passing to cohomology and using Lemma \ref{lemm:contr2} that $H^1_{y}(X, F) \to H^1(X \setminus x,F)$ is surjective.
On the other hand the boundary map $H^1(X \setminus x,F) \to H^2_x(X,F)$ is surjective since $H^2(X,F)=0$.
The map $\alpha_{x,y}^{\mu,\lambda}$ is thus the composite of two surjections.
This concludes the proof.
\end{proof}

Next we must study the dependence of the isomorphism $\alpha_{x,y}^{\mu,\lambda}$ on $y,\mu,\lambda$.
Fix $x \in X$ and consider the local ring $\scr O_{X,x}$.
Choose elements $\pi, \rho \in \scr O_{X,x}$ which generate the maximal ideal (this is possible since $\scr O_{X,x}$ is regular of dimension $2$ \cite[Tag 056S]{stacks-project}).
Set $Y=Z(\pi)$.
This is a smooth closed subscheme of $X_x$.
The elements $\rho$, $\pi$ define trivializations $\mu(\pi,\rho)$ of $N_{x/Y_x}$ and $\lambda(\pi,\rho)$ of $N_{Y_x/X}$.
Thus for each choice $\pi, \rho$ we obtain $y,\mu,\lambda$, hence hence an isomorphism \[ \alpha_x^{\pi,\rho} := \alpha_{x,y(\pi,\rho)}^{\mu(\pi,\rho),\lambda(\pi,\rho)}: F_{-2}(x) \xrightarrow{\wequi} H^2_x(X,F). \]
We can also build \[ \tilde\alpha_x^{\pi,\rho}: F_{-2}(x,\omega_{x/X}) \stackrel{\bar\pi \wedge \bar\lambda}{\wequi} F_{-2}(x) \stackrel{\alpha_x^{\pi,\rho}}{\wequi} H^2_x(X,F). \]
\begin{proposition} \label{prop:pi-rho-independent}
The isomorphism $\tilde\alpha_x^{\pi,\rho}$ does not depend on $\pi$ or $\rho$.
\end{proposition}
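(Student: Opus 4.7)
The strategy is to decompose an arbitrary change of generators $(\pi,\rho) \leadsto (\pi',\rho')$ of $m_x \subset \scr O_{X,x}$ into a sequence of elementary moves and verify invariance of $\tilde\alpha_x^{\pi,\rho}$ under each. Write $V = m_x/m_x^2$, so modulo $m_x^2$ the pair $(\pi,\rho)$ gives a basis of $V$; any two such bases differ by an $M \in \mathrm{GL}_2(\kappa(x))$. I will treat separately (i) perturbations lying in $m_x^2$, (ii) elementary (shear) transformations, and (iii) diagonal rescalings $\pi \mapsto u\pi$ and $\rho \mapsto v\rho$. By Smith normal form over the local ring, shears together with a single family of rescalings $\mathrm{diag}(u,1)$ suffice to realize any $M \in \mathrm{GL}_2(\kappa(x))$, so no separate swap argument is required.

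For (i), a perturbation $\pi \mapsto \pi + q$ with $q \in m_x^2$ leaves $\bar\pi$ unchanged in $V$, hence fixes both $\bar\pi\wedge\bar\rho \in \det V \wequi \omega_{x/X}$ and (after passing to an étale neighborhood of $Y = Z(\pi)$) the intermediate curve up to an étale neighborhood; the construction $\alpha_x^{\pi,\rho}$ is invariant by Remark \ref{rmk:alpha-etale-extension}. For (ii), a shear such as $\pi \mapsto \pi + ta\rho$ can be realized as a one-parameter $\A^1$-family $t \in \A^1$, along which the flat family $Y_t = Z(\pi + ta\rho)$ is smooth at $x$; strong $\A^1$-invariance of $F$ and of $F_{-1}$ (Lemma \ref{lemm:contract-strong}), combined with $\A^1$-invariance of the purity identifications, forces the corresponding maps $\tilde\alpha$ to agree at $t=0$ and $t=1$.

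The heart of the argument is (iii). For $\pi \mapsto u\pi$ with $u \in \scr O_{X,x}^\times$, the subscheme $Y = Z(\pi)$ is unchanged but the trivialization of $N_{Y/X}$ is rescaled by $u|_Y$. On the $\omega_{x/X}$ side, $\bar\pi \wedge \bar\rho$ is rescaled by $u|_x$, so the twisting isomorphism $F_{-2}(x, \omega_{x/X}) \wequi F_{-2}(x)$ is altered by the action of $\langle u|_x^{-1}\rangle \in \KMWn_0(x)$. On the $\alpha$ side, the rescaling of $\lambda = \bar\pi$ modifies the purity identification $\Th(N_{Y/X}) \wequi \Sigma\Gm \wedge Y_+$ by the unreduced suspension of the multiplication-by-$u$ map on $\Gm$, which by Remark \ref{rmk:multn-by-a-map} is $1 + \eta[u] = \langle u\rangle$; via the $\KMWn_0$-module structure on $F_{-1}$ (Remark \ref{rmk:action-of-Otimes}), this multiplies $\alpha_x^{\pi,\rho}$ by $\langle u|_x\rangle$. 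The two factors $\langle u|_x\rangle$ and $\langle u|_x^{-1}\rangle$ cancel in the composite $\tilde\alpha_x^{u\pi,\rho}$. The case $\rho \mapsto v\rho$ is formally identical, using instead the trivialization $\mu = \bar\rho$ of $N_{x/Y}$ and the identification $\Th(N_{x/Y}) \wequi \Sigma\Gm \wedge x_+$.

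The main obstacle I anticipate is bookkeeping of the twists: precisely matching the action of $\langle u\rangle$ induced by rescaling the purity trivialization on both $F_{-1}(y)/F_{-1}(Y_x)$ and, after the connecting map $\partial\colon H^1(X_x\setminus x/X_x\setminus Y_x, F) \to H^2_x(X,F)$, on $F_{-2}(x)$, and verifying that this coincides with the $\KMWn_0$-action on $F_{-2}(x)$ used in the definition of $F_{-2}(x,\omega_{x/X})$. This is a compatibility verification rather than a new idea, but it is the step where a sign or duality mistake would be easiest to make.
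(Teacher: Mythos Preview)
Your step (ii) has a genuine gap. The shear $\pi \mapsto \pi + a\rho$ changes the intermediate curve $Y = Z(\pi)$, and you propose to control this by an $\A^1$-family $Y_t = Z(\pi + ta\rho)$ together with ``strong $\A^1$-invariance of $F$ and $F_{-1}$'' and ``$\A^1$-invariance of the purity identifications''. But the target of $\alpha_x^{\pi,\rho}$ is $H^2_x(X,F) = [X_x/X_x\setminus x, K(F,2)]$, and at this point in the argument we only know that $K(F,i)$ is $\A^1$-invariant for $i\le 1$. There is therefore no mechanism by which an $\A^1$-homotopy between the two geometric setups produces an equality of maps into $H^2_x(X,F)$: the map $\partial$ in Construction \ref{constr:local-H2-comparison} depends on the cofiber sequence built from $Y$, and for different $Y$'s these cofiber sequences only share the term $X_x/X_x\setminus x$, about which we have no homotopy-invariance statement. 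In short, you cannot ``vary $t$'' through an object you do not yet control. This is not a bookkeeping issue; it is exactly the hard step.

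The paper avoids this problem by never changing $Y$ directly. Its two elementary moves are (a) $(\pi,\rho)\mapsto(\pi,\,a\rho+b\pi+c)$ with $a$ a unit and $c\in m_x^2$, which keeps $Y=Z(\pi)$ fixed and only rescales the trivialization $\mu$ of $N_{x/Y}$, and (b) the swap $(\pi,\rho)\mapsto(\rho,\pi)$. Move (a) is handled by the $\KMWn_0$-linearity of $\partial^y_x$ (Lemma \ref{lemm:differential-linear}), much as in your step (iii). Move (b) is proved later as Lemma \ref{lemm:H2-coordinate-swap}: one expresses $\alpha_x^{\pi,\rho}(m)$ as $\partial^y_x\partial^\eta_y([\pi][\rho]m)$ via Lemma \ref{lemm:double-boundary-iso-trick}, then uses that $\partial^2=0$ in the Gersten complex $C^*(X_x,M)$ together with the relation $[\pi][\rho]=-\lra{-1}[\rho][\pi]$ over fields to obtain $\alpha_x^{\pi,\rho}=\alpha_x^{\rho,\pi}\lra{-1}$. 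The vanishing $\partial^2=0$ on the $E_1$-page of the coniveau spectral sequence is the substantive input that substitutes for the $\A^1$-invariance of $K(F,2)$ you are missing.

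Your (i) and (iii) are essentially correct and close to the paper's move (a); note however that (iii) also tacitly relies on the $\KMWn_0$-linearity of the boundary map to push the factor $\lra{u}$ through $\partial$, which is precisely Lemma \ref{lemm:differential-linear}. The place where your plan diverges from the paper is in trying to replace the swap by an upper-triangular shear, and that is where it breaks.
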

\begin{proof}
For fixed $\pi$, the element $\lambda$ only enters through the trivialization of $N_{x/Y_x}$, i.e., the backwards arrow in Construction \ref{constr:local-H2-comparison}.
In particular, for $a \in \scr O^\times_{X,x}$, $b \in \scr O_{X,x}$, $c \in m_{X,x}^2$ we get \[ \alpha_x^{\pi,\rho} = \alpha_x^{\pi,a \lambda + b \rho + c}\lra{a} \text{ and thus } \tilde\alpha_x^{\pi,\rho} = \tilde\alpha_x^{\pi,a \lambda + b \rho + c}. \]
Here we have used that $\partial^y_x: M_{-1}(y) \to M_{-2}(x)$ is linear over $\KMWn_0(Y_x)$, which is proved in Lemma \ref{lemm:differential-linear} in the next section.
In the next section we shall also see that (Lemma \ref{lemm:H2-coordinate-swap}) \[ \alpha_x^{\pi,\rho} =\alpha_x^{\rho,\pi}\lra{-1} \text{ so that } \tilde\alpha_x^{\pi,\rho} = \tilde\alpha_x^{\rho,\pi}. \]
This implies what we want since any other system of generators $(\pi', \rho')$ can be reached from $(\pi,\rho)$ by finitely many operations of these two forms.\NB{details?}
\end{proof}

\begin{definition} \label{def:identify-M-2}
The upshot is that we have, for any strongly invariant sheaf $F$ of abelian groups and $x \in X \in \Sm_k$, a canonical isomorphism \[ \alpha_x: F_{-2}(x, \omega_{x/X}) \xrightarrow{\wequi} H^2_x(X,F). \]
Indeed we set $\alpha_x = \tilde \alpha_x^{\pi,\rho}$ for any choice of $(\pi, \rho)$; this is independent of the choice by Proposition \ref{prop:pi-rho-independent}.
\end{definition}

\subsection{Contractions and transfers} \label{subsec:contractions-transfers}
We still assume $k$ perfect.

Let $L/K$ be a finite extension of fields which are finitely generated over $k$.
Suppose that $L$ can be generated by one element over $K$ and pick such a generator $x \in L$, that is, pick an embedding $x: \Spec(L) \hookrightarrow \A^1_K$.
Applying $[(\ph), K(F,1)]$ to the projection map $\P^1_K \to \P^1_K/\P^1_K \setminus \Spec(L)$ we get a map\todo{sloppy} \[ H^1_x(\P^1_K, F) \to [\Sigma \Gm \wedge \Spec(K)_+, K(F,1)] \wequi F_{-1}(K). \]
Lemma \ref{lemm:identify-local-cohomology} supplies us with an isomorphism $H^1_x(\P^1_K, F) \wequi F_{-1}(L, \omega_{x/\P^1_K})$.
Note that $\omega_{x/\P^1_K} \wequi \omega_{L/K} \otimes (\omega_{\P^1_K/K}|_x)^{-1}$ (use Remark \ref{rmk:omega-composition}).
The second term is canonically trivial, since $\omega_{\P^1_K/K}|_{\A^1_K}$ is trivialized via the coordinate $t$ on $\A^1$.
We can thus rewrite our map as \[ \tr_{L/K}^x: F_{-1}(L, \omega_{L/K}) \to F_{-1}(K). \]

The following will be proved in the next section (see \S\ref{subsub:gersten-M-1}).
\begin{lemma} \label{lemm:transfer-linear}
If $F=M_{-1}$, the map $\tr_{L/K}^x: F_{-1}(L, \omega_{L/K}) \to F_{-1}(K)$ is linear over $\KMWn_0(K)$.
\end{lemma}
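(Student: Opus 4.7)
The plan is to exploit the assumption $F = M_{-1}$ in order to realize the $\KMWn_0(K)$-action and the transfer as precompositions along two independent smash-factor directions, whereupon linearity becomes formal. By Remarks \ref{rmk:action-of-Otimes} and \ref{rmk:multn-by-a-map}, the element $\lra{a} \in \KMWn_0(K)$ acts on $F_{-1}$ by precomposition with the unreduced suspension of multiplication by $a$ on the $\Gm$-factor coming from the contraction $F_{-1} \wequi \iMap(\Gm, F)$. Since $F = M_{-1}$, Lemma \ref{lemm:contract-strong} gives $K(F,1) \wequi \Omega_\Gm K(M,1)$, so via the $\Gm$-loop/smash adjunction every mapping set in the construction of $\tr^x_{L/K}$ can be rewritten with an extra $\wedge \Gm$-factor on the source, landing in $K(M,1)$; concretely,
\[F_{-1}(K) \wequi [\Sigma\Gm_K \wedge \Gm,\, K(M,1)] \text{ and } H^1_x(\P^1_K, F) \wequi [\P^1_K/(\P^1_K\setminus \Spec(L)) \wedge \Gm,\, K(M,1)].\]
All identifications entering the construction of $\tr^x_{L/K}$---the long exact sequence on $\P^1_K$, the purity isomorphism of Lemma \ref{lemm:identify-local-cohomology}, and the twisting by $\omega_{L/K}$ of Construction \ref{constr:twisting-V}---take place entirely in the $\P^1_K$-direction and act as the identity on the extra external $\Gm$-factor.

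The next step is to move the $\KMWn_0(K)$-action from the original (inner) $\Gm$ onto the external one. This should be possible because the two $\Gm$-factors in the double contraction $F_{-1} = M_{-2}$ are interchangeable up to the $\A^1$-homotopies of Lemma \ref{lemm:A1-homotopies} and the centrality of $\eta$ recorded in Proposition \ref{prop:eta-central} (modulo an absorbable $\lra{-1}$-sign). Once this reshuffling is done, the transfer $\tr^x_{L/K}$ becomes a precomposition of the form $\pi \wedge \id_\Gm$, for $\pi : \P^1_K \to \P^1_K/(\P^1_K \setminus \Spec(L))$, while the $\lra{a}$-action is a precomposition of the form $\id \wedge m_a$; these act on orthogonal smash factors and therefore commute. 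This yields $\lra{a}$-equivariance of $\tr^x_{L/K}$, and since $\KMWn_0$ is additively generated by the symbols $\lra{a}$ (Lemma \ref{lemm:KMWn-basics}(3)) and the transfer is visibly $\Z$-linear, full $\KMWn_0(K)$-linearity follows.

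The main obstacle is the $\Gm$-interchange in the paragraph above: one must verify that the $\KMWn_0(K)$-action, originally realized on the outer $\Gm$ arising from $F_{-1} \wequi \iMap(\Gm, F)$, can genuinely be reshuffled onto the inner $\Gm$ coming from $F = M_{-1}$. This is essentially the only place where the hypothesis $F = M_{-1}$ (as opposed to $F$ merely strongly $\A^1$-invariant) is used in an essential way, and it is consistent with the fact that the analogous linearity of the codimension-one differential $\partial^y_x$ (Lemma \ref{lemm:differential-linear}, used in Proposition \ref{prop:pi-rho-independent}) will be proved in the same section. After this bookkeeping, the remainder of the argument is a purely formal commutation of precompositions acting on disjoint smash coordinates.
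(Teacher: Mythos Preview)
Your strategy is in the right spirit—the hypothesis $F = M_{-1}$ does buy a $\KMWn_0$-module structure ``transverse'' to the transfer—but the reshuffling you flag as the main obstacle is the entire content of the lemma, and you have not actually carried it out. What you need is that the $\KMWn_0(K)$-action on $F_{-1} = M_{-2}$ coming from the outer contraction (the one in Remark \ref{rmk:action-of-Otimes}) agrees with the one inherited from the $\KMWn_0$-module structure on $M_{-1}$ (the inner contraction). Waving at Lemma \ref{lemm:A1-homotopies} and Proposition \ref{prop:eta-central} is not enough: those compare specific maps between motivic spheres, while here you must match two module actions on $M_{-2}$, tracked through the adjunctions and the construction of the $\KMWn_*$-action after Proposition \ref{prop:KMW-action}. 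Until that is done, your ``$\pi \wedge \id$ commutes with $\id \wedge m_a$'' step does not apply, because the action you must commute past the transfer still lives on the same $\P^1_K$-factor as the transfer itself.

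The paper organizes things so that this comparison never has to be made head-on. It first rewrites the transfer via Lemma \ref{lemm:transfer-coh-P1}(2) as the composite
\[
M_{-2}(L,\omega_{L/K}) \xrightarrow{\ \alpha_y\ } H^1_y(\P^1_K, M_{-1}) \longrightarrow H^1(\P^1_K, M_{-1}) \xrightarrow{\ \alpha_\infty^{-1}\ } M_{-2}(K),
\]
and then checks each arrow is $\KMWn_0(K)$-linear. The middle arrow is linear for free: $M_{-1}|_{\P^1_K}$ is a sheaf of $\KMWn_0(K)$-modules (Remark \ref{rmk:gersten-diff-linear}, restricted along $K \to \P^1_K$), so every map in the long exact sequence of cohomology with support is a module map. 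The linearity of $\alpha_y$ and $\alpha_\infty$ is Lemma \ref{lemm:alpha-1d-linear}, whose proof writes $\alpha_x(\ph \otimes d\pi) = \partial^\eta_x\bigl([\pi]\,(\ph)|_\eta\bigr)$ using Lemma \ref{lemm:boundary-iso-trick}; here $[\pi]\cdot(\ph)$ is $\KMWn_0$-linear by centrality of $\KMWn_0$ in $\KMWn_*$ (Lemma \ref{lemm:KMWn-basics}(3)), and $\partial^\eta_x$ is linear as a map of $\KMWn_0(X_x)$-module sheaves. In effect, Lemma \ref{lemm:alpha-1d-linear} \emph{is} the reshuffling you were after: it identifies the double-contraction action on $M_{-2}(x,\omega_{x/X})$ with the module-sheaf action on $H^1_x(X,M_{-1})$. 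Your smash-factor argument could be completed, but only by reproducing that lemma in a less transparent form.
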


Now let $\scr L$ be a line bundle on $\Spec(K)$.
We can define the \emph{twisted transfer map} \[ \tr_{L/K}^x = \tr_{L/K}^x \otimes \scr L: F_{-1}(L, \omega_{L/K} \otimes \scr L|_L) \to F_{-1}(K, \scr L) \] by choosing a nonvanishing section $s \in \scr L^\times$ and setting $\tr_{L/K}^x(m \otimes s) = \tr_{L/K}^x(m) \otimes s$.
This is well-defined by Lemma \ref{lemm:transfer-linear}.

\begin{construction}
Let $L/K$ be a finite extension of fields which are finitely generated over $k$.
Choose a sequence of subfields $K=L_0 \subset L_1 \subset \dots \subset L_n = L$ such that $L_{i+1}/L_i$ is monogeneic, and pick a generator $x_i \in L_i$.
Define the map \[ \tr_{L/K}^{(x_i)_i}: F_{-1}(L, \omega_{L/K}) \to F_{-1}(K) \] as the composite \[ \tr_{L/K}^{(x_i)_i} = \tr_{L_1/K}^{x_1} \circ (\tr_{L_2/L_1}^{x_1} \otimes \omega_{L_1/K}) \circ \dots \circ (\tr_{L/L_{n-1}}^{x_n} \otimes \omega_{L_{n-1}/K}). \]
Here we use the twisted transfers, together with the canonical isomorphisms $\omega_{L_{i+1}/L_i} \otimes \omega_{L_i/K} \wequi \omega_{L_{i+1}/K}$.
\end{construction}

The following will also be proved in the next section (see \S\ref{label:proof-transfers-welldefined}).
\begin{theorem} \label{thm:transfers}
Assume that $F = M_{-1}$, for some strongly $\A^1$-invariant sheaf of abelian groups $M$.
Then $\tr_{L/K}^{(x_i)_i}$ is independent of the choice of the $x_i$.
\end{theorem}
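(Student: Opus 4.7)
The plan is to reduce the independence statement to two elementary comparisons: (A) independence of the generator $x$ for a single monogenic extension $L/K$, and (B) a factorization property, namely that for a monogenic tower $K \subset L' \subset L$ (with $L/K$ monogenic via $x$ and $L/L'$ monogenic via some $y$), the transfer $\tr^x_{L/K}$ equals the composite of transfers through $L'$ (suitably twisted). Given these, any two towers with chosen generators can be connected by a chain of such moves: interleave the two towers via their composita $L_i \cdot L'_j$, refine each step into a monogenic tower (possible since we work inside $L$), and use (B) repeatedly to compare, then use (A) to match up generator choices.

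For (A), let $L = K(x) = K(y)$ so that $x, y \colon \Spec L \hookrightarrow \A^1_K$ are two distinct closed immersions with the same image scheme (namely $\Spec L$, viewed as two distinct closed subschemes of $\A^1_K$). The transfer $\tr^x_{L/K}$ is built from the map $\P^1_K \to \P^1_K/(\P^1_K \setminus \Spec L)$, combined with the identification of Lemma \ref{lemm:identify-local-cohomology}. The twisting by $\omega_{L/K}$ is precisely designed to absorb the dependence of the trivialization of the normal bundle on the choice of coordinate at the closed subscheme. I would argue that the two embeddings induce the same map $F_{-1}(L, \omega_{L/K}) \to F_{-1}(K)$ by checking that the isomorphism $\alpha_{\Spec L}$ (as in Lemma \ref{lemm:identify-local-cohomology}) is natural in the choice of trivialization, using the $\KMWn_0$-action (via Proposition \ref{prop:KMW-action} and Remark \ref{rmk:action-of-Otimes}) to compare the different trivializations of $\omega_{\P^1_K/K}|_x$.

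For (B), suppose $K \subset L' \subset L$ with $L/K$ monogenic by $x$. The idea is to realize both transfers geometrically inside $\A^2_K$ rather than $\A^1_K$, namely via an embedding $\Spec L \hookrightarrow \A^2_K$ that factors through $\A^1_{L'} \hookrightarrow \A^2_K$, and identify the codimension-$2$ local cohomology group $H^2_{\Spec L}(\P^1 \times \P^1_K, F)$ (in the sense of $F_{-2}$ for some $M$ with $F = M_{-1}$) with the twisted $F_{-2}(L, \omega_{L/K})$ via the canonical isomorphism $\alpha_x$ of Definition \ref{def:identify-M-2}. Both composites will compute the same boundary map, since the underlying cofiber sequences are compatible up to the canonical identification $\omega_{L/L'} \otimes \omega_{L'/K} \wequi \omega_{L/K}$. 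This is where Lemma \ref{lemm:transfer-linear} (on $\KMWn_0$-linearity) is used: it allows the twisted and untwisted transfers to be compared consistently across the tower.

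The main obstacle is (B), the tower-factorization property, which is genuinely the heart of the statement and requires the full strength of Definition \ref{def:identify-M-2} and the coordinate-independence of $\alpha_x$ established in Proposition \ref{prop:pi-rho-independent}. Indeed, the whole point of analyzing $H^2_x$ so carefully in \S\ref{subsec:contr-local-coho} is to enable exactly this sort of codimension-$2$ argument. Once (A) and (B) are in hand, the independence over arbitrary towers follows by a standard zig-zag between any two towers using their composita, reducing step by step to a single monogenic extension where (A) applies.
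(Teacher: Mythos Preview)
Your argument for (A) has a genuine gap. You write that the twist by $\omega_{L/K}$ ``absorbs the dependence of the trivialization of the normal bundle on the choice of coordinate at the closed subscheme,'' but this is not the only dependence. The two generators $x,y$ determine two \emph{different} closed subschemes of $\P^1_K$ (abstractly isomorphic to $\Spec L$, but sitting at different places), and hence two genuinely different collapse maps $\P^1_K \to \P^1_K/(\P^1_K\setminus \Spec L)$. The $\KMWn_0$-linearity and the twist by $\omega_{L/K}$ only tell you that, for a \emph{fixed} closed point, the identification $H^1_x(\P^1_K,M)\simeq M_{-2}(L,\omega_{L/K})$ is independent of the uniformizer; they say nothing about comparing the collapse maps at two different closed points. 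There is in general no automorphism of $\P^1_K$ carrying one to the other, so (A) cannot be established by such a direct comparison.

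The paper does not attempt (A) directly. Instead it proves two simpler statements: first, that for a \emph{trivial} monogenic step (i.e.\ $x\in K$, so $K(x)=K$) the transfer is the identity (Lemma~\ref{lemm:transfer-coh-P1}(1)); second, a \emph{swap identity} $\tr^y_{K(y)/K}\circ\tr^x_{L/K(y)}=\tr^x_{K(x)/K}\circ\tr^y_{L/K(x)}$ for a two-generator extension, obtained by comparing the two factorizations of the map $M_{-2}(L,\omega_{L/K})\simeq H^2_{(x,y)}((\P^1\times\P^1)_K,M)\to H^2((\P^1\times\P^1)_K,M)\simeq M_{-2}(K)$ under the switch on $\P^1\times\P^1$ (Lemmas~\ref{lemm:coh-of-P1xP1} and~\ref{lemm:double-transfer-trick}). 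Your (A) then falls out as a special case: if $L=K(x)=K(y)$, prepend the redundant generator $y$, swap, and remove the now-redundant $x$. The general case is handled the same way---concatenate the two sequences of generators and reorder---rather than via composita of intermediate fields. Your (B) is in the right spirit (it is the $\P^1\times\P^1$ computation), but formulated as a factorization through a fixed intermediate $L'$ it is both harder to state cleanly and not quite what the geometry gives you; the swap formulation is what actually comes out of the symmetry of $\P^1\times\P^1$.
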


Using this, we can define the ``canonical transfer''.
\begin{definition}
Let $L/K$ be a finite extension of fields which are finitely generated $k$, and $M$ a strongly $\A^1$-invariant sheaf of abelian groups.
Denote by \[ \tr_{L/K} := \tr_{L/K}^{(x_i)_i}:  M_{-2}(L, \omega_{L/K}) \to M_{-2}(K) \] the map obtained by some choice of sequence of generators $x_1, \dots, x_n$ of $L/K$.
This is independent of the choices by Theorem \ref{thm:transfers}.
\end{definition}

\begin{remark}
A choice of generator $x$ for an extension $L/K$ induces a trivialization of $\omega_{L/K}$: either $\Omega^1_{L/K}$ is trivial, or it is one dimensional with basis $dx$; in either case the determinant is trivialized.
The resulting transfer $M_{-2}(L) \to M_{-2}(K)$ is however not independent of choices.
\end{remark}

\begin{remark}
Let $L$ be a finite product of finite field extensions of $K$, $L = \prod_{i=1}^n L_i$.
Then $M_{-1}(L, \omega_{L/K}) \wequi \prod_{i=1}^n M_{-1}(L_i, \omega_{L_i}/K)$ and we define $\tr_{L/K}: M_{-1}(L, \omega_{L/K}) \to M_{-1}(K)$ by using the canonical transfer on each factor.
\end{remark}

Let us point out for future reference that the canonical transfers are compatible with smooth base change.
\begin{lemma} \label{lemm:transfer-smooth-base-change}
Let $L/K$ be a finite extension and $K'/K$ be a separable extension, so that $L' := L \otimes_K K'$ is a finite product of finite extensions of $K'$.
Then the following diagram commutes
\begin{equation*}
\begin{CD}
M_{-2}(L, \omega_{L/K}) @>>> M_{-2}(L', \omega_{L'/K'}) \\
@V{\tr_{L/K}}VV @V{\tr_{L'/K'}}VV \\
M_{-2}(K) @>>> M_{-2}(K').
\end{CD}
\end{equation*}
Here we have used the canonical isomorphism $\omega_{L'/K'} \wequi \omega_{L/K}$, and the horizontal maps are restrictions.
\end{lemma}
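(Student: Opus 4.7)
The plan is to reduce to the monogenic case using the definition of $\tr_{L/K}$ as a composite along a tower, and then to observe that the basic monogenic transfer is manifestly compatible with base change. Essentially smooth base change (Lemma \ref{lemm:essentially-smooth-bc}) will do most of the work.

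First I would choose generators $x_1, \dots, x_n$ realizing $\tr_{L/K}$ as the composite of monogenic transfers along a tower $K = L_0 \subset L_1 \subset \cdots \subset L_n = L$. Setting $L'_i := L_i \otimes_K K'$, each $L'_i$ is a finite reduced $K'$-algebra (reduced because it embeds into $L'_n = L'$, which is reduced by hypothesis), hence a finite product of finite field extensions of $K'$. The images of the $x_j$ supply componentwise monogenic generators of $L'_j$ over $L'_{j-1}$, and flat base change for the cotangent complex provides isomorphisms $\omega_{L_j/L_{j-1}}|_{L'_j} \wequi \omega_{L'_j/L'_{j-1}}$ compatible with the trivializations induced by these generators. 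Since transfers on products of fields are defined factorwise, it will suffice to prove the base-change square commutes for each individual monogenic step.

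For such a step $L = K[x]$, the transfer $\tr^x_{L/K}$ is defined by applying $[-, K(F,1)]$ to the collapse map $\P^1_K \to \P^1_K/(\P^1_K \setminus \Spec L)$ together with the purity identification $H^1_x(\P^1_K, F) \wequi F_{-1}(L, \omega_{L/K})$, using the trivialization of $\omega_{\P^1_K/K}|_{\A^1_K}$ by the coordinate $t$. Base change along $K \to K'$ sends this collapse map to $\P^1_{K'} \to \P^1_{K'}/(\P^1_{K'} \setminus \Spec L') \wequi \bigvee_\alpha \P^1_{K'}/(\P^1_{K'} \setminus \Spec L'_\alpha)$, where the wedge runs over the factors $L'_\alpha = K'[x]/(p_\alpha(x))$ corresponding to the irreducible factors over $K'$ of the minimal polynomial of $x$. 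By Lemma \ref{lemm:essentially-smooth-bc}, $F|_{K'}$ remains strongly $\A^1$-invariant, and the formation of contractions and of the purity identification (which is formal from a cofiber sequence plus a normal bundle trivialization) commutes with restriction. Assembling these facts shows that the restriction of $\tr^x_{L/K}$ agrees with the factorwise sum $\sum_\alpha \tr^x_{L'_\alpha/K'}$, which by definition equals $\tr_{L'/K'}$ in this case.

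The main obstacle is really just bookkeeping: one must verify that when an intermediate field $L_i$ splits into $\prod_\beta L'_{i,\beta}$ after base change, the factorwise monogenic transfers telescope correctly up the tower to reconstruct the tower-definition of $\tr_{L'/K'}$ on $L'$, with the $\omega$-twistings consistently identified. This is formal from the factorwise definition of transfers on étale $K'$-algebras and from flat base change for cotangent complexes (\S\ref{subsub:cotangent-cx}), so no new geometric input is required beyond the monogenic step.
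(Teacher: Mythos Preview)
Your proof is correct and follows essentially the same approach as the paper: reduce to the monogenic case and then use that the collapse map $\P^1_K \to \P^1_K/(\P^1_K \setminus \Spec L)$ and the purity identification are compatible with base change along $K \to K'$. The paper is terser about the tower reduction (it simply says ``we may reduce to the case where $L=K(x)$ is monogeneic'') and phrases the monogenic step via an explicit commutative diagram of cartesian squares together with naturality of the purity equivalence (Theorem~\ref{thm:purity}) and Lemma~\ref{lemm:twisting-V}, rather than appealing to essentially smooth base change; but the content is the same.
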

\begin{proof}
We may reduce to the case where $L=K(x)$ is monogeneic.
The commutative diagram of cartesian squares
\begin{equation*}
\begin{CD}
\Spec L @>>> \P^1_K @>>> \Spec K \\
@AAA  @AAA @AAA \\
\Spec L' @>>> \P^1_{K'} @>>> \Spec K' \\
\end{CD}
\end{equation*}
induces the commutative square on the left hand side of the following diagram
\begin{equation*}
\begin{CD}
\P^1_K @>>> \P^1_K/\P^1_K \setminus \Spec L @>>> \Th(N_{\Spec L/\P^1_K}) \\
@AAA             @AAA                               @AAA \\
\P^1_{K'} @>>> \P^1_{K'}/\P^1_{K'} \setminus \Spec L' @>>> \Th(N_{\Spec L'/\P^1_{K'}}). \\
\end{CD}
\end{equation*}
The right hand square commutes by naturality of the purity equivalence (Theorem \ref{thm:purity}).
The outer square encodes our desired commutativity (using Lemma \ref{lemm:twisting-V}).
\end{proof}

\section{The Gersten complex} \label{sec:gersten}
In this section we define and study the Gersten complex of a strongly $\A^1$-invariant sheaf.
First in \S\ref{subsec:gersten-diff} we define the Gersten differentials and use them to express some of the isomorphisms denoted $\alpha$ in the previous section.
Then in \S\ref{subsec:coniveau-filtn} we introduce the coniveau filtration and associated spectral sequence.
This allows us to quickly define the Gersten complex and establish some of its basic properties.
Next in \S\ref{subsec:canonical-diff} we define what we call the \emph{canonical differential}; this is just a twisted form of the first Gersten differential.
Putting all our hard hard work up to this point together, we can establish purity of the Gersten complex in codimension $\le 2$ (Corollary \ref{cor:purity-codim2}).
We put this to good use in \S\ref{subsec:gersten-comp} to actually compute some cohomology groups (e.g., we show that $H^2(\A^2, M) = 0$ for any strongly $\A^1$-invariant sheaf\footnote{Note how surprising this is: a priori, strong $\A^1$-invariance of $M$ is only a statement about $H^1$.}).
Finally in \S\ref{label:proof-transfers-welldefined} show that the transfers on $M_{-2}$ are well-defined.

\subsubsection*{Assumptions and notation}
We work over a perfect field $k$ throughout.

Recall that for a scheme $X$ and a point $x \in X$, we call $\dim \scr O_{X,x}$ the \emph{codimension of $x$}.
We write $X^{(c)} \subset X$ for the set of points of codimension $c$.

A point $y \in X$ is called a \emph{specialization} of $x$ if $y \in \overline{\{x\}}$ (equivalently, $x \in X_y$).
We call $y$ an \emph{immediate specialization} if for every point $z \in X$ such that $x$ specializes to $z$ and $z$ specializes to $y$, either $x=z$ or $y=z$.
In reasonable cases, whenever $y$ is a specialization of $x$, it is an immediate specialization if and only if $\mathrm{codim}(x) = \mathrm{codim}(y) - 1$ (this will hold on catenary schemes, e.g., essentially finite type schemes over fields \cite[Tag 0ECE]{stacks-project}.

\subsection{The Gersten differentials} \label{subsec:gersten-diff}
Let $X \in \Sm_k$, $x \in X$ of codimension $c$ and $z \in X$ an immediate specialization of $x$.
We have the cofiber sequence \[ X_x/X_x \setminus x \wequi X_z \setminus z/X_z \setminus \overline{\{x\}} \to X_z/X_z \setminus \overline{\{x\}} \to X_z/X_z \setminus z \in \Shv_\Nis(\Sm_k). \]
(Recall our conventions regarding essentially smooth schemes and pro-objects from \S\ref{subsub:ess-smooth}.)
Here the first equivalence arises from excision (see Remark \ref{rmk:excision}), since $X_x$ is a pro-(open neighborhood) of $x = X_z \cap \overline{\{x\}}$.
Let $M$ be a Nisnevich sheaf of abelian groups on $\Sm_k$.
Extending the cofiber sequence to the right and taking maps into $K(M, \ph)$ we obtain a long exact sequence, which is called the long exact sequence of cohomology with support.
\begin{definition} \label{def:gersten-diff}
For a sheaf of abelian groups $M$, the boundary map in the long exact sequence of cohomology with support of the form \[ \partial^x_z: H^c_x(X,M) \to H^{c+1}_z(X, M) \] is called the \emph{Gersten differential}.
\end{definition}

We explain the naturality of this construction.
\begin{lemma} \label{lemm:gersten-diff-natural}
Let $f: X \to Y$ be a morphism of essentially smooth schemes, $x \in X$ of codimension $c$, $z \in X$ an immediate specialization.
Assume that $f(z)$ is an immediate specialization of $f(x)$ (e.g., $f$ étale).

Then the following square commutes
\begin{equation*}
\begin{CD}
H^c_x(X,M) @>{\partial^x_z}>> H^{c+1}_z(X, M) \\
@A{f^*}AA                     @A{f^*}AA \\
H^c_{f(x)}(Y,M) @>{\partial^{f(x)}_{f(z)}}>> H^{c+1}_{f(z)}(Y, M).
\end{CD}
\end{equation*}
\end{lemma}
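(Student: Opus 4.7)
The plan is to realize $\partial^x_z$ as a boundary map of a long exact sequence naturally attached to $(X,x,z)$ and then invoke naturality. By Definition~\ref{def:gersten-diff}, $\partial^x_z$ comes from applying $\Map_*(-, K(M,\bullet))$ to the cofiber sequence
\[ X_x/X_x \setminus x \to X_z/X_z \setminus \overline{\{x\}} \to X_z/X_z \setminus z \]
in $\Shv_\Nis(\Sm_k)$. So it suffices to construct a morphism of cofiber sequences
\[ \begin{CD} X_x/X_x \setminus x @>>> X_z/X_z \setminus \overline{\{x\}} @>>> X_z/X_z \setminus z \\ @VVV @VVV @VVV \\ Y_{f(x)}/Y_{f(x)} \setminus f(x) @>>> Y_{f(z)}/Y_{f(z)} \setminus \overline{\{f(x)\}} @>>> Y_{f(z)}/Y_{f(z)} \setminus f(z) \end{CD} \]
induced by $f$; mapping into $K(M,\bullet)$ and taking the resulting long exact sequences then produces the desired commuting square by naturality of boundary maps.

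To construct the vertical arrows, the morphism $f$ restricts (in the pro-category of essentially smooth schemes) to morphisms $f: X_x \to Y_{f(x)}$ and $f: X_z \to Y_{f(z)}$. A map of local schemes descends to a map of the relevant quotient sheaves provided it carries the open complement into the target open complement, which amounts to the set-theoretic containments $f^{-1}(\{f(z)\}) \cap X_z \subseteq \{z\}$ and $f^{-1}(\overline{\{f(x)\}}) \cap X_z \subseteq \overline{\{x\}}|_{X_z}$, possibly after replacing $X$ by a small enough étale neighborhood of $z$. In the étale case both hold: the fibre $f^{-1}(f(z))$ is étale and zero-dimensional over $f(z)$, and any point of it lying in the local scheme $X_z$ must specialize to the unique closed point $z$, hence equal $z$; moreover $x$ is then a generic point of $f^{-1}(\overline{\{f(x)\}})$, and after shrinking away the (finitely many) other irreducible components passing through $z$ we may assume $f^{-1}(\overline{\{f(x)\}}) = \overline{\{x\}}|_{X_z}$. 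The immediate-specialization hypothesis is exactly what guarantees such a shrinking is possible in general.

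Granting these containments, $f$ induces the required morphism of pairs, and hence of cofiber sequences. Compatibility with the excision identification $X_x/X_x \setminus x \wequi X_z \setminus z/X_z \setminus \overline{\{x\}}$ used on the first term is automatic, since excision (\S\ref{subsub:dist-squares}) is itself natural in étale maps. The main obstacle is the set-theoretic verification above: $f^{-1}$ generically enlarges preimages of closed subsets, so the compatibility with the quotient construction is not automatic but is exactly where the immediate-specialization hypothesis (and, in general, a pro-shrinking of $X_z$) is needed. Once this is arranged, the rest of the argument is purely formal.
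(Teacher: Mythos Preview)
Your approach---constructing a morphism between the two cofiber sequences and invoking naturality of the boundary map---is exactly the paper's; the paper simply records the resulting square of pairs
\[
\begin{CD}
(X_x, X_x \setminus x) @>>> (X_z, X_z \setminus \overline{\{x\}}) \\
@VVV                           @VVV \\
(Y_{f(x)}, Y_{f(x)} \setminus f(x)) @>>> (Y_{f(z)}, Y_{f(z)} \setminus \overline{\{f(x)\}})
\end{CD}
\]
without spelling out the set-theoretic checks you describe.

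One caution about your shrinking argument for the middle vertical arrow. You need $f^{-1}(\overline{\{f(x)\}}) \cap X_z \subseteq \{x,z\}$, and you claim this can be arranged by pro-shrinking around $z$. But if $\overline{\{f(x)\}}$ has several formal branches through $f(z)$, an \'etale $f$ may separate them into distinct codimension-$c$ points $x=x_1,x_2,\dots$ of $X_z$, each mapping to $f(x)$ and each specialising to $z$; since every such $x_i$ lies in every Zariski neighbourhood of $z$, no shrinking removes them. In that situation the map of cofiber sequences one actually obtains (using the closed set $f^{-1}(\overline{\{f(x)\}})$ on the source) only yields the summed identity $f^*\partial^{f(x)}_{f(z)}=\sum_i \partial^{x_i}_z \circ f^*$, not the single-component square as written. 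The square as stated is unproblematic whenever $x$ is the \emph{unique} point of $X_z$ lying over $f(x)$---for instance when $c=0$, i.e.\ $x$ is a generic point---and this is the only case actually invoked later in the paper (Lemmas~\ref{lemm:double-boundary-iso-trick} and~\ref{lemm:alpha-1d-linear}).
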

\begin{proof}
Since $X_z \cap \overline{\{x\}} = \{x,z\}$ and similarly for $Y$, we have an induced square of pairs of schemes
\begin{equation*}
\begin{CD}
(X_x, X_x \setminus x) @>>> (X_z, X_z \setminus \overline{\{x\}}) \\
@VVV                           @VVV \\
(Y_{f(x)}, Y_{f(x)} \setminus f(x)) @>>> (Y_{f(z)}, Y_{f(z)} \setminus \overline{\{f(x)\}}).
\end{CD}
\end{equation*}
From this we obtain an induced morphism between the cofiber sequences used in the construction of the Gersten differential.
\end{proof}

We shall express some of the maps denoted $\alpha$ in the previous section using the Gersten differentials.
Here is a start.
\begin{lemma} \label{lemm:boundary-iso-trick}
Let $K/k$ be finitely generated, $X/K$ essentially smooth, $x \in X$ of codimension $1$ with generic point $\eta$.
Assume that $x$ is a $K$-rational point.
Pick a uniformizer $\pi$ of $\scr O_{X,x}$.
Then the composite \[ M_{-1}(x, \omega_{x/X}) \stackrel{\pi}{\wequi} M_{-1}(x) = M_{-1}(K) \to M_{-1}(\eta) \xrightarrow{[\pi]} M(\eta) \xrightarrow{\partial} H^1_x(X, M) \] coincides with the canonical isomorphism from Lemma \ref{lemm:identify-local-cohomology}.
\end{lemma}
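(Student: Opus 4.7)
The plan is to reduce by naturality to the model case $(X, x, \pi) = (\A^1_K, 0, t)$ with $t$ the standard coordinate of $\A^1$, and then compute directly. Since $\pi$ is a uniformizer at the $K$-rational point $x$, the $K$-morphism $\varphi: X_x \to \A^1_K$ defined by $t \mapsto \pi$ induces an isomorphism on cotangent spaces at $x$ and matches residue fields, so it is \'etale at $x$; after shrinking $X_x$, $\varphi$ becomes an \'etale neighborhood of $0$. All the structures in sight are natural in \'etale extensions: the purity isomorphism $\alpha$ of Lemma \ref{lemm:identify-local-cohomology} (with $dt$ pulling back to $d\pi$), the Gersten boundary (Lemma \ref{lemm:gersten-diff-natural}), and the action $\KMWn_1 \otimes M_{-1} \to M$ (with $\varphi^*[t] = [\pi]$). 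Therefore the composite in the lemma for $(X, x, \pi)$ is the $\varphi^*$-image of the one for $(\A^1_K, 0, t)$, and it suffices to handle the model case.

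In the model case, I will first identify $[t] \cdot m|_\eta \in M(K(t))$ for $m \in M_{-1}(K)$. Viewing $m \in M_{-1}(K) \subset M(\Gm_K)$ as a pointed section (via the splitting $M(\Gm_K) = M(K) \oplus M_{-1}(K)$ coming from the basepoint $1 \in \Gm_K$), the element $[t] \cdot m|_\eta$ is the restriction of $m$ along the generic-point inclusion $\eta \hookrightarrow \Gm_K$. This is because the action $\Z[\Gm] \otimes M_{-1} \to M$ is the evaluation map and the $\eta$-point of $\Gm_K$ defined by the coordinate $t$ is precisely this inclusion. Next, the Gersten boundary $\partial: M(\eta) \to H^1_0(\A^1_K, M)$ factors through the ``global'' boundary $\tilde\partial: M(\Gm_K) \to H^1_0$ coming from the cofiber sequence $\Gm_K \to \A^1_K \to \A^1_K/\Gm_K$, and on the subsheaf $M_{-1}(K) \subset M(\Gm_K)$ this agrees with the pointed boundary, because constant sections in $M(K) \subset M(\Gm_K)$ extend to $\A^1_K$ and hence have trivial boundary.

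Finally, the pointed boundary $[\Gm_K, M]_* \to [\A^1_K/\Gm_K, K(M,1)]_* = H^1_0$ is precomposition with the Puppe connecting map $\A^1_K/\Gm_K \to \Sigma \Gm_K$. By motivic contractibility of $\A^1_K$ pointed at $1$, this connecting map is the purity equivalence underlying $\alpha_0^t$. Combining, $\partial([t] \cdot m|_\eta) = \tilde\partial(m) = \alpha_0^t(m)$, as required. The main obstacle will be this last identification -- matching the Puppe connecting map with the purity equivalence for the trivialization $t$; however this is essentially how $\alpha_0^t$ is defined in terms of suspension and purity, so it reduces to unwinding both sides. The remaining steps are formal applications of naturality.
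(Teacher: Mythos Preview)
Your proof is correct and follows essentially the same approach as the paper: both use the (pro-)\'etale map $\pi: X_x \to \A^1_K$ determined by the uniformizer to reduce the identification to the pair $(\A^1_K, 0)$, where the boundary map and the purity isomorphism visibly agree. The paper packages this as a single commutative square comparing the boundary maps along $\pi^*$, while you first reduce to the model case by naturality and then unwind it there via the Puppe map; the content is the same.
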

Here the isomorphism denoted $\pi$ arises from the identification $\omega_{x/X} \wequi (m_x/m_x^2)^*$ (by definition $\pi$ is a generator of the maximal ideal $m_x$), and the morphism $[\pi]$ denotes the action of the element $[\pi] \in \KMWn_1(\eta)$ (Proposition \ref{prop:KMW-action}).
\begin{proof}
The map $\pi$ induces a cartesian square
\begin{equation*}
\begin{CD}
X_x \setminus x @>>> X_x \\
@V{\pi}VV         @V{\pi}VV \\
\Gm_K @>>> \A^1_K.
\end{CD}
\end{equation*}
The map $X_x \to \A^1_K$ is a (pro-)étale neighborhood of $x$, since $d\pi$ generates the cotangent space of $X_x$ at $x$.
Consider the induced commutative square
\begin{equation*}
\begin{CD}
H^0(\Gm_K, M)/H^0(\A^1_K,M) @>{\partial}>> H^1_{\Gm_K}(\A^1_K, M) \\
@V{\pi^*}VV                                @V{\pi^*}VV \\
H^0(X_x \setminus x,M)/H^0(X_x, M) @>{\partial}>> H^1_x(X_x, M).
\end{CD}
\end{equation*}
The top left hand corner is $M_{-1}(K)$, and the map to the bottom right hand corner via top right hand corner is the isomorphism from Lemma \ref{lemm:identify-local-cohomology} (use that the purity equivalence is natural, and for $0 \subset \A^1$ is the obvious one; see Theorem \ref{thm:purity}).
Since the map via the bottom left hand corner is $\partial([\pi](\ph))$ (indeed the action by $[\pi]$ is by definition given by pullback along the map $\eta \wequi X_x \setminus X \xrightarrow{\pi} \Gm$), the result follows.\NB{this proof is less clear than I would like}
\end{proof}

Now we can treat a more complicated case.
\begin{lemma} \label{lemm:double-boundary-iso-trick}
Let $K/k$ be finitely generated, $X/K$ essentially smooth, $x \in X$ of codimension $2$.
Pick $\pi, \rho$ as before Proposition \ref{prop:pi-rho-independent}.
Assume that $x$ is $K$-rational.
The composite \[ M_{-2}(x) \to M_{-2}(\eta) \xrightarrow{[\pi][\rho]} M(\eta) \xrightarrow{\partial^\eta_y} H^1_y(X,M) \xrightarrow{\partial^y_x} H^2_x(X,M) \] coincides with $\tilde\alpha_x^{\pi,\rho}$.
\end{lemma}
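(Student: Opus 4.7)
The plan is to chain two applications of Lemma \ref{lemm:boundary-iso-trick}---one on $X$ at the codimension-one point $y$ with uniformizer $\pi$, one on the curve $Y = Z(\pi)$ at the rational point $x$ with uniformizer $\rho|_Y$---and then compare the result to the construction of $\tilde\alpha_x^{\pi,\rho}$ using naturality of the Gersten differential along $Y \hookrightarrow X$ (Lemma \ref{lemm:gersten-diff-natural}). To begin, I would factor $[\pi][\rho]$ as $[\pi] \circ [\rho]$ using the construction of the $\KMWn$-action following Proposition \ref{prop:KMW-action}, so that the composite in question reads
\[
M_{-2}(x) \to M_{-2}(\eta) \xrightarrow{[\rho]} M_{-1}(\eta) \xrightarrow{[\pi]} M(\eta) \xrightarrow{\partial^\eta_y} H^1_y(X,M) \xrightarrow{\partial^y_x} H^2_x(X,M).
\]
Since $\pi,\rho$ is a regular system of parameters at $x$, the element $\rho$ is a unit on the open $U := X_x \setminus V(\rho)$, which contains both $\eta$ and $y$; naturality of $[\rho]\colon (M_{-2})|_U \to (M_{-1})|_U$ allows the leg ending at $M_{-1}(\eta)$ to be factored through $M_{-2}(y) \xrightarrow{[\rho]} M_{-1}(y) \to M_{-1}(\eta)$.

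For the first application of Lemma \ref{lemm:boundary-iso-trick}, pass to the base field $\kappa(y)$ via the essentially smooth retraction of Corollary \ref{cor:ess-smooth-retraction} (permitted by Lemma \ref{lemm:essentially-smooth-bc}), so that $y$ becomes a rational point and $\pi$ a uniformizer of the DVR $\scr O_{X,y}$. The lemma then identifies $M_{-1}(y) \to M_{-1}(\eta) \xrightarrow{[\pi]} M(\eta) \xrightarrow{\partial^\eta_y} H^1_y(X,M)$ with the canonical isomorphism of Lemma \ref{lemm:identify-local-cohomology}, precomposed with the $\pi$-trivialization of $\omega_{y/X} \wequi (N_{Y/X}^*)|_y$; this is exactly the $\lambda$ appearing in Construction \ref{constr:local-H2-comparison}. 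Applying Lemma \ref{lemm:boundary-iso-trick} a second time, now to the strongly invariant sheaf $M_{-1}$ (Lemma \ref{lemm:contract-strong}) on the curve $Y$ at the rational point $x$ with uniformizer $\rho|_Y$, identifies $M_{-2}(x) \to M_{-2}(y) \xrightarrow{[\rho]} M_{-1}(y) \xrightarrow{\partial^y_x} H^1_x(Y, M_{-1})$ with the canonical iso of Lemma \ref{lemm:identify-local-cohomology}, precomposed with the $\rho|_Y$-trivialization of $\omega_{x/Y}$---exactly the $\mu$ from the construction.

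It then remains to match the composite $M_{-1}(y) \wequi H^1_y(X,M) \xrightarrow{\partial^y_x} H^2_x(X,M)$ (via Lemma \ref{lemm:identify-local-cohomology} at $y$) with the composite $H^1_x(Y, M_{-1}) \wequi M_{-2}(x) \xrightarrow{\alpha_{x,y}^{\mu,\lambda}} H^2_x(X,M)$ supplied by Construction \ref{constr:local-H2-comparison}. By naturality of the Gersten differential for the closed immersion $Y \hookrightarrow X$ (Lemma \ref{lemm:gersten-diff-natural}), this reduces to checking that the cofiber sequence $X_x \setminus x/X_x \setminus Y_x \to X_x/X_x \setminus Y_x \to X_x/X_x \setminus x$ (whose boundary is $\partial^y_x$) agrees, after $\lambda$-trivialization, with the $\P^1$-suspension of $y \to Y_x \to Y_x/y$ (used to build $\alpha_{x,y}^{\mu,\lambda}$); this is precisely the $\A^1$-equivalence between those two sequences noted in Construction \ref{constr:local-H2-comparison} and follows from naturality of homotopy purity (Theorem \ref{thm:purity}) applied to the smooth closed pair $(X_x, Y_x)$. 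The main obstacle is exactly this last diagram chase---tracking carefully how the $\lambda$-trivialization of $N_{Y/X}$ interacts with the $\pi$-trivialization of $\omega_{y/X}$ used by Lemma \ref{lemm:boundary-iso-trick}, so that the two applications of that lemma glue correctly along the ambient Gersten boundary $\partial^y_x$.
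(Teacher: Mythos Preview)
Your approach is essentially the same as the paper's: two applications of Lemma \ref{lemm:boundary-iso-trick} (on the curve $Y=Z(\pi)$ at $x$ with uniformizer $\rho$, and on $X_y^h$ at $y$ with uniformizer $\pi$), glued together via the definition of $\alpha_{x,y}^{\mu,\lambda}$. The paper runs the argument in the opposite direction---starting from $\alpha_x^{\pi,\rho}(m)=\partial^y_x(m')$ and unwinding Construction \ref{constr:local-H2-comparison} step by step---but the content is identical.

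One small correction: your appeal to Lemma \ref{lemm:gersten-diff-natural} ``for the closed immersion $Y \hookrightarrow X$'' is misplaced. That lemma compares $H^c_x(X,M)$ with $H^c_{f(x)}(Y,M)$ and so requires $f$ to preserve codimension (e.g.\ $f$ \'etale); a closed immersion changes it. In fact no such naturality statement is needed here: the last arrow in Construction \ref{constr:local-H2-comparison} \emph{is} the ambient boundary $\partial^y_x: H^1_y(X,M) \to H^2_x(X,M)$, and the preceding backward/forward arrows are precisely the $\mu$- and $\lambda$-trivialized identifications of Lemma \ref{lemm:identify-local-cohomology}. So once your two applications of Lemma \ref{lemm:boundary-iso-trick} produce the class $[\rho]m|_y \in M_{-1}(y)$ mapping (via the $\pi$-trivialization) to $\partial^\eta_y([\pi][\rho]m) \in H^1_y(X,M)$, the remaining step is literally the definition of $\alpha_{x,y}^{\mu,\lambda}$, not a separate naturality argument. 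The ``main obstacle'' you flag thus dissolves on inspection of the construction.
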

\begin{proof}
Let $m \in M_{-2}(x)$.
By definition we have $\alpha_x^{\pi,\rho}(m) = \partial^y_x(m')$, where $m'$ corresponds to $m$ under the canonical isomorphisms \[ M_{-2}(x) \wequi H^1_y(Y,M_{-1}) \wequi M_{-1}(y)/M_{-1}(Y_x) \wequi H^1(X_x \setminus x/X_x \setminus Y_x, M)/H^1(X_x/X_x \setminus Y_x, M) \wequi H^1_y(X,M)/(\dots). \]
By Lemma \ref{lemm:boundary-iso-trick}, the image of $m$ under the first isomorphism is $\partial^\eta_y([\rho]m)$.
Consequently its image in the fourth group coincides with the image of $[\rho]m \in M_{-1}(y)$ under the isomorphism $M_{-1}(y) \wequi H^1_y(X, M) \wequi  H^1(X_x \setminus x/X_x \setminus Y_x, M)$.
Let $X_y^h$ denote the henselization of $y$ in $X$.
Then there is a pro-smooth map $X_y^h \to y$ (see Corollary \ref{cor:ess-smooth-retraction}), making $y$ into a $y$-rational point.
Let $\eta^h$ be the generic point of $X_y^h$.
Then applying Lemma \ref{lemm:boundary-iso-trick} to $X_y^h$, we learn that the image of $[\rho]m$ in $H^1_y(X,M)$ is given by $\partial^{\eta^h}_y([\pi][\rho]m|_{\eta^h})$.
Since the differential is compatible with étale extensions (see Lemma \ref{lemm:gersten-diff-natural}), this is the same as $\partial^\eta_y([\pi][\rho]m)$.
This concludes the proof.
\end{proof}

\subsubsection{Gersten differentials on $M_{-1}$} \label{subsub:gersten-M-1}
Let $X$ be essentially smooth, $y \in X$ a point and $x \in X$ an immediate specialization.
\begin{remark} \label{rmk:gersten-diff-linear}
The sheaf $M_{-1}|_{(X_x)_\Nis}$ is a module over $\KMWn_0(X_x)$.
It follows that the Gersten differential \[ \partial^y_x: H^d_y(X, M_{-1}) \to H^{d+1}_x(X, M) \] is linear over $\KMWn_0(X_x)$.
\end{remark}

\begin{lemma} \label{lemm:alpha-1d-linear}
Let $X$ be essentially smooth, $\eta \in X$ a generic point and $x \in X$ an immediate specialization.
The isomorphism \[ \alpha_x: M_{-2}(x, \omega_{x/X}) \to H^1_x(X, M_{-1}) \] is linear over $\KMWn_0(X_x)$.
\end{lemma}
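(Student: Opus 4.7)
The plan is to exhibit $\alpha_x$ as an explicit composite of maps, each of which is visibly $\KMWn_0(X_x)$-linear. The key ingredient is Lemma \ref{lemm:boundary-iso-trick}, applied to the strongly $\A^1$-invariant sheaf $M_{-1}$ (strongly invariant by Lemma \ref{lemm:contract-strong}) in place of $M$.

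First I would reduce to the case that $x$ is a rational point of $X$. Since $k$ is perfect, Corollary \ref{cor:ess-smooth-retraction} (see also Remark \ref{rmk:reduce-to-closed-point}) allows me to replace $X$ by $X_x^h$ and regard it as essentially smooth over its residue field $K := k(x)$, with $x$ now $K$-rational. The isomorphism $\alpha_x$ is compatible with this reduction because it is built from excision and purity, both of which behave well under étale extensions; the relevant naturality is exactly the mechanism used, e.g., in Remark \ref{rmk:alpha-etale-extension}.

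Next, pick a uniformizer $\pi \in m_x \subset \scr O_{X,x}$. Lemma \ref{lemm:boundary-iso-trick} then identifies $\alpha_x$ with the composite
\[ M_{-2}(x, \omega_{x/X}) \xrightarrow{\pi} M_{-2}(x) \to M_{-2}(\eta) \xrightarrow{[\pi]\cdot (-)} M_{-1}(\eta) \xrightarrow{\partial^\eta_x} H^1_x(X, M_{-1}). \]
The $\KMWn_0(X_x)$-linearity follows factor by factor: the trivialization by $\pi$ respects the $\KMWn_0$-structure by the definition of the twist $M_{-2}(x, \omega_{x/X})$; restriction to the generic point is linear since $M_{-2}$ is a presheaf of $\KMWn_0$-modules; multiplication by $[\pi] \in \KMWn_1(\eta)$ commutes with the $\KMWn_0$-action because $\KMWn_0$ is central in $\KMWn_*$ by Lemma \ref{lemm:KMWn-basics}(3); and the Gersten differential is $\KMWn_0(X_x)$-linear by Remark \ref{rmk:gersten-diff-linear}.

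The only step that requires any real care is the initial reduction: one needs to verify that the $\KMWn_0(X_x)$-module structures on both source and target of $\alpha_x$ are preserved under the identification of $X$ with $X_x^h$ viewed as essentially smooth over $K$. I do not anticipate this as a serious obstacle, since both structures are determined sheaf-theoretically on the pro-scheme $X_x$ (which is unaffected by the reduction), but it is the only non-mechanical point in what is otherwise a routine unwinding of the formula for $\alpha_x$.
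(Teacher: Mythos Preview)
Your proposal is correct and follows essentially the same route as the paper's proof: reduce to $x$ rational via the essentially smooth retraction (Corollary~\ref{cor:ess-smooth-retraction}), then invoke Lemma~\ref{lemm:boundary-iso-trick} to express $\alpha_x$ as $\partial^\eta_x\big([\pi]\cdot (\ph)|_\eta\big)$, a composite of $\KMWn_0(X_x)$-linear maps. The paper's proof is terser (it does not spell out the linearity of each factor), but the argument is the same; your extra care about the module structures under the reduction is harmless and, as you say, not a serious issue.
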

\begin{proof}
Using compatibility of Gersten differentials with étale extensions (Lemma \ref{lemm:gersten-diff-natural}), as well as the existence of étale-local smooth retractions (Corollary \ref{cor:ess-smooth-retraction}), we may assume $X$ pro-smooth over $x$.
Lemma \ref{lemm:boundary-iso-trick} now tells us that $\alpha_x((\ph) \otimes d\pi) = \partial^\eta_x([\pi] (\ph)|_\eta)$, which is a composite of linear maps.
\end{proof}

Using these ideas, we can prove that the transfer is linear over the base.
\begin{lemma} \label{lemm:transfer-coh-P1}
Let $K/k$ be a finitely generated, separable extension and $M$ a strongly $\A^1$-invariant sheaf of abelian groups.
\begin{enumerate}
\item Let $x \in \P^1_K$ be rational.
  The canonical map $H^1_x(\P^1_K, M) \to H^1(\P^1_K, M)$ is an isomorphism.
  Identifying both sides with $M_{-1}(K)$, the map becomes the identity.
\item Let $L/K$ be a finite extension with chosen generator $y$.
  The composite \[ M_{-1}(L, \omega_{L/K}) \stackrel{\alpha_y}{\wequi} H^1_y(\P^1_K,M) \to H^1(\P^1_K,M) \stackrel{\alpha_\infty}{\wequi} M_{-1}(K) \] is $\tr_{L/K}^y$.
\end{enumerate}
\end{lemma}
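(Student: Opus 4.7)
The plan for (1) proceeds in two steps: first show the map is an isomorphism, then identify it with the identity. For the isomorphism, the key input is Theorem~\ref{thm:coh-dim}: $\P^1_K \setminus x \iso \A^1_K$ is $\A^1$-equivalent to $\Spec K$, which has Nisnevich cohomological dimension zero. Thus $H^1(\P^1_K \setminus x, M) = 0$ and $H^0(\P^1_K \setminus x, M) = M(K)$. Pullback along the structure map $\P^1_K \to \Spec K$ provides a section of the restriction $M(\P^1_K) \to M(K)$, so that map is surjective. The long exact sequence of cohomology with support then forces $H^1_x(\P^1_K, M) \to H^1(\P^1_K, M)$ to be an isomorphism.

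For the second step, I would use the transitive action of $\mathrm{PGL}_2(K)$ on $\P^1(K)$ to reduce to $x = \infty$. In that case $\P^1_K \to \P^1_K/(\P^1_K \setminus \infty)$ is literally the map to the cofiber of the inclusion $\A^1_K \hookrightarrow \P^1_K$, which after contracting $\A^1_K \wequi \Spec K$ becomes the standard equivalence $\P^1_K \wequi \Sigma \Gm \wedge (\Spec K)_+$ used to identify $H^1(\P^1_K, M) \wequi M_{-1}(K)$. The isomorphism $\alpha_\infty$ from Lemma~\ref{lemm:identify-local-cohomology} is assembled from the same equivalence together with the trivialization of $\omega_{\infty/\P^1_K}$ coming from the local coordinate $s = 1/t$ at $\infty$. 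The two identifications therefore agree, and the induced map $M_{-1}(K) \to M_{-1}(K)$ is the identity.

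Part (2) is then immediate. By the definition of $\tr_{L/K}^y$ recorded in \S\ref{subsec:contractions-transfers}, the transfer equals
\[ M_{-1}(L, \omega_{L/K}) \xrightarrow{\alpha_y} H^1_y(\P^1_K, M) \to H^1(\P^1_K, M) \xrightarrow{\wequi} M_{-1}(K), \]
where the last arrow is the standard identification via $\P^1_K \wequi \Sigma\Gm \wedge (\Spec K)_+$. Part~(1), applied at the rational point $\infty$, tells us this agrees with $\alpha_\infty$, which is exactly the claim.

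The main obstacle is tracking the trivializations of the various twisting line bundles: one must verify that the trivialization of $\omega_{\P^1_K/K}|_{\A^1}$ by $dt$ (used to identify $\omega_{y/\P^1_K}$ with $\omega_{L/K}$ in the definition of the transfer) corresponds correctly to the trivialization of $\omega_{\infty/\P^1_K} = N_{\infty/\P^1_K}$ by $\partial_s$ (with $s = 1/t$) entering $\alpha_\infty$. Both come from the dual pair $(dt, \partial_s)$, so the compatibility is canonical, but making it explicit requires carefully tracing through the definitions of purity and of $\alpha_\bullet$.
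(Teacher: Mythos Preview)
Your argument is correct and follows essentially the same approach as the paper: the cofiber $\P^1_K \to \P^1_K/(\P^1_K \setminus x)$ is a motivic equivalence because $\P^1_K \setminus x \wequi \A^1_K$ is contractible, and the identification with $M_{-1}(K)$ is the purity equivalence. The paper's proof is slightly more direct---it argues at the space level rather than via the long exact sequence, and invokes Theorem~\ref{thm:purity} for arbitrary rational $x$ without passing through a $\mathrm{PGL}_2$ reduction to $x=\infty$---but the content is the same, and for (2) both proofs simply unwind the definition of $\tr_{L/K}^y$ using (1).
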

\begin{proof}
(1) This just says that $\P^1_K \to \P^1_K/\P^1_K \setminus x$ is an equivalence, which is clear since $\P^1_K \setminus X \wequi \A^1_K \wequi \Spec(K)$.\todo{sloppy}
The second claim is that $\P^1_K \to \P^1_K/\P^1_K \setminus x \wequi \P^1 \wedge x_+$ is the identity, which is part of the definition of the purity isomorphism (see Theorem \ref{thm:purity}).

(2) This is just a translation of the definition via (1).
\end{proof}

We can now establish one of the lemmas left over from the previous section: transfers on $M_{-2}$ are linear over $\ul{K}_0^{MW}$.
\begin{proof}[Proof of Lemma \ref{lemm:transfer-linear}.]
We must prove that in the situation of (2), the transfer for $M_{-1}$ of the form $\tr_{L/K}^y: M_{-2}(L, \omega_{L/K}) \to M_{-2}(K)$ is linear over $\ul{K}_0^{MW}(K)$.
But each of the maps in the composite from (2) is linear, by Lemma \ref{lemm:alpha-1d-linear} and the fact that $M_{-1}|_{\P^1_K}$ is a module over $\ul{K}_0^{MW}(K)$.
\end{proof}

\subsection{The coniveau filtration} \label{subsec:coniveau-filtn}
Let $X$ be an essentially smooth $k$-scheme and $E$ a sheaf of spectra on $X_\Zar$.
The \emph{coniveau filtration} on $E$ is obtained as \[ (F_iE)(\ph) = \colim_{Z \subset X, codim(Z) > i} F((\ph) \setminus Z). \]
The colimit is over closed subschemes of codimension $>i$; note that this colimit is filtered.
We have $F_{-1} E = 0$ and $F_n E = E$ for $n = \dim X$.
Since a subscheme of codimension $>i$ is also of codimension $>i-1$, we have natural maps $F_iE \to F_{i-1}E$ and thus a filtration \[ E = F_n E \to F_{n-1} E \to \dots \to F_0 E \to F_{-1} E = 0. \]
Taking global sections and homotopy groups yields a spectral sequence converging to $\pi_* E(X)$ (see, e.g., \cite[\S1.2.2]{lurie-ha}).
On the first page we find the homotopy groups of the layers.
Observe that \[ \fib(F_i E \to F_{i-1} E) \wequi \bigoplus_{x \in X^{(i)}} E_x(X). \]
Here the sum is over points of codimension $i$, and $E_x(X)$ denotes the evident analog of local cohomology.
(See \cite[Lemma 1.2.1]{bloch-ogus-gabber-theorem} for a proof.)

Consider now the case $E = HM$, $M$ a sheaf of abelian groups on $\Sm_k$.
In this case the spectral sequence has signature \[ E_1^{p,q} = \bigoplus_{x \in X^{(p)}} H^{p+q}_x(X,M) \Rightarrow H^{p+q}(X, M). \]
The first differential takes the form \[ d_1^{p,q}: E_1^{p,q} \to E_1^{p+1,q}. \]
Concentrating on the line $q=0$, we thus find a complex of abelian groups.
\begin{definition}
The \emph{Gersten complex} for $M$ over $X$ is the complex $C^*(X,M)$ of abelian groups arising from the first page of the coniveau spectral sequence.
Its groups are \[ C^p(X,M) = \bigoplus_{x \in X^{(p)}} H^p_x(X,M). \]
\end{definition}
The details of the construction of a spectral sequence (see, e.g., \cite[\S1]{bloch-ogus-gabber-theorem}) imply the following:
\begin{itemize}
\item If $x \in X^{(p)}$ and $y \in X^{(p+1)}$ is a specialization of $x$, then the component of the differential in $C^*(X,M)$ from $H^p_x(X,M)$ to $H^q_y(X,M)$ is the Gersten differential $\partial^x_y$ of Definition \ref{def:gersten-diff}.
\item If $x \in X^{(p)}$ and $y \in X^{(p+1)}$ is not a specialization of $x$, then the corresponding component of the differential in $C^*(X,M)$ is zero.
\end{itemize}

For schemes of dimension $\le 2$, the Gersten complex in fact computes cohomology:
\begin{lemma} \label{lemm:gersten-cohomology-dim2}
Let $X$ be essentially smooth of dimension $\le 2$ and $M$ strongly $\A^1$-invariant.
Let $x \in X$ be a point of codimension $i$.
Then $H^j_x(X,M) = 0$ for $j \ne i$.

There is also a canonical isomorphism $H^* C^*(X,M) \wequi H^*(X,M)$.
\end{lemma}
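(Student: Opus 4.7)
The plan breaks into two steps: first the pointwise vanishing $H^j_x(X, M) = 0$ for $j \ne i := \mathrm{codim}(x)$, and then a spectral-sequence degeneration identifying $H^* C^*(X, M)$ with $H^*(X, M)$. For the vanishing, I would use the long exact sequence of local cohomology
\begin{equation*}
\cdots \to H^{j-1}(X_x \setminus x, M) \to H^j_x(X, M) \to H^j(X_x, M) \to H^j(X_x \setminus x, M) \to \cdots,
\end{equation*}
interpreted as the colimit over open neighborhoods of $x$. Since $X_x$ is essentially smooth of Krull dimension $i \le 2$, Theorem \ref{thm:coh-dim} yields $H^j(X_x, M) = 0$ for $j > i$, and $X_x \setminus x$ has dimension $i - 1$ so $H^j(X_x \setminus x, M) = 0$ for $j \ge i$. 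Together these kill $H^j_x(X,M)$ for every $j > i$, so I only need to worry about $j < i$.

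For $j < i$ I would argue by codimension. Codimension $0$ is vacuous since then $X_x \setminus x$ is empty. In codimension $1$ the only case is $j = 0$: $H^0_x$ is the kernel of $M(X_x) \to M(X_x \setminus x)$, which vanishes by unramifiedness of strongly $\A^1$-invariant sheaves (Corollary \ref{cor:A1-inv-unramified}). The codimension $2$ case is where the main work lies, and I expect it to be the main obstacle. Both $H^0_x = 0$ and $H^1_x = 0$ rely on ``removal in codimension $\ge 2$'' results from earlier: Lemma \ref{lem:remove-codim-2} gives $M(X_x) \wequi M(X_x \setminus x)$, which simultaneously kills $H^0_x$ and the leftmost contribution to $H^1_x$; and Proposition \ref{prop:inj-H1} (applied to $G = M$ via the identification $M \wequi \ul\pi_1 K(M, 1)$) yields the injectivity of $H^1(X_x, M) \hookrightarrow H^1(X_x \setminus x, M)$, killing the ambient contribution to $H^1_x$. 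Both inputs ultimately rest on Gabber's presentation lemma and are the deepest ingredients used here.

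With the vanishing in hand, the second assertion is formal. The coniveau filtration, which is finite because $\dim X \le 2$, produces a convergent spectral sequence
\begin{equation*}
E_1^{p,q} = \bigoplus_{x \in X^{(p)}} H^{p+q}_x(X, M) \Rightarrow H^{p+q}(X, M),
\end{equation*}
and by the vanishing it is supported on the single line $q = 0$, where it coincides with $C^*(X, M)$ and $d_1$ is the Gersten differential of Definition \ref{def:gersten-diff}. The differentials $d_r$ for $r \ge 2$ shift $q$ by $1 - r \ne 0$ and are therefore zero, so the sequence degenerates at $E_2$. The induced filtration on $H^p(X, M)$ then has only one nonzero associated graded piece, yielding the desired canonical isomorphism $H^p C^*(X, M) \wequi H^p(X, M)$.
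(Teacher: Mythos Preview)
Your argument is correct, but the paper handles the case $j < i$ quite differently. Instead of the case analysis via Corollary \ref{cor:A1-inv-unramified}, Lemma \ref{lem:remove-codim-2}, and Proposition \ref{prop:inj-H1}, the paper observes that $j < i \le 2$ forces $j \le 1$, so $K(M,j)$ is $\A^1$-invariant by strong $\A^1$-invariance. It then invokes purity directly: in the colimit defining $H^j_x(X,M)$ one has
\[
[X_x/X_x \setminus x, K(M,j)] \wequi [\Sigma^i \Gmp{i} \wedge x_+, K(M,j)] = 0,
\]
since $\Omega^i K(M,j) = 0$ for $i > j$. This is uniform across all three codimensions and avoids the separate appeals to unramifiedness and to the $H^1$-injectivity result. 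Your route is more hands-on and makes the underlying Gabber-lemma content explicit, while the paper's route packages everything into the purity equivalence (which of course also rests on perfectness of $k$). For the spectral sequence degeneration and the second assertion, your argument matches the paper's.
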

\begin{proof}
The long exact sequence for cohomology with support yields an exact sequence \[ H^{j-1}(X_x \setminus x, M) \to H^j_x(X,M) \to H^j(X_x, M). \]
For $j>i$ the outer-most groups are zero (since, respectively, $\dim X_x \setminus x = i-1$ and $\dim X_x = i$) and we get the desired vanishing.
If $j<i$ then $j \le 1$, whence $K(M,j)$ is $\A^1$-invariant and we compute $[X_x/X_x \setminus x, K(M,j)] \wequi [\Sigma^i \Gmp{i} \wedge x_+, K(M,j)] = 0$, since $\Omega^i K(M,j)=0$.

We have now proved that the $E_1$-page consists of $C^*(X,M)$ on the line $q=0$ and zero else, hence collapses at $E_2$ to $H^* C^*(X,M)$.
Since it converges to $H^*(X,M)$, the final claim follows.
\end{proof}

We can now provide a crucial ingredient to the proof of Proposition \ref{prop:pi-rho-independent}.
\begin{lemma} \label{lemm:H2-coordinate-swap}
In the notation of Lemma \ref{prop:pi-rho-independent}, we have $\tilde\alpha_x^{\pi,\rho} = \tilde\alpha_x^{\rho,\pi}\lra{-1}$.
\end{lemma}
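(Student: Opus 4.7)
The plan is to apply $d_1\circ d_1=0$ in the Gersten complex to the element $[\pi][\rho]m \in M(\eta)$ (for $m\in M_{-2}(x)$), combined with the Milnor--Witt anti-commutation $[\pi][\rho] = -\lra{-1}[\rho][\pi]$ from Remark \ref{rmk:KMW-extra-relations} (valid over the field $\eta$), to match $\tilde\alpha_x^{\pi,\rho}$ and $\tilde\alpha_x^{\rho,\pi}$ up to a factor of $\lra{-1}$.

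First I would reduce to the case that $x$ is $K$-rational, where $K=\kappa(x)$. Corollary \ref{cor:ess-smooth-retraction} supplies an essentially smooth retraction $X_x^h\to x$, letting us view $X_x^h$ as essentially smooth over $K$ with $x$ a rational point; the formation of $\tilde\alpha_x^{\pi,\rho}$ is compatible with this replacement by the evident essentially smooth generalization of Remark \ref{rmk:alpha-etale-extension}, and $\pi,\rho$ remain a regular system of parameters. Writing $y$ and $y'$ for the generic points of $Z(\pi)$ and $Z(\rho)$, Lemma \ref{lemm:double-boundary-iso-trick} identifies
\[ \alpha_x^{\pi,\rho}(m) = \partial^y_x\partial^\eta_y([\pi][\rho]m) \quad\text{and}\quad \alpha_x^{\rho,\pi}(m) = \partial^{y'}_x\partial^\eta_{y'}([\rho][\pi]m). \]

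Next I would apply $d_1\circ d_1=0$ in the Gersten complex of $M$ on $X_x$ to $[\pi][\rho]m\in C^0(X_x,M)=M(\eta)$, projected onto the $x$-summand of $C^2(X_x,M)$, giving
\[ \sum_{\bar y\in X_x^{(1)}} \partial^{\bar y}_x\partial^\eta_{\bar y}([\pi][\rho]m) = 0 \in H^2_x(X,M). \]
The key step is to show that only $\bar y\in\{y,y'\}$ contribute: if both $\pi$ and $\rho$ are units in $\scr O_{X,\bar y}$, then $[\pi][\rho]m$ lifts to $M(X_{\bar y})$, so the residue $\partial^\eta_{\bar y}$ vanishes. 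Since $d\pi,d\rho$ are linearly independent at $x$, each of $\pi,\rho$ is irreducible in the UFD $\scr O_{X,x}$, so $y$ is the unique codimension-one point at which $\pi$ is a nonunit, and likewise for $y'$ and $\rho$.

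Finally, combining the Gersten vanishing with $[\pi][\rho]m = -[\rho][\pi]\cdot(\lra{-1}m)$ (from the Milnor--Witt relation and the centrality of $\lra{-1}$) yields
\[ \alpha_x^{\pi,\rho}(m) = -\partial^{y'}_x\partial^\eta_{y'}([\pi][\rho]m) = \partial^{y'}_x\partial^\eta_{y'}([\rho][\pi](\lra{-1}m)) = \alpha_x^{\rho,\pi}(\lra{-1}m), \]
i.e.\ $\alpha_x^{\pi,\rho} = \alpha_x^{\rho,\pi}\lra{-1}$; the stated relation for the twisted maps $\tilde\alpha$ then follows by tracking the sign introduced when exchanging the trivialization $\bar\pi\wedge\bar\rho$ of $\omega_{x/X}$ with $\bar\rho\wedge\bar\pi$. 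The main obstacle is the combined verification of the reduction to the rational case and the claim that only the $y$ and $y'$ terms contribute to the Gersten vanishing relation; the remaining algebraic manipulation using the anti-commutation is then immediate.
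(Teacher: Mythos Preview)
Your proposal is correct and follows essentially the same route as the paper: reduce to a rational closed point via the henselization and Corollary \ref{cor:ess-smooth-retraction}, express both $\alpha_x^{\pi,\rho}$ and $\alpha_x^{\rho,\pi}$ as iterated Gersten differentials via Lemma \ref{lemm:double-boundary-iso-trick}, use $\partial^2=0$ in the Gersten complex of $X_x$ together with the observation that $[\pi][\rho]m$ lifts to $M(X_x\setminus(Z(\pi)\cup Z(\rho)))$ so that only the two summands at $y,y'$ survive, and conclude using the relation $-[\pi][\rho]=\lra{-1}[\rho][\pi]$ over the field $\eta$. The only cosmetic difference is that the paper phrases the vanishing of the extra contributions directly in terms of the lift, whereas you invoke irreducibility in the regular local ring; these are equivalent.
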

\begin{proof}
Henselizing $X$ in $x$, we may assume given a pro-smooth map $X \to x$ (see Remark \ref{rmk:reduce-to-closed-point}).
By Lemma \ref{lemm:double-boundary-iso-trick} we have \[ \tilde\alpha_x^{\rho,\pi}m = \partial^{y_1}_x \partial^\eta_{y_1} ([\rho][\pi]m'), \] where $m'$ is the pullback of $m$ to $\eta$ along $\eta \to X \to x$.
Here $y_1$ is the generic point of $Z(\pi)$.
Writing $y_2$ for the generic point of $Z(\rho)$, we similarly have \[ \tilde\alpha_x^{\pi,\rho}m = \partial^{y_2}_x \partial^\eta_{y_2} ([\pi][\rho]m'). \]

Consider the Gersten complex $C^*(X_x,M)$.
The class $[\pi][\rho]m'$ lifts to $M(X_x \setminus (Z(\pi) \cup Z(\rho)))$ (since $\pi$ and $\rho$ are units there) and consequently $\partial^\eta_y([\pi][\rho]m') = 0$ for all $y \in (X_x \setminus (Z(\pi) \cup Z(\rho)))^{(1)}$, i.e. for all $y \in X^{(1)} \setminus \{y_1,y_2\}$.
Since $C^*(X_x,M)$ is a complex, we find that \[ 0 = \partial^2([\pi][\rho]m') = \partial^{y_2}_x \partial^\eta_{y_2} ([\pi][\rho]m') + \partial^{y_1}_x \partial^\eta_{y_1} ([\pi][\rho]m'). \]
Combining this with $-[\pi][\rho] = \lra{-1}[\rho][\pi]$ (see Remark \ref{rmk:KMW-extra-relations}) we see that \[ \tilde\alpha_x^{\pi,\rho}(m) = \tilde\alpha_x^{\rho,\pi}(\lra{-1}m). \]
\end{proof}

Now that the map $\alpha_x$ (for $x$ of codimension $2$) is well-defined, let us observe that it is linear.
\begin{lemma}\label{lemm:alpha-2d-linear}
Let $X$ be essentially smooth, $x \in X$ of codimension $2$, $\eta \in X$ the corresponding generic point and $M$ a strongly $\A^1$-invariant sheaf of abelian groups.
The map $\alpha_x$ for $M_{-1}$, \[ \alpha_x: M_{-3}(x, \omega_{x/X}) \to H^2_x(X, M_{-1}) \] is linear over $\KMWn_0(X_x)$.
\end{lemma}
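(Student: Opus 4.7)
The plan is to mimic the approach used for Lemma \ref{lemm:alpha-1d-linear}: reduce to a situation where an explicit formula for $\alpha_x$ is available, then verify linearity factor by factor.

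First, using the compatibility of $\alpha$ with étale extensions (Remark \ref{rmk:alpha-etale-extension}) together with Corollary \ref{cor:ess-smooth-retraction}, I would replace $X$ by its henselization $X_x^h$ and then use an essentially smooth retraction to view $X$ as pro-smooth over $x$. This reduces the problem to the case where $x$ is rational over the (new) base field, so that Lemma \ref{lemm:double-boundary-iso-trick} applies. Picking generators $\pi, \rho$ of $\mathfrak{m}_{X,x}$ and letting $y$ denote the generic point of $Z(\pi)$, that lemma identifies $\alpha_x$ (applied to the strongly $\A^1$-invariant sheaf $M_{-1}$) with the composite
\[ M_{-3}(x) \to M_{-3}(\eta) \xrightarrow{[\pi][\rho]\cdot} M_{-1}(\eta) \xrightarrow{\partial^\eta_y} H^1_y(X, M_{-1}) \xrightarrow{\partial^y_x} H^2_x(X, M_{-1}), \]
precomposed with the trivialization isomorphism $M_{-3}(x, \omega_{x/X}) \wequi M_{-3}(x)$ afforded by $d\pi \wedge d\rho$.

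It then suffices to observe that each map in this composite is $\KMWn_0(X_x)$-linear. The restriction is tautologically a morphism of $\KMWn_0$-modules. The two Gersten differentials, applied to the sheaf $M_{-1}$, are $\KMWn_0(X_x)$-linear by Remark \ref{rmk:gersten-diff-linear}. The twisting isomorphism is linear by its very construction. The one point requiring comment is linearity of multiplication by $[\pi][\rho] \in \KMWn_2(X_x)$, viewed as a map $M_{-3} \to M_{-1}$: this will follow from the centrality of $\KMWn_0$ in $\KMWn_*$ (Lemma \ref{lemm:KMWn-basics}(3)) together with the fact that the actions $\KMWn_m \otimes M_{-n} \to M_{-(n-m)}$ are all built from the ring multiplication on $\KMWn_*$ via the universal property of Proposition \ref{prop:KMW-action}, so that left multiplication by a fixed element of $\KMWn_2$ commutes with the $\KMWn_0$-actions on source and target.

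The main obstacle, though not a severe one, is the bookkeeping around these $\KMWn_0$-module structures: one must verify that the $\KMWn_0$-actions on $M_{-3}$ and $M_{-1}$ appearing in the individual steps are compatibly induced from the $\KMWn_*$-action, so that the notion of $\KMWn_0$-linearity of the composite is well posed. This should be a formal consequence of associativity of the $\KMWn_*$-action on iterated contractions.
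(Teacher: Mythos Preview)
Your proposal is correct and follows essentially the same approach as the paper: reduce to the pro-smooth-over-$x$ situation via Remark~\ref{rmk:reduce-to-closed-point}, invoke Lemma~\ref{lemm:double-boundary-iso-trick} to write $\alpha_x((\ph)\otimes d\pi\wedge d\rho)=\partial^y_x\partial^\eta_y([\pi][\rho](\ph))$, and observe this is a composite of $\KMWn_0(X_x)$-linear maps (Remark~\ref{rmk:gersten-diff-linear}). Your extra discussion of why multiplication by $[\pi][\rho]$ is $\KMWn_0$-linear (via centrality, Lemma~\ref{lemm:KMWn-basics}(3)) makes explicit a point the paper leaves implicit, but the strategy is identical.
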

\begin{proof}
As usual (Remark \ref{rmk:reduce-to-closed-point}) we can assume $X$ pro-smooth over $x$, and choose generators $\pi, \rho$ of the maximal ideal.
Now by Lemma \ref{lemm:double-boundary-iso-trick} we have $\alpha_x((\ph) \otimes d\pi \wedge d\rho) \wequi \partial^y_x \partial^\eta_y ([\pi][\rho](\ph))$, which is a composite of linear maps (Remark \ref{rmk:gersten-diff-linear}).
\end{proof}

\subsection{The canonical differential} \label{subsec:canonical-diff}
Let $X \in \Sm_k$, $\eta \in X$ a generic point and $x \in X$ an immediate specialization of $\eta$.
For a sheaf of abelian groups $M$, we obtain the boundary map in the long exact sequence of cohomology with support of the form $\partial: M(\eta) \to H^1_x(X, M)$.
If $M$ is strongly $\A^1$-invariant, then the target identifies canonically (Lemma \ref{lemm:identify-local-cohomology}) with $M_{-1}(x, \omega_{x/X})$.
We call the resulting map \[ \cpartial^\eta_x: M(\eta) \to M_{-1}(x, \omega_{x/X}) \] the \emph{canonical differential}.

\begin{lemma} \label{lemm:differential-linear}
In the situation above, the canonical differential for $M_{-1}$ \[ \cpartial^\eta_x: M_{-1}(\eta) \to M_{-2}(x, \omega_{x/X}) \] is linear over $\ul{K}_0^{MW}(X_x)$.
\end{lemma}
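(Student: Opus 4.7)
My plan is to factor the canonical differential as $\cpartial^\eta_x = \alpha_x^{-1} \circ \partial$, where $\partial: M_{-1}(\eta) \to H^1_x(X, M_{-1})$ is the boundary map in the long exact sequence of cohomology with support (note $M_{-1}$ is strongly $\A^1$-invariant by Lemma \ref{lemm:contract-strong}, so Lemma \ref{lemm:identify-local-cohomology} applies to it to produce $\alpha_x$), and then check that each factor is $\KMWn_0(X_x)$-linear separately. Both source and target of $\partial$, as well as of $\alpha_x$, carry natural $\KMWn_0(X_x)$-actions: the sheaf $M_{-1}|_{X_x}$ is a $\KMWn_0(X_x)$-module by the module structure constructed in \S\ref{subsec:contr-1}, and all associated cohomology groups with support and pullbacks to points inherit this action.

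For the boundary map $\partial$, linearity is essentially formal from naturality. For any $a \in \KMWn_0(X_x)$, multiplication by $a$ defines a morphism of sheaves $m_a: M_{-1}|_{X_x} \to M_{-1}|_{X_x}$. The cofiber sequence used to define $\partial$ lives on the topos side and does not depend on the coefficient sheaf, so the boundary map is natural in the coefficients: the diagram
\begin{equation*}
\begin{CD}
M_{-1}(\eta) @>{\partial}>> H^1_x(X, M_{-1}) \\
@V{m_a}VV @V{m_a}VV \\
M_{-1}(\eta) @>{\partial}>> H^1_x(X, M_{-1})
\end{CD}
\end{equation*}
commutes. Since the left vertical map is multiplication by (the pullback of) $a$ on $M_{-1}(\eta)$, and the right one is by construction the action of $a$ on $H^1_x(X, M_{-1})$, this yields $\KMWn_0(X_x)$-linearity.

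Linearity of $\alpha_x$ is exactly Lemma \ref{lemm:alpha-1d-linear} (applied in its native form, where $\alpha_x$ goes between $M_{-2}(x, \omega_{x/X})$ and $H^1_x(X, M_{-1})$). Composing the two linear maps gives the result. I expect no serious obstacle: the only slightly delicate point is to make sure that the $\KMWn_0(X_x)$-action on $H^1_x(X, M_{-1})$ used in the naturality argument for $\partial$ coincides with the one used in Lemma \ref{lemm:alpha-1d-linear}, but both are manifestly induced from the sheaf-level action of $\KMWn_0(X_x)$ on $M_{-1}|_{X_x}$, so they agree by construction.
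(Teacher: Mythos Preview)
Your proposal is correct and follows essentially the same approach as the paper: the paper's proof simply cites Remark~\ref{rmk:gersten-diff-linear} (which is precisely your naturality-in-coefficients argument for the linearity of $\partial$) and Lemma~\ref{lemm:alpha-1d-linear}, combining them exactly as you do. Your write-up spells out the content of Remark~\ref{rmk:gersten-diff-linear} in a bit more detail, but the decomposition $\cpartial^\eta_x = \alpha_x^{-1} \circ \partial$ and the verification that each factor is $\KMWn_0(X_x)$-linear is identical.
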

\begin{proof}
This follows by combining Remark \ref{rmk:gersten-diff-linear} and Lemma \ref{lemm:alpha-1d-linear}.
\end{proof}

\begin{construction} \label{cons:twisted-canonical-differential}
Let $X \in \Sm_k$, $\eta \in X$ a generic point and $x \in X$ an immediate specialization of $\eta$.
Let $\scr L$ be a line bundle on $X_x$.
We define the \emph{twisted canonical differential} \[ \cpartial^\eta_x = \cpartial^\eta_x \otimes \scr L: M_{-1}(\eta,\scr L|_\eta) \to M_{-2}(x, \omega_{x/X} \otimes \scr L|_x) \] by choosing a nonvanishing section $s \in \scr L^\times(X_x)$ and setting $\cpartial^\eta_x(m \otimes s|_\eta) = \cpartial^\eta_x(m) \otimes s|_x$.
This is well-defined by Lemma \ref{lemm:differential-linear}.
\end{construction}

\begin{proposition} \label{prop:second-gersten-diff-smooth}
Let $X$ be essentially smooth, $y \in X$ a point of codimension $1$ and $x \in X$ an immediate specialization of $y$ which is a smooth point of $\overline{\{y\}}$.
The following diagram commutes
\begin{equation*}
\begin{CD}
H^1_y(X,M) @>{\partial^y_x}>> H^2_x(X,M) \\
@A{\wequi}AA       @A{\wequi}A{\alpha_x}A \\
M_{-1}(y, \omega_{y/X}) @>{\cpartial^y_x \otimes \omega_{x/\overline{\{y\}}}}>> M_{-2}(x, \omega_{x/X}).
\end{CD}
\end{equation*}
\end{proposition}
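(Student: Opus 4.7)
My plan is to reduce to an explicit, untwisted computation via a judicious choice of regular parameters at $x$. Since $x$ is a smooth point of $Y := \overline{\{y\}}$ and $Y$ has codimension one in $X$, the prime ideal of $\scr O_{X,x}$ defining $Y$ is principal, generated by some $\pi$ with nonzero image in $m_x/m_x^2$; complete to a regular system of parameters $(\pi,\rho)$. Set $\lambda := d\pi$ and $\mu := d\rho$, which trivialize $N_{Y/X}|_x$ (hence $\omega_{y/X}$) and $N_{x/Y}$ (hence $\omega_{x/Y}$) respectively, and together trivialize $\omega_{x/X}$ by Remark \ref{rmk:omega-composition}. By Proposition \ref{prop:pi-rho-independent}, $\alpha_x$ may be computed as $\tilde\alpha_x^{\pi,\rho}$ for this choice.

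Next I would unpack $\tilde\alpha_x^{\pi,\rho}$ using Construction \ref{constr:local-H2-comparison}. After untwisting by $\lambda\wedge\mu$, its defining formula reads: for $m\in M_{-2}(x)$, pick any $n\in M_{-1}(y)$ with $\cpartial^y_x(n)=m$ (possible modulo the image of $M_{-1}(Y_x)$, via the long exact sequence for $y\hookrightarrow Y_x$ applied to $M_{-1}$ combined with Lemma \ref{lemm:identify-local-cohomology} through $\mu$); then
\[
\alpha_x^{\pi,\rho}(m)=\partial^y_x(L_\lambda(n)),
\]
where $L_\lambda\colon M_{-1}(y)\wequi H^1_y(X,M)$ is the Lemma \ref{lemm:identify-local-cohomology} identification using $\lambda$ and $\partial^y_x$ is the Gersten differential. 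The choice of lift $n$ matters only modulo $M_{-1}(Y_x)$, and that ambiguity is killed because the image of $M_{-1}(Y_x)$ in $H^1_y(X,M)$ factors through $H^1_{Y_x}(X_x,M)$, which maps to zero under $\partial^y_x$ by exactness of the long exact sequence of the cofiber sequence $X_x\setminus x/X_x\setminus Y_x \to X_x/X_x\setminus Y_x \to X_x/X_x\setminus x$ used to define $\partial^y_x$.

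With this in hand, commutativity of the diagram is immediate: for $n\in M_{-1}(y)$ (viewed via $\lambda$ as an element of $M_{-1}(y,\omega_{y/X})$), the up-then-right path yields $\partial^y_x(L_\lambda(n))$ by definition of $L_\lambda$ and $\partial^y_x$, while the right-then-up path yields $\alpha_x^{\pi,\rho}(\cpartial^y_x(n))$, which by the formula above (taking $n$ itself as the lift) also equals $\partial^y_x(L_\lambda(n))$.

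The main obstacle is not conceptual but notational: one must verify that the purity isomorphisms for the smooth pairs $(X_x,Y_x)$ and $(Y,x)$ appearing inside $\alpha_x^{\pi,\rho}$, together with the two instances of Lemma \ref{lemm:identify-local-cohomology} used in the vertical maps of the diagram, are compatible with the trivializations $\lambda$, $\mu$, $\lambda\wedge\mu$ of $\omega_{y/X}$, $\omega_{x/Y}$, $\omega_{x/X}$ respectively (using Remark \ref{rmk:omega-composition} for the third). Once this bookkeeping is done the proposition is essentially a tautology, because $\alpha_x$ was designed precisely so that its composition with $\cpartial^y_x$ recovers the Gersten differential $\partial^y_x$.
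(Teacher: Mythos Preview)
Your proposal is correct and follows the same approach as the paper: choose generators $(\pi,\rho)$ of $m_{X,x}$ with $\pi$ cutting out $Y$, invoke Proposition \ref{prop:pi-rho-independent} so that $\alpha_x = \tilde\alpha_x^{\pi,\rho}$, and then observe that the diagram commutes directly from Construction \ref{constr:local-H2-comparison}. The paper's proof is a terse two sentences (``we may choose the intermediate point to be $y$, and then the diagram commutes by definition''), whereas you have helpfully unpacked the bookkeeping about which trivializations are being used where; but the underlying argument is identical.
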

\begin{proof}
In the previous subsection we have finished the proof of Proposition \ref{prop:pi-rho-independent}, that is, we know that $\alpha_x$ does not depend on the choice of intermediate point.
We may thus choose it to be $y$, and then the diagram commutes by definition (see Construction \ref{constr:local-H2-comparison}).
\end{proof}

\begin{corollary} \label{cor:second-gersten-diff-smooth-twisted}
Let $X$ be essentially smooth, $y \in X$ a point of codimension $1$ and $x \in X$ an immediate specialization of $y$ which is a smooth point of $\overline{\{y\}}$.
Let $\scr L$ be a line bundle on $X$.
The following diagrams commute
\begin{equation*}
\begin{CD}
H^0_\eta(X,M_{-1}(\scr L)) @>{\partial^\eta_y}>> H^1_y(X,M_{-1}(\scr L)) \\
@A{\wequi}A{\alpha_\eta \otimes \scr L}A       @A{\wequi}A{\alpha_y \otimes \scr L}A \\
M_{-1}(\eta, \scr L|_\eta) @>{\cpartial^\eta_y \otimes \scr L|_\eta}>> M_{-2}(y, \omega_{y/X} \otimes \scr L|_y)
\end{CD}
\end{equation*}
\begin{equation*}
\begin{CD}
H^1_y(X,M_{-1}(\scr L)) @>{\partial^y_x}>> H^2_x(X,M_{-1}(\scr L)) \\
@A{\wequi}A{\alpha_y \otimes \scr L}A       @A{\wequi}A{\alpha_x \otimes \scr L}A \\
M_{-2}(y, \omega_{y/X} \otimes \scr L|_y) @>{\cpartial^y_x \otimes \omega_{x/\overline{\{y\}}} \otimes \scr L}>> M_{-3}(x, \omega_{x/X} \otimes \scr L|_x).
\end{CD}
\end{equation*}
Here the maps denoted $\alpha \otimes \scr L$ are twists of the maps $\alpha$, and they are well-defined isomorphisms.
\end{corollary}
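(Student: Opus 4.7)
The plan is to reduce the twisted commutativity assertions to the untwisted versions that have already been established, using the linearity of $\alpha$ and of the various differentials over $\KMWn_0$.

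First I would construct the twisted isomorphisms. By Lemmas \ref{lemm:alpha-1d-linear} and \ref{lemm:alpha-2d-linear} the untwisted maps $\alpha_y$ and $\alpha_x$ are linear over $\KMWn_0(X_y)$ and $\KMWn_0(X_x)$ respectively. Following the same recipe used to define twisted contractions (and Construction \ref{cons:twisted-canonical-differential}), one picks a nonvanishing section $s$ of $\scr L$ over $X_y$ (resp.\ $X_x$) and sets $(\alpha_y\otimes \scr L)(m\otimes s|_y) = \alpha_y(m)\otimes s$; independence from $s$ follows because $\scr O^\times$ acts on both sides through $\KMWn_0$ and the untwisted $\alpha$'s are equivariant for this action. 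At the generic point $\alpha_\eta$ is the trivial identification, and $\alpha_\eta\otimes\scr L$ is just its twist by $\scr L|_\eta$. Each of these maps is an isomorphism because the untwisted one is, and the twist is a fibrewise base change along a torsor.

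For the first diagram, recall that $\cpartial^\eta_y$ was \emph{defined} (in the paragraph preceding Lemma \ref{lemm:differential-linear}) as $\alpha_y^{-1}\circ\partial^\eta_y$, so the untwisted square commutes tautologically. The top arrow $\partial^\eta_y$ for $M_{-1}(\scr L)$ is obtained from $\partial^\eta_y$ for $M_{-1}$ by tensoring with $\scr L$, and all four corners and maps of the twisted square are defined by the same ``extend a nonvanishing section'' procedure. Trivializing $\scr L$ Zariski-locally on $X_y$ therefore identifies the twisted square with the untwisted one, and the claim follows. For the second diagram, the analogous argument applies: Proposition \ref{prop:second-gersten-diff-smooth} supplies commutativity of the untwisted square, the top arrow $\partial^y_x$ is linear over $\KMWn_0(X_x)$ by Remark \ref{rmk:gersten-diff-linear}, and the bottom arrow is a twist of the canonical differential for $M_{-1}$ which is linear by Lemma \ref{lemm:differential-linear}; one more local trivialization of $\scr L$ closes the argument.

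The main obstacle is bookkeeping rather than substance: one must keep careful track of the family of twists $\omega_{y/X}$, $\omega_{x/X}$, $\omega_{x/\overline{\{y\}}}$ and the restrictions $\scr L|_\eta,\scr L|_y,\scr L|_x$, and check that the canonical identification $\omega_{x/X}\wequi\omega_{x/\overline{\{y\}}}\otimes\omega_{y/X}|_x$ from Remark \ref{rmk:omega-composition} is used consistently when passing from the bottom of the first diagram to the top of the second. Once this alignment is fixed, linearity over $\KMWn_0$ at every level of the tower makes the reduction to the untwisted (already proved) case essentially automatic.
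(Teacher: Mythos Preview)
Your proposal is correct and follows essentially the same approach as the paper: trivialize $\scr L$ on the relevant local scheme ($X_y$ for the first diagram, $X_x$ for the second) to reduce to the untwisted case, invoke the definition of $\cpartial^\eta_y$ for the first square and Proposition~\ref{prop:second-gersten-diff-smooth} for the second, and justify well-definedness of the $\alpha\otimes\scr L$ via linearity of $\alpha$ over $\KMWn_0$. The paper's proof is a one-sentence version of your argument; your only superfluous step is speaking of trivializing ``Zariski-locally'' on $X_y$, since $X_y$ and $X_x$ are already local and any line bundle on them is globally trivial.
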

\begin{proof}
Choosing a trivialization of $\scr L|_y$ induces an isomorphism $C^*(X_y, M_{-1}) \wequi C^*(X_y, M_{-1}(\scr L))$, and the commutative diagrams hold by definition and Proposition \ref{prop:second-gersten-diff-smooth}.
The maps $\alpha \otimes \scr L$ are well-defined because the maps $\alpha$ are linear over $\ul{K}_0^{MW}$.
\end{proof}

\begin{remark}
The meaning of Proposition \ref{prop:second-gersten-diff-smooth} and Corollary \ref{cor:second-gersten-diff-smooth-twisted} is that in the Gersten complexes $C^*(X, M)$ (respectively $C^*(X, M_{-1})$) we can identify the groups $C^0, C^1$ and $C^2$ in terms of contractions of $M$, and we can identify many of the differentials in terms of the canonical differentials.
\end{remark}

Note that the formation of the coniveau filtration is functorial in (at least) open immersions.
Thus for $U \subset X$ open we have an obvious map $C^*(X,M) \to C^*(U,M)$, which is immediately seen to be surjective.
\begin{definition}
For $Z \subset X$ closed, we put \[ C^*_Z(X,M) = \fib(C^*(X,M) \to C^*(U,M)). \]
\end{definition}
\begin{remark}
Since $C^*(X,M) \to C^*(U,M)$ is surjective, the fiber is just the degreewise kernel.
We thus have \[ C^i_Z(X,M) = \bigoplus_{x \in X^{(i)} \cap Z} H^i_x(X,M). \]
\end{remark}

\begin{corollary} \label{cor:purity-codim2}
Let $X$ be essentially smooth of dimension $2$, $Y \subset X$ an essentially smooth closed subscheme of codimension $1$.
The isomorphisms \[ M_{-1-i}(x, \omega_{x/Y}) \otimes \omega_{Y/X} \wequi M_{-1-i}(x, \omega_{x/X}) \] induce an isomorphism of graded abelian groups \[ \alpha_{Y/X}: C^{*-1}(Y, M_{-1}(\omega_{Y/X})) \wequi C^*_Y(X,M) \] which is in fact an isomorphism of complexes.
\end{corollary}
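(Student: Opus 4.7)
My plan is to exploit that both complexes are concentrated in only two degrees and that the nontrivial differentials on each side have already been identified with a canonical differential of $M_{-1}$. So the proof reduces to coherence of the various determinant identifications.

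Concretely, since $\dim X = 2$ and $Y \subset X$ has codimension $1$, the subcomplex $C^*_Y(X,M)$ is concentrated in degrees $1,2$, with components indexed by the generic points $\eta \in Y^{(0)}$ (codimension $1$ in $X$) and the closed points $x \in Y^{(1)}$ (codimension $2$ in $X$). The shifted complex $C^{*-1}(Y, M_{-1}(\omega_{Y/X}))$ is also concentrated in these degrees. I define $\alpha_{Y/X}$ on generic points by composing the isomorphism $M_{-1}(\eta, \omega_{Y/X}|_\eta) \wequi M_{-1}(\eta, \omega_{\eta/X})$ (since $\omega_{\eta/Y}$ is trivial, so that $\omega_{\eta/X} \wequi \omega_{Y/X}|_\eta$ by Remark \ref{rmk:omega-composition}) with $\alpha_\eta$ from Lemma \ref{lemm:identify-local-cohomology}. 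On closed points, I use $\alpha_x$ for $M_{-1}$ on $Y$ (the strong invariance being Lemma \ref{lemm:contract-strong}) combined with the identification $\omega_{x/Y} \otimes \omega_{Y/X}|_x \wequi \omega_{x/X}$, followed by $\alpha_x^{-1}$ for $M$ on $X$ (Definition \ref{def:identify-M-2}).

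For the complex structure, I only need to check the single family of differentials $\partial^\eta_x$. Note first that $C^*_Y(X,M)$ really is closed under the Gersten differential: if $\eta$ is a generic point of $Y$, then any codimension-$2$ specialization $x$ of $\eta$ lies in $\overline{\{\eta\}} \subset Y$, so the component into points outside $Y$ is zero. Since $Y$ is essentially smooth, every such $x$ is automatically a smooth point of $\overline{\{\eta\}}$, so Proposition \ref{prop:second-gersten-diff-smooth} applies and identifies $\partial^\eta_x$ in $C^*_Y(X,M)$ with the canonical differential $\cpartial^\eta_x$ for $M_{-1}$, taking values in $M_{-2}(x, \omega_{x/\overline{\{\eta\}}} \otimes \omega_{\eta/X}|_x) \wequi M_{-2}(x, \omega_{x/X})$. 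Applying Corollary \ref{cor:second-gersten-diff-smooth-twisted} (first diagram) on $Y$ with the line bundle $\scr L = \omega_{Y/X}$, the differential in $C^*(Y, M_{-1}(\omega_{Y/X}))$ is identified with the same canonical differential $\cpartial^\eta_x$ for $M_{-1}$, now landing in $M_{-2}(x, \omega_{x/Y} \otimes \omega_{Y/X}|_x)$.

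The agreement therefore comes down to checking that the two target identifications
\[ M_{-2}(x, \omega_{x/\overline{\{\eta\}}} \otimes \omega_{\eta/X}|_x) \wequi M_{-2}(x, \omega_{x/X}) \wequi M_{-2}(x, \omega_{x/Y} \otimes \omega_{Y/X}|_x) \]
are compatible, and similarly on the source $M_{-1}(\eta, \omega_{\eta/X}) \wequi M_{-1}(\eta, \omega_{Y/X}|_\eta)$. Both are straightforward instances of the cocycle property of $\omega$ for composites (Remark \ref{rmk:omega-composition}), using that $\overline{\{\eta\}}$ is a component of $Y$ so that $\omega_{x/\overline{\{\eta\}}} = \omega_{x/Y}$ and $\omega_{\eta/Y}$ is canonically trivial. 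The main (and essentially only) obstacle is to write down these $\omega$-coherences carefully enough to see that the twists line up; once they do, the commutativity of the square of differentials is immediate from the two cited identifications, and $\alpha_{Y/X}$ is a map, hence isomorphism, of complexes.
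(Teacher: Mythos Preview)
Your proposal is correct and takes essentially the same approach as the paper: identify the groups via the maps $\alpha$ from Lemma \ref{lemm:identify-local-cohomology} and Definition \ref{def:identify-M-2}, and check compatibility with the single differential using Proposition \ref{prop:second-gersten-diff-smooth} and Corollary \ref{cor:second-gersten-diff-smooth-twisted}. Your write-up spells out the determinant coherences that the paper leaves implicit; the only blemish is that in your description of the map on closed points the directions of $\alpha_x$ and $\alpha_x^{-1}$ are swapped (to go from $H^1_x(Y,M_{-1}(\omega_{Y/X}))$ to $H^2_x(X,M)$ you invert the $Y$-side $\alpha$ and apply the $X$-side $\alpha$), but the intent is clear and nothing substantive is affected.
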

\begin{proof}
The isomorphism of abelian groups comes from the identification of $H^i_x(X,M) \wequi M_{-i}(x,\omega_{x/X})$ for $i \le 2$ and similarly for $Y$.
The fact that this is an isomorphism of complexes is Proposition \ref{prop:second-gersten-diff-smooth} and Corollary \ref{cor:second-gersten-diff-smooth-twisted}.
\end{proof}

\subsection{Some Gersten complex computations} \label{subsec:gersten-comp}
We shall eventually prove Theorem \ref{thm:transfers} (showing that transfers are well-defined on $M_{-2}$) by analysing the Gersten complex of $\P^1 \times \P^1$.
Before we can do so, we need to analyse some related Gersten complexes.

\begin{lemma} \label{lemm:coh-A2}
Let $M$ be a strongly $\A^1$-invariant sheaf of abelian groups and $K/k$ finitely generated.
We have $H^2(\A^2_K, M) = 0$.
\end{lemma}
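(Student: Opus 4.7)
The plan is to compute $H^2(\A^2_K, M)$ via the Gersten complex and to kill each cohomology class by exploiting the fibration $p: \A^2_K \to \A^1_K$, whose fibers are affine lines on which the strongly invariant sheaf $M_{-1}$ has trivial $H^1$.

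First, since $\A^2_K$ has Krull dimension $2$, Lemma \ref{lemm:gersten-cohomology-dim2} gives $H^2(\A^2_K, M) = H^2 C^*(\A^2_K, M)$. A class in $C^2(\A^2_K, M)$ is a finite sum $c = \sum_{i=1}^n c_{x_i}$ with $c_{x_i} \in H^2_{x_i}(\A^2_K, M)$ and $x_i \in (\A^2_K)^{(2)}$ a closed point. Let $\bar x^{(1)}, \dots, \bar x^{(m)}$ be the distinct images of the $x_i$ under the first projection $p: \A^2_K \to \A^1_K$; each $\bar x^{(j)}$ is a closed point of $\A^1_K$, and the fiber $Y_j := p^{-1}(\bar x^{(j)}) \cong \A^1_{\kappa(\bar x^{(j)})}$ is a smooth closed subscheme of codimension $1$. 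Group the class by fiber: $c = \sum_j c_j$ where $c_j := \sum_{x_i \in Y_j} c_{x_i} \in C^2_{Y_j}(\A^2_K, M)$.

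Fix $j$ and apply Corollary \ref{cor:purity-codim2} to the pair $(Y_j, \A^2_K)$: we get an isomorphism of complexes
\[
\alpha_{Y_j/\A^2_K}: C^{*-1}(Y_j, M_{-1}(\omega_{Y_j/\A^2_K})) \xrightarrow{\wequi} C^*_{Y_j}(\A^2_K, M).
\]
The conormal bundle of $Y_j$ in $\A^2_K$ is cut out by a single equation $f(t) \in K[t]$, hence $\omega_{Y_j/\A^2_K}$ is trivializable on $Y_j \cong \A^1_{\kappa(\bar x^{(j)})}$ (whose Picard group is trivial). A choice of trivialization identifies the twisted Gersten complex with $C^*(Y_j, M_{-1})$. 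Since $M_{-1}$ is strongly $\A^1$-invariant (Lemma \ref{lemm:contract-strong}) and $Y_j$ is one-dimensional, Lemma \ref{lemm:gersten-cohomology-dim2} applied to $M_{-1}$ on $Y_j$ computes
\[
H^1 C^*(Y_j, M_{-1}) \wequi H^1(\A^1_{\kappa(\bar x^{(j)})}, M_{-1}) \wequi H^1(\Spec \kappa(\bar x^{(j)}), M_{-1}) = 0,
\]
the middle isomorphism by strong $\A^1$-invariance of $M_{-1}$ and the last vanishing because fields have Nisnevich cohomological dimension $0$ (Theorem \ref{thm:coh-dim}).

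Consequently the image of $c_j$ in $C^1(Y_j, M_{-1}(\omega_{Y_j/\A^2_K}))$ is a boundary: there exists $b_j \in C^0(Y_j, M_{-1}(\omega_{Y_j/\A^2_K})) = M_{-1}(\eta_j, \omega|_{\eta_j}) \wequi H^1_{\eta_j}(\A^2_K, M) \subset C^1(\A^2_K, M)$ (where $\eta_j$ is the generic point of $Y_j$) with $d(b_j) = c_j$ in the Gersten complex of $\A^2_K$, the compatibility being precisely that $\alpha_{Y_j/\A^2_K}$ is an isomorphism of complexes. Now set $b = \sum_{j=1}^m b_j \in C^1(\A^2_K, M)$; this is a finite sum whose terms lie in pairwise distinct direct summands. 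Specializations of $\eta_j$ are contained in $Y_j$, so $d(b_j)$ has support in $Y_j$ and hence $d(b) = \sum_j d(b_j) = \sum_j c_j = c$. Thus $c$ is a boundary, proving $H^2 C^*(\A^2_K, M) = 0$. The only real step to verify carefully is that the purity isomorphism of Corollary \ref{cor:purity-codim2} matches the Gersten differentials on the nose, but this is already packaged into the statement of that corollary; everything else is assembly of earlier results.
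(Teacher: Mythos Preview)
Your proof is correct and follows essentially the same approach as the paper: both use the projection $\A^2_K \to \A^1_K$, apply purity (Corollary \ref{cor:purity-codim2}) to the vertical fibers $\A^1_{\kappa(\bar x^{(j)})}$, and invoke strong $\A^1$-invariance of $M_{-1}$ (Lemma \ref{lemm:contract-strong}) to kill $H^1$ of these fibers. The only cosmetic difference is that the paper phrases the argument at the level of cohomology groups---showing $H^2_{\A^1 \times S}(\A^2_K,M)=0$ for finite $S \subset \A^1_K$ and then passing to the colimit to embed $H^2(\A^2_K,M)$ into $H^2(\A^1_\eta,M)=0$---whereas you work directly inside the Gersten complex and bound each cycle explicitly; the content is the same.
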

\begin{proof}
Let $S \subset \A^1_K$ be a finite set of closed points.
We have \[ H^2_{\A^1 \times S}(\A^2_K, M) \wequi H^1(\A^1 \times S, M(\omega_{S/K})) \wequi H^1(\A^1 \times S, M) = 0. \]
Here in the first isomorphism we have used Lemma \ref{lemm:gersten-cohomology-dim2} to compute cohomology using the Gersten complex as well as purity (Corollary \ref{cor:purity-codim2}), for the second isomorphism we use that any line bundle on $S$ is trivial, and for the third we use strong $\A^1$-invariance of $M$ (together with $\dim S = 0$).
Writing $U$ for the open complement of $S$ in $\A^1_K$ we thus find (from the long exact sequence in cohomology with support) that the restriction map \[ H^2(\A^2_K,M) \to H^2(\A^1 \times U, M) \] is injective.
Taking the colimit over all such $U$ we find that $H^2(\A^2_K, M) \hookrightarrow H^2(\A^1_\eta, M)$, where $\eta$ is the generic point of $\A^1_K$.
This latter group vanishes since $\A^1_\eta$ has dimension $1$.
\end{proof}

\begin{corollary} \label{cor:coh-A1P1}
We have $H^2((\P^1 \times \A^1)_K, M) = 0$.
\end{corollary}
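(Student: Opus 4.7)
The plan is to use the localization long exact sequence for the closed pair $Y := \{\infty\} \times \A^1_K \hookrightarrow \P^1_K \times \A^1_K =: X$, whose open complement is $\A^2_K$. The relevant portion reads
\[ H^2_Y(X, M) \to H^2(X, M) \to H^2(\A^2_K, M), \]
and by Lemma \ref{lemm:coh-A2} the right-hand term already vanishes, so it suffices to show $H^2_Y(X, M) = 0$.

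To handle the local cohomology group, I would appeal to purity of the Gersten complex. Since $X$ is essentially smooth of dimension $2$ and $Y \wequi \A^1_K$ is smooth of codimension $1$, Corollary \ref{cor:purity-codim2} combined with Lemma \ref{lemm:gersten-cohomology-dim2} gives a canonical isomorphism
\[ H^2_Y(X, M) \wequi H^1(Y, M_{-1}(\omega_{Y/X})). \]
The normal bundle of $Y$ in $X$ is pulled back along the first projection from the normal bundle of $\{\infty\} \subset \P^1_K$, which is a line bundle on $\Spec K$ and hence trivial. Since $\omega_{Y/X}$ is the corresponding determinant line bundle on $Y \wequi \A^1_K$ (again trivial, as $\Pic(\A^1_K) = 0$), we may identify the target with $H^1(\A^1_K, M_{-1})$.

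Finally, $M_{-1}$ is strongly $\A^1$-invariant by Lemma \ref{lemm:contract-strong}, so $H^1(\A^1_K, M_{-1}) \wequi H^1(\Spec K, M_{-1})$. As $\Spec K$ has Krull dimension $0$, its Nisnevich cohomological dimension is $0$ by Theorem \ref{thm:coh-dim}, and this last group vanishes. By exactness of the localization sequence, $H^2(\P^1_K \times \A^1_K, M) = 0$.

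The only point that requires care is ensuring the localization sequence is available; this is a standard consequence of the cofiber sequence $(X \setminus Y)_+ \to X_+ \to X/(X \setminus Y)$, or equivalently can be read off from the coniveau filtration, since $C^*_Y(X, M)$ was defined precisely as the fiber of $C^*(X, M) \to C^*(X \setminus Y, M)$ and both cohomologies agree with Nisnevich cohomology by Lemma \ref{lemm:gersten-cohomology-dim2}.
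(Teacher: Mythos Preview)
Your proof is correct and follows essentially the same approach as the paper: both use the localization sequence for $\{\infty\}\times\A^1_K \subset (\P^1\times\A^1)_K$, invoke Lemma~\ref{lemm:coh-A2} for the open part, and identify the supported cohomology via Corollary~\ref{cor:purity-codim2} and Lemma~\ref{lemm:gersten-cohomology-dim2} as $H^1(\A^1_K, M_{-1})$, which vanishes by strong $\A^1$-invariance of $M_{-1}$. Your version is in fact slightly more careful than the paper's in explicitly citing Lemma~\ref{lemm:contract-strong} for the strong $\A^1$-invariance of $M_{-1}$.
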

\begin{proof}
Consider the following part of the long exact sequence in cohomology with support \[ H^2_{\infty \times \A^1}((\P^1 \times \A^1)_K, M) \to H^2((\P^1 \times \A^1)_K, M) \to H^2(\A^2_K,M). \]
The right hand side vanishes by Lemma \ref{lemm:coh-A2}.
The left hand side identifies using Lemma \ref{lemm:gersten-cohomology-dim2} and Corollary \ref{cor:purity-codim2} with $H^1(\infty \times \A^1_K, M_{-1}(\omega_{\infty/\A^1}))$.
Since $\omega_{\infty/\A^1}$ is trivial, the left hand group vanishes by strong $\A^1$-invariance of $M$.
\end{proof}

We now arrive at the main computation that we need.
\begin{lemma} \label{lemm:coh-of-P1xP1}
Let $K/k$ be a finitely generated extension.
For any strongly $\A^1$-invariant sheaf of abelian groups $M$, the canonical map $H^2_{(\infty,\infty)}((\P^1 \times \P^1)_K, M) \to H^2((\P^1 \times \P^1)_K, M)$ is an isomorphism.
\end{lemma}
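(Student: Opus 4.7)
Set $U = Y \setminus \{(\infty,\infty)\}$ where $Y = (\P^1 \times \P^1)_K$. The long exact sequence of cohomology with support reads
\[
H^1(Y, M) \to H^1(U, M) \to H^2_{(\infty,\infty)}(Y, M) \to H^2(Y, M) \to H^2(U, M),
\]
and a diagram chase reduces the claim to two assertions: (a) $H^2(U, M) = 0$, and (b) the restriction $H^1(Y, M) \to H^1(U, M)$ is surjective.

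For (a), I would use Mayer--Vietoris for the cover $U = V_1 \cup V_2$ with $V_1 = \A^1 \times \P^1$, $V_2 = \P^1 \times \A^1$, and $V_1 \cap V_2 = \A^2$. Corollary \ref{cor:coh-A1P1} and Lemma \ref{lemm:coh-A2} give $H^2 = 0$ on each of $V_1$, $V_2$, and $V_1 \cap V_2$. Moreover $H^1(\A^2, M) \cong H^1(\Spec K, M) = 0$ by strong $\A^1$-invariance applied twice, together with Theorem \ref{thm:coh-dim} (since $\Spec K$ has Nisnevich cohomological dimension $0$). The Mayer--Vietoris sequence then yields $H^2(U, M) = 0$ and identifies $H^1(U, M) \cong H^1(V_1, M) \oplus H^1(V_2, M)$.

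For (b), I would first establish a projective bundle formula
\[
H^1(X \times \P^1, M) \cong H^1(X, M) \oplus M_{-1}(X)
\]
for any essentially smooth $K$-scheme $X$, via Mayer--Vietoris applied to $\P^1 = (\P^1 \setminus \{0\}) \cup (\P^1 \setminus \{\infty\})$ (intersection $\Gm$) fiberwise over $X$; the key inputs are $\A^1$-invariance of $H^{\le 1}(-, M)$ and the canonical splitting $M(X \times \Gm) \cong M(X) \oplus M_{-1}(X)$. Specialising to $X = \A^1$ gives $H^1(V_i, M) \cong M_{-1}(K)$, while specialising to $X = \P^1$ gives $H^1(Y, M) \cong M_{-1}(K)^{\oplus 2}$, whose two summands can be identified with residues along the divisors $\{\infty\} \times \P^1$ and $\P^1 \times \{\infty\}$. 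By naturality, restriction to $V_i$ annihilates the summand supported on the divisor removed in $V_i$ (its base-cohomology contribution vanishes by $\A^1$-invariance) and preserves the other. Hence the total restriction $H^1(Y, M) \to H^1(V_1, M) \oplus H^1(V_2, M)$ becomes the swap isomorphism of $M_{-1}(K)^{\oplus 2}$, in particular a surjection.

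The main technical obstacle lies in the last step: one must track the two distinct projective bundle decompositions of $H^1(Y, M)$ (one associated to each projection $Y \to \P^1$) and verify that the restrictions to $V_1$ and $V_2$ annihilate complementary summands. The symmetry of the configuration makes this routine, but it requires careful bookkeeping with the Mayer--Vietoris sequences.
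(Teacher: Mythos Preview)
Your argument is correct, but it takes a genuinely different route from the paper's proof.

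The paper does not remove the single point $(\infty,\infty)$ directly. Instead it works in two stages: first it removes the whole divisor $\P^1 \times \{\infty\}$ and uses the support sequence for that pair. Surjectivity of $H^1(Y,M)\to H^1((\P^1\times\A^1)_K,M)$ is obtained in one line by noting that $\P^1\times\A^1 \hookrightarrow \P^1\times\P^1 \xrightarrow{pr_1} \P^1 \hookrightarrow \P^1\times\A^1$ is $\A^1$-homotopic to the identity, so the restriction splits. Together with $H^2((\P^1\times\A^1)_K,M)=0$ this gives $H^2_{\P^1\times\infty}(Y,M)\xrightarrow{\sim} H^2(Y,M)$. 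For the second stage the paper invokes the purity isomorphism for Gersten complexes in codimension $\le 2$ (Corollary~\ref{cor:purity-codim2}) to rewrite $H^2_{(\infty,\infty)}(Y,M)\to H^2_{\P^1\times\infty}(Y,M)$ as $H^1_\infty(\P^1_K,M_{-1})\to H^1(\P^1_K,M_{-1})$, which is an isomorphism by Lemma~\ref{lemm:transfer-coh-P1}.

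Your approach is more elementary in that it avoids the Gersten-complex purity machinery and works entirely with Mayer--Vietoris and the projective bundle formula; the price is the bookkeeping in step~(b). The cleanest way to finish (b) is the following sharpening of your last paragraph: the summand $A_1 := p_1^*H^1(\P^1_K,M)\subset H^1(Y,M)$ is killed by restriction to $V_1$ (since it factors through $H^1(\A^1_K,M)=0$), while its restriction to $V_2=\P^1\times\A^1$ is the map $H^1(\P^1_K,M)\to H^1((\P^1\times\A^1)_K,M)$ induced by the first projection, which is an isomorphism by strong $\A^1$-invariance. Since $H^1(Y,M)\to H^1(V_1,M)$ is already surjective, this forces the combined map $H^1(Y,M)\to H^1(V_1,M)\oplus H^1(V_2,M)$ to be surjective. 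This avoids having to compare the two projective-bundle decompositions directly. The paper's route, by contrast, dovetails with the surrounding development (it exercises the codimension-$2$ purity result just established) and needs no explicit projective bundle formula.
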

\begin{proof}
Consider the following part of the long exact sequence in cohomology with support \[ H^1((\P^1 \times \P^1)_K, M) \to H^1((\P^1 \times \A^1)_K, M) \to H^2_{\P^1_K \times \infty}((\P^1 \times \P^1)_K, M) \to H^2((\P^1 \times \P^1)_K, M) \to H^2((\P^1 \times \A^1)_K, M). \]
The right hand group vanishes by Corollary \ref{cor:coh-A1P1}.
Since the composite $\P^1 \times \A^1 \hookrightarrow \P^1 \times \P^1 \xrightarrow{pr_1} \P^1 \hookrightarrow \P^1 \times \A^1$ is $\A^1$-homotopic to the identity, the map $H^1((\P^1 \times \P^1)_K, M) \to H^1((\P^1 \times \A^1)_K, M)$ has a section and so is surjective.
We deduce that \[ H^2_{\P^1_K \times \infty}((\P^1 \times \P^1)_K, M) \xrightarrow{\wequi} H^2((\P^1 \times \P^1)_K, M). \]
The map $H^2_{\infty \times \infty}((\P^1 \times \P^1)_K, M) \to H^2_{\P^1_K \times \infty}((\P^1 \times \P^1)_K, M)$ is induced by the map of Gersten complexes $C^*_{\infty \times \infty}((\P^1 \times \P^1)_K, M) \to C^*_{\P^1_K \times \infty}((\P^1 \times \P^1)_K, M)$ (Lemma \ref{lemm:gersten-cohomology-dim2}).
Using purity for the Gersten complex (Corollary \ref{cor:purity-codim2}) we can rewrite this as $C^*_{\infty \times \infty}(\P^1_K \times \infty, M_{-1}(\omega_{\infty/\P^1})) \to C^*(\P^1_K \times \infty, M_{-1}(\omega_{\infty/\P^1}))$, which in cohomology yields the canonical map $H^1_\infty(\P^1_K, M_{-1}) \to H^1(\P^1_K, M_{-1})$.
This is an isomorphism by Lemma \ref{lemm:transfer-coh-P1}.
\end{proof}

\subsection{Proof of Theorem \ref{thm:transfers}} \label{label:proof-transfers-welldefined}
We can now prove that the transfers are well-defined on $M_{-2}$.
Let $L = K(x,y)$ with $K/k$ finitely generated.
The elements $x, y \in L$ define a point $(x,y) \in \A^2_K \subset (\P^1 \times \P^1)_K$.
Note that $\omega_{(x,y)/(\P^1 \times \P^1)_K} \wequi \omega_{L/K}$, using the canonical trivialization of $\omega_{\A^2_K/K}$.
\begin{lemma} \label{lemm:double-transfer-trick}
The composite \[ M_{-2}(L, \omega_{L/K}) \stackrel{\alpha_{(x,y)}}{\wequi} H^2_{(x,y)}((\P^1 \times \P^1)_K, M) \to H^2((\P^1\times\P^1), M) \stackrel{L.\ref{lemm:coh-of-P1xP1}}{\wequi} M_{-2}(K) \] coincides with $\lra{-1}\tr_{K(y)/K}^y\tr_{L/K(y)}^x$.
\end{lemma}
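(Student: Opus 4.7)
The composite is best analyzed through the Gersten complex of $X := (\P^1 \times \P^1)_K$, exploiting that the point $(x,y)$ sits on two natural smooth codimension-one curves in $X$, each of which will be used to reduce one coordinate at a time. Let $C := \P^1_K \times y \subset X$, which is canonically isomorphic to $\P^1_{K(y)}$ and contains $(x,y)$ as the $K(y)$-rational point corresponding to $x \in L$. Purity (Corollary \ref{cor:purity-codim2}) supplies the identifications
\[ H^2_{(x,y)}(X, M) \wequi H^1_{(x,y)}(C, M_{-1}(\omega_{C/X})) \quad \text{and} \quad H^2_C(X, M) \wequi H^1(C, M_{-1}(\omega_{C/X})), \]
and $\omega_{C/X}$ is canonically trivialized by the standard coordinate on $\A^1_K \subset \P^1_K$.

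\emph{First transfer.} Apply Lemma \ref{lemm:transfer-coh-P1}(2) to the curve $C \cong \P^1_{K(y)}$ with coefficient sheaf $M_{-1}$ (strongly $\A^1$-invariant by Lemma \ref{lemm:contract-strong}): the composite
\[ M_{-2}(L, \omega_{L/K(y)}) \wequi H^1_{(x,y)}(C, M_{-1}) \to H^1(C, M_{-1}) \wequi M_{-2}(K(y)) \]
coincides with the transfer $\tr^x_{L/K(y)}$. Combined with purity for $(x,y) \in C \subset X$ and the chain-rule isomorphism $\omega_{L/K} \wequi \omega_{L/K(y)} \otimes \omega_{K(y)/K}|_L$ (Remark \ref{rmk:omega-composition}), this realizes the inner transfer appearing in the claimed factorization.

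\emph{Second transfer.} The resulting class in $H^2(X, M)$, represented on $C$ at the point $(\infty, y)$, is then pushed across the Gersten complex of $X$ to a class supported on the vertical curve $C' := \infty \times \P^1 \cong \P^1_K$ at $(\infty, y)$. Arguing as in Lemma \ref{lemm:coh-of-P1xP1} with the roles of the two factors of $\P^1 \times \P^1$ exchanged, the map $H^2_{C'}(X, M) \to H^2(X, M)$ is an isomorphism, and a second application of Lemma \ref{lemm:transfer-coh-P1}(2)—now to $C' \cong \P^1_K$ at the closed point $y$—identifies the further composite with $\tr^y_{K(y)/K}$.

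\emph{The sign and main obstacle.} The canonical identification $H^2_{(\infty,\infty)}(X, M) \wequi M_{-2}(K)$ from Definition \ref{def:identify-M-2} is built from the uniformizers $1/t_1, 1/t_2$ in a specific order, whereas our two-stage reduction treats them in the opposite order; by Lemma \ref{lemm:H2-coordinate-swap} this swap contributes precisely the factor $\lra{-1}$. The principal difficulty is bookkeeping rather than conceptual: one must verify that all the normal-bundle trivializations (from affine coordinates on $\P^1$), the chain-rule isomorphism $\omega_{L/K} \wequi \omega_{L/K(y)} \otimes \omega_{K(y)/K}|_L$, and the successive purity identifications fit together so that the two transfers compose to $\tr^y_{K(y)/K} \circ \tr^x_{L/K(y)}$ with only the $\lra{-1}$ factor attached, and no further hidden twist.
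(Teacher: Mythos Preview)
Your plan is essentially the paper's: use purity along the horizontal curve $C=\P^1\times y$ together with Lemma~\ref{lemm:transfer-coh-P1}(2) to extract $\tr_{L/K(y)}^x$, then switch to the vertical curve $C'=\infty\times\P^1$ and apply Lemma~\ref{lemm:transfer-coh-P1}(2) again for $\tr_{K(y)/K}^y$, with a single $\lra{-1}$ from the bookkeeping. One correction on where that sign lives: the canonical $\alpha_{(\infty,\infty)}$ of Definition~\ref{def:identify-M-2} is \emph{independent} of the order of uniformizers (that is the content of Proposition~\ref{prop:pi-rho-independent}), so it is not the source of the $\lra{-1}$; in the paper's diagram the sign appears at the pivot point $(\infty,y)$, because the two identifications of $\omega_{(\infty,y)/\P^1\times\P^1}$ with $\omega_{K(y)/K}$---one coming from $C$ (via $d(1/t_1)$ and the $t_2$-trivialization of $\omega_{C/X}$) and one from $C'$ (via $dt_2$ and the $1/t_1$-trivialization of $\omega_{C'/X}$)---differ by $-1$.
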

\begin{proof}
Consider the following diagram.
All schemes are over $K$, which we drop from the notation (e.g., $\P^1 \times \P^1$ means $(\P^1 \times \P^1)_K$).
\begin{equation*}
\begin{tikzcd}[cramped,sep=small]
{H^2_{(x,y)}}(\P^1 \times \P^1, M) \arrow[dd]     & {H^1_{(x,y)}(\P^1 \times y, M_{-1}(\omega_{K(y)/K}))} \arrow[l] \arrow[d] &                                         & M_{-2}(L, \omega_{L/K}) \arrow[d, "\tr_{L/K(y)}^x"] \arrow[ll]           \\
                             & H^1(\P^1 \times y, M_{-1}(\omega_{K(y)/K})) \arrow[ld]                    & {H^1_{(\infty,y)}(\P^1 \times y, M_{-1}(\omega_{K(y)/K}))} \arrow[ld] \arrow[l] & M_{-2}(K(y), \omega_{K(y)/K}) \arrow[l] \arrow[ld, "\lra{-1}\alpha"] \arrow[d, "\lra{-1}"] \\
H^2_{\P^1 \times y}(\P^1 \times \P^1,M) \arrow[d] & {H^2_{(\infty,y)}(\P^1 \times \P^1,M)} \arrow[l]      & {H^1_{(\infty,y)}(\infty \times \P^1, M_{-1})} \arrow[l] \arrow[d]  & M_{-2}(K(y), \omega_{K(y)/K}) \arrow[l] \arrow[d, "\tr_{K(y)/K}^y"]            \\
H^2(\P^1 \times \P^1,M)                          & H^2_{\infty\times \P^1}(\P^1 \times \P^1,M) \arrow[l]  & H^1(\infty \times \P^1,M_{-1}) \arrow[l]                           & M_{-2}(K) \arrow[l]                     
\end{tikzcd}
\end{equation*}
The diagram arises by looking at various subcomplexes of $C^*((\P^1 \times \P^1)_K, M)$ and applying $H^2$, using that the Gersten complex computes cohomology by Lemma \ref{lemm:gersten-cohomology-dim2}.
All the unlabelled arrows are ultimately induced by various morphisms denoted $\alpha$, in particular via Corollary \ref{cor:purity-codim2}.
We identify the various determinants of normal bundles in evident ways, in particular $\omega_{\infty/\P^1} = \scr O$ (via the coordinate $t^{-1}$) and $\omega_{y/\P^1} \wequi \omega_{K(y)/K}$ (trivializing $\omega_{\A^1/K}$ via $t$).
The cells involving transfers commute by Lemma \ref{lemm:transfer-coh-P1}.
The quadrangle involving $\lra{-1}\alpha$ commutes because the two identifications of $\omega_{(\infty,y)/\P^1 \times \P^1}$ with $\omega_{K(y)/K}$ differ by a factor of $-1$.\todo{???}
All other cells commute for trivial reasons.
The equality of the two extremal paths through the diagram is the desired result.
\end{proof}

\begin{proof}[Proof of Theorem \ref{thm:transfers}]
We must show that, if $L/K$ is a finite extension, then the map $\tr_{L/K}^{x_1, \dots, x_n}: M_{-2}(L, \omega_{L/K}) \to M_{-2}(K, \omega_{L/K})$ is independent of the choices of generators $\{x_i\}_i$.
As a first step, consider an extension $L=K(x)/K$, with $x \in K$, so $L=K$.
Lemma \ref{lemm:transfer-coh-P1}(1) shows that in this case, $\tr_{L/K}^x = \id$.

Now suppose that $L=K(x,y)/K$, i.e. an extension with two chosen generators.
Consider the following diagram
\begin{equation*}
\begin{CD}
M_{-2}(L, \omega_{L/K}) @>>> H^2_{(x,y)}((\P^1 \times \P^1)_K, M) @>>> H^2((\P^1 \times \P^1)_K, M) @>>> M_{-2}(K) \\
@V{\lra{-1}}VV @V{\tau}VV @V{\tau}VV @V{\lra{-1}}VV \\
M_{-2}(L, \omega_{L/K}) @>>> H^2_{(y,x)}((\P^1 \times \P^1)_K, M) @>>> H^2((\P^1 \times \P^1)_K, M) @>>> M_{-2}(K).
\end{CD}
\end{equation*}
Here $\tau$ denotes the swap map on $\P^1 \times \P^1$; the middle cell thus commutes trivially.
We claim that the outer cells commute, too.
To see this, recall from Remark \ref{rmk:alpha-etale-extension} that the map $\alpha_x: M_{-2}(x, \omega_{x/X}) \to H^2_x(X,M)$ is natural in automorphisms of $X$.
Thus, for example, the square
\begin{equation*}
\begin{CD}
M_{-2}(L, \omega_{(x,y)/(\P^1 \times \P^1)_K}) @>>> H^2_{(x,y)}((\P^1 \times \P^1)_K, M) \\
@V{\tau}VV @V{\tau}VV \\
M_{-2}(L, \omega_{(y,x)/(\P^1 \times \P^1)_K}) @>>> H^2_{(y,x)}((\P^1 \times \P^1)_K, M)
\end{CD}
\end{equation*}
commutes.
We also have isomorphisms $\omega_{(x,y)/(\P^1 \times \P^1)_K} \wequi \omega_{L/K} \wequi \omega_{(y,x)/(\P^1 \times \P^1)_K}$, but these are not compatible with $\tau$, since they involve trivializing $\omega_{\A^1\times\A^1/k}$ via $dt_1 \wedge dt_2$.
The switch map thus introduces a factor of $-1$, as claimed.
We have thus proved that the (larger) diagram commutes.
Since the top and bottom maps are determined by Lemma \ref{lemm:double-transfer-trick} to be $\lra{-1}\tr^y_{K(y)/K} \tr^x_{L/K(y)}$ and $\lra{-1}\tr^x_{K(x)/K}\tr^y_{L/K(x)}$ respectively, we deduce that $\tr^y_{K(y)/K} \tr^x_{L/K(y)} = \tr^x_{K(x)/K}\tr^y_{L/K(x)}$.

Now we treat the general case.
Given $L/K$ and sequences of generators $(x_1, \dots, x_n)$, respectively $(y_1, \dots, y_m)$, we find: \[ \tr_{L/K}^{(x_1, \dots, x_n)} = \tr_{L/K}^{(y_1, \dots, y_m, x_1, \dots, x_n)} = \tr_{L/K}^{(x_1, \dots, x_n, y_1, \dots, y_n)} = \tr_{L/K}^{(y_1, \dots, y_m)}. \]
Here the first equality comes from our first step (applied repeatedly), the second from our second step (also applied repeatedly), and the third from the first step again (repeatedly).
This concludes the proof.
\end{proof}

\section{The Rost--Schmid complex} \label{sec:RS}
In this final section we will prove that strongly $\A^1$-invariant sheaves of abelian groups (over a perfect field) are strictly $\A^1$-invariant.
To do this, we will write down a resolution and use it to compute cohomology.
This resolution is called the \emph{Rost--Schmid complex}.
A priori it will not be clear that this is even a complex (let alone a resolution), whence we will call it a \emph{quasi-complex}.

In \S\ref{subsec:def-RS} we define the Rost--Schmid quasi-complex and establish some basic first properties, such as functoriality in smooth maps.
Then in \S\ref{subsec:strict-A1-proof} we show how to prove the main theorem, \emph{assuming} that the Rost--Schmid complex is indeed a complex.
The technical heart of the proof is in \S\ref{subsec:RS-transfers}.
Here we define and study various kinds of transfer maps on Rost--Schmid quasi-complexes, and use them to eventually show that the Rost--Schmid quasi-complex is a complex.

\subsection{Definition and first properties} \label{subsec:def-RS}
\begin{definition} \label{def:rost-schmid-diff}
Let $M$ be a strongly $\A^1$-invariant sheaf of abelian groups, $X$ a smooth scheme, $x \in X$ a point, $y \in X$ an immediate specialization of $x$ and $n \ge 1$.
We define the \emph{Rost--Schmid differential} \[ \cpartial^x_y: M_{-n}(x, \omega_{x/X}) \to M_{-n}(y, \omega_{y/X}) \] as follows.
Let $Z=\overline{\{x\}}$ and $\tilde Z$ the normalization of $Z$.
Then $\tilde Z \to Z$ is finite \cite[Tag 035R]{stacks-project}; write $y_1, \dots, y_n$ for the finitely many points above $y$.
Write also $x \in \tilde Z$ for the canonical lift (recall that $\tilde Z \to Z$ is birational).
Being regular in codimension one \cite[Tag 031S]{stacks-project}, each $\tilde Z_{y_i}$ is essentially smooth and hence induces a canonical differential (see \S\ref{subsec:canonical-diff}) \[ \cpartial^x_{y_i}: M_{-n}(x) \to M_{-n-1}(y_i, \omega_{y_i/\tilde Z}) \] which we can moreover twist (Construction \ref{cons:twisted-canonical-differential}) by $\omega_{\tilde Z_{y_i}/X}$.
We define the Rost--Schmid differential as the composite \begin{gather*} M_{-n}(x, \omega_{x/X}) \wequi M_{-n}(x, \omega_{\tilde Z_{y_i}/X}|_x) \xrightarrow{\oplus \cpartial^x_{y_i}} \\ \bigoplus_i M_{-n-1}(y_i, \omega_{y_i/\tilde Z} \otimes \omega_{\tilde Z_{y_i}/X}) \wequi \bigoplus_i M_{-n-1}(y_i, \omega_{y_i/y} \otimes \omega_{y/X}) \xrightarrow{\Sigma \tr_{y_i/y}} M_{-n-1}(y, \omega_{y/X}). \end{gather*}
\end{definition}

\begin{remark} \label{rmk:RS-ess-smooth}
It is easy to see that the construction of $\cpartial^x_y$ is compatible with étale morphisms (Lemma \ref{lemm:smooth-pullback} below).
We can use this to extend its definition essentially smooth schemes by taking a colimit.

Suppose that $X$ is an \emph{locally noetherian} essentially smooth scheme.
We shall give a direct construction of $\cpartial^x_y$.
To define the differential way may localize $X$ in $y$.
Then $Z$ has dimension one and is noetherian, so the normalization $\tilde Z \to Z$ is defined and has finite fibers \cite[Tag 0C45]{stacks-project}.
This is actually all that we used in the construction.
\end{remark}

\begin{remark}
By construction, the Rost--Schmid differential is a composite of maps which are linear over $\ul{K}_0^{MW}(X_y)$ (see Lemmas \ref{lemm:differential-linear} and \ref{lemm:transfer-linear}).
It follows that we may twist it by a line bundle on $X$.
\end{remark}

\begin{remark}
For $n=0$ and $x \in X$ a generic point we can still make sense of the Rost--Schmid differential: it is just given by the canonical differential of \S\ref{subsec:canonical-diff}.
This can of course not be twisted unless $M$ is itself a contraction.
\end{remark}

We now aim to define the Rost--Schmid complex, which will turn out to be an acylic resolution of $M$.
Using it we shall be able to show that $M$ is strictly $\A^1$-invariant.
It will take us a while to establish these properties.
In fact, for the longest time, we will not even know that the Rost--Schmid complex is a complex, that is, it will not be clear that $\cpartial^2 = 0$.
For this reason we introduce the notion of a \emph{quasi-complex}.
This is a diagram of abelian groups \[ C^0 \xrightarrow{d} C^1 \xrightarrow{d} C^2 \xrightarrow{d} \dots \] where it is not required that $d^2 = 0$.
A morphism of quasi-complexes is still required to commute with the differentials.
In other words, the category of quasi-complexes is $\Fun(\N, \Ab)$.

\begin{definition}
Let $X$ be essentially smooth and $M$ a strongly $\A^1$-invariant sheaf of abelian groups.
We define the \emph{Rost--Schmid quasi-complex} as follows.
We set \[ C^i_{RS}(X,M) = \bigoplus_{x \in X^{(i)}} M_{-i}(x, \omega_{x/X}). \]
For $y \in X^{(i+1)}$ the differential $C^i_{RS}(X,M) \to C^{i+1}_{RS}(X,M)$ has a component from $M_{-i}(x, \omega_{x/X})$ to $M_{-i-1}(y,\omega_{y/X})$ only if $y \in \overline{\{x\}}$ (otherwise it is zero).
If $i \le 1$ then the component is the Gersten differential $\partial^x_y$, employing the identifications of Lemma \ref{lemm:identify-local-cohomology} and Definition \ref{def:identify-M-2}.
If $i > 1$, then the component is the Rost--Schmid differential $\cpartial^x_y$ of Definition \ref{def:rost-schmid-diff} (and Remark \ref{rmk:RS-ess-smooth}).

Now let $\scr L$ be a line bundle on $X$.
We define the \emph{twisted Rost--Schmid quasi-complex} with \[ C^i_{RS}(X,M_{-1}(\scr L)) = \bigoplus_{x \in X^{(i)}} M_{-1-i}(x, \scr L \otimes \omega_{x/X}). \]
Again we can identify the first three terms with the Gersten complex for $M_{-1}(\scr L)$, and define the first two differentials to be the Gersten differentials.
All further differentials are defined to be Rost--Schmid differentials.
\end{definition}

We shall later prove the following two results.
\begin{theorem} \label{thm:2nd-Gersten-diff}
Let $X$ be essentially smooth, $x \in X$ of codimension $1$ (or $0$) and $y \in X$ an immediate specialization.
Let $M = F_{-1}$ be a contraction.
Then the Gersten differential $\partial^x_y$ and the Rost--Schmid differential $\cpartial^x_y$ coincide.
\end{theorem}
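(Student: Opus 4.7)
The plan is as follows. The codimension-$0$ case is immediate by unwinding definitions: when $x$ is a generic point, $\overline{\{x\}}$ equals $X$ locally, so the Rost--Schmid formula collapses to the canonical differential of \S\ref{subsec:canonical-diff}, which \emph{is} the Gersten boundary map composed with the isomorphism $\alpha_y$ of Lemma \ref{lemm:identify-local-cohomology} by the very definition of the canonical differential.

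The content is the codimension-$1$ to codimension-$2$ case. First I would verify that both $\partial^x_y$ and $\cpartial^x_y$ are natural under \'etale base change of the triple $(X,x,y)$: on the Gersten side this is Lemma \ref{lemm:gersten-diff-natural}, and on the Rost--Schmid side it follows from the \'etale-compatibility of normalisation of $1$-dimensional noetherian schemes, of the canonical differential (by construction), and of the transfer (Lemma \ref{lemm:transfer-smooth-base-change}). This reduces the problem to $X = X_y^h$, at which point $Z := \overline{\{x\}}$ splits as a finite union of local branches $Z_i$ whose normalisations are the henselian local smooth curves $(\tilde Z)_{y_i}^h$. Both differentials decompose compatibly as sums over these branches (Gersten by additivity of cohomology with support in the support, Rost--Schmid by construction), so I may assume $\tilde Z$ has a single closed point $y_1$ above $y$ mapping birationally onto a single branch $Z$.

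If this branch $Z$ is smooth at $y$, the normalisation is an isomorphism, the transfer is trivial, and the Rost--Schmid differential reduces to $\cpartial^x_y \otimes \omega_{y/Z}$; the equality with $\partial^x_y$ is then exactly Proposition \ref{prop:second-gersten-diff-smooth}. When $Z$ is genuinely singular at $y$, the strategy is to realise the Gersten differential on $X_y^h$ as a ``finite pushforward'' of the canonical differential on the smooth henselian curve $\tilde Z$. Concretely, one would produce a suitable geometric model---for instance by iterated blow-ups of $X_y^h$ at $y$ (and its preimages), over which the strict transform of $Z$ becomes smooth---so that Proposition \ref{prop:second-gersten-diff-smooth} applies directly on the modified scheme, track $\partial^x_y$ through this birational modification using the long-exact-sequence machinery for cohomology with support, and identify the resulting finite pushforward from $\tilde Z$ into $X_y^h$ with the transfer $\tr_{y_1/y}$.

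The main obstacle is this last identification. The transfer $\tr_{y_1/y}$ was defined in \S\ref{subsec:contractions-transfers} in a very concrete way via boundary maps on $\P^1_K$, and matching it with a geometrically-defined pushforward requires careful diagram chasing in the spirit of Lemmas \ref{lemm:boundary-iso-trick} and \ref{lemm:double-boundary-iso-trick}, crucially using the well-definedness Theorem \ref{thm:transfers} to freely choose generators for the monogenic sub-extensions arising at each blow-up; this is the technical heart of the argument.
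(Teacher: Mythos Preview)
Your outline is reasonable in spirit but diverges substantially from the paper's argument, and the step you yourself flag as the ``main obstacle'' is left as a hand-wave.

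The paper does \emph{not} attempt to compare $\partial^x_y$ and $\cpartial^x_y$ directly on $X_y^h$ via blow-ups. Instead it first applies Gabber's presentation lemma (the reduction strategy spelled out just before the proof) to arrange $X=\P^1_S$ with $S$ essentially smooth of dimension~$1$, $y\in\P^1_S$ of codimension~$1$, and $z$ the unique point of $\overline{\{y\}}$ lying over the closed point $s\in S$. It then runs a \emph{two-out-of-three} argument: in the diagram
\[
\begin{CD}
M_{-2}(\tilde y,\omega_{\tilde y/S}) @>{\partial}>> \bigoplus_{\tilde z}M_{-3}(\tilde z,\omega_{\tilde z/S})\\
@V{\wequi}VV @VV{\tr}V\\
M_{-2}(y,\omega_{y/S}) @>{\partial}>> M_{-3}(z,\omega_{z/S})\\
@V{\tr}VV @VV{\wequi}V\\
M_{-2}(\eta) @>{\partial}>> M_{-3}(s,\omega_{s/S})
\end{CD}
\]
the outer rectangle commutes by Corollary~\ref{cor:finite-transfer} (finite transfer for the normalised curve $\tilde Z\to S$), and the bottom rectangle commutes by Proposition~\ref{prop:transfer-P1-gersten} (the $\P^1$-transfer $p_*$ is a morphism of quasi-complexes). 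Hence the top rectangle commutes, and that \emph{is} the statement $\partial^y_z=\cpartial^y_z$. The point is that the right-hand vertical maps are isomorphisms in the relevant component, so comparing on $X$ is reduced to a comparison after projecting to the $1$-dimensional base $S$, where everything lives in degrees $\le 1$ and is automatic.

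Your blow-up route can in principle be made to work, but what you call ``long-exact-sequence machinery'' is precisely the content of Lemma~\ref{lemm:blowup} (the blow-up transfer commutes with the Gersten differentials in degrees $\le 2$, for sheaves of the form $N_{-1}$), and your ``identification of the geometrically-defined pushforward with $\tr_{y_1/y}$'' is the content of Corollary~\ref{cor:finite-transfer} together with the resolution Lemma~\ref{lemm:blowup-res}. You would need to invoke or reprove these, not just Lemmas~\ref{lemm:boundary-iso-trick} and~\ref{lemm:double-boundary-iso-trick}. Note also the Discussion after Lemma~\ref{lemm:blowup}: the blow-up pushforward is only shown to commute with differentials for contracted sheaves, which is exactly why the hypothesis $M=F_{-1}$ is essential---your write-up does not make clear where this hypothesis enters. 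Finally, your claimed decomposition of the \emph{Gersten} differential over branches of $Z$ is not correct as stated: $H^1_x(X,M)\to H^2_y(X,M)$ is a single map and does not naturally split over the points of $\tilde Z$ above $y$; that splitting exists only on the Rost--Schmid side.
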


\begin{theorem} \label{thm:RS-complex}
Let $X$ be essentially smooth and $M$ a strongly $\A^1$-invariant sheaf of abelian groups.
Then $C^*_{RS}(X, M)$ is a complex.
\end{theorem}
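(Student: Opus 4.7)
Plan: I would prove $d^{i+1} \circ d^i = 0$ componentwise: fix $x \in X^{(i)}$ and $z \in X^{(i+2)}$ with $z \in \overline{\{x\}}$, and show $\sum_y \cpartial^y_z \circ \cpartial^x_y = 0$ as $y$ ranges over immediate specializations of $x$ that generize $z$. As a preliminary step I would verify that the Rost--Schmid differential is compatible with essentially smooth (in particular étale) base change, so that we may localize $X$ at $z$; this reduces the problem to $X = X_z^h$ and hence to a finite sum.

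The case $i = 0$ is immediate: both $d^0$ and $d^1$ are Gersten differentials for $M$, and the Gersten complex $C^*(X,M)$ is a genuine complex since it appears as the bottom row of the $E_1$-page of the coniveau spectral sequence of \S\ref{subsec:coniveau-filtn}.

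For $i \ge 1$, the substantive content is an unpacking of the Rost--Schmid formula. Set $Z = \overline{\{x\}} \subset X_z^h$ and let $\widetilde Z \to Z$ be the normalization; this is finite \cite[Tag 035R]{stacks-project}, so $\widetilde Z$ is semi-local, and being normal it is regular in codimension one \cite[Tag 031S]{stacks-project}. By construction, $\cpartial^x_y$ is the sum over preimages $\tilde y$ of $y$ in $\widetilde Z$ of canonical differentials for $M_{-i}$ on the essentially smooth local scheme $\widetilde Z_{\tilde y}$, followed by the transfer map $\tr_{\tilde y / y}$. My plan is to unpack $\cpartial^y_z$ in the same way via the normalization of $\overline{\{y\}}$, and then manipulate the resulting double sum using the transfer machinery of \S\ref{subsec:RS-transfers} until it takes the form of a composition of two canonical differentials on a single essentially smooth $2$-dimensional scheme. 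On such a scheme, canonical differentials agree with Gersten differentials (Theorem \ref{thm:2nd-Gersten-diff}), and the Gersten complex is a genuine complex (Lemma \ref{lemm:gersten-cohomology-dim2}), so the composition vanishes.

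The main obstacle is the coherent bookkeeping of the two levels of normalization and transfer: passing from $\overline{\{y\}}$ to its normalization introduces new transfer maps that must be commuted past the canonical differentials inherited from $\widetilde Z$. This is precisely what the transfer compatibility results of \S\ref{subsec:RS-transfers}---notably the interaction of transfers with canonical differentials (in the spirit of Lemma \ref{lemm:transfer-linear}) and the smooth base change behaviour of Lemma \ref{lemm:transfer-smooth-base-change}---are designed to handle. Once these tools are in place, the vanishing reduces to the essentially smooth $2$-dimensional case, where identifying the Rost--Schmid data with the Gersten complex closes the argument.
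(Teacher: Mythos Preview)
Your treatment of the case $i=0$ is correct and matches the paper: both differentials are Gersten differentials, so the composite vanishes because the Gersten complex is a complex.

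For $i \ge 1$, however, your plan has a genuine gap. After localizing at $z$, the closure $\overline{\{x\}}$ is a $2$-dimensional scheme, and you propose to work on its normalization $\widetilde Z$. But a normal $2$-dimensional scheme is only regular in codimension $\le 1$; at the closed points above $z$ it may well be singular, hence not essentially smooth. So there is no ``single essentially smooth $2$-dimensional scheme'' on which the double differential lives, and the appeal to Theorem~\ref{thm:2nd-Gersten-diff} and Lemma~\ref{lemm:gersten-cohomology-dim2} is unjustified. The tools you cite (Lemmas~\ref{lemm:transfer-linear} and~\ref{lemm:transfer-smooth-base-change}) concern linearity and smooth base change of transfers; they do not by themselves produce such a scheme, nor do they show that transfers commute with the canonical differential---which is the hard statement you actually need.

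The paper's argument is structurally different. It proceeds by induction on $\dim X$, the base case $\dim X \le 2$ being automatic. For the inductive step one fixes $x$ and $z$ as you do, but instead of normalizing $\overline{\{x\}}$ one applies Gabber's presentation lemma (together with the \'etale-local smooth retraction of \S\ref{subsubs:smooth-retract}) to realize the situation inside $\P^1_S$, where $S$ is essentially smooth of dimension one less than $X$. The key input is then Proposition~\ref{prop:transfer-P1-gersten}: the pushforward $p_*: C^*_{RS}(\P^1_S, M) \to C^{*-1}_{RS}(S, M_{-1})$ is a morphism of quasi-complexes, and on the component indexed by $z$ it is an isomorphism. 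Hence $\cpartial^2$ on $\P^1_S$ vanishes at $z$ as soon as $\cpartial^2$ on $S$ vanishes, which holds by induction. The ``transfer machinery'' of \S\ref{subsec:RS-transfers} enters not to commute individual transfers past differentials in your double-normalization picture, but to establish that $p_*$ is a morphism of quasi-complexes; this in turn rests on the chain Lemma~\ref{lemm:blowup} $\Rightarrow$ Corollary~\ref{cor:finite-transfer} $\Rightarrow$ Proposition~\ref{prop:transfer-P1-gersten}.
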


\subsubsection{Rost--Schmid complex with support}
Let $U \subset X$ be an open subscheme and $Z$ its complement.
We then have an evident surjective map $C^*_{RS}(X, M) \to C^*_{RS}(U,M)$.
Its kernel is the (quasi-)complex $C^*_{RS,Z}(X,M)$ with components \[ C^i_{RS,Z}(X,M) = \bigoplus_{x \in X^{(i)} \cap Z} M_{-i}(x, \omega_{x/X}). \]
If $M$ is a contraction, we also have a twisted variant.

As a consequence of Theorem \ref{thm:2nd-Gersten-diff}, the Rost--Schmid complex satisfies an automatic purity property.
\begin{proposition} \label{prop:RS-purity}
Let $X$ be essentially smooth, $Y \subset X$ an essentially smooth closed subscheme of codimension $d$.
The evident map \[ C^{*-d}_{RS}(Y, M_{-d}(\omega_{Y/X})) \to C^*_{RS,Y}(X, M) \] (obtained by using for $y \in Y^{(i)} \subset X^{(i+d)}$ the isomorphism $M_{-d-i}(y, \omega_{y/Y} \otimes \omega_{Y/X}) \wequi M_{-i-d}(y, \omega_{y/X})$) is an isomorphism of (quasi-)complexes.

If $M$ is a contraction, the twisted analog is also true.
\end{proposition}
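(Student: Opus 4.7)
The underlying identification of abelian groups is immediate: there is an obvious bijection $Y^{(i)} \leftrightarrow X^{(i+d)} \cap Y$, and the isomorphism $\omega_{y/X} \wequi \omega_{y/Y} \otimes \omega_{Y/X}|_y$ of Remark \ref{rmk:omega-composition} identifies the twisted contractions appearing on the two sides. The content of the proposition is that this identification intertwines the differentials. Since a component of any differential from $x$ to $y$ is nonzero only if $y \in \overline{\{x\}}$, and this condition is independent of the ambient scheme, we may fix a single such pair $x \in Y^{(i)}$, $y \in Y^{(i+1)}$, and verify compatibility componentwise.

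The first observation is that, because $M_{-d}(\omega_{Y/X})$ is (for $d \ge 1$) a twisted contraction, Theorem \ref{thm:2nd-Gersten-diff} (in its twisted form) identifies the Gersten differentials $d^0$ and $d^1$ on the left-hand side with the corresponding Rost--Schmid differentials. Thus every left-hand differential is given by the formula of Definition \ref{def:rost-schmid-diff}. On the right-hand side, the shift by $d$ places the only nontrivial Gersten case at $d=1$, $i=0$, and here Proposition \ref{prop:second-gersten-diff-smooth} together with Corollary \ref{cor:second-gersten-diff-smooth-twisted} identifies the Gersten differential $H^1_\eta(X,M) \to H^2_x(X,M)$ with a twisted canonical differential on $\overline{\{\eta\}} = Y$, which after the identification of twists is precisely the left-hand canonical differential. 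All other right-hand differentials are Rost--Schmid.

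It remains to compare two Rost--Schmid differentials, one computed on $Y$ with sheaf $M_{-d}(\omega_{Y/X})$ and one on $X$ with $M$. Both are built by Definition \ref{def:rost-schmid-diff} from three ingredients applied to $Z = \overline{\{x\}}$: its normalization $\widetilde Z$, the canonical differentials $\cpartial^x_{y_j}$ at the points $y_j \in \widetilde Z$ over $y$, and the transfers $\tr_{\kappa(y_j)/\kappa(y)}$ along the finite residue field extensions. None of these data depend on whether $Z$ is viewed as a subscheme of $Y$ or of $X$, so the only remaining check is that the line bundle twists propagate consistently. This follows by applying Remark \ref{rmk:omega-composition} to the chain $\widetilde Z_{y_j} \to Y \to X$, which supplies $\omega_{\widetilde Z_{y_j}/X} \wequi \omega_{\widetilde Z_{y_j}/Y} \otimes \omega_{Y/X}|_{\widetilde Z_{y_j}}$ and precisely absorbs the extra factor $\omega_{Y/X}$ carried by the left-hand side.

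The main obstacle is the meticulous tracking of line bundle twists through the identifications in Lemma \ref{lemm:identify-local-cohomology}, Definition \ref{def:identify-M-2}, Remark \ref{rmk:omega-composition}, and the purity isomorphisms; the substantive mathematical input beyond pure bookkeeping is supplied by Theorem \ref{thm:2nd-Gersten-diff}, Proposition \ref{prop:second-gersten-diff-smooth}, and Corollary \ref{cor:second-gersten-diff-smooth-twisted}. The twisted analog is handled verbatim, introducing a passive twist by $\scr E$ throughout and using that all the cited compatibility statements have twisted forms.
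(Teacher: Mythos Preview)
Your proposal is correct and follows essentially the same approach as the paper: use Theorem \ref{thm:2nd-Gersten-diff} to reduce all left-hand differentials to Rost--Schmid differentials, observe that the Rost--Schmid differential from $x$ to $y$ depends only on $\overline{\{x\}}_y$ (hence is insensitive to whether the ambient scheme is $Y$ or $X$), and handle the residual case where the right-hand side uses a Gersten differential via Proposition \ref{prop:second-gersten-diff-smooth}. The paper's proof is terser about the twist bookkeeping that you spell out via Remark \ref{rmk:omega-composition}, but the logical skeleton is identical.
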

\begin{proof}
By Theorem \ref{thm:2nd-Gersten-diff}, all differentials in $C^{*-d}_{RS}(Y, M_{-d}(\omega_{Y/X}))$ are Rost--Schmid differentials.
By definition these are the same for both complexes (since the differential from $x$ to $y$ only depends on the scheme $\overline{\{x\}}_y$).
The only problem that could conceivably arise is if the evident map lands in a component of $C^*_{RS,Y}(X,M)$ where the differential is the Gersten differential instead of the Rost--Schmid differential.
Since Gersten differentials are used only in $C^{* \le 1}$ and any Gersten differential starting at a smooth point coincides with the Rost--Schmid differential (Proposition \ref{prop:second-gersten-diff-smooth}), this does not happen.
\end{proof}

\subsubsection{Pullback along smooth morphisms}
Let $f: X \to Y$ be a flat morphism of essentially smooth schemes.
Let $y \in Y^{(i)}$.
Since flat morphisms preserve codimension \cite[Tags 02NM and 02R8]{stacks-project}, the preimage $f^{-1}(y) \subset X$ has generic points of codimension $i$.
Let $x$ be such a generic point.
By compatibility of the cotangent complex with base change, we have $\omega_{y/Y}|_x \wequi \omega_{x/X}$.
We can thus form the morphism $f^*: M_{-i}(y, \omega_{y/Y}) \to M_{-1}(x, \omega_{x/Y})$.
Taking the sum over all $x$, we obtain a morphism of graded abelian groups \[ f^*: C^*_{RS}(Y,M) \to C^*_{RS}(X,M). \]
Similarly for $\scr L$ a line bundle on $Y$ we can build \[ f^*: C^*_{RS}(Y, M_{-1}(\scr L)) \to C^*_{RS}(X, M_{-1}(f^*\scr L)). \]
It is not immediately clear that this is a morphism of quasi-complexes.
We now show that this is true for $f$ smooth; presumably it is true more generally.

\begin{lemma} \label{lemm:smooth-pullback}
Let $f: X \to Y$ be a smooth morphism of essentially smooth schemes.
Then $f^*: C^*_{RS}(Y,M) \to C^*_{RS}(X,M)$ is a morphism of quasi-complexes.

If $M$ is a contraction, the twisted analog is also true.

In fact, all Gersten and Rost--Schmid differentials commute with smooth pullbacks.
\end{lemma}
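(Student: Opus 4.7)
The plan is to fix smooth $f : X \to Y$ and verify commutativity of the pullback square component-by-component. For $y \in Y^{(i)}$ an arbitrary point, smoothness (in particular flatness) of $f$ ensures that every generic point of the fibre $f^{-1}(y)$ lies in $X^{(i)}$, and that if $y' \in Y^{(i+1)}$ is an immediate specialization of $y$ and $x' \in f^{-1}(y') \cap X^{(i+1)}$ is an immediate specialization of $x \in f^{-1}(y) \cap X^{(i)}$, then $\overline{\{x\}} \to \overline{\{y\}}$ is a smooth morphism in a neighborhood of $x'$, so immediate specializations really do lift to immediate specializations and no unexpected codimension jumps occur. The twisted version follows from the untwisted one after noting that $f^*\omega_{y/Y} \wequi \omega_{x/X}$ canonically via Remark \ref{rmk:omega-composition}, applied to $x \to y \to Y$ versus $x \to X \to Y$, so the action of a line bundle pulls through transparently.

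For the Gersten part (the components with $i \le 1$), this is essentially already done: the differential $\partial^x_y$ is constructed from the cofiber sequence computing local cohomology, and Lemma \ref{lemm:gersten-diff-natural} combined with the codimension compatibility just noted gives commutativity with $f^*$ on the nose.

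For the Rost--Schmid part ($i \ge 1$), I would unwind Definition \ref{def:rost-schmid-diff} and show that each of the three pieces commutes with smooth pullback. First, normalization commutes with smooth base change: if $Z = \overline{\{x\}} \subset X$ and $\tilde Z \to Z$ is its normalization, then the base change $\tilde Z \times_Y X'$ along a smooth morphism is again the normalization of $Z \times_Y X'$, because smooth morphisms preserve being regular in codimension one and being finite, and normalization in dimension one (which is all that matters by Remark \ref{rmk:RS-ess-smooth}) is determined by these properties. Second, the canonical differential $\cpartial$ is built directly from Gersten differentials via Proposition \ref{prop:second-gersten-diff-smooth} and Corollary \ref{cor:second-gersten-diff-smooth-twisted}, so smooth pullback compatibility reduces back to the Gersten case above. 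Third, the transfer maps appearing in Definition \ref{def:rost-schmid-diff} commute with smooth (in fact separable, hence essentially smooth) base change by Lemma \ref{lemm:transfer-smooth-base-change}, applied to the finite extensions $\kappa(y_i)/\kappa(y)$ base-changed along the residue field extension induced by $f$.

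The principal obstacle will be bookkeeping: matching the indexing of preimage points of the normalization after base change, and tracking the canonical identifications of $\omega$'s. Concretely, one must check that the isomorphism $\omega_{y_i/y} \otimes \omega_{\tilde Z_{y_i}/X}|_{y_i} \wequi \omega_{y_i/\tilde Z} \otimes \omega_{\tilde Z_{y_i}/X}|_{y_i}$ used in Definition \ref{def:rost-schmid-diff} is compatible via Remark \ref{rmk:omega-composition} with the corresponding identifications after base change along $f$, and that summing over the finitely many preimage points on both sides matches up the way Lemma \ref{lemm:transfer-smooth-base-change} says it should. Once these identifications are shown to commute with the constituents (a), (b), (c) above, a diagram chase in the composite of Definition \ref{def:rost-schmid-diff} finishes the proof.
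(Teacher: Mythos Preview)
Your overall strategy matches the paper's: handle the Gersten differentials first, then decompose the Rost--Schmid differential into (i) normalization of the closure, (ii) the canonical (Gersten-type) differential on the normalization, and (iii) transfers, checking each piece commutes with smooth base change. Your references for normalization and for transfers (Lemma~\ref{lemm:transfer-smooth-base-change}) agree with the paper's.

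There is, however, a gap in your treatment of the Gersten part. You invoke Lemma~\ref{lemm:gersten-diff-natural}, which produces a commutative square for a pair $(x, x')$ only under the hypothesis that $f(x')$ is an immediate specialization of $f(x)$. But not every immediate specialization $x \leadsto x'$ in $X$ is of this form: when $f$ has positive relative dimension there are specializations \emph{within a fibre}, i.e.\ $x' \in X^{(i+1)}$ with $f(x') = f(x) = y$. (Take $Y = \Spec k$, $X = \A^1_k$: every closed point of $\A^1$ is such an $x'$.) For these, the component of $d_X \circ f^*$ at $x'$ must vanish, since $f^* \circ d_Y$ has no $x'$-component, and Lemma~\ref{lemm:gersten-diff-natural} says nothing about this. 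One can fill the gap by noting that $f^*(m)$ extends to a section over the localization at $x'$ (being pulled back from $y$), so its boundary there is zero; but you have not said this, and the same omission recurs when you reduce step (ii) to ``the Gersten case above.''

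The paper sidesteps this case analysis altogether: since a flat morphism preserves codimension of closed subschemes, the entire coniveau filtration (and hence the associated spectral sequence, and hence its $d_1$) is natural in flat morphisms. This packages both the ``horizontal'' and the ``vertical'' contributions in one stroke, and is the cleaner route. The paper then reuses this same coniveau naturality for step (ii), applied to the smooth map of normalizations $\tilde C \times_Y X \to \tilde C$ furnished by \cite[Tag 07TD]{stacks-project}.
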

\begin{proof}
If $Z \subset Y$ has codimension $\ge d$, then so does $f^{-1}(Z) \subset X$.
It follows that the entire coniveau filtration of \S\ref{subsec:coniveau-filtn} is compatible with flat morphisms.
Hence the same is true for the coniveau spectral sequence, on the first page of which we find the Gersten differentials.
This proves that $f^*$ is a morphism of quasi-complexes in degrees $\le 2$, in the untwisted case.

In all other cases, we need to prove that the Rost--Schmid differential is compatible with smooth morphisms.
By construction, the Rost--Schmid differential is built using (0) cofiltered limits, (1) normalization of curves, (2) the Gersten differential and (3) transfers on $M_{-i}$, $i \ge 2$.
Using (0) we reduce to the case $Y$ (and hence $X$) smooth; we must show that (1)--(3) are compatible with smooth morphisms.
For (1), let $x \in X^{(i)}$ lie above $y \in Y^{(i)}$, and let $\tilde C$ be the normalization of $\overline{\{y\}}$.
We must show that $\tilde C \times_Y X \to \overline{\{x\}}$ is the normalization.
This holds by \cite[Tag 07TD]{stacks-project}.
For (2) we already argued.
Finally (3) is Lemma \ref{lemm:transfer-smooth-base-change}.
\end{proof}

\subsection{Proof of strict $\A^1$-invariance} \label{subsec:strict-A1-proof}
\begin{theorem} \label{thm:strict-A1-invariance}
Let $k$ be a perfect field.

For any $X \in \Sm_k$ and strongly $\A^1$-invariant sheaf of abelian groups $M$ on $\Sm_k$, $C^*_{RS}(X,M)$ is an acyclic resolution of $M|_{X_\Nis}$ (so that $H^* C^*_{RS}(X,M) \wequi H^*(X,M)$).
Moreover $M$ is strictly $\A^1$-invariant.
\end{theorem}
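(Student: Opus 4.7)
The plan is to assume Theorems \ref{thm:RS-complex} and \ref{thm:2nd-Gersten-diff} (whose proofs occupy the rest of the section) and deduce the three assertions of Theorem \ref{thm:strict-A1-invariance} in turn: the sheafified Rost--Schmid complex is an acyclic resolution; it therefore computes Nisnevich cohomology; and the resulting cohomology is $\A^1$-invariant.

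First, I would upgrade $U \mapsto C^*_{RS}(U, M)$ to a complex of Nisnevich sheaves $\ul{C}^*_{RS}(-, M)$ on the small Nisnevich site $X_\Nis$. By Lemma \ref{lemm:smooth-pullback}, the assignment is functorial (and compatible with differentials) for étale maps over $X$, and the resulting presheaf of complexes is easily seen to be a sheaf. There is a canonical augmentation $\varepsilon: M|_{X_\Nis} \hookrightarrow \ul{C}^0_{RS}(-, M)$ sending $m \in M(U)$ to its restriction to the generic points of $U$; this is a monomorphism by Remark \ref{rmk:unramifiedness-use}. Term-wise Nisnevich acyclicity is then immediate: each $\ul{C}^i_{RS}$ decomposes as a sum, indexed by $x \in X^{(i)}$, of pushforwards along $\Spec k(x) \to X$ of (twisted) constant sheaves, and such pushforwards from spectra of fields are Nisnevich acyclic since fields have Nisnevich cohomological dimension zero.

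Next, I would establish exactness of the augmented complex on Nisnevich stalks by induction on dimension. For $U$ an essentially smooth henselian local $k$-scheme of dimension $d$ with closed point $x$, the short exact sequence of complexes
\[ 0 \to C^*_{RS, \{x\}}(U, M) \to C^*_{RS}(U, M) \to C^*_{RS}(U \setminus \{x\}, M) \to 0 \]
reduces the problem to (i) identifying $C^*_{RS, \{x\}}(U, M) \wequi M_{-d}(x, \omega_{x/U})[-d]$ via purity (Proposition \ref{prop:RS-purity}), and (ii) computing $H^*$ of the complex for the punctured henselian local scheme, which in turn is controlled by induction (using compatibility with essentially smooth base change from Lemma \ref{lemm:essentially-smooth-bc} and compatibility of contractions with strong invariance from Lemma \ref{lemm:contract-strong}). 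In low degrees, the Rost--Schmid complex agrees with the Gersten complex by Theorem \ref{thm:2nd-Gersten-diff} and Corollary \ref{cor:purity-codim2}, so one can bootstrap from Lemma \ref{lemm:gersten-cohomology-dim2}, in particular to handle the base case. With these inputs, a hypercohomology argument gives $H^*(X, M) \wequi H^*(C^*_{RS}(X, M))$.

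Finally, for strict $\A^1$-invariance: by the resolution property, it suffices to show that $p^*: C^*_{RS}(X, M) \to C^*_{RS}(X \times \A^1, M)$ (well-defined by Lemma \ref{lemm:smooth-pullback}) is a quasi-isomorphism. Filter the target by projection onto points of $X$: the associated graded in codimension $c$ pieces together the complexes $C^*_{RS}(\A^1_\eta, M)$ for $\eta \in X^{(c)}$, each of which computes $H^*(\A^1_\eta, M)$ by the preceding step, and these equal $M(\eta)$ (concentrated in degree zero) by strong $\A^1$-invariance of $M$ together with the fact that $\A^1$ over a field has dimension $1$. Comparing with the analogous filtration of $C^*_{RS}(X, M)$ (where the codimension $c$ piece is just $M_{-c}(\eta, \omega_{\eta/X})$ for $\eta \in X^{(c)}$), one concludes that $p^*$ is a quasi-isomorphism. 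The main obstacle is the inductive exactness step for henselian stalks in dimension $d \geq 3$, where the low-dimensional tools (Lemma \ref{lemm:gersten-cohomology-dim2}, Corollary \ref{cor:purity-codim2}) do not apply directly and one must genuinely leverage the Rost--Schmid-specific structure, in particular the transfers constructed in \S\ref{subsec:contractions-transfers} and their compatibility with the canonical differential.
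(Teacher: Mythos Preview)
Your termwise acyclicity argument is correct, and your filtration approach to the quasi-isomorphism $p^*: C^*_{RS}(X, M) \to C^*_{RS}(\A^1_X, M)$ is a nice idea---in fact slightly cleaner than the paper's, which proves this only for local $X$ by peeling off the closed point and invoking the inductive hypothesis on the complement, whereas your filtration by codimension of the image in $X$ works uniformly and needs only the one-dimensional base case.

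The genuine gap is in your exactness step. Your proposed induction via the short exact sequence
\[ 0 \to C^*_{RS,\{x\}}(U, M) \to C^*_{RS}(U, M) \to C^*_{RS}(U\setminus\{x\}, M) \to 0 \]
does not close: even granting by induction that $C^*_{RS}(U\setminus\{x\}, M)$ computes $H^*_\Nis(U \setminus \{x\}, M)$, you have no independent control over these groups (for instance $H^1_\Nis(U \setminus \{x\}, M)$ has no reason to vanish once $\dim U \ge 3$), so you cannot conclude that $H^i C^*_{RS}(U, M) = 0$ for $i \ge 1$. The paper instead runs exactness and the quasi-isomorphism $p^*$ as a \emph{simultaneous} induction, and the decisive extra input for exactness is Gabber's presentation lemma (Theorem \ref{thm:gabber-lemma}): given a cycle $a \in C^i_{RS}(U, M)$ supported on some closed $Z$, Gabber produces an \'etale neighborhood of $Z$ mapping to $\A^1_S$ with $\dim S = \dim U - 1$; the cycle lifts to $C^i_{RS}(\A^1_{S_s}, M)$, and then the quasi-isomorphism $C^*_{RS}(S_s, M) \simeq C^*_{RS}(\A^1_{S_s}, M)$ together with the inductive hypothesis on $S_s$ shows it is a boundary. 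The transfers from \S\ref{subsec:contractions-transfers} are not the right tool here---they are used to prove Theorems \ref{thm:2nd-Gersten-diff} and \ref{thm:RS-complex}, which you rightly take as input.
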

\begin{proof}
We know that $C^*_{RS}(X,M)$ consist of acyclic sheaves (see Lemma \ref{lemm:acyclic-sheaves} below).
We will now prove by induction on $d$ the following statements:
\begin{enumerate}
\item For an essentially smooth, local scheme $X$ of dimension $d$, the map $C^*_{RS}(X,M) \to C^*_{RS}(\A^1_X,M)$ is a quasi-isomorphism.
\item For an essentially smooth, local scheme $Y$ of dimension $d+1$, the complex $C^*_{RS}(Y,M)$ is a resolution of $M(Y)$.
\item For a smooth scheme $X$ of dimension $d$, we have $H^*(X, M) \wequi H^*(\A^1_X, M)$ and $C^*_{RS}(X,M) \to C^*_{RS}(\A^1_X,M)$ is a quasi-isomorphism.
\end{enumerate}

The base case is $d=0$.
In this case, by definition, we have $C^*_{RS}(\A^1_X, M) = C^*(\A^1_X, M)$, which computes $H^*(\A^1_X, M)$ (Lemma \ref{lemm:gersten-cohomology-dim2}).
This vanishes for $*>0$ since $M$ is strongly $\A^1$-invariant and $X$ has dimension $0$; hence (1) and (3) hold.
For (2) we similarly have $C^*_{RS}(Y,M) = C^*(Y,M)$ which computes $H^*(Y,M)$.
This coincides with $H^*_\Zar(Y,M)$ (Corollary \ref{cor:Zariski-vanishing}) and so has homology concentrated in degree $0$ ($Y$ being Zariski local), given there by $M(Y)$.
In other words $C^*_{RS}(Y,M)$ is a resolution of $M(Y)$.

Now we assume the claims proved for $d-1$ and establish (1) to (3).

(1) Let $x \in X$ be the closed point and $U$ the open complement, which has dimension $d-1$.
Let us write $E(S, M)$ for the fiber of $C^*_{RS}(S, M) \to C^*_{RS}(\A^1_S,M)$.
Via purity for the Rost--Schmid complex (Proposition \ref{prop:RS-purity}) we obtain an exact sequence \[ 0 \to C^{*-d}_{RS}(x, M_{-d}) \to C^*_{RS}(X,M) \to C^*_{RS}(U,M) \to 0. \]
We have a similar exact sequence for $\A^1_X$, and hence obtain a fiber sequence \[ \Sigma^d E(x,M_{-d}) \to E(X,M) \to E(U,M). \]
The left hand term vanishes by the case $d=0$, and the right hand term vanishes by (3) for $d-1$
Thus the middle term vanishes, as needed.

(2) Let $Y'$ be obtained from $Y$ by removing all closed subschemes of codimension $\ge 2$.
Then $M(Y') \wequi M(Y)$ by Lemma \ref{lem:remove-codim-2}, and also $C^{* \le 1}_{RS}(Y,M) \wequi C_{RS}^{* \le 1}(Y',M)$.
We have $C^*_{RS}(Y',M) = C^*(Y',M)$ and the latter computes $H^*(Y',M)$ (Lemma \ref{lemm:gersten-cohomology-dim2}).
Consequently \[ 0 \to M(Y) \to C^0(Y', M) \wequi C^0_{RS}(Y,M) \to C^1(Y,M) \wequi C^1_{RS}(Y',M) \] is exact.
It remains to prove exactness of $M(Y) \to C^*_{RS}(Y,M)$ at the other spots.
Thus let $a \in C^i_{RS}(Y,M)$ with $i \ge 1$ and $\cpartial(a) = 0$.
We must prove that $a$ is a boundary.
Write $Y = \lim_i Y_i$ as a cofiltered limit of smooth schemes along étale maps.
Then $a$ arises from a cycle $\tilde a \in C^*_{RS}(Y_0, M)$ for some $Y_0$.
There exist finitely many points $y_1, \dots, y_n \in Y_0^{(i)}$ such that $a$ is supported on the $y_i$.
Gabber's presentation lemma (Theorem \ref{thm:gabber-lemma}) applied to $Y_0$, the closed subscheme $\cup_i \overline{\{y_i\}}$ and the image of the closed point of $Y$, yields an étale morphism $Y_0' \to \A^1_S$ which is an étale neighborhood of each of the $y_i$.
Here $Y_0' \subset Y_0$ is an open neighborhood of the image of the closed point of $Y$, and $S$ is smooth of dimension $d$.
Since $Y$ is local, the map $Y \to Y_0$ factors through $W := Y_0' \times_{S} S_s$, where $S_s$ denotes the localization of $S$ in the image of the closed point of $Y$.
We shall show that the restriction of $\tilde a$ to $W$ is a boundary.
Let $\tilde y_i$ be the image of $y_i$ in $\A^1_{S_s}$.
The map $C^i_{RS}(\A^1_{S_s},M) \to C^i_{RS}(W,M)$ induces an isomorphism on the components corresponding to the $\tilde y_i$, and similarly for points in their closures.
It follows that $\tilde a$ lifts to a cycle $b \in C^i_{RS}(\A^1_{S_s},M)$.
By (1), $b$ is a boundary.
Hence $\tilde a|_W$ is a boundary, as needed.

(3) Having proved (2), we know that for any essentially smooth scheme $Y$ of dimension $\le d+1$ we have $H^*(Y,M) \wequi H^* C^*_{RS}(Y,M)$.
The two statements are thus equivalent.
To prove that $H^*(X,M) \wequi H^*(\A^1_X, M)$ we may reduce to the case where $X$ is local.\footnote{Consider the Zariski sheaf of spectra $E(\ph) = \fib(\Gamma_\Nis(\ph, M) \to \Gamma_\Nis(\ph \times \A^1, M))$. Then $E(U) = 0$ if and only if $H^*(U,M) \wequi H^*(\A^1_U,M)$. Since the former condition can be checked on stalks, so can the latter.}
By what we just said this is equivalent to proving that $C^*_{RS}(X,M) \to C^*_{RS}(\A^1_X, M)$ is a quasi-isomorphism, which is (1).
\end{proof}

\begin{lemma} \label{lemm:acyclic-sheaves}
Let $X$ be a scheme and $x \in X$ a point.
Write $i: x \to X$ for the inclusion.
Let $G$ be a sheaf of groups on $x_\Nis$.
We have $H^i(X, i_* G) = 0$ for $i=1$ (respectively for $i \ge 1$ if $G$ is abelian).

In particular, skyscraper sheaves (e.g., constant sheaves) on $X_\Nis$ are acyclic.
\end{lemma}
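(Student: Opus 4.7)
The plan is to apply the Leray spectral sequence for the morphism of sites associated to $i: x \to X$, which in the abelian case reads
\[ E_2^{p,q} = H^p(X, R^q i_* G) \Rightarrow H^{p+q}(x, G). \]
Since $x = \Spec k(x)$ is the spectrum of a field, any Nisnevich cover of $x$ is refined by the identity, so $H^n(x, G) = 0$ for $n \ge 1$ and the abutment is concentrated in degree zero. Granted that $R^q i_* G = 0$ for $q \ge 1$, the spectral sequence degenerates to yield $H^p(X, i_*G) \wequi H^p(x, G) = 0$ for all $p \ge 1$. For a general (possibly non-abelian) sheaf of groups $G$, the full spectral sequence is replaced by the classical low-degree exact sequence of pointed sets $1 \to H^1(X, i_* G) \to H^1(x, G) \to \Gamma(X, R^1 i_* G)$, which gives the $i=1$ case by the same inputs.

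The heart of the argument is the vanishing of the higher direct images. I would compute the Nisnevich stalk of $R^q i_* G$ at a point $y \in X$ as $H^q(X_y^h \times_X x, G)$. Since $X_y^h$ is a cofiltered limit of étale $X$-schemes, the fiber $X_y^h \times_X x$ is essentially étale over $\Spec k(x)$, hence a (possibly infinite) disjoint union of spectra of separable algebraic extensions of $k(x)$. Each summand is the spectrum of a field, and so has trivial Nisnevich cohomology in positive degrees; since Nisnevich cohomology commutes with disjoint unions and with cofiltered limits of schemes along étale transition maps, the claimed vanishing follows.

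The final assertion about skyscraper sheaves is then immediate, since any skyscraper sheaf at $x$ is by definition of the form $i_*G$; in particular the sheaves $G^c$ used earlier in the paper (Lemma \ref{lemm:compute-coh}) are obtained in this way from the inclusion of the disjoint union of generic points of $X$, and so fall under the same framework. The main obstacle is just the identification of the stalks of $R^q i_*$ with cohomology of the geometric fiber $X_y^h \times_X x$ as an essentially étale $k(x)$-scheme; once this reduction is in hand, everything collapses to the basic principle that the Nisnevich cohomology of a field vanishes in positive degrees.
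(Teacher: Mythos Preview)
Your argument is correct, but organized differently from the paper's. The paper proceeds more directly: it pushes forward the Eilenberg--MacLane object $E=K(G,1)$ (or $K(G,n)$ in the abelian case) along $i_*$ and checks that the result is again an Eilenberg--MacLane object $K(i_*G,1)$. The only nontrivial point is connectedness of $i_*E$, which amounts to $H^1(i^*U,G)=*$ for every $U\to X$; this holds because $i^*U=U\times_X x$ is zero-dimensional. One then reads off $H^1(X,i_*G)=\pi_0(i_*E)(X)=H^1(i^*X,G)=*$ immediately.

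Your Leray approach is the classical repackaging of the same idea: showing $R^q i_*G=0$ for $q\ge 1$ via the zero-dimensionality of the fibers $X_y^h\times_X x$ is exactly equivalent to showing that $i_*$ sends $K(G,n)$ to $K(i_*G,n)$. Your version has the advantage of being the textbook argument a reader will recognize; the paper's version avoids spectral sequence bookkeeping (and, in the non-abelian case, avoids invoking the non-abelian low-degree sequence) by working directly at the level of classifying spaces. Both routes ultimately rest on Theorem~\ref{thm:coh-dim} for dimension zero.
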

\begin{proof}
Set $E=K(G,1) \in \Shv_\Nis(x)$.
Then $i_* E$ is a pointed sheaf on $X_\Nis$ which is connected ($\pi_0 (i_* E)(U) = \pi_0 E(i^* U) \wequi H^1(i^* U, G) = *$ since $i^* U$ has dimension $0$) and $\Omega i_* E \wequi i_* \Omega E \wequi i_* G$.
In other words $i_* E \wequi K(i_* G, 1)$ and so $H^1(X, E) \wequi \pi_0 (i_* E)(X) = *$ as we have seen.
If $G$ is abelian, the same argument works with $K(G,n)$ in place of $K(G,1)$ for any $n \ge 1$.
\end{proof}

\subsection{Transfer along proper morphisms} \label{subsec:RS-transfers}
Let $p: X \to Y$ be a proper morphism of essentially smooth schemes of relative dimension $d$.
We attempt to define a morphism of graded abelian groups \[ p_*: C^*_{RS}(X, M(\omega_{X/Y})) \to C^{*-d}_{RS}(Y, M_{-d}) \] as follows.
Let $x \in X$ have image $y \in Y$.
If $x \to y$ is not finite, we define the corresponding component of $p_*$ to be zero.
If $x \to y$ is finite and $x$ has codimension $c$, then $y$ has codimension $c-d$\NB{$\dim \bar x = \dim \bar y$} and we define $p_*$ on the component corresponding to $x$ as \[ M_{-c}(x, \omega_{x/X} \otimes \omega_{X/Y}) \wequi M_{-c}(x, \omega_{x/y} \otimes \omega_{y/Y}) \xrightarrow{\tr_{x/y}} M_{-c}(y, \omega_{y/Y}) = (M_{-d})_{-(c-d)}(y, \omega_{y/Y}). \]
This may not quite make sense, because $M$ cannot be twisted (unless it is a contraction) and the transfers on $M_{-1}$ may not be well-defined.
We do obtain a well-defined map in the following cases:
\begin{itemize}
\item $M=F_{-1}$ and $p$ birational (no transfer is needed on $C^0_{RS}$, and starting from $C^1_{RS}$ we only have to deal with $F_{-i}$ for $i \ge 2$)
\item $M=F_{-2}$ and $p$ arbitrary
\item $X = \P^1_Y$ (see \S\ref{subsub:transfer-P1})
\end{itemize}

\subsubsection{Transfer from $\P^1$ in degrees $\le 2$} \label{subsub:transfer-P1}
Let $X$ be an essentially smooth scheme and write $p: \P^1_X \to X$ for the projection.
We define $p_*: C^*_{RS}(\P^1_X, M) \to C^{*-1}_{RS}(X,M_{-1})$ as follows:
\begin{itemize}
\item In degree $*=0$, the map is $0$.
\item In degree $*=1$, we need to construct for any point $x \in \P^1_X$ of codimension $1$, finite over a point $\eta \in X$ (automatically of codimension $0$) a map $p_*: M_{-1}(x, \omega_{x/\P^1_X}) \to M_{-1}(\eta)$.
  (If $x$ is not finite over its image, the transfer is defined to be zero.)
  We have $x \in \P^1_\eta$, and so if $x \ne \infty$ we can use the transfer morphism $\tr_{L/K}^x$ of \S\ref{subsec:contractions-transfers}, where $L$ is the residue field of $x$ and $K$ is the residue field of $\eta$.
  If $x=\infty$ then we need not perform any transfer at all.
\item In degree $*>1$ we can use the general construction.
\end{itemize}

\begin{lemma} \label{lemm:transfer-P1-gersten}
The morphism \[ p_*: C^{* \le 2}_{RS}(\P^1_X, M) \to C^{(*-1) \le 1}_{RS}(X,M_{-1}) \] is a morphism of (quasi-)complexes.
\end{lemma}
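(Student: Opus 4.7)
The statement asserts the commutativity of two squares: one with $C^{0}_{RS}(\P^1_X,M)\to C^1_{RS}(\P^1_X,M)$ mapping to $0\to C^0_{RS}(X,M_{-1})$, and one with the first two Gersten differentials of $C^*_{RS}(\P^1_X,M)$ and $C^*_{RS}(X,M_{-1})$. The plan is to treat the first square and then the second by splitting on whether the source codimension-one point is \emph{horizontal} (finite over a generic point of $X$) or \emph{vertical} (a fiber $\P^1_y$ for $y\in X^{(1)}$).

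For the first square, fix a generic point $\xi$ of $\P^1_X$ lying over $\eta\in X^{(0)}$ and $a\in M(\xi)$. The only nontrivial components of $p_*\circ d$ land in $M_{-1}(\eta)$, and by unwinding definitions this composite equals $\sum_{x\in(\P^1_\eta)^{(1)}}\tr^{x}_{\kappa(x)/\kappa(\eta)}(\partial^{\xi}_{x}a)$, with $\tr_{\infty}$ the canonical identification. By Lemma~\ref{lemm:transfer-coh-P1}(2) this sum is precisely the composite $M(\xi)\xrightarrow{d}C^1(\P^1_\eta,M)\to H^{1}C^*(\P^1_\eta,M)\wequi H^1(\P^1_\eta,M)\xrightarrow{\alpha_\infty^{-1}}M_{-1}(\eta)$, which vanishes because it factors through projection to cohomology of a complex applied to a boundary.

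For the vertical case of the second square, $x=\P^1_y$ is not finite over $y$, so $p_*(a)=0$; we must show $p_*(da)=0$. The codimension-two specializations of $\P^1_y$ in $\P^1_X$ are exactly the closed points of $\P^1_y$, and the desired vanishing becomes the $C^0\to C^{-1}$ square of the first part applied to the contracted sheaf $M_{-1}$ (strongly $\A^1$-invariant by Lemma~\ref{lemm:contract-strong}) over $\Spec\kappa(y)$, hence follows by the same argument.

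The horizontal case is the main substance. Fix $y\in X^{(1)}$ and compare the components at $y$. The question is local at $y$, so I localize $X$ at $y$ and note that $Z:=\overline{\{x\}}\subset\P^1_X$ then becomes finite over $X$ (closed and quasi-finite in the proper scheme $\P^1_X$). Using Corollary~\ref{cor:purity-codim2} I identify the subcomplex $C^*_{\P^1_y}(\P^1_X,M)$ with a shift of the Gersten complex of $\P^1_y$ for $M_{-1}$, and using Lemma~\ref{lemm:transfer-coh-P1} I identify $p_*(a)\in M_{-1}(\eta)$ and its target $M_{-2}(y,\omega_{y/X})$ with $H^1(\P^1_\eta,M)$ and $H^2_{\P^1_y}(\P^1_X,M)$ respectively. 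Under these identifications, the Gersten differential $\partial^{\eta}_y$ on $M_{-1}$ corresponds to the connecting map $H^1(\P^1_\eta,M)\to H^2_{\P^1_y}(\P^1_X,M)$ of the long exact sequence for the pair $(\P^1_X,\P^1_y)$, and the horizontal components of $p_*\circ d$ correspond to applying this same connecting map after mapping $a$ through $H^1_x(\P^1_\eta,M)\to H^1(\P^1_\eta,M)$. Thus the desired equality reduces to naturality of the boundary map of cohomology with support.

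The hard part will be the bookkeeping of the various twisting line bundles (powers of $\omega_{y/X}$, $\omega_{x/\P^1_X}$, $\omega_{\P^1/X}|_\infty$, and the determinants of fibered-product normal bundles) so that the identifications via Lemma~\ref{lemm:transfer-coh-P1} and Corollary~\ref{cor:purity-codim2} are compatible on the nose rather than up to a sign; the linearity results Lemma~\ref{lemm:transfer-linear}, Lemma~\ref{lemm:differential-linear} and Lemma~\ref{lemm:alpha-1d-linear} are what let one twist by $\omega_{y/X}$ throughout without disturbing the argument.
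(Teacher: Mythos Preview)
Your proposal is correct and follows essentially the same route as the paper. The degree-zero square and the vertical case are handled identically (via Lemma~\ref{lemm:transfer-coh-P1}(2) and reduction to the degree-zero argument for $M_{-1}$). For the horizontal case, the paper organizes the argument slightly differently: it writes down an explicit morphism of cofiber sequences (the pair $(\P^1_X,\P^1_X\setminus Y)$ mapping to the pair $(\P^1_X,\P^1_\eta)$), obtains from it a commutative square with a \emph{geometric} pushforward $p_*^G$, and then checks $p_*^G=p_*$ on each side. The right-hand identification---that the canonical transfer $\tr_{z/y}$ agrees with the map $H^2_z(\P^1_X,M)\to H^2_{\P^1_y}(\P^1_X,M)$ under the purity isomorphism---is isolated as a separate Lemma~\ref{lemm:RS-transfer-geom}, whose proof is exactly your combination of Corollary~\ref{cor:purity-codim2} and Lemma~\ref{lemm:transfer-coh-P1}(2). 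Your claim that $\partial^\eta_y$ on $M_{-1}$ matches the connecting map $H^1(\P^1_\eta,M)\to H^2_{\P^1_y}(\P^1_X,M)$ is the content of the left vertical identification in the paper's square; it is most cleanly seen by restricting the short exact sequence of Gersten complexes to the subcomplex supported on $\infty\times X$, where it becomes (via purity at $\infty$) the defining sequence for $\partial^\eta_y$ in $C^*(X,M_{-1})$, and then invoking Lemma~\ref{lemm:transfer-coh-P1}(1). You should make this step explicit rather than leaving it inside ``naturality of the boundary map''.
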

\begin{proof}
In other words we must show that the transfer map commutes with the first two Gersten differentials.
We may assume $X$ local of dimension $1$, with generic point $\eta$ and closed point $z$.
Write $\mu \in \P^1_X$ for the generic point.

In degree zero, we must show that the following composite vanishes: \[ M(\mu) \xrightarrow{\partial} \bigoplus_{x \in (\P^1_\eta)^{(1)}} M_{-1}(x, \omega_{x/\eta}) \xrightarrow{\Sigma_x \tr_{x/\eta}^x} M_{-1}(\eta). \]
Using Lemma \ref{lemm:transfer-coh-P1}(2), we see that the second map can be written as the composite \[ \bigoplus_{x \in (\P^1_\eta)^{(1)}} M_{-1}(x, \omega_{x/\eta}) \wequi C^1(\P^1_\eta, M) \to H^1(\P^1_\eta, M) \wequi M_{-1}(\eta). \]
It is now clear that this map vanishes on the image of $\partial$.

Now let $y \in  \P^1_X$ be of codimension $1$, with closure $Y \subset \P^1_X$.
The only way in which $y$ can not be finite over its image is if $Y = \P^1_z$.
In this case, the desired commutativity follows as in the degree zero case.
We may thus assume that $y$ is finite over its image $\eta \in X$.
Consider the following commutative diagram of cofiber sequences
\begin{equation*}
\begin{CD}
\P^1_\eta/\P^1_\eta \setminus  Y_\eta @>>> \P^1_X/\P^1_X \setminus Y @>>> \P^1_X/\P^1_\eta \cup \P^1_X \setminus Y @>>> \Sigma \P^1_\eta/\P^1_\eta \setminus Y_\eta \\
@AAA @AAA @AAA @AAA \\
\P^1_\eta  @>>> \P^1_X @>>> \P^1_X/\P^1_\eta @>>> \Sigma \P^1_\eta \\
\end{CD}
\end{equation*}
Mapping into $K(M,2)$, the right hand square yields the following commutative diagram
\begin{equation*}
\begin{CD}
C^1_{RS}(\P^1_X, M) \supset H^1_{y}(\P^1_X, M) @>{\partial}>> H^2_{Y^{(1)}}(\P^1_X, M) \subset C^2_{RS}(\P^1_X, M) \\
@V{p_*^G}VV                                 @V{p_*^G}VV \\
C^0_{RS}(X,M_{-1}) \wequi H^1(\P^1_\eta,M) @>{\partial}>> H^2_{\P^1_z}(\P^1_X, M) \subset C^1_{RS}(X, M_{-1}).
\end{CD}
\end{equation*}
The top and bottom rows identify with parts of the relevant Rost--Schmid complexes.
The result will be proven if we can show that $p_*^G = p_*$.
On the left hand side this follows easily from Lemma \ref{lemm:transfer-coh-P1}(2).
For the right hand side this is Lemma \ref{lemm:RS-transfer-geom} below.
\end{proof}

\begin{lemma} \label{lemm:RS-transfer-geom}
Let $X$ be essentially smooth, local of dimension $1$ with closed point $z$ and $x \in \P^1_X$ a closed point.
The composite \[ M_{-2}(x, \omega_{x/X}) \wequi H^2_x(\P^1_X, M) \to H^2_{\P^1_z}(\P^1_X, M) \wequi M_{-2}(z, \omega_{z/X}) \] is the canonical transfer.
\end{lemma}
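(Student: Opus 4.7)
The plan is to reduce via purity to the one-dimensional result of Lemma~\ref{lemm:transfer-coh-P1}(2). The starting observation is that $x$, being a closed point of $\P^1_X$ over the closed point $z \in X$, lies in the closed fiber $\P^1_z$, where it is a codimension-one point. Thus $(\P^1_X, \P^1_z)$ is an essentially smooth closed pair of codimension one, and $x$ is a codimension-one point on the smooth curve $\P^1_z$.

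First I would apply purity for the Rost--Schmid complex in codimension $\le 2$ (Corollary~\ref{cor:purity-codim2}), combined with the fact that the Gersten complex computes cohomology in dimension $\le 2$ (Lemma~\ref{lemm:gersten-cohomology-dim2}), to obtain natural isomorphisms
\[
H^2_{\P^1_z}(\P^1_X, M) \wequi H^1(\P^1_z, M_{-1}(\omega_{z/X})), \qquad H^2_x(\P^1_X, M) \wequi H^1_x(\P^1_z, M_{-1}(\omega_{z/X})),
\]
where I use that $\omega_{\P^1_z/\P^1_X} \wequi \omega_{z/X}|_{\P^1_z}$ (base change of the conormal bundle for the regular immersion $z \hookrightarrow X$). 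Under these identifications, which are natural in the subscheme we take support over, the forget-support map in the statement of the lemma corresponds to the evident forget-support map on $\P^1_z$.

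Next I would apply Lemma~\ref{lemm:transfer-coh-P1}(2), with base field $K = k(z)$ (finitely generated and separable over $k$ since $k$ is perfect), extension $L = k(x)$, generator the image of a coordinate $t$ on $\A^1_z \subset \P^1_z$, and coefficient sheaf $M_{-1}$ (strongly $\A^1$-invariant by Lemma~\ref{lemm:contract-strong}) twisted by the line bundle $\omega_{z/X}$. This identifies the forget-support map with $\tr^t_{k(x)/k(z)}$; by Theorem~\ref{thm:transfers} this is independent of $t$ and equals the canonical transfer $\tr_{x/z}$.

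The final task is to verify that all twists align: on the source side the chain above yields $M_{-2}(x, \omega_{x/\P^1_z} \otimes \omega_{z/X}|_x)$, and we want this to agree with $M_{-2}(x, \omega_{x/X})$ as used in the lemma's statement. By Remark~\ref{rmk:omega-composition} applied to $x \to z \to X$ and to $x \to \P^1_z \to z$, this amounts to identifying $\omega_{x/\P^1_z}$ with $\omega_{x/z}$ via the trivialization of $\omega_{\P^1_z/z}|_x$ by $dt$ --- which is precisely the trivialization (restricted from $dt$ on $\P^1_X$) used in the lemma's statement to identify $\omega_{x/\P^1_X}$ with $\omega_{x/X}$. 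I expect the main obstacle will be this bookkeeping of twists and confirming that the a priori choice of coordinate $t$ produces consistent identifications throughout; independence of $t$ then follows from Theorem~\ref{thm:transfers} together with the $\KMWn_0$-linearity of the various purity and Gersten-complex identifications.
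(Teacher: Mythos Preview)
Your approach is essentially the same as the paper's: both reduce via purity in codimension $\le 2$ (Corollary~\ref{cor:purity-codim2} together with Lemma~\ref{lemm:gersten-cohomology-dim2}) to the one-dimensional forget-support map on $\P^1_z$, and then invoke Lemma~\ref{lemm:transfer-coh-P1}(2). The paper's proof is terser and leaves the twist bookkeeping implicit, whereas you spell it out carefully; your remark about independence of the chosen coordinate $t$ (and the resulting consistency of the identifications $\omega_{x/\P^1_X} \wequi \omega_{x/X}$) is a welcome clarification that the paper does not make explicit.
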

\begin{proof}
We can use $C^*_{RS}(\P^1_X, M) = C^*(\P^1_X,M)$ to compute the relevant cohomology groups (Lemma \ref{lemm:gersten-cohomology-dim2}).
Everything is happening in the subcomplex \[ C^*_{\P^1_z}(\P^1_X, M) \wequi C^{*-1}(\P^1_z, M_{-1}(\omega_{z/X})). \]
This shows that the composite is the same as the following: \[ M_{-2}(x, \omega_{x/X}) \wequi H^1_x(\P^1_z, M_{-1}(\omega_{z/X})) \to H^1(\P^1_z, M_{-1}(\omega_{z/X})) \wequi M_{-2}(z, \omega_{z/X}) \] which is the canonical transfer by Lemma \ref{lemm:transfer-coh-P1}(2).\todo{$x=\infty$...}
\end{proof}

\subsubsection{Transfers via $M_{-1}$ in degrees $\le 2$}
\begin{lemma} \label{lemm:blowup}
Let $X$ be obtained from $Y$ by blowing up a point of codimension $2$.
Then \[ p_*: C^{* \le 2}_{RS}(X, M_{-1}(\omega_{X/Y})) \to C^{* \le 2}_{RS}(Y, M_{-1}) \] is a morphism of quasi-complexes.
\end{lemma}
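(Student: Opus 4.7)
The plan is to reduce to a local computation at the blown-up point and to check commutativity of $p_*$ with the first two Gersten differentials degree by degree, invoking Lemma~\ref{lemm:transfer-P1-gersten} to handle the contribution at the exceptional divisor.

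First I would reduce to the case that $Y$ is essentially smooth, local of dimension $2$, with closed point $y_0$ the blown-up point, so that $E := p^{-1}(y_0) \subset X$ is canonically isomorphic to $\P^1_{\kappa(y_0)}$. Away from $y_0$ (resp.\ from $E$) the map $p$ is an isomorphism, so $p_*$ commutes with differentials there trivially; the essentially-smooth base-change compatibilities of Lemmas~\ref{lemm:smooth-pullback} and~\ref{lemm:transfer-smooth-base-change} justify the reduction. Let $\eta$ denote the common generic point, $\xi_i$ the codimension-1 points of $Y$ (also their proper transforms in $X$), $\epsilon$ the generic point of $E$, and $v_\alpha$ the closed points of $E$. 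For the first Gersten differential, the components $\partial^\eta_{\xi_i}$ commute with $p_*$ since $\xi_i$ is preserved by the birational map, while the component $\partial^\eta_\epsilon$ is annihilated by $p_*$ because $\epsilon \to y_0$ is not finite (it has transcendence degree one), so the corresponding component of $p_*$ is zero by definition.

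For the second Gersten differential at a $\xi_i$, components landing at codimension-2 points of $Y$ lying on $\overline{\{\xi_i\}}$ away from $y_0$ commute trivially, while those landing at the intersection points $v_\alpha \in \overline{\{\xi_i\}}^{\mathrm{p.t.}} \cap E$ are transferred via $\sum_\alpha \tr_{v_\alpha/y_0}$. The sum agrees with the Rost--Schmid expansion of $\cpartial^{\xi_i}_{y_0}$ in $Y$: both are computed using the same normalization of the curve $\overline{\{\xi_i\}}$, whose points over $y_0$ are in canonical bijection (with matching residue fields and transfer factors) with the local branches of the proper transform at the various $v_\alpha$. By Theorem~\ref{thm:2nd-Gersten-diff} this Rost--Schmid differential equals the Gersten differential $\partial^{\xi_i}_{y_0}$ used in $C^*_{RS}(Y, M_{-1})$, so the square commutes.

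The main case is commutativity at $\epsilon$: since $p_*$ is zero on the $\epsilon$-component of $C^1_{RS}$, we must show $p_*(\partial^\epsilon a) = 0$ for every $a \in M_{-2}(\epsilon, \omega_{\epsilon/X} \otimes \omega_{X/Y})$. Purity for the Rost--Schmid complex (Proposition~\ref{prop:RS-purity}) identifies the $E$-supported subcomplex of $C^*_{RS}(X, M_{-1}(\omega_{X/Y}))$ with $C^{*-1}_{RS}(E, M_{-1}(\omega_{E/Y}))$ via the isomorphism $\omega_{E/X} \otimes \omega_{X/Y}|_E \wequi \omega_{E/Y} \wequi \omega_{E/y_0} \otimes \omega_{y_0/Y}|_E$ of Remark~\ref{rmk:omega-composition}. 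Under this identification $a$ corresponds to an element $\tilde a$ in the degree-$0$ piece of $C^*_{RS}(E, M_{-1}(\omega_{E/Y}))$, and $p_*(\partial^\epsilon a)$ corresponds to $p_{E,*}(\partial \tilde a)$ for the $\P^1$-pushforward of \S\ref{subsub:transfer-P1} applied to $E \cong \P^1_{\kappa(y_0)}$ (after absorbing the fixed twist by $\omega_{y_0/Y}|_E$). An appropriate twisted analog of Lemma~\ref{lemm:transfer-P1-gersten} then yields $p_{E,*}(\partial \tilde a) = \partial(p_{E,*}\tilde a) = 0$, since $p_{E,*}$ is zero on the generic-point component by construction.

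The hard part is the bookkeeping in this last step: establishing the twisted version of Lemma~\ref{lemm:transfer-P1-gersten} and tracking the canonical identifications among $\omega_{E/X}$, $\omega_{X/Y}|_E$, $\omega_{E/y_0}$, and $\omega_{y_0/Y}|_E$ so that the $X$-side differentials and canonical transfers really do transport to the $\P^1$-side ones. The twists are forced by the cotangent-complex formulas, and the twisted version of Lemma~\ref{lemm:transfer-P1-gersten} follows from the same argument applied to the twisted Gersten complex; the main additional verification is that twisting by a line bundle intertwines correctly with the canonical transfers, which is guaranteed by their $\ul{K}_0^{MW}$-linearity (Lemma~\ref{lemm:transfer-linear}).
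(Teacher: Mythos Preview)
Your argument has a genuine circularity problem in the crucial case $\tilde y \ne \eta$ (your ``$\xi_i$'' case). You invoke Theorem~\ref{thm:2nd-Gersten-diff} to identify the Gersten differential $\partial^{\xi_i}_{y_0}$ on $Y$ with the Rost--Schmid differential, and you invoke Proposition~\ref{prop:RS-purity} for the exceptional divisor. But in the paper's logical order both of these results are proved \emph{using} Lemma~\ref{lemm:blowup}: Theorem~\ref{thm:2nd-Gersten-diff} relies on Corollary~\ref{cor:finite-transfer} and Proposition~\ref{prop:transfer-P1-gersten}, and Corollary~\ref{cor:finite-transfer} in turn applies Lemma~\ref{lemm:blowup} repeatedly. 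Even ignoring the circularity, your claim that ``both are computed using the same normalization'' is exactly what is not yet known: the proper transform of $\overline{\{\xi_i\}}$ in $X$ need not be smooth at the points $v_\alpha$, so the Gersten differential $\partial^{\xi_i}_{v_\alpha}$ on $X$ is still a local-cohomology boundary map with no a priori description via normalizations. That description is precisely the content of Theorem~\ref{thm:2nd-Gersten-diff}, which you cannot yet use.

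The paper handles this case by a different mechanism that avoids computing $\partial^{\xi_i}_{v_\alpha}$ directly. One chooses an auxiliary \emph{smooth} codimension-$1$ point $y_0 \in Y$ through $z$, shows via a cohomology computation that $H^1(Y \setminus Z_0, M_{-1}) = 0$ (using Corollary~\ref{cor:purity-codim2}, which \emph{is} available), and hence lifts the given class $a$ at $y$ to some $b$ at the generic point with $\partial b$ supported only on $\{y, y_0\}$. Then one exploits that $C^{*\le 2}_{RS}(X,\ph)$ is, by definition, the Gersten complex in these degrees, so $\partial^2 \tilde b = 0$ holds on $X$; this expresses $\partial \tilde a$ in terms of differentials emanating from $\tilde y_0$ (smooth, hence easy) and from $\eta$ (already handled by the $\P^1$-transfer case). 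Your $\epsilon$-case is essentially correct in spirit, but you should cite Corollary~\ref{cor:purity-codim2} rather than Proposition~\ref{prop:RS-purity}, since only the former is available at this point.
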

\begin{proof}

Let $z \in Y$ be the point of codimension $2$ being blown up.
Write $\P^1_z \subset X$ for its preimage, and $\eta \in \P^1_z$ for the generic point.
The map $X \setminus \P^1_z \to Y \setminus z$ being an isomorphism, we see that the only differentials which do possibly not commute with $p_*$ are those with targets corresponding to $z$, i.e. emanating from points in $\tilde y \in X^{(1)}$ whose image specializes to $z$.
There are two distinct cases of such $\tilde y$: either $\tilde y = \eta$, or not.
The case $\tilde y = \eta$ is, up to twisting, contained in Lemma \ref{lemm:transfer-P1-gersten}.

Now we treat the other case.
Using that the Rost--Schmid differential is compatible with cofiltered limits, and defined ``pointwise'', we may as well assume that $Y=Y_z$ is the localization of a smooth\NB{needed?} scheme $Y$ in a point $z$.
Let $y \in Y$ be the image of $\tilde y$.
Write $Z = \overline{\{y\}}$ and $\tilde Z = \overline{\{\tilde y\}}$, so that $\tilde Z \to Z$ is finite (being proper and quasi-finite \cite[Tag 02LS]{stacks-project}) and birational.
We can find $y_0 \in Y^{(1)}$ such that $Z_0 := \overline{\{y_0\}}$ is smooth at $z$ (take the zero locus of function in the maximal ideal of $Y$ with non-zero differential\NB{more details?}).
Put $\tilde Z_0 := \overline{\{\tilde y_0\}}$, where $\tilde y_0$ denotes the unique lift of $y_0$. 
Write $\tilde z$ for the unique lift of $z$ to $\tilde Z_0$ (note that $\tilde Z_0 \to Z_0$ is an isomorphism, $Z_0$ being normal and $\tilde Z_0 \to Z_0$ being finite and birational).
We have the short exact sequence \[ H^1(Y, M_{-1}) \to H^1(Y \setminus Z_0, M_{-1}) \to H^2_{Z_0}(Y, M_{-1}) \] in which the first term vanishes since $Y$ is local (Corollary \ref{cor:Zariski-vanishing}) and the third term vanishes by purity: \[ H^2_{Z_0}(Y, M_{-1}) \stackrel{L.\ref{lemm:gersten-cohomology-dim2}}{\wequi} H^2 C^*_{Z_0}(Y, M_{-1}) \stackrel{C.\ref{cor:purity-codim2}}{\wequi} H^1 C^*(Z_0, M_{-2}(\omega_{Z_0/Y})) \stackrel{L.\ref{lemm:gersten-cohomology-dim2}}{\wequi} H^1(Z_0, M_{-2}(\omega_{Z_0/Y}) \stackrel{C.\ref{cor:Zariski-vanishing}}{=} 0. \]
It follows that $H^1(Y \setminus Z_0, M_{-1})$ vanishes and thus \begin{equation}\label{eq:partial-surj} \partial: C^0(Y) \to C^1(Y \setminus Z_0) = \bigoplus_{x \in Y^{(1)} \setminus y_0}M_{-2}(x, \omega_{x/Y}) \end{equation} is surjective.
Write $\mu \in Y^{(0)}$ for the (unique) generic point and $\tilde \mu \in X^{(0)}$ for its unique lift.

Pick $\tilde a \in C^1(X, M_{-1}(\omega_{X/Y}))$ in the component corresponding to $\tilde y$, that is, $\tilde a \in M_{-2}(\tilde y, \omega_{\tilde y/X} \otimes \omega_{X/Y})$.
We shall prove that $p_*$ commutes with the differential on $\tilde a$.
Under the isomorphisms $\omega_{\tilde y/X} \otimes \omega_{X/Y} \wequi \omega_{\tilde y/Y}$ and $\tilde y \wequi y$, $\tilde a$ corresponds to a class $a \in M_{-2}(y, \omega_{y/Y})$; in fact $a=p_*(\tilde a)$.
Surjectivity of the map in \eqref{eq:partial-surj} implies that we can find $b \in M_{-1}(\mu)$ with $\partial^\mu_y(b) = a$ and $\partial^\mu_{y'}(b) = 0$ for $y' \not\in\{y,y_0\}$.
As above, the transfer $C^0(X, M_{-1}(\omega_{X/Y})) \to C^0(Y, M_{-1})$ is bijective for trivial reasons, so we may pick $\tilde b \in M_{-1}(\tilde \mu, \omega_{X/Y})$ above $b$.
Using that transfer and differential commute on $X \setminus \P^1_z$, we find that $\partial \tilde b$ is supported on $\eta$, $\tilde y_0$ and $\tilde y$, with $\partial^{\tilde \mu}_{\tilde y} \tilde b = \tilde a$.
Since $\partial^2 = 0$ (recall that the first two differentials in the Rost--Schmid quasi-complex are given by the differentials from the Gersten complex) we deduce that \[ \partial \tilde a = -(\partial^{\tilde y_0}_{\tilde z} \partial^{\tilde \mu}_{\tilde y_0} \tilde b + \sum_{x \in (\P^1_z)^{(1)}} \partial^\eta_x \partial^{\tilde \mu}_{\eta} \tilde b). \]
Apply now $p_* = \tr_{?/z}$.
The second term vanishes, by the case of commutativity we have already dealt with.
The component of $\partial$ emanating at $\tilde y_0$ commutes with $p_*$ since $\tilde Z_0 \to Z_0$ is an isomorphism, and the differential emanating from $\tilde \mu$ was already treated.
Hence we find \[ p_* \partial \tilde a = - p_* \partial^{\tilde y_0}_{\tilde z} \partial^{\tilde \mu}_{\tilde y_0} \tilde b = -\partial^{y_0}_z p_* \partial^{\tilde \mu}_{\tilde y_0} \tilde b = -\partial^{y_0}_z \partial^{\mu}_{y_0} p_* \tilde b = -\partial^{y_0}_z \partial^{\mu}_{y_0} b. \]
Using again that $\partial^2 = 0$, we see that this is the same thing as $\partial^y_z \partial^\mu_y b = \partial^y_z a$.
This is what we set out to prove.
\end{proof}
\begin{remark}
In the above proof we crucially used that $C^*_{RS}(X,M)$ is a complex in degrees $\le 2$.
This is why we defined the first two differentials to be the Gersten differentials.
\end{remark}
\begin{discussion}
We can define $p_*: C^{1 \le * \le 2}(X, M(\omega_{X/Y})) \to C^{1 \le * \le 2}_{RS}(Y, M)$ (with no contraction on $M$!).
However the above proof does not show that this map is a morphism of quasi-complexes.
(There is no complex $C^{* \le 2}(X, M({\omega_{X/Y}}))$ to which we could apply $\partial^2(\tilde b) = 0$.)
\end{discussion}

\begin{corollary} \label{cor:finite-transfer}
Let $p: X \to Y$ be a finite morphism of finite presentation between essentially smooth schemes.
Then \[ p_*: C^{* \le 1}_{RS}(X, M_{-2}(\omega_{X/Y})) \to C^{* \le 1}_{RS}(Y, M_{-2}) \] is a morphism of quasi-complexes.
\end{corollary}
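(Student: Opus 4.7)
The plan is to reduce the claim, via a geometric factorization, to the two transfer computations already available: the projection $\P^1_Y \to Y$ of Lemma~\ref{lemm:transfer-P1-gersten} and (if necessary) the blowup transfer of Lemma~\ref{lemm:blowup}. First I would localize: the only nontrivial component of the differential on $C^{*\le 1}_{RS}$ goes from degree $0$ to degree $1$, i.e.\ from a generic point of $Y$ to a codimension-$1$ point $y$. Since both $p_*$ and $\cpartial$ are defined pointwise and commute with étale pullback (Lemma~\ref{lemm:smooth-pullback}), we may replace $Y$ by its henselization $Y_y^h$, a henselian local essentially smooth scheme of dimension $1$ (the dimension-$0$ case being vacuous), and correspondingly replace $X$ by $X\times_Y Y_y^h$, which is then semi-local, essentially smooth of dimension $1$, and finite over $Y$.

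Next I would establish a geometric factorization $p \colon X \xrightarrow{i} \P^1_Y \xrightarrow{q} Y$ with $i$ a smooth closed immersion of codimension $1$. Since $k$ is perfect, the residue field extensions of $X$ over $Y$ (both at the generic points and at the finitely many closed points) are separable, and hence monogenic over the corresponding residue fields of $Y$. A suitable version of the primitive element theorem for semi-local rings lets us pick a function $f \in \mathcal O(X)$ whose image in each residue field of $X$ generates the extension over the image residue field of $Y$; after possibly modifying by elements of the Jacobson radical to avoid coincidences at distinct closed points, the associated morphism $X \to \A^1_Y \subset \P^1_Y$ becomes a closed immersion, which is smooth of codimension $1$ because both $X$ and $\P^1_Y$ are essentially smooth over $Y$ of the relevant dimensions.

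Given such a factorization, I would identify $p_*$ with $q_*\circ i_*$, where $i_*$ is the purity isomorphism of Proposition~\ref{prop:RS-purity} applied to $M_{-1}$ and the smooth closed pair $(\P^1_Y, X)$, combined with the identification $\omega_{X/Y} \wequi \omega_{X/\P^1_Y}\otimes\omega_{\P^1_Y/Y}|_X$ (Remark~\ref{rmk:omega-composition}). The map $i_*$ lands in the subcomplex $C^{*+1}_{RS,X}(\P^1_Y, M_{-1}(\omega_{\P^1_Y/Y}))$ and is tautologically a morphism of quasi-complexes there. That the composition then matches $p_*$ on the nose follows from compatibility of the canonical transfers under smooth base change (Lemma~\ref{lemm:transfer-smooth-base-change}) applied fiberwise, together with the explicit computation of the $\P^1$-transfer in codimension $1$ via Lemma~\ref{lemm:transfer-coh-P1}(2). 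Finally, Lemma~\ref{lemm:transfer-P1-gersten} supplies commutativity of $q_*$ with the differential in degrees $\le 2$, which in combination with the inclusion of the support subcomplex yields commutativity of $p_* = q_*\circ i_*$ with the first differential.

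The main obstacle I anticipate is Step~2, the production of the closed embedding $i\colon X \hookrightarrow \P^1_Y$: the primitive-element argument must work simultaneously at all closed points of the semi-local scheme $X$ and at its generic points, and the resulting $f$ must define an embedding (not merely be generically injective). If this cannot be arranged directly, the fallback is to embed $X$ only into $\P^N_Y$ for some $N \ge 1$ and then reduce to the $\P^1$ case by a sequence of codimension-$2$ blowups separating the points of $X$, applying Lemma~\ref{lemm:blowup} at each step. Either approach reduces the general finite transfer to the two building blocks already in hand.
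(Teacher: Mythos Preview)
Your localization step and the overall plan to reduce to Lemma~\ref{lemm:transfer-P1-gersten} are sound, but Step~2 has a genuine gap, and the proposed fallback does not repair it.

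The separability claim is incorrect: perfection of $k$ does not force residue field extensions along $p$ to be separable. For $k=\F_p$, take $Y=X=\A^2_k$ and $p\colon(u,\rho)\mapsto(u^p,\rho^p)$; at the codimension-$1$ point $y=(\pi)$ (writing $t,\pi$ for the coordinates on $Y$) the residue field extension is $\F_p(t)\hookrightarrow\F_p(t^{1/p})$, purely inseparable. More to the point, even when the fraction field extension $L/K$ is monogeneic, the ring extension $B/A$ need not be. In this same example, after henselizing $Y$ at $y$ one finds $B/\pi B\cong\F_p(t)[u,\rho]/(u^p-t,\rho^p)$; every element $z$ of this $\F_p(t)$-algebra satisfies $z^p\in\F_p(t)$, so $\F_p(t)[z]$ has $\F_p(t)$-dimension at most $p<p^2$. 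Hence $B$ is not generated over $A$ by one element, and there is no closed embedding $X\hookrightarrow\P^1_Y$.

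Your fallback runs into a dimension obstacle: Lemma~\ref{lemm:blowup} treats only blowups at a point of codimension~$2$, i.e.\ a closed point of a surface. If you embed $X$ into $\P^N_Y$ with $N\ge 2$ and $\dim Y=1$, the centers needed to project down to $\P^1_Y$ are not codimension-$2$ points, so the lemma does not apply.

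The paper's argument avoids both issues by staying on surfaces throughout. One first factors $L/K$ into monogeneic \emph{field} extensions (the normalization of $Y$ in each intermediate field is an essentially smooth curve since $k$ is perfect), reducing to $L=K(t)$ with $t\in B$. One then embeds the possibly singular order $\Spec A[t]$---not $X$ itself---as a closed subscheme of $\P^1_Y$. The missing geometric input is a resolution statement (Lemma~\ref{lemm:blowup-res}): repeatedly blowing up the ambient surface at the closed points of the successive strict transforms eventually makes the strict transform regular, hence equal to $X=\Spec B$. Each such blowup is at a codimension-$2$ point, so Lemma~\ref{lemm:blowup} applies at every step; composing with Lemma~\ref{lemm:transfer-P1-gersten} gives a morphism of quasi-complexes on the total space, and restricting to the strict transform $X$ yields the desired $p_*$.
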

\begin{proof}
We may assume $X, Y$ smooth.\footnote{Write $Y = \lim_i Y_i$.
First, since $p$ is finite of finite presentation, there exists $p_0: X_0 \to Y_0$ inducing $p$ with $p_0$ finite \cite[Tags 01ZM and 01ZO]{stacks-project}.
Now $\emptyset = X_{sing} = \lim_i (X_i)_{sing}$ and so $(X_i)_{sing} = \emptyset$ for $i$ sufficiently large \cite[Tag 081A]{stacks-project}.}
By considering the localizations of $Y$ in points of codimension $1$, we may assume $Y$ local; in particular $X,Y$ are then affine of dimension $\le 1$.
Being essentially smooth, we may also assume $X,Y$ integral.
If we can prove the theorem for morphisms $p_1: X \to Y_1$ and $p_2: Y_1 \to Y$, then we have also proved it for $p_2 \circ p_1: X \to Y$.
Let $L/K$ be the function field extension corresponding to $p$, and let $L/E/K$ be an intermediate extension.
Let $Y_1$ be the normalization of $Y$ in $E$; then $X \to Y_1$ and $Y_1 \to Y$ are finite \cite[Tag 035R]{stacks-project} and $Y_1$ is normal of dimension $1$, so essentially smooth \cite[Tag 031S]{stacks-project}.
Applying this construction several times if necessary, we reduce to the case where $L/K$ is monogeneic.
Set $X = \Spec(B)$, $Y=\Spec(A)$.
We may assume that $L=K(t)$ for some $t \in B$ (note that $K \otimes_A B = L$, $B/A$ being finite).
Let $B_0 = A[t] \subset B$.
Note that $B_0$ is finite over $A$ ($t \in B$ satisfying an integral equation over $A$).
We can thus view $Z = \Spec(B_0)$ as a closed subscheme of $\P^1_A$; by construction its normalization is $X$.
Lemma \ref{lemm:blowup-res} below shows that by blowing up finitely many points of codimension $2$, we obtain a sequence of morphisms \[ W_n \to \dots \to W_1 = \P^1_A \] such that if $Z_i$ is the strict transform of $Z$ in $W_i$ then $Z_n \wequi X$.
Write $q: W_n \to \P^1_A \to Y$ for the composite morphism.
Applying Lemma \ref{lemm:blowup} finitely many times, as well as Lemma \ref{lemm:transfer-P1-gersten}, we see that $q_*$ is a morphism of quasi-complexes.
The restriction of this morphism of quasi-complexes to the part of the source corresponding to $X \wequi Z_n \subset W_n$ is the desired result.
\end{proof}

\begin{lemma}\label{lemm:blowup-res}
Let $A$ be a discrete valuation ring, $A \to B$ a finite morphism with $B$ a Dedekind domain (i.e. regular).
Let $t \in B$ generate the fraction field of $B$ and set $B_0 = A[t] \subset B$.
Set $X_0 = \Spec(B_0) \subset \A^1_A$.
There exists a sequence of blowups in closed points \[ \A^1_A \leftarrow W_1 \leftarrow W_2 \leftarrow \dots \leftarrow W_n \] such that if $X_i$ is the strict transform of $X_0$ in $W_i$, then $X_n = \Spec(B)$.
\end{lemma}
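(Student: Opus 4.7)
The plan is to realize the desired sequence as an embedded resolution of the singularities of the curve $X_0 \subset \A^1_A$, carried out by iteratively blowing up its singular closed points in the ambient regular surface. The main obstacle will be the termination of this procedure, which rests on the classical resolution of curve singularities on a regular surface.

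First I would verify that $\pi\colon \Spec(B) \to X_0 = \Spec(B_0)$ is the normalization of $X_0$. Indeed, the inclusion $B_0 = A[t] \subset B$ induces an isomorphism on fraction fields (since $t$ generates $\mathrm{Frac}(B)$), so $\pi$ is birational; and $\pi$ is finite because $B$ is $A$-finite, hence $B_0$-finite. Since $B$ is normal (Dedekind), $\pi$ is the normalization. The singular locus of $X_0$ is then the finite support of the coherent torsion sheaf $\pi_* \mathcal{O}_{\Spec B}/\mathcal{O}_{X_0}$; it consists of finitely many closed points, all lying over the closed point of $\Spec(A)$ (the generic fibre of $X_0 \to \Spec A$ being $\Spec \mathrm{Frac}(B)$, already normal).

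Next I would iteratively blow up these singular closed points. If $p \in X_i$ is a singular closed point in the inductively regular surface $W_i$, then $W_{i+1} := \mathrm{Bl}_p W_i$ is again a regular $2$-dimensional scheme (blowup of a regular closed point on a regular surface), and the strict transform $X_{i+1} \subset W_{i+1}$ is integral and birational to $X_i$. By the valuative criterion of properness applied to the $1$-dimensional normal scheme $\Spec B$ and the proper birational morphism $X_{i+1} \to X_i$, the normalization map $\Spec B \to X_i$ lifts uniquely to a (necessarily finite birational) morphism $\Spec B \to X_{i+1}$. In particular $\Spec B$ is the normalization of every $X_i$, and the procedure ends (with $X_n = \Spec B$) exactly once $X_n$ is itself normal.

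To force termination, I would track the $\delta$-invariant $\delta(X_i) := \sum_{q \in X_i} \mathrm{length}_{\mathcal{O}_{X_i,q}}(\widetilde{\mathcal{O}}_{X_i,q}/\mathcal{O}_{X_i,q})$, a nonnegative integer bounded above by $\delta(X_0) < \infty$ and vanishing iff $X_i = \Spec B$. The strict decrease $\delta(X_{i+1}) < \delta(X_i)$ under the blowup of any singular closed point is the heart of the classical theorem on embedded resolution of curve singularities on a regular surface (Zariski/Lipman); granting it, the procedure terminates after at most $\delta(X_0)$ blowups and yields $X_n = \Spec(B)$. This strict-decrease statement is the only nontrivial ingredient, and I would invoke it as a black box rather than reprove it: the local verification (pass to the completion at the singular point and inspect the proper transform on the standard affine charts of the blowup) is classical and independent of the motivic context of the paper.
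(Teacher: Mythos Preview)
Your argument is correct, but it differs from the paper's in a significant way, namely in how termination is established.

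Both you and the paper construct $W_{i+1}$ by blowing up closed points of $X_i$ in $W_i$ (the paper blows up \emph{all} closed points of $X_i$, you only the singular ones; either works) and check that $\Spec(B)\to X_i$ lifts to $X_{i+1}$ (you via the valuative criterion, the paper via the universal property of blowing up, using that on the Dedekind scheme $\Spec(B)$ every divisor is Cartier). The divergence is at termination. You invoke the classical strict decrease of the $\delta$-invariant under blowing up a singular point of a curve on a regular surface, treating it as a black box. The paper instead observes that the $X_i$ are affine, say $X_i=\Spec(B_i)$, with $B_0\subset B_1\subset\cdots\subset B$ an ascending chain of $A$-submodules of the finite $A$-module $B$; noetherianity forces $B_n=B_{n+1}$ for some $n$, and then the ideal of closed points in $B_n$ becomes locally principal (universal property again), forcing $B_n$ to be regular, hence equal to its normalization $B$.

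The paper's route is more elementary and entirely self-contained: it replaces the geometric input (Zariski/Lipman on embedded resolution) by a one-line noetherian argument. Your route is perfectly valid and conceptually transparent, but it imports a nontrivial external result that the paper manages to avoid.
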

\begin{proof}
We obtain $W_{i+1}$ by blowing up in $W_i$ the closed points of $X_i$.
Since $B$ is Dedekind, every divisor is Cartier, and so by the universal property of blowing up \cite[Tag 0806]{stacks-project} the map $X = \Spec(B) \to W_i$ factors uniquely through $W_{i+1}$.
Let $X_i = \Spec(B_i)$, so that we have inclusions $A \subset B_0 \subset B_1 \subset \dots \subset B$.
Since $A$ is noetherian and $B$ is finite over $A$, this chain of submodules must be stationary, that is, $B_n = B_{n+1}$ for some $n$.
Let $I \subset B_n$ be the ideal defining the closed points.
Then $I B_{n+1}$ is locally principal, by the universal property of blowing up.
But $B_{n+1} = B_n$, so $I$ is locally principal.
Since $B_n$ is a noetherian domain, it follows that $B_n$ is Dedekind, i.e., integrally closed (e.g., use \cite[Tags 034X and 05KH]{stacks-project}).
Thus whenever $b \in B$ (so $b$ is integral over $A$) also $b \in B_n$, i.e., $B_n = B$.
\end{proof}

\subsubsection{Proof of Theorems \ref{thm:2nd-Gersten-diff} and \ref{thm:RS-complex}}
We use Lemma \ref{lemm:blowup-res} to prove that $p_*$ is a morphism of quasi-complexes in more degrees.
\begin{proposition} \label{prop:transfer-P1-gersten}
Let $X$ be an essentially smooth scheme and write $p: \P^1_X \to X$ for the projection.
The morphism \[ p_*: C^{*}_{RS}(\P^1_X, M) \to C^{*-1}_{RS}(X,M_{-1}) \] is a morphism of quasi-complexes.
\end{proposition}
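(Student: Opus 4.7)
The plan is to extend Lemma \ref{lemm:transfer-P1-gersten} from degrees $\le 2$ to all degrees by bootstrapping from Corollary \ref{cor:finite-transfer} and the degree-$0$ vanishing established inside the proof of Lemma \ref{lemm:transfer-P1-gersten}. The key observation is that in degree $c \ge 2$, both sides of the desired commutativity only involve contractions $M_{-i}$ with $i \ge 2$, so we may apply previous results to the strongly $\A^1$-invariant sheaf $M' := M_{-c+2}$ (which is strongly $\A^1$-invariant by Lemma \ref{lemm:contract-strong}). Since Rost--Schmid differentials and pushforwards are defined pointwise and compatible with open immersions, it suffices to fix a component $(x, \bar y)$ with $x \in (\P^1_X)^{(c)}$ and $\bar y \in X^{(c)}$ and verify
\[ \sum_{y}\, p_{*, y \to \bar y}\, \cpartial^x_y \,=\, \cpartial^{p(x)}_{\bar y} \circ p_{*, x}, \]
where $y$ ranges over immediate specializations of $x$ in $\P^1_X$ with $p(y) = \bar y$. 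Setting $\bar x := p(x)$, a dimension count gives two cases: either $\bar x \in X^{(c-1)}$ (the \emph{finite case}, in which $\overline{\{x\}} \to \overline{\{\bar x\}}$ is finite surjective) or $\bar x \in X^{(c)}$ (forcing $\overline{\{x\}} = \P^1_{\overline{\{\bar x\}}}$).

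In the finite case, I would form the normalizations $\tilde W_x, \tilde W_{\bar x}$ of $\overline{\{x\}}, \overline{\{\bar x\}}$, obtaining a finite morphism $q \colon \tilde W_x \to \tilde W_{\bar x}$ of integral schemes of the same dimension $\dim X - c + 1$. Both are essentially smooth at points of codimension $\le 1$ (normal schemes are regular there, hence essentially smooth over the perfect field $k$); after restricting to an open subscheme retaining $x$ and the relevant codimension-$1$ points, $q$ becomes a finite morphism of essentially smooth schemes, bringing us into the setting of Corollary \ref{cor:finite-transfer}. Unpacking the Rost--Schmid differential $\cpartial^x_y$ on $\P^1_X$ as ``canonical differentials on $\tilde W_x$ followed by transfers $\tilde y \to y$'', reorganizing the resulting double sum by grouping points $\tilde y \in \tilde W_x$ according to their image $\tilde{\bar y}_j$ in $\tilde W_{\bar x}$, and using composition of transfers (Theorem \ref{thm:transfers}), the identity at $(x, \bar y)$ translates exactly into the first-differential commutativity $q_* \circ d^0_{RS} = d^0_{RS} \circ q_*$ on $C^{*\le 1}_{RS}(\tilde W_x, M'_{-2}(\omega_{\tilde W_x/\tilde W_{\bar x}}))$, which is Corollary \ref{cor:finite-transfer} applied to $q$ and $M'$.

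In the remaining case $\bar x \in X^{(c)}$, the right-hand side vanishes since $p_{*, x} = 0$, and I would analyze codimension-$1$ points of $\P^1_{\overline{\{\bar x\}}}$: each is either a fiber $\P^1_{\bar y'}$ (in which case $y \to \bar y'$ is not finite and hence $p_{*, y} = 0$) or a horizontal divisor (forcing $p(y) = \bar x$). Only the horizontal $y$'s contribute, and only to the component $\bar y = \bar x$; localizing $X$ at $\bar x$ with residue field $K$, the remaining identity reads
\[ \sum_{y \in (\P^1_K)^{(1)}} \tr_{y/K}\, \cpartial^\mu_y \,=\, 0 \]
on $M_{-c}(\mu)$, where $\mu$ is the generic point of $\P^1_K$. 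This is precisely the degree-$0$ vanishing established inside the proof of Lemma \ref{lemm:transfer-P1-gersten}, applied now to the strongly $\A^1$-invariant sheaf $M_{-c}$ in place of $M$.

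The main obstacle will be the bookkeeping in the finite case: one has to track the identifications of $\omega$-twists (via $\omega_{x/\P^1_X} \wequi \omega_{x/\tilde W_x} \otimes \omega_{\tilde W_x/X}$ and the analogue for $\bar x$, cf.\ Remark \ref{rmk:omega-composition}), verify that $\tilde W_x$ and $\tilde W_{\bar x}$ admit essentially smooth open neighborhoods of all relevant codimension-$\le 1$ points, and confirm that the sum/reorganization step faithfully translates the Rost--Schmid data on $(\P^1_X, X)$ into Rost--Schmid data on $(\tilde W_x, \tilde W_{\bar x})$ to which Corollary \ref{cor:finite-transfer} directly applies.
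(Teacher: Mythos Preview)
Your proposal is correct and follows essentially the same strategy as the paper: split according to whether $x \to p(x)$ is finite, reduce the non-finite case to the degree-$0$ vanishing of Lemma~\ref{lemm:transfer-P1-gersten} (applied to a contraction of $M$), and in the finite case pass to the normalizations of the closures and invoke Corollary~\ref{cor:finite-transfer}. The paper's proof is terser (it simply names $\tilde Y \to Y$ and cites Corollary~\ref{cor:finite-transfer}), whereas you spell out the restriction to codimension~$\le 1$ loci where the normalizations are essentially smooth and the reorganization of the double sum of transfers---bookkeeping the paper leaves implicit.
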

\begin{proof}
We may assume $X$ smooth.
The claim was proved in degrees $\le 2$ in Lemma \ref{lemm:transfer-P1-gersten}.
Let $\tilde y \in \P^1_X$ be a point of codimension $c \ge 2$ and $y \in X$ its image.
If $\tilde y \to y$ is not finite, then $\tilde y$ must be the generic point of $\P^1_y$.
The required commutativity is thus obtained from the case $c=0$, suitably twisted.
We may hence assume that $\tilde y \to y$ is finite.
Write $\tilde Z, Z$ for the closures of $\tilde y, y$ and $\tilde Y, Y$ for their normalizations.
The required commutativity is obtained by applying Corollary \ref{cor:finite-transfer} to the morphism $\tilde Y \to Y$ and using the definition of the Rost--Schmid differential.\NB{details?}
\end{proof}

The remaining two results will be proven using the same strategy, which we outline now.
Let $X$ be an essentially smooth scheme and $z \in X$ a point.
We shall seek to prove something about (composites of) Gersten or Rost--Schmid differentials terminating at $z$.
Firstly, since the differentials are compatible with cofiltered limits (Remark \ref{rmk:RS-ess-smooth}), we may assume $X$ smooth, say of some dimension $d$; suppose $z$ has codimension $c$.
Secondly, since we only care about what happens in the end at $z$, and since the differentials compatible with étale extensions (Lemma \ref{lemm:smooth-pullback}), we may work instead with $X_z^h$.
Recall that there is an essentially smooth morphism $X_z^h \to z$, making $z$ into a $z$-rational point (see Corollary \ref{cor:ess-smooth-retraction}).
Writing $X_z^h \to z$ as a cofiltered limit of smooth $z$-schemes, we reduce to the case where $X$ is a smooth $z$-scheme.
Remark \ref{rmk:smooth-retraction-dimensionality} shows that in fact $X$ now has dimension $c$.
Let $Z \subset X$ be closed of positive codimension, containing all the starting points of differentials we are interested in.
Applying Gabber's lemma to $(X,Z,z)$ we find a smooth $z$-scheme $S_0$ and an étale map $X_0 \to \A^1_{S_0}$ which is an étale neighborhood of $Z$ and such that $Z \to S_0$ is finite.
It will follow that we may replace $X$ by $\P^1_{S_0}$.
Write $s \in S_0$ for the image of $z$; then of course $z \to s$ is an isomorphism ($z$ being a $z$-rational point).
Since our problem is étale local around $z$, it will follow further that we may replace $X$ by $\P^1_S$, where $S= (S_0)_s^h$.
Now $Z$ splits into a finite disjoint union of local schemes and one scheme having no points over $s$ \cite[Tag 04GJ]{stacks-project}.
Precisely one of the local schemes contains $z$; we may replace $Z$ by this component.
Since $Z \to S$ is a finite map of local schemes, $z$ is in fact the only point in the fiber above $S$.

In summary, we have arranged that:
\begin{itemize}
\item $X = \P^1_S$
\item $S \to z$ is a cofiltered limit of smooth $z$-schemes of dimension $c-1$
\item we need only treat differentials living on $Z \subset X$, where $Z \to S$ is finite and $z \in Z$ is the only point of $Z$ above $s \in S$.
\end{itemize}

\begin{proof}[Proof of Theorem \ref{thm:2nd-Gersten-diff}]
We must show that the second Gersten differential in $C^*_{RS}(X, M_{-1})$ coincides with the Rost--Schmid differential.
Let $z \in X$ a point of codimension $2$, $y \in X$ a point of codimension $1$ specializing to $z$, and $Z = \overline{\{y\}}$.
We shall treat the two differentials from $y$ to $z$.
Perform the above constructions and reductions; thus assume that $X$ is of the form $\P^1_S$, etc.
Note that through all this, $y$ remains a generic point of $Z$.
Let $\tilde Z$ denote the normalization of $Z$, write $\tilde y$ for the unique lift of $y$ to $\tilde Z$.
Consider the diagram
\begin{equation*}
\begin{CD}
M_{-2}(\tilde y, \omega_{\tilde y/S}) @>{\partial}>> \bigoplus_{\tilde z \in \tilde Z^{(1)}} M_{-3}(\tilde z, \omega_{\tilde z/S}) \\
@V{\wequi}VV                                             @V{\tr}VV \\
M_{-2}(y, \omega_{y/S}) @>{\partial}>> M_{-3}(z, \omega_{z/S}) \\
@V{\tr}VV                                @V{\wequi}VV \\
M_{-2}(\eta) @>{\partial}>> M_{-3}(s, \omega_{s/S}).
\end{CD}
\end{equation*}
The outer rectangle commutes by Corollary \ref{cor:finite-transfer} (applied to $\tilde Z \to S$), and the bottom rectangle commutes by Proposition \ref{prop:transfer-P1-gersten} (applied to $\P^1_S \to S$).
It follows that the top rectangle commutes, which is what we needed to prove.
\end{proof}

\begin{proof}[Proof of Theorem \ref{thm:RS-complex}]
We must prove that $C^*_{RS}(X,M)$ is complex.
This is true by definition in degrees $\le 2$.
By writing $X$ as a filtered colimit, we may assume $X$ smooth over $k$, say of dimension $d$.
We shall prove the result by induction on $d$; the case $d \le 2$ is clear as noted above.
Thus let $x \in X$ be a point of codimension $c-2 \ge 1$ and let $z \in X$ be a specialization of $x$ of codimension $c$.
We must prove that the component of $\cpartial^2$ from $x$ to $z$ vanishes.
Let $Z$ be the closure of $x$ and run the above constructions and reductions.
Write $w \in S$ for the image of $x$.
Proposition \ref{prop:transfer-P1-gersten} supplies us with the following commutative square
\begin{equation*}
\begin{CD}
M_{-c+2}(x, \omega_{x/S}) @>{\cpartial^2}>> M_{-c}(z, \omega_{z/S}) \\
@V{p_*}VV                                     @V{p_*}V{\wequi}V  \\
M_{-c+2}(w, \omega_{w/S}) @>{\cpartial^2}>> M_{-c}(s, \omega_{s/S}).
\end{CD}
\end{equation*}
To prove that the top map vanishes, it thus suffices to prove that the bottom map vanishes.
In other words it will be enough to prove the result for $S$.
Note that $S$ is a cofiltered limit of smooth $z$-schemes of dimension $c-1$, and $z$ is a cofiltered limit of smooth $k$-schemes of dimension $d-c$ (recall that originally, $z$ was a point of codimension $c$ on a smooth $k$-scheme of dimension $d$).
Thus $S$ is a cofiltered limit of smooth $k$-schemes of dimension $d-1$, and so the required result holds by induction.
\end{proof}

\bibliographystyle{alpha}
\bibliography{bibliography}

\end{document}